\providecommand{\noopsort[1]{}}
\numberwithin{equation}{section}
\newtheorem{thm}{Theorem}[section]
\newtheorem{cor}[thm]{Corollary}
\newtheorem{prop}[thm]{Proposition}
\newtheorem{lem}[thm]{Lemma}
\theoremstyle{remark}
\newtheorem{rem}[thm]{Remark}
\newtheorem{example}[thm]{Example}
\theoremstyle{definition}
\newtheorem{defn}[thm]{Definition}
\newcommand{\coloneqq}{\mathrel{\mathop:}=}
\renewcommand{\Re}{{\rm Re}\,}
\newcommand{\eps}{\varepsilon}
\newcommand{\one}{\mathds{1}}
\newcommand{\nstar}{0^\star}
\newcommand{\tstar}{t^\star}
\newcommand{\weak}{\rightharpoonup}
\newcommand{\R}{\mathds{R}}
\newcommand{\N}{\mathds{N}}
\newcommand{\cD}{\mathscr{D}}
\newcommand{\cF}{\mathscr{F}}
\newcommand{\cX}{\mathscr{X}}
\newcommand{\cXc}{\mathscr{X}_{\mathrm{C}}}
\newcommand{\cXd}{\mathscr{X}_{\mathrm{D}}}
\newcommand{\cL}{\mathscr{L}}
\newcommand{\cB}{\mathscr{B}}
\newcommand{\cM}{\mathscr{M}}
\newcommand{\cT}{\mathscr{T}}
\newcommand{\cG}{\mathscr{G}}
\newcommand{\md}{\mathbb{d}}
\newcommand{\x}{\mathbb{x}}
\newcommand{\y}{\mathbb{y}}
\newcommand{\z}{\mathbb{z}}
\newcommand{\p}{\mathbb{p}}
\let\k\undefined
\newcommand{\k}{\mathbb{k}}
\let\S\undefined
\newcommand{\E}{\mathbb{E}}
\newcommand{\T}{\mathbb{T}}
\newcommand{\D}{\mathbb{D}}
\newcommand{\S}{\mathbb{S}}
\newcommand{\Df}{\mathbb{D}_{\mathsf{full}}}
\newcommand{\A}{\mathbb{A}}
\newcommand{\B}{\mathbb{B}}
\newcommand{\Af}{\mathbb{A}_{\mathsf{full}}}
\let\P\undefined
\newcommand{\P}{\mathbb{P}}
\newcommand{\bP}{\mathbf{P}}
\newcommand{\expect}{\mathbf{E}}
\newcommand{\bo}{\mathfrak{B}}
\newcommand{\vt}{\vartheta}
\newcommand{\Th}{\Theta}
\newcommand{\cm}{C_{b}(\cX, \cF_{0-})}
\newcommand{\bm}{B_{b}(\cX, \cF_{0-})}
\newcommand{\chat}{\hat{C}_b(\cX^-)}
\newcommand{\cXxt}{C_{b,\mathrm{ext}}(\cX^-)}
\newcommand{\bb}[1]{B_{b}(\mathscr{X}, \cF(#1))}
\newcommand{\cb}[1]{C_{b}(\mathscr{X}, \cF(#1))}
\newcommand{\Ch}{\mathscr{C}_h}
\newcommand{\Dh}{\mathscr{D}_h}
\newcommand{\Lf}{L_{\mathsf{full}}}
\DeclareMathOperator{\tr}{tr}
\begin{document}
\title{Evolutionary semigroups on path spaces}
\author{Robert Denk}
\email{robert.denk@uni-konstanz.de}
\author{Markus Kunze}
\email{markus.kunze@uni-konstanz.de}
\author{Michael Kupper}
\email{kupper@uni-konstanz.de}
\address{Universit\"at Konstanz, Fachbereich Mathematik und Statistik, 78357 Konstanz, Germany}

\begin{abstract}
    We introduce the concept evolutionary semigroups on  path spaces,
    generalizing the notion of transition semigroups to possibly
    non-Markovian stochastic processes. 
    We study the basic properties 
    of evolutionary semigroups and, in particular, prove that they
    always arise as the composition of the shift semigroup and 
    a single operator called the expectation operator of the
    semigroup. We also prove that the transition semigroup of
    a Markov process can always be extended to an evolutionary 
    semigroup on the path space whenever the Markov process 
    can be realized with the appropriate path regularity.
    As first examples of evolutionary semigroups associated to
    non-Markovian processes, we discuss deterministic evolution
    equations and stochastic delay equations.
\end{abstract}

\subjclass[2020]{47D03, 46G12, 60J35}
\keywords{Transition semigroup, non-Markovian stochastic process, Koopman semigroup, (stochastic) delay equation.}
\date{\today}

\maketitle

\section{Introduction}

An important object in the study of a Markov process (say with a Polish state space $X$) is its transition semigroup. 
Indeed, encoded in the transition semigroup are the transition probabilities of the process and, together with the initial distribution, these determine the finite
dimensional marginals and thus the distribution of the process as a random variable with values in the space of
all $X$-valued functions on $[0, \infty)$. In particular, in view of Kolmogorov's extension theorem, we can construct a Markov process with 
a prescribed transition semigroup. Depending on the Markov process in question, it is sometimes possible to replace the space of all
$X$-valued functions on $[0,\infty)$ with a space of more regular functions. The most important examples are the space
$C([0,\infty); X)$ of all continuous $X$-valued functions and $D([0,\infty); X)$, the space of all c\`adl\`ag $X$-valued functions.
But in all cases we can say that all information about the stochastic process is encoded in the transition semigroup and vice versa. 

In this article, we introduce the concept of \emph{evolutionary semigroups on path spaces} to extend the notion of  transition semigroups
to possibly non-Markovian stochastic processes. Our main guidelines in establishing this concept are two statements which may be 
considered  `mathematical folk wisdom' -- not proven facts but rather an intuitive understanding on how a 
sensible theory should look like. These are as follows:\smallskip

\emph{If we enlarge the state space appropriately, every stochastic process becomes Markovian}. This suggests that we should incorporate
enough `history' of the stochastic process into the state space to make it Markovian. Consequently, as an `ultimate state space', we should
consider a space of $X$-valued functions on the interval $(-\infty, 0]$, where we consider the value at time $t=0$ as the present state
of the process and the value at time $t<0$ as the position of the process at the past time $t<0$. Thus, if we seek to describe
a stochastic process with continuous paths (and we will assume this throughout this introduction; however our general setup
also allows for different `path spaces', in particular for c\`adl\`ag paths), we should use $\cX^-=C((-\infty, 0];X))$ as  state space.
This strategy of `incorporating the past' into the state space is rather standard in the semigroup approach to (deterministic) delay
equations, see \cite{bp05} and the references therein, but it was also used for stochastic delay equations, see \cite{mohammed84}.
We should point out that often (in particular in the references just mentioned) 
only the history in a finite time horizon is considered, i.e.\ one uses $C([-h, 0]; X)$ instead of $C((-\infty, 0];X)$.

However, there is a fundamental difference between semigroup theory for deterministic delay equations and the theory
that we want to develop here. In the former, $X$ itself is a Banach space and one constructs a semigroup on the space $C([-h,0]; X)$ 
(or a similar space such as
$L^p([-h, 0];X)$) that describes the evolution of the solution of the
delay equation. 
On the other hand, in this article, $X$ is not a vector space in general and (similar to \cite{mohammed84}) we want to
use the `path space' $\cX^- = C((-\infty, 0]; X)$ as state space of a then Markovian process.
Thus, the transition semigroup acts on the space $C_b(\cX^-) = C_b\big(C((-\infty, 0];X))$. In a sense, the semigroup we seek to construct acts on
a function space that is `one level higher' than that considered in the classical theory. 
This is similar to considering the \emph{Koopman operator} (or, in the time continuous setting, \emph{Koopman semigroups}) in the study of dynamical systems, see \cite{efhr15}. Typically, the Koopman operator/semigroup acts either on an $L^p$-space or (closer to our situation) on the space
$C(K)$ of continuous functions on a compact space $K$.
Only recently, there was a generalization of this 
theory to the setting of completely regular spaces
in \cite{fk2020}.
\smallskip

As a second guideline we require that, \emph{as time passes, the past of the process is merely shifted in time}. While this
guideline does not need additional explanation, it raises an important question. What should
happen with the past once it is shifted into the future? In the semigroup approach to delay equations
mentioned above, this question is answered at an infinitesimal level. The generator of the shift semigroup
is (roughly speaking) the first derivative. To obtain a semigroup governing the evolution of a delay equation, one considers
a realization of the first derivative, defined on a subspace of $C^1([-h,0]; X)$, where  the delay equation itself enters the domain 
of the generator as a (Neumann-type) boundary condition at time $t=0$. In our situation, the shift semigroup 
is a rather delicate object (see Remark \ref{r.evaluation}) and this approach does not seem to work.

Therefore, in this article we follow a different approach. Setting $\cX = C(\R; X)$, we identify the space $C_b(\cX^-)$ with a subspace of
$C_b(\cX)$ as follows: Given $\x\in \cX$ and $\tilde{F}\in C_b(\cX^-)$, we can extend $\tilde{F}$ to a function $F \in C_b(\cX)$,
by setting $F(\x) = \tilde{F}(\x|_{(-\infty, 0]})$. Note that $F(\x) = F(\y)$ for any $\x, \y \in \cX$
with $\x(t) = \y(t)$ for all $t\leq 0$ and this property actually characterizes functions that appear as extension of functions in $C_b(\cX^-)$.
As it turns out (see Lemma \ref{l.measurability}) this property is equivalent to $F$ being measurable with respect to
the $\sigma$-algebra $\cF_0$ that is generated by the point evaluations $(\pi_t)_{t\leq 0}$. 
Similarly, functions $F\in C_b(\cX)$ whose
value only depends on $\x|_{(-\infty, t]}$ are exactly those that are measurable with respect to $\cF_t \coloneqq \sigma(\pi_s : s\leq t)$.
Let us write $C_b(\cX, \cF_t)$ for the space of bounded continuous functions $F : \cX \to \R$ that are $\cF_t$-measurable.
With this identification at hand, we can work throughout on the space $C_b(\cX)$ and, in particular, avoid the central question posed above.
If we shift a function $F\in C_b(\cX,\cF_0)$ (say by the time lapse $t>0$), we again obtain a function in $C_b(\cX)$, 
which is $\cF_t$-measurable rather than $\cF_0$-measurable.

After this preparation, we now 
define an \emph{evolutionary semigroup} as a semigroup $(\T(t))_{t\geq 0}$ on $C_b(\cX, \cF_0)$ such that for 
$0\leq s \leq t$ and $F\in C_b(\cX, \cF_{-t})$ it holds $\T(s)F = \Th_s F$, where $(\Th_t)_{t\geq 0}$ is the \emph{shift semigroup}
on $C_b(\cX)$, see Section \ref{sect.shiftsg}. 
This requirement is a rephrasement of our second guiding principle.
In our first main result, Theorem \ref{t.expectation_semigroup}, we prove that a semigroup on $C_b(\cX, \cF_0)$
is evolutionary if and only if it is given as $\T(t) = \E \Th_t$ for some bounded linear operator $\E$ on $C_b(\cX)$
that takes values in $C_b(\cX, \cF_0)$ and satisfies $\E F = F$ for all $F\in C_b(\cX, \cF_0)$. This operator $\E$ determines
the semigroup $(\T(t))_{t\geq 0}$ uniquely and is called the \emph{expectation operator} of the semigroup. Proposition 
\ref{p.expectation_operator} shows that $\E$ indeed has properties that are characteristic for expectations.
The expectation operator also connects (various notions of) the generator of $\T$ to that of the shift semigroup, see Propositions 
\ref{p.bpcorefora}, \ref{p.generator} and \ref{p.generatorgen}.
We point out that the expectation operator now settles the central question posed above: Once the shift operator $\Th_t$
transports information into the future (i.e.\ producing a function that is not $\cF_0$-measurable any more) the expectation operator
$\E$ transforms this into an $\cF_0$-measurable function (and thus into a function that we might view as a function on $\cX^-$).
\smallskip

Let us now discuss some examples.

\subsection*{Delay equations}

Our first example concerns deterministic delay equations. We fix $d\in \N$, $h>0$ and define
\[
\Ch \coloneqq C([-h,0]; \R^d).
\]
As usual, for a function $y \in C([-h, \infty); \R^d)$ and $t\geq 0$, we define $y_t \in \Ch$, the \emph{past at time $t$},
by setting $y_t(s) = y(t+s)$ for $s\in [-h, 0]$. 
Now, given a Lipschitz map $b: \R^d \to \R^d$ and an `initial history' $\xi \in \Ch$, we may consider the delay equation
\begin{equation}
    \label{eq.introdde}
    \left\{ 
    \begin{aligned}
        y'(t) & = b(y_t), \quad\mbox{for }t\ge 0,\\
        y_0 & = \xi.
    \end{aligned}
    \right.
\end{equation}
As $b$ is assumed to be Lipschitz continuous, an argument based on the Banach fixed point theorem shows that \eqref{eq.introdde}
has a unique solution $y^\xi$, see \cite[Theorem II.4.3.1]{bddm07}. In Section \ref{sub.deterministic}, we will see that, setting
$X= \R^d$ and $\cX = C(\R; \R^d)$ as above, we can associated an evolutionary semigroup $\T$ on $C_b(\cX, \cF_0)$ with such equations.
To describe the expectation operator, given $\x \in \cX$, we define the function $\x_- \in \cX^- = C((-\infty, 0]; \R^d)$ as the restriction
$\x_{(-\infty, 0]}$. Note that $\x_0 \in \Ch$ so that \eqref{eq.introdde} has a unique solution $y^{\x_0}$ for $\xi = \x_0$. We set
\[
\x_-\oplus_0 y^{\x_0}\coloneqq \begin{cases}
    \x(t), & \mbox{if } t \leq 0,\\
    y^{\x_0}, & \mbox{if } t> 0.
\end{cases}
\]
Then $\x_-\oplus_0y^{\x_0}\in \cX$ and we define for $F\in C_b(\cX)$,
\begin{equation} 
\label{eq.evolutiondde}
[\E F](\x) \coloneqq F(\x_-\oplus_0y^{\x_0}).
\end{equation}
We will see in Section \ref{sub.deterministic}, that this is indeed an expectation operator that induces an evolutionary semigroup 
by setting $\T(t) \coloneqq \E\Th_t$.

We point out that there are several semigroup approaches to delay equations to be found in the literature, see, for example,
\cite{bp05}, \cite[Section VI.6]{en00} or \cite[Section II.4]{bddm07}. However, to the best of our knowledge, all these approaches need the
additional assumption that the map $b$ is linear. To obtain an evolutionary semigroup $\T$ induced by $\E$, we do not need $b$ to be linear.
The semigroup $\T$ can be seen as a Koopman approach to delay equations or as an extension of the results from \cite{fk2020} to equations that depend on the past.

\subsection*{Stochastic differential equations}

Next, let $(B(t))_{t\geq 0}$ be an $m$-dimensional Brownian motion, defined on a probability space $(\Omega, \Sigma, \bP)$. Given
functions $b: \R^d \to \R^d$ and $\sigma: \R^d \to \R^{m\times d}$, we can consider the stochastic differential equation 
\begin{equation}
    \label{eq.introsde}
    \left\{
    \begin{aligned}
        dY(t) & = b(Y(t))\, dt + \sigma (Y(t))dB(t), \quad \mbox{ for } t\geq 0,\\
        Y(0) & = y.
    \end{aligned}
    \right.
\end{equation}
Under suitable assumptions on the coefficients $b$ and $\sigma$, it is well known that for every initial datum $y\in \R^d$, Equation \eqref{eq.introsde} has a unique solution $(Y^y(t))_{\geq 0}$. It turns out that also to this equation we can associate an evolutionary semigroup
on the path space $\cX = C(\R, \R^d)$. Motiaved by \eqref{eq.evolutiondde}, we set for $F\in C_b(\cX)$,
\[
[\E F](\x) \coloneqq \expect \big[F(\x_-\oplus_0Y^{\x(0)})\big],
\]
where $\expect$ refers to expectation with respect to the measure $\bP$. 
Using the fact that the solutions to \eqref{eq.introsde} are Markov processes,
we prove in Section \ref{sub.markov} that $\E$ is an expectation operator that induces an evolutionary semigroup. As a matter of fact, this is true not only 
for solutions of stochastic differential equations, but for all Markov processes that can be realized with continuous paths. Switching to the path space of c\`adl\`ag paths, the same result also applies to Markovian processes with c\`adl\`ag paths. In Theorem \ref{t.diffusion}, 
we give a complete
characterization of evolutionary semigroups that arise in this way.

\subsection*{Stochastic delay equations}

For our last example, consider stochastic delay equations of the form
\begin{equation}
    \label{eq.introsdde}
    \left\{
    \begin{aligned}
    dY(t) & = b(Y_t)dt + \sigma(Y_t)dB(t),\quad \mbox{ for } t\geq 0,\\
    Y_0 & = \xi.
    \end{aligned}
    \right.
\end{equation}
Here, once again, $(B(t))_{t\geq 0}$ is an $m$-dimensional Brownian motion on the probability space $(\Omega, \Sigma, \bP)$. However, 
compared to \eqref{eq.introsde}, the domain of the maps $b$ and $\sigma$ changes from $\R^d$ to the function space $\Ch$. Nevertheless,
under Lipschitz assumptions on the coefficients, it is known that for any $\xi\in \Ch$, Equation \eqref{eq.introsdde} has a unique solution
$Y^\xi$ and we may define
\[
[\E F](\x) \coloneqq \expect \big[F( \x_-\oplus_0Y^{\x_0})\big].
\]
We point out that the solutions $Y^\xi$ are no longer Markov processes on $\R^d$, but they turn out to be Markov processes on the space
$\Ch$. Using this fact, we prove in Section \ref{sec:SDE} that $\E$ is indeed the expectation operator of an evolutionary semigroup on
the space $\cX = C(\R, \R^d)$. In Section \ref{sub.flow}, we discuss some related results in the c\`adl\`ag setting.

\subsection*{Organization of this article}
In Section \ref{sect.pathspace}, we introduce the abstract concept of a `path space' 
and establish some preliminary measurability results. Our main examples of path spaces are
$C(\R; X)$ and $D(\R; X)$, as we will prove in Appendix \ref{ap.pathspace}. In Section \ref{sect.shiftsg}, we introduce the shift semigroup and present some results concerning its full generator and its $C_b$-generator.
We should point out that the semigroups considered in this article are not strongly continuous, whence a different semigroup
theory is used throughout. 
The main definitions and results of this theory are collected in Appendix \ref{sect.appendix}, where also references
to the literature may be found. In Section \ref{sect.measurable}, we take up our main line of study and introduce the central concepts
of `evolutionary semigroup' and `expectation operator'. In Section \ref{sect.measurable}
we do not work in $C_b(\cX)$ but rather in $B_b(\cX)$, the space of all bounded measurable functions on the path space $\cX$.
Continuity properties of evolutionary semigroups are discussed in Section \ref{sect.continuous}. The choice of
the path space plays an important role here. 
The case where $\cX = C(\R; X)$ is the easiest one and all of the expected
results hold true. The general case is much more involved and we actually impose additional assumptions on the `path space' 
(which, however, are satisfied in the case of c\`adl\`ag paths).
The basic problem in the case of c\`adl\`ag paths is that point evaluations are not continuous functions. 
Nevertheless, most results from the continuous case generalize. In the concluding Section \ref{sect.examples}, we present our examples.

\subsection*{Acknowledgement}
We are grateful to Rainer Nagel for helpful comments.

\section{Path spaces}
\label{sect.pathspace}

\begin{defn}
\label{def.pathspace}
Let $(X,d)$ be a complete separable metric space.
A \emph{path space (with state space $X$)} is a pair $((\cX, \md), \tau)$ consisting of a complete separable metric space $(\cX, \md)$ of right-continuous functions $\x : \R \to X$ and a map
$\tau: \cX \to \cX$, such that the following conditions are satisfied:
\begin{itemize}
\item[(P1)] The \emph{evaluation maps} $\pi_t : \cX \to X$, $\pi_t(\x) = \x(t)$ are Borel measurable and the Borel $\sigma$-algebra
$\bo (\cX)$ is generated by these maps, i.e.\ $\bo (\cX) = \sigma (\pi_t : t\in \R)$. Every $\x\in \cX$ is continuous at almost every
$t\in \R$. Moroever, if $\x_n\to \x$ and $\x$ is continuous at $t$, then $\pi_t(\x_n)\to \pi_t(\x)$.

We define some additional $\sigma$-algebras. Given an interval $I\subset \R$, we set
\[
\cF(I) \coloneqq  \sigma (\pi_t : t\in I).
\]
Of particular importance is the case where $I= (-\infty, t)$ or $I= (-\infty, t]$ for some $t\in \R$.
We define $\cF_{t-} \coloneqq \cF((-\infty,t))$ and $\cF_t\coloneqq \cF((-\infty,t])$.
\item[(P2)] The \emph{stopping map} $\tau : \cX \to \cX$ is $\cF_{0-}$-measurable and $\tau^{-1}(A) = A$ for every $A\in \cF_{0-}$. 
Moreover, $\tau(\x)$ is continuous at $0$ for every $\x\in \cX$ and 
the image $\cX^-\coloneqq \tau(\cX)$ is Polish, i.e.\ there exists a complete metric $\md^-$ on $\cX^-$ that induces the same 
topology as $\md$ on $\cX^-$.
\item[(P3)] For every $t\in \R$, the \emph{shift} $\vt_t$, defined by $[\vt_t\x](s) \coloneqq \x(t+s)$,  maps $\cX$ to itself and the 
map $(t,\x) \mapsto \vt_t\x$ is continuous from $\R\times\cX\to \cX$.
\end{itemize}
\end{defn}

Throughout, elements of the path space $\cX$ will be denoted by lower case blackboard letters $\x, \y, \z$ whereas elements of the state space
$X$ will be denoted by lower case roman letters $x, y, z$. Likewise, scalar-valued functions on $\cX$ will be denoted by upper case roman letters 
$F, G, H$, whereas scalar-valued functions on $X$ will be denoted by lower case roman letters $f, g, h$.

\begin{example}
Our basic examples of path spaces are $\cXc\coloneqq C(\R; X)$, the space of all \emph{continuous paths}
from $\R$ to $X$, endowed with the metric that topologizes uniform convergence on compact subsets of $\R$
and $\cXd\coloneqq D(\R; X)$, the space of all \emph{c\`adl\`ag paths} from $\R$ to $X$, endowed with 
Skorohod's $J_1$-metric. We point out that the stopping map is defined slightly different in these two examples:
for $\cXc$, we define $[\tau_\mathrm{C}(\x)](t) = \x(0)$ for $t\geq 0$, whereas for c\`adl\`ag paths
$\cXd$ we put $[\tau_{\mathrm{D}}(\x)](t) = \x(0-)$ for $t\geq 0$. 

As our main results only use the abstract assumptions of Definition \ref{def.pathspace}, we postpone
the details and proofs to Appendix \ref{ap.pathspace}.
\end{example}

\begin{example}
    \label{ex.limitpathspace}
    In the context of continuous paths, we can also consider the spaces
    \[
     \cX_{\mathrm{C}, \ell} \coloneqq \{ \x \in C(\R; X) : \lim_{t\to-\infty}\x(t) \mbox{ exists}\} = C([-\infty, \infty); X)
    \]
    and, if $X$ is additionally a vector space, 
   \[
    \cX_{\mathrm{C}, 0} \coloneqq \{\x \in C(\R; X) : \lim_{x\to -\infty}\x(t) = 0\}
    \]
    Both are path spaces with respect to the metric
    \[
    \md(\x, \y) = \sum_{n=1}^\infty 2^{-n} \big[ 1\wedge \sup_{t \in (-\infty, n]} d(\x (t), \y(t))\big].
    \]
    This choice of path spaces is helpful, for example, in the context of delay equations with infinite delay, see Equation (3.2) on page 240
    of \cite{bddm07}.
\end{example}

\begin{example}
    Another possible choice for a path space is motivated by adding a cemetary state to the space $X$. This is a standard procedure
    to turn a non-honest Markov process into an honest one. This construction can be replicated on the level of the  path space. 
    Indeed, given a locally compact $X$, we denote its one-point compactification of $X$ by $X^\dagger$, 
    where the added point $\dagger$ is used as a cemetery state.
    As $X$ is a complete separable metric space, so is $X^\dagger$. Now set
    \[
    \cX_{\mathrm{C}, \dagger} \coloneqq \{ \x \in C(\R; X^\dagger) : \x(t_0) = \dagger \mbox{ implies } \x(t) = \dagger \mbox{ for all } t\geq t_0\}.
    \]
\end{example}

We also introduce the following notation. Given an interval $I\subset \R$, we put
\[
\bb{I} \coloneqq \{ F \in B_b(\cX) : F \mbox{ is } \cF(I)\mbox{-measurable}\}
\]
and $\cb{I} \coloneqq \bb{I}\cap C_b(\cX)$.  We often write $B_b(\cX, \cF_{t-})$ and $C_b(\cX, \cF_{t-})$ instead
of $\bb{(-\infty, t)}$ and $\cb{(-\infty, t)}$. If $F: \cX \to \R$ is $\cF_{t-}$-measurable, we will say that it is
\emph{determined before $t$}, if $F$ is $\cF([t,\infty))$-measurable, we will say that it is
\emph{determined after $t$}.
Thus, $\bb{[t_1,t_2)}$ refers to the space of bounded, measurable functions that are determined before $t_2$ and after $t_1$.
Endowed with the supremum norm $\|\cdot\|_\infty$, all of these spaces are Banach spaces.

Let us note some easy consequences concerning measurability.

\begin{lem}\label{l.measurability}
Let $((\cX, \md), \tau)$ be a path space and $S$ be a Polish space with Borel $\sigma$-algebra $\bo(S)$.
\begin{enumerate}
[\upshape (a)]
\item For every $t \in \R$ and interval $I\subset \R$, the map $\vt_t$ is 
$\cF(I+t)/\cF(I)$-measurable.
\item For $t\in \R$, the map $\tau^t \coloneqq \vt_{-t}\circ\tau\circ \vt_t$ is $\cF_{t-}$-measurable and $(\tau^t)^{-1}(A) = A$
for all $A\in \cF_{t-}$. 
\item A measurable function  
$\Phi: \cX \to S$ is $\cF_{t-}$-measurable if and only if $\Phi= \Phi\circ \tau^t$.
\item It holds $\tau^2=\tau$. Moreover, $\x \in \cX^-$ if and only if $\x=\tau(\x)$.
\item It holds $\x(t) = [\tau(\x)](t)$ for all $t<0$ and $\x\in \cX$.
\end{enumerate}
\end{lem}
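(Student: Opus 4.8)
The plan is to prove the five assertions (a)--(e) in order, since later parts rely on earlier ones. For (a), I would unwind the definition: $\cF(I)$ is generated by $\{\pi_s : s \in I\}$, so by the standard criterion for measurability into a $\sigma$-algebra generated by a family of maps, it suffices to check that $\pi_s \circ \vt_t$ is $\cF(I+t)$-measurable for every $s \in I$. But $\pi_s \circ \vt_t = \pi_{s+t}$ by the definition of the shift in (P3), and $s+t \in I+t$, so $\pi_{s+t}$ is $\cF(I+t)$-measurable by definition. This is routine.

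For (b), I would first note that $\tau^t = \vt_{-t}\circ\tau\circ\vt_t$ is a composition of measurable maps, so it is measurable; to get $\cF_{t-}$-measurability I apply (a) twice together with (P2): $\vt_t$ is $\cF_{t-}/\cF_{0-}$-measurable by (a) (taking $I = (-\infty,0)$, so $I+t = (-\infty,t)$), $\tau$ is $\cF_{0-}$-measurable by (P2), and then $\vt_{-t}$ maps into $\cF(I-t) = \cF((-\infty,0)-t)$; I need to be slightly careful about which target $\sigma$-algebra to use, but tracking the intervals through (a) gives the claim. For the invariance $(\tau^t)^{-1}(A) = A$ for $A \in \cF_{t-}$, I would write $A = \vt_t^{-1}(B)$ for some $B \in \cF_{0-}$ (possible since $\vt_t$ induces a bijection between $\cF_{t-}$ and $\cF_{0-}$ at the level of preimages, or more carefully, every $A \in \cF_{t-}$ of the form $\pi_{s}^{-1}(E)$ with $s < t$ equals $\vt_t^{-1}(\pi_{s-t}^{-1}(E))$, and one extends to all of $\cF_{t-}$ by a $\pi$--$\lambda$ / generated-$\sigma$-algebra argument), and then compute $(\tau^t)^{-1}(A) = (\vt_t^{-1}\tau^{-1}\vt_{t})^{-1}\cdots$ — actually $(\vt_{-t}\circ\tau\circ\vt_t)^{-1}(A) = \vt_t^{-1}(\tau^{-1}(\vt_{-t}^{-1}(A)))$, and using $\vt_{-t}^{-1}(A) = B \in \cF_{0-}$ together with $\tau^{-1}(B) = B$ from (P2), this collapses to $\vt_t^{-1}(B) = A$. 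The main subtlety here is the bookkeeping of intervals under shifts and the reduction of set-identities to generators; I expect this to be the most delicate part of the whole lemma.

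For (c), the ``if'' direction is immediate from (b): if $\Phi = \Phi\circ\tau^t$ then $\Phi$ is $\cF_{t-}$-measurable because $\tau^t$ is. For ``only if'', I would use that $\Phi$ being $\cF_{t-}$-measurable means (by the factorization lemma for maps into Polish spaces, or directly when $S = \R$ by the monotone class theorem) that $\Phi$ is a pointwise limit of functions built from $\pi_s$, $s < t$; since each such generator satisfies $\pi_s = \pi_s \circ \tau^t$ (this is exactly the content of $(\tau^t)^{-1}(A) = A$ on $\cF_{t-}$, applied to $A = \pi_s^{-1}(E)$), the identity $\Phi = \Phi \circ \tau^t$ propagates. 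Alternatively and more cleanly: $(\tau^t)^{-1}(A) = A$ for all $A \in \cF_{t-}$ says precisely that $\tau^t$ acts as the identity on the sub-$\sigma$-algebra $\cF_{t-}$, and a measurable map into a Polish space that is $\cF_{t-}$-measurable factors through the ``quotient'' determined by $\cF_{t-}$, hence is unchanged by precomposition with any map fixing $\cF_{t-}$.

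For (d), applying (b) with $t = 0$ gives $\tau^0 = \tau$ and $\tau^{-1}(A) = A$ for all $A \in \cF_{0-}$; since $\tau$ is itself $\cF_{0-}$-measurable, part (c) (with $t=0$) yields $\tau = \tau\circ\tau = \tau^2$. For the characterization of $\cX^- = \tau(\cX)$: if $\x = \tau(\x)$ then trivially $\x \in \tau(\cX)$; conversely if $\x = \tau(\y)$ for some $\y$, then $\tau(\x) = \tau(\tau(\y)) = \tau^2(\y) = \tau(\y) = \x$. Finally, for (e), I would argue that by (d), $\tau = \tau^2$ and $\tau$ is $\cF_{0-}$-measurable, so for $t < 0$ the map $\pi_t \circ \tau$ is $\cF_{0-}$-measurable; invoking the invariance $(\tau)^{-1}(A) = A$ for $A \in \cF_{0-} = \sigma(\pi_s : s < 0)$ applied to $A = \pi_t^{-1}(E)$ gives $\pi_t^{-1}(E) = (\pi_t\circ\tau)^{-1}(E)$ for every Borel $E \subset X$, i.e. $\pi_t = \pi_t\circ\tau$, which is exactly $\x(t) = [\tau(\x)](t)$ for all $t < 0$. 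In short, (e) is a direct specialization of the identity-on-$\cF_{0-}$ property to a single evaluation map. The genuinely load-bearing step is part (b)'s set-theoretic identity $(\tau^t)^{-1}(A)=A$, and once that is in hand (c)--(e) follow quickly.
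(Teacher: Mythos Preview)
Your proposal is correct and follows essentially the same route as the paper for parts (a), (b), and (d). Two minor remarks are worth making. For (c), your first suggestion (factorization lemma / monotone class) is heavier than necessary; the paper's argument is the concrete version of your ``alternatively and more cleanly'' sketch: if $\Phi$ is $\cF_{t-}$-measurable then for every Borel $A\subset S$ one has $\Phi^{-1}(A)\in\cF_{t-}$, hence $\Phi^{-1}(A)=(\tau^t)^{-1}(\Phi^{-1}(A))=(\Phi\circ\tau^t)^{-1}(A)$, and since $S$ is Polish this forces $\Phi=\Phi\circ\tau^t$. For (e), your approach differs slightly from the paper's and is arguably more direct: you apply the preimage identity from (b) (equivalently, (c) with $\Phi=\pi_t$) to each evaluation $\pi_t$ with $t<0$, whereas the paper defines the agreement set $A=\{\x:\x(t)=[\tau(\x)](t)\text{ for all }t<0\}$, observes $A\in\cF_{0-}$ so that $A=\tau^{-1}(A)$, and then notes that $\tau^{-1}(A)=\cX$ by idempotency of $\tau$. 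Both arguments are short; yours avoids introducing the auxiliary set.
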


\begin{proof}
(a). This follows directly from the identity $\pi_r\circ\vt_t = \pi_{r+t}$ for all $r\in \R$.\smallskip

(b). If $A\in \bo (\cX)$, then $B \coloneqq \tau^{-1}((\vt_{-t})^{-1}(A)) \in \cF_{0-}$ by (P2), so that $(\vt_t)^{-1}(B) \in \cF_{t-}$ by (a), proving
the $\cF_{t-}$-measurability of $\tau^t$. If $A\in \cF_{t-}$, then $(\vt_{-t})^{-1}(A) \in \cF_{0-}$ by (a) so that $B= (\vt_{-t})^{-1}(A)$ by (P2).
At this point, (a) yields $A = (\vt_t)^{-1}(B) = (\tau^t)^{-1}(A)$.\smallskip

(c). If $\Phi= \Phi\circ \tau^t$, then $\Phi$ is $\cF_{t-}$-measurable as $\tau^t$ is. Conversely, if $\Phi$ is $\cF_{t-}$-measurable, then for
$A\in \Sigma$, $\Phi^{-1}(A) \in \cF_{t-}$ so that $\Phi^{-1}(A) = (\tau^t)^{-1}(\Phi^{-1}(A))$ by (b). As $A$ was arbitrary, it follows that
$\Phi= \Phi\circ\tau^t$.\smallskip

(d). Applying (c) to $\Phi=\tau$ yields $\tau^2=\tau$. If $\x= \tau(\x)$ then $\x\in \cX^-$. Conversely, assume that
$\x= \tau(\y)\in \cX^-$. Then $\tau(\x) = \tau^2(\y) = \tau(\y) = \x$.\smallskip

(e). Let $A \coloneqq \{ \x\in \cX : \x(t) = [\tau(\x)](t) \mbox{ for all } t< 0\}$. Then $A\in \cF_{0-}$ and thus $A=\tau^{-1}(A)$
by (P2). On the other hand, $\tau^{-1}(A) = \{ \x : [\tau(\x)](t) = [\tau^2(\x)](t) \mbox{ for all } t< 0\} = \cX$ by (d).
\end{proof}

As a consequence of Lemma \ref{l.measurability}(c), an $\cF_{0-}$-measurable function $F\in B_b(\cX)$
is uniquely determined by its values on $\cX^-$. We may thus use the map $\tau$ to identify
functions on $\cX^-$ with  functions on $\cX$ by means of the \emph{extension map} $\Phi\mapsto \hat \Phi$, defined by $\hat \Phi = 
\Phi\circ\tau$.
Concerning measurability, we have the following result.

\begin{lem}\label{l.eminusf0}
Let $((\cX, \md), \tau)$ be a path space, $S$ be a Polish space with Borel $\sigma$-algebra $\bo(S)$ and $\Phi:\cX \to S$.
\begin{enumerate}
[\upshape (a)]
\item The Borel $\sigma$-algebra $\bo(\cX^-)$ is the trace $\sigma$-algebra of $\cF_{0-}$ on $\cX^-$.
\item Then the function $\Phi$ is $\bo(\cX^-)$-measurable 
if and only if $\hat{\Phi}$ is $\cF_{0-}$-measurable.
\end{enumerate}
\end{lem}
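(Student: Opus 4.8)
The plan is to prove both parts essentially in parallel, since (a) is the special case $S = \cX^-$, $\Phi = \mathrm{id}_{\cX^-}$ of the mechanism underlying (b), and then use (a) to streamline (b). For (a), I would first observe that by (P2) the map $\tau : \cX \to \cX^-$ is $\cF_{0-}/\bo(\cX^-)$-measurable (it is $\cF_{0-}$-measurable as a map into $\cX$ by (P2), and it takes values in $\cX^-$), where I equip $\cX^-$ with its own Borel $\sigma$-algebra $\bo(\cX^-)$ coming from the Polish topology induced by $\md$ (or equivalently $\md^-$). This already gives one inclusion: pulling back a Borel set of $\cX^-$ under $\tau$ lands in $\cF_{0-}$, and since $\tau$ restricted to $\cX^-$ is the identity by Lemma \ref{l.measurability}(d), the trace of that preimage on $\cX^-$ is the original Borel set; hence every Borel set of $\cX^-$ is a trace of an $\cF_{0-}$-set. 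For the reverse inclusion I need that the trace on $\cX^-$ of each generator $\pi_t^{-1}(B)$, $t < 0$, $B \in \bo(X)$, is Borel in $\cX^-$; but $\pi_t|_{\cX^-}$ is continuous (point evaluation is continuous on $\cX^-$ because, by (P2), paths in $\cX^-$ are constant — or appropriately frozen — on $[0,\infty)$ and, more to the point, convergence in $\md$ forces pointwise convergence at continuity points, and one checks continuity points are dense; for $t<0$ this works directly via the last clause of (P1) combined with the structure of $\cX^-$), so the trace is open-generated, hence Borel. Since $\cF_{0-}$ is generated by these $\pi_t^{-1}(B)$ with $t < 0$, and the trace operation commutes with generating $\sigma$-algebras, the trace $\sigma$-algebra of $\cF_{0-}$ on $\cX^-$ is contained in $\bo(\cX^-)$. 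Combining the two inclusions proves (a).

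For (b), suppose first that $\Phi : \cX^- \to S$ is $\bo(\cX^-)$-measurable. Then $\hat\Phi = \Phi \circ \tau$ is the composition of the $\cF_{0-}/\bo(\cX^-)$-measurable map $\tau$ (established above) with the $\bo(\cX^-)/\bo(S)$-measurable map $\Phi$, so $\hat\Phi$ is $\cF_{0-}$-measurable. Conversely, suppose $\hat\Phi = \Phi \circ \tau$ is $\cF_{0-}$-measurable. For $B \in \bo(S)$ we have $\hat\Phi^{-1}(B) = \tau^{-1}(\Phi^{-1}(B)) \in \cF_{0-}$. Intersecting with $\cX^-$ and using Lemma \ref{l.measurability}(d) (so that $\tau$ acts as the identity on $\cX^-$) gives $\hat\Phi^{-1}(B) \cap \cX^- = \Phi^{-1}(B)$, which is therefore in the trace of $\cF_{0-}$ on $\cX^-$, i.e.\ in $\bo(\cX^-)$ by part (a). Since $B$ was arbitrary, $\Phi$ is $\bo(\cX^-)$-measurable.

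The main obstacle I anticipate is the reverse inclusion in (a): showing that the trace of each generator $\pi_t^{-1}(B)$ on $\cX^-$ is Borel in the intrinsic topology of $\cX^-$, which amounts to verifying that $\pi_t$ restricted to $\cX^-$ is Borel (ideally continuous) for $t < 0$. This is exactly the kind of point-evaluation subtlety the introduction flags (e.g.\ Remark \ref{r.evaluation} and the discussion of c\`adl\`ag paths). I would handle it using the last sentence of (P1) — if $\x_n \to \x$ and $\x$ is continuous at $t$ then $\pi_t(\x_n) \to \pi_t(\x)$ — together with the fact that every $\x$ is continuous at a.e.\ $t$, so that $\pi_t|_{\cX^-}$ is continuous off a set of times of measure zero; measurability of $\pi_t|_{\cX^-}$ for the remaining $t$ then follows by an approximation argument (e.g.\ $\pi_t = \lim_{s \downarrow t} \pi_s$ using right-continuity of paths, which holds by definition of path space). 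A secondary routine point is the standard lemma that forming traces commutes with generating $\sigma$-algebras, i.e.\ $\sigma(\mathcal{G})|_Y = \sigma(\mathcal{G}|_Y)$ for a subset $Y$; I would invoke this without proof.
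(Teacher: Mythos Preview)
Your argument for (b), and for the inclusion $\bo(\cX^-)\subset\{A\cap\cX^- : A\in\cF_{0-}\}$ in (a), is correct and matches the paper's: both use that $\tau:\cX\to\cX^-$ is $\cF_{0-}$-measurable and that $\tau|_{\cX^-}=\mathrm{id}$.

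The reverse inclusion in (a) is where your proposal has a genuine gap. You try to show that $\pi_t|_{\cX^-}$ is $\bo(\cX^-)$-Borel by first arguing it is continuous for ``most'' $t$ and then approximating via right-continuity. But the sentence ``$\pi_t|_{\cX^-}$ is continuous off a set of times of measure zero'' conflates two different statements. Axiom (P1) says that for each fixed $\x$ the path is continuous at a.e.\ $t$, i.e.\ $\pi_t$ is continuous \emph{at the point $\x$} for a.e.\ $t$. It does \emph{not} say that for a.e.\ $t$ the map $\pi_t|_{\cX^-}$ is continuous on all of $\cX^-$; in the c\`adl\`ag case, for every fixed $t<0$ there are paths in $\cX^-$ jumping at $t$, so $\pi_t|_{\cX^-}$ fails to be continuous there. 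Your approximation $\pi_t=\lim_{s\downarrow t}\pi_s$ is then circular: there is no $s$ for which you have already established Borel measurability of $\pi_s|_{\cX^-}$ by continuity.

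The paper avoids this entirely with the observation that the trace lemma you mention at the end, applied to the \emph{open} sets of $\cX$, gives $\bo(\cX^-)=\{B\cap\cX^-:B\in\bo(\cX)\}$ directly (this is \cite[Lemma 6.2.4]{bogachev2007} in the paper). Since trivially $\cF_{0-}\subset\bo(\cX)$, the trace of $\cF_{0-}$ on $\cX^-$ lands in $\bo(\cX^-)$ with no further work. Equivalently, in the language you set up: the inclusion $\iota:\cX^-\hookrightarrow\cX$ is continuous, hence Borel, so $\pi_t|_{\cX^-}=\pi_t\circ\iota$ is Borel as a composition of Borel maps---no continuity of $\pi_t$ is needed.
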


\begin{proof}
(a). By \cite[Lemma 6.2.4]{bogachev2007}, $\bo(\cX^-)$ is the trace of $\bo(\cX)$ on $\cX^-$, i.e.\ $A\in \bo(\cX^-)$
if and only if there exits $B\in \bo(\cX)$ with $A= B\cap \cX^-$. For $A$ of this form, it 
follows from Lemma \ref{l.measurability}(d) that $\tau(A) = A$ which, by (P2), implies $A\in \cF_{0-}$. This shows that
$\bo(\cX^-)\subset \cF_{0-}$ and thus $\bo(\cX^-)$ is contained in the trace of $\cF_{0-}$ on $\cX^-$. The converse inclusion follows
from considering $B\in \cF_{0-}$.

(b). Assume that $\Phi$ is $\bo(\cX^-)$-measurable. By (a), for every $A\in \Sigma$ it is $B\coloneqq \Phi^{-1}(A)\in \bo(\cX^-)\subset
\cF_{0-}$. By (P2), $\hat{\Phi}^{-1}(A) = \tau^{-1}(\Phi^{-1}(A)) =\tau^{-1}(B) = B$, proving that $\hat{\Phi}$ is 
$\cF_{0-}$-measurable.

Conversely assume that $\hat{\Phi}$ is $\cF_{0-}$-measurable. Then for every $A\in \Sigma$ we have
$B\coloneqq \hat{\Phi}^{-1}(A)\in \cF_{0-}$, whence $\tau^{-1}(B) = B$. This implies $\tau(B) = B$ and thus
$B\subset \cX^-$ in view of Lemma \ref{l.measurability}(d). In particular, $B=B\cap\cX^-\in \bo(\cX^-)$.
It follows that $\tau^{-1}(\Phi^{-1}(A)) = \tau^{-1}(B)$ and thus $\Phi^{-1}(A) = B$ proving that $\Phi$ is $\bo(\cX^-)$-measurable.
\end{proof}

\section{The shift semigroup}\label{sect.shiftsg}

Throughout, $((\cX, \md),\tau)$ is a path space. We next introduce the \emph{shift group} $(\Th_t)_{t\in \R} \subset \cL(B_b(\cX), \sigma)$ by setting
\[
(\Th_t F)(\x) \coloneqq F(\vt_t\x)
\]
for $t\in \R$ and $F\in B_b(\cX)$. Here, $\cL(B_b(\cX), \sigma)$ refers to the space of bounded kernel operators, see Section \ref{s.a1} and, 
in particular, Lemma \ref{l.kernelop}.

Lemma \ref{l.measurability}(a) immediately yields

\begin{cor}\label{c.measurability}
Let $I\subset \R$ be an interval and $t \in \R$. If $F\in \bb{I}$, then $\Th_tF \in \bb{I+t}$.
\end{cor}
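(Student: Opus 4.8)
The plan is to observe that the statement is an immediate consequence of Lemma \ref{l.measurability}(a) together with the very definition of the shift operator. Writing $\Th_t F = F\circ\vt_t$, I would argue as follows. Since $I$ is an interval, so is $I+t$, and by Lemma \ref{l.measurability}(a) the map $\vt_t$ is $\cF(I+t)/\cF(I)$-measurable. On the other hand, $F\in\bb{I}$ means in particular that $F:\cX\to\R$ is $\cF(I)$-measurable. Composing a $\cF(I+t)/\cF(I)$-measurable map with a $(\cF(I), \bo(\R))$-measurable function, we obtain that $\Th_t F = F\circ\vt_t$ is $\cF(I+t)$-measurable.

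It then remains only to check that $\Th_t F$ genuinely belongs to $B_b(\cX)$, that is, that it is bounded and $\bo(\cX)$-measurable. Boundedness is clear, since $\|\Th_t F\|_\infty = \sup_{\x\in\cX}|F(\vt_t\x)| \leq \|F\|_\infty$, using that $\vt_t$ maps $\cX$ into itself by (P3). Moreover, $\bo(\cX)$-measurability follows from the $\cF(I+t)$-measurability just established, because $\cF(I+t) = \sigma(\pi_s : s\in I+t)\subseteq\sigma(\pi_s : s\in\R) = \bo(\cX)$ by (P1). Hence $\Th_t F\in\bb{I+t}$. There is no substantive obstacle here: the corollary is essentially a bookkeeping reformulation of the measurability of the shift map $\vt_t$ recorded in Lemma \ref{l.measurability}(a), and the verification that $\Th_t$ actually takes values in $B_b(\cX)$ is routine.
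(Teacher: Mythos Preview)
Your proof is correct and follows exactly the approach intended by the paper, which simply states that the corollary is an immediate consequence of Lemma~\ref{l.measurability}(a). You have spelled out the routine verification (composition of measurable maps, boundedness, and the inclusion $\cF(I+t)\subset\bo(\cX)$) that the paper leaves implicit.
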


Let us briefly recall some notions concerning semigroups.
A \emph{$C_b$-semigroup} is a family $\S =(\S_t)_{t\geq 0}$ of Markovian kernel operators on $\cX$ that leave the space
$C_b(\cX)$ invariant and such that the orbit of every bounded, continuous function is jointly continuous in $t$ and $\x$.
The \emph{$C_b$-generator} $\A$ of a $C_b$-semigroup $\S$ is defined as follows. We have $F \in D(\A)$ and $\A F = G$ if and only if
$\sup_{t\in (0,1)}t^{-1}\|\S(t)F- F\|_\infty <\infty$ and $t^{-1}(\S(t)F(\x) - F(\x)) \to G(\x)$ for all $\x \in \cX$ 
as $t\to 0$. See Theorem \ref{t.genchar} for equivalent descriptions of the $C_b$-generator.

\begin{prop}\label{p.derivation}
Given (P1) and (P2), condition (P3) is fulfilled if and only if 
$(\Th_t)_{t\in \R}$ is a $C_b$-group.
In this case, the $C_b$-generator of $(\Th_t)_{t\in R}$ is denoted by $\D$ and the following hold true:
\begin{enumerate}
[\upshape (a)]
\item $\D$ is a \emph{derivation}, i.e.\ for $F,G\in D(\D)$ also the product $FG\in D(\D)$ and
$\D(FG) = (\D F)G + F(\D G)$;
\item If $F\in D(\D)$ is determined before (after) time t, then so is $\D F$.
\end{enumerate}
\end{prop}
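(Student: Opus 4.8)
The plan is to establish the two equivalent directions separately, then prove (a) and (b) under the assumption that (P3) holds.

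\textbf{Equivalence of (P3) and the $C_b$-group property.} First I would show that (P3) implies $(\Th_t)_{t\in\R}$ is a $C_b$-group. The algebraic group law $\Th_{s+t}=\Th_s\Th_t$ is immediate from $\vt_{s+t}=\vt_s\circ\vt_t$, and $\Th_0=\mathrm{Id}$. By Corollary \ref{c.measurability} (or directly), each $\Th_t$ maps $B_b(\cX)$ to itself and is a Markovian kernel operator; it is the composition operator associated with the continuous map $\vt_t$, hence leaves $C_b(\cX)$ invariant. The only nontrivial point is joint continuity: for $F\in C_b(\cX)$, the orbit map $(t,\x)\mapsto (\Th_t F)(\x)=F(\vt_t\x)$ is the composition of $F$ with the map $(t,\x)\mapsto\vt_t\x$, which is continuous $\R\times\cX\to\cX$ precisely by (P3); composing with the continuous $F$ gives joint continuity. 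For the converse, if $(\Th_t)_{t\in\R}$ is a $C_b$-group then in particular the orbit $(t,\x)\mapsto F(\vt_t\x)$ is jointly continuous for every $F\in C_b(\cX)$; since by (P1) the Borel $\sigma$-algebra of $\cX$ is generated by the evaluations $\pi_t$ and $\cX$ is a metric space, continuity of $(t,\x)\mapsto\vt_t\x$ can be tested against a point-separating family of continuous functions. I would take $F$ of the form $F=g\circ\pi_r$ with $g\in C_b(X)$; joint continuity of $(t,\x)\mapsto g(\x(r+t))$ for all such $F$ and all $r$ forces $(t,\x)\mapsto\vt_t\x$ to be continuous into $\cX$, because a net $\vt_{t_n}\x_n$ converges in $\cX$ iff it converges after applying a suitable countable family of such maps (here one must be a little careful and use that $\cX$ is separable metric so that the topology is determined by countably many such functionals, or argue via sequential continuity). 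This last reduction is the main obstacle in the equivalence, and I expect the authors phrase it using a metric description of $\cX$ or the fact that $C_b(\cX)$ separates points and is closed under the relevant operations.

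\textbf{Part (a): $\D$ is a derivation.} This is a general fact about $C_b$-generators of $C_b$-groups (or semigroups) of algebra homomorphisms. Since each $\Th_t$ is a composition operator, it is multiplicative: $\Th_t(FG)=(\Th_t F)(\Th_t G)$. For $F,G\in D(\D)$ I would write the standard difference quotient
\[
\frac{\Th_t(FG)-FG}{t}=\Big(\frac{\Th_t F-F}{t}\Big)\Th_t G + F\,\frac{\Th_t G-G}{t},
\]
evaluate pointwise at $\x\in\cX$, and pass to the limit $t\to0$: the first term tends to $(\D F)(\x)\,G(\x)$ using that $\Th_t G\to G$ pointwise (from the orbit continuity) together with the boundedness of the difference quotients, and the second tends to $F(\x)(\D G)(\x)$. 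To conclude $FG\in D(\D)$ via the $C_b$-generator characterization I also need the uniform bound $\sup_{t\in(0,1)}t^{-1}\|\Th_t(FG)-FG\|_\infty<\infty$, which follows from the analogous bounds for $F$ and $G$ and the identity above since $\|\Th_t G\|_\infty=\|G\|_\infty$. The limit $G\in C_b(\cX)$, being a pointwise limit of continuous functions that is also bounded with controlled difference quotients, lies in $C_b(\cX)$ by the characterization of the $C_b$-generator (Theorem \ref{t.genchar}); hence $FG\in D(\D)$ and $\D(FG)=(\D F)G+F(\D G)$.

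\textbf{Part (b): $\D$ preserves being determined before/after $t$.} Suppose $F\in D(\D)$ is determined before $t$, i.e.\ $F\in\bb{(-\infty,t)}$ (the case "after $t$" is symmetric). By Corollary \ref{c.measurability}, $\Th_s F$ is $\cF((-\infty,t)+s)$-measurable; for $s\in(0,1)$ this is contained in $\cF((-\infty,t+1))$, but more to the point, for $s>0$ we have $(-\infty,t)+s=(-\infty,t+s)\supset(-\infty,t)$, so $\Th_s F$ need not itself be $\cF_{t-}$-measurable. Instead I would use Lemma \ref{l.measurability}(c): $F$ is $\cF_{t-}$-measurable iff $F=F\circ\tau^t$. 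The difference quotient $t^{-1}(\Th_s F-F)$ is then a linear combination of functions of the form $F\circ\vt_s\circ(\text{something})$; the cleanest route is to observe that for $s\le 0$, $\Th_s F$ is still determined before $t$ (since $(-\infty,t)+s\subset(-\infty,t)$), but we take $s\downarrow0$. So instead I would argue as follows: fix $\x$ and $\y$ with $\x(r)=\y(r)$ for all $r<t$; then by Lemma \ref{l.measurability}(c) applied with parameter $t$ we need $(\D F)(\x)=(\D F)(\y)$. Since $F$ is determined before $t$, $F(\vt_s\x)$ depends only on $(\vt_s\x)(r)=\x(s+r)$ for $r<t$, i.e.\ on $\x(u)$ for $u<t+s$; comparing with $\y$ this shows $F(\vt_s\x)$ and $F(\vt_s\y)$ agree only for $s\le0$, not $s>0$. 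Therefore the correct approach is: pass to the limit and use that $\cF_{t-}$-measurability is preserved under pointwise limits, after first noting that for the \emph{two-sided} group one can write $\D F=\lim_{s\to0}s^{-1}(\Th_s F - F)$ using $s\downarrow0$; but one equally has $\D F = -\lim_{s\downarrow 0} s^{-1}(\Th_{-s}F - F)$, and $\Th_{-s}F$ \emph{is} determined before $t$ for $s>0$. Hence $\D F$ is a pointwise limit of $\cF_{t-}$-measurable functions, so it is $\cF_{t-}$-measurable, i.e.\ determined before $t$. The "after $t$" case uses $\Th_s F$ with $s\ge0$ and the forward difference quotient. The main subtlety here — and the place I would be most careful — is precisely this choice of one-sided difference quotient so that the shifted functions stay in the right $\sigma$-algebra, using that $\D$ generates a \emph{group}, not merely a semigroup.
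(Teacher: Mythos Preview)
Your arguments for (a) and (b) are essentially the paper's; in particular, your eventual choice in (b) to use the one-sided quotient $(-s)^{-1}(\Th_{-s}F-F)$ for $s>0$ in the ``before $t$'' case (and the forward quotient in the ``after $t$'' case) is exactly what the paper does.

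The converse direction of the equivalence, however, has a genuine gap. You propose to test continuity of $(t,\x)\mapsto\vt_t\x$ against functions of the form $F=g\circ\pi_r$ with $g\in C_b(X)$. But (P1) only asserts that $\pi_r$ is \emph{Borel measurable}, not continuous; indeed for $\cX=\cXd$ the evaluations $\pi_r$ are discontinuous at every $\x$ with a jump at $r$, so $g\circ\pi_r\notin C_b(\cX)$ in general. Thus the family you want to test against is not a subfamily of $C_b(\cX)$, and joint continuity of the orbits of the $C_b$-group gives you nothing about it. Your fallback remark about ``a countable family of such functionals'' or ``$C_b(\cX)$ separates points'' does not repair this: point-separation by $C_b(\cX)$ is not enough to deduce that $\vt_{t_n}\x_n\to\vt_t\x$ in the metric from $F(\vt_{t_n}\x_n)\to F(\vt_t\x)$ for all $F\in C_b(\cX)$ unless you also know relative compactness of the sequence, which is precisely what is at stake.

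The paper's argument is much simpler and uses only that $\cX$ is a metric space: suppose $(t,\x)\mapsto\vt_t\x$ fails to be continuous, pick sequences $t_n\to t$, $\x_n\to\x$ with $\md(\vt_{t_n}\x_n,\vt_t\x)\ge\eps>0$, and choose a single $F\in C_b(\cX)$ with $F(\vt_t\x)=1$ and $F\equiv 0$ outside the $\eps$-ball around $\vt_t\x$ (Urysohn). Then $(\Th_{t_n}F)(\x_n)=0\not\to 1=(\Th_tF)(\x)$, contradicting the $C_b$-group property.
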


\begin{proof}
If (P3) is satisfied then for every $F\in C_b(\cX)$ the map
$(t,\x) \mapsto (\Th_tF)(\x) = F(\vt_t\x)$ is continuous which shows that $(\Th_t)_{t\in \R}$ is a $C_b$-group.

To see the converse, assume that there exist sequences $t_n \to t$ and $\x_n \to \x$ such that
$\md(\vt_{t_n}\x_n, \vt_t\x) \geq \eps>0$ for all $n\in \N$. As $\cX$ is a metric space, we find a bounded continuous function $F$ on $\cX$
such that $F(\vt_t\x) = 1$ whereas $F(\y)= 0$ whenever $\md(\y, \vt_t\x) \geq \eps$. For this function $F$ we have
\[
0 \equiv (\Th_{t_n}F)(\x_n) \not\to (\Th_tF)(\x) = 1.
\]
Thus, for this particular $F$ the map $(t,\x)\mapsto [\Th_tF](\x)$ is not continuous, whence
 $(\Th_t)_{t\in \R}$ is not a 
$C_b$-group.\smallskip

(a).  Let $F, G\in D(\D)$. Using the characterization of the $C_b$-generator from Theorem \ref{t.genchar}(iv), we find
\[
\frac{\Th_t(FG) - FG}{t}(\x) = (\Th_tF)(\x)\frac{(\Th_tG)(\x) - G(\x)}{t} + \frac{(\Th_tF)(\x) - F(\x)}{t}G(\x)
\]
which is uniformly bounded and converges to $F(\x) [\D G](\x) + [\D F](\x) G(\x)$ as $t\to 0$.\smallskip

(b). Let $F\in D(\D)$ be $\cF_{t-}$-measurable. By Corollary \ref{c.measurability}
 $\Th_{-s}F$ is $\cF_{t-}$-measurable for every $s\geq 0$. Thus, for every $s>0$ the difference
quotient $(\Th_{-s} F - F)/(-s)$ is $\cF_{t-}$-measurable hence so is the (pointwise) limit $\D F$.

In the case where $F$ is determined after time $t$ we can proceed similarly, considering the difference quotients
$(\Th_sF-F)/s$ for $s>0$ instead.
\end{proof}

Besides the $C_b$-generator $\D$ also the \emph{full generator} $\Df$ of $(\Th_t)_{t\in \R}$ is of interest.
This operator is typically multivalued and is defined via the Laplace transform of the semigroup
on $B_b(\cX)$. Consequently, it may contain functions that are not continous, which will be important
in what follows. For more information about the full generator, we refer to Section \ref{s.a2}.

\begin{cor}\label{c.derivative}
The full generator $\Df$ of $(\Th_t)_{t\in \R}$ is a derivation in the sense that if $(F_j, G_j)\in \Df$ for 
$j=1,2$, then also $(F_1F_2, F_1G_2+F_2G_1)\in \Df$.
\end{cor}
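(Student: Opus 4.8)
The plan is to deduce Corollary \ref{c.derivative} from Proposition \ref{p.derivation}(a) by a standard Laplace-transform (resolvent) argument, since the full generator $\Df$ is characterized in terms of the Laplace transforms of the semigroup acting on $B_b(\cX)$. Recall that $(F,G) \in \Df$ precisely when, for some (equivalently all) $\lambda > 0$,
\[
F = \int_0^\infty e^{-\lambda t}\,\Th_t(\lambda F - G)\,\dx t,
\]
the integral being understood in the appropriate weak sense on $B_b(\cX)$ (see Section \ref{s.a2}); equivalently $R(\lambda)H := \int_0^\infty e^{-\lambda t}\Th_t H\,\dx t$ satisfies $R(\lambda)(\lambda F - G) = F$, and $(F,G)\in \Df$ iff $F \in \mathrm{range}\,R(\lambda)$ with $G = \lambda F - R(\lambda)^{-1}F$.

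First I would reduce to a pointwise identity. Given $(F_j,G_j)\in\Df$ for $j=1,2$, I want to show that, with $H := F_1 G_2 + F_2 G_1$, one has $(F_1F_2, H)\in \Df$, i.e.\ $F_1F_2 = R(\lambda)(\lambda F_1F_2 - H)$ for some $\lambda>0$. Evaluating at a point $\x\in\cX$, this is the claim
\[
F_1(\x)F_2(\x) = \int_0^\infty e^{-\lambda t}\,\big[(\Th_tF_1)(\x)(\lambda F_2 - G_2)(\x') \cdots\big]\,\dx t,
\]
but rather than expand crudely I would use the semigroup relation directly: set $u_j(t,\x) := (\Th_t F_j)(\x)$. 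From $(F_j,G_j)\in\Df$ one gets that $t\mapsto u_j(t,\x)$ is (for fixed $\x$) a bounded measurable function with $u_j(t,\x) = F_j(\x) + \int_0^t (\Th_s G_j)(\x)\,\dx s$, hence absolutely continuous in $t$ with derivative $(\Th_t G_j)(\x)$ for a.e.\ $t$. Then $t\mapsto u_1(t,\x)u_2(t,\x)$ is absolutely continuous with derivative $(\Th_tG_1)(\x)u_2(t,\x) + u_1(t,\x)(\Th_tG_2)(\x)$ a.e., and since $\Df$ is a derivation-type object this product-rule at the integrated level is exactly $\Th_t(F_1F_2)(\x) = F_1(\x)F_2(\x) + \int_0^t \Th_s(F_1G_2 + F_2G_1)(\x)\,\dx s$ — using that $\Th_s$ is multiplicative ($\Th_s(FG) = (\Th_sF)(\Th_sG)$, immediate from the definition) to identify $u_1(s,\x)(\Th_sG_2)(\x) + u_2(s,\x)(\Th_sG_1)(\x) = \Th_s(F_1G_2 + F_2G_1)(\x)$. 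Feeding this integrated identity back into the Laplace-transform characterization of $\Df$ (an integration by parts in $t$ against $e^{-\lambda t}$, valid by boundedness) yields $(F_1F_2, F_1G_2 + F_2G_1)\in\Df$.

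The main obstacle I anticipate is bookkeeping around the precise definition of $\Df$ on $B_b(\cX)$: the full generator is multivalued and the Laplace transform is taken in a weak/$\sigma$-topology sense (as kernel operators), so I must make sure the pointwise manipulations above (absolute continuity of $t\mapsto u_j(t,\x)$, differentiation under no sign issues, Fubini/integration-by-parts against $e^{-\lambda t}$) are all licensed by the results collected in Section \ref{s.a2}, and that measurability of the relevant functions of $(t,\x)$ is in hand. The multiplicativity $\Th_s(FG)=(\Th_sF)(\Th_sG)$ is the one genuinely new ingredient beyond Proposition \ref{p.derivation}, and it is trivial from $(\Th_s F)(\x) = F(\vt_s\x)$; everything else is a routine adaptation of the classical fact that generators of multiplicative semigroups are derivations, transported from the $C_b$-setting of Proposition \ref{p.derivation}(a) to the $B_b$-setting via the integrated (Laplace) formulation. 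I would close by remarking that, unlike in Proposition \ref{p.derivation}, no continuity of $F_1F_2$ or $F_1G_2+F_2G_1$ is needed here, which is exactly why the statement is phrased for $\Df$.
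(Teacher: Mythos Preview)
Your argument is correct and is essentially the same as the paper's: the core is the multiplicativity $\Th_s(FG)=(\Th_sF)(\Th_sG)$ together with the product rule for the absolutely continuous (equivalently $W^{1,\infty}_{\mathrm{loc}}$) orbits $t\mapsto(\Th_tF_j)(\x)$ furnished by Proposition~\ref{p.awf}. Note that your detour through the Laplace transform is unnecessary---once you have the integrated identity $\Th_t(F_1F_2)(\x)=F_1(\x)F_2(\x)+\int_0^t\Th_s(F_1G_2+F_2G_1)(\x)\,ds$, this is exactly characterization (ii) of Proposition~\ref{p.awf}, so no integration by parts against $e^{-\lambda t}$ is needed; the paper simply invokes characterization (iii) and the Sobolev product rule in one line.
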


\begin{proof}
Using the characterization of the full generator from Proposition \ref{p.awf}(iii), this follows from the multiplicativity of 
$(\Th_t)_{t\in \R}$ and the product rule for Sobolev functions.
\end{proof}

Next, we introduce some specific elements of $\Df$. Given $f\in B_b(X)$ and $a<b$ and $t\in \R$, we define $F_a^b(f)$ and $F_t(f)$ by setting
\begin{align}
\label{eq.fab}
[F_a^b(f)](\x) &\coloneqq \int_a^bf(\x(s))\, ds \quad \mbox{and}\\
\label{eq.fa}
[F_t(f)](\x) &\coloneqq f(\x(t))
\end{align}
for every $\x\in \cX$. Obviously, $F_a^b(f)$ and $F_t(f)$ are bounded measurable functions and $F_a^b(f) \in \bb{[a,b)}$. 
On the other hand, in general $F_t(f)$ is not $\cF_{t-}$-measurable. 
As this leads to technical problems in what follows, we will often replace $F_t(f)$ by the function 
$F_t^\star(f)$, defined by
\begin{equation}
\label{eq.fastar}
[F_t^\star(f)](\x)\coloneqq \limsup_{n\to\infty}n\int_{t-2/n}^{t-1/n}f(\x(s))\, ds.
\end{equation}

In the case where $\cX \in \{\cXc, \cXd\}$, we set
\[
\tstar \coloneqq \begin{cases}
t, & \mbox{if } \cX = \cXc,\\
t-, & \mbox{if } \cX = \cXd.
\end{cases}
\]

For future reference, we collect some easy properties of $F_t^\star(f)$.

\begin{lem}\label{l.fbstar}
Given $t\in \R$ and $f\in B_b(X)$, the following hold true:
\begin{enumerate}
[\upshape (a)]
\item $F_t^\star(f)$ is $\cF_{t-}$-measurable;
\item If $f\in C_b(X)$ and $t$ is a continuity point of $\x$, then $[F_t^\star(f)](\x) = [F_t(f)](\x)$;
\item  If $\cX \in \{\cXc, \cXd\}$, then $F_t^\star(f) = F_{\tstar}(f)$ for all $t\in \R$ and $f\in C_b(X)$;
\item For $s\in \R$, it is $\Th_sF_t^\star(f) = F_{t+s}^\star(f)$.
\end{enumerate}
\end{lem}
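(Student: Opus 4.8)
The plan is to verify the four assertions separately; each is a short consequence of the defining formula \eqref{eq.fastar} together with elementary facts about the evaluation maps $\pi_s$. None of them presents a genuine difficulty, and the only point that goes beyond pure bookkeeping is the c\`adl\`ag case of part (c).

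For (a), observe that for each fixed $n$ the quantity $n\int_{t-2/n}^{t-1/n}f(\x(s))\,ds$ equals $n\,[F_{t-2/n}^{t-1/n}(f)](\x)$, a bounded function lying in $\bb{[t-2/n,t-1/n)} \subseteq \bb{(-\infty,t)}$ since $t-1/n<t$; thus each term of the sequence is $\cF_{t-}$-measurable and bounded by $\|f\|_\infty$. As both $\cF_{t-}$-measurability and this uniform bound pass to the pointwise $\limsup$, it follows that $F_t^\star(f) \in \bb{(-\infty,t)}$, which is (a).

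Parts (b) and (c) rest on one and the same $\eps$-$\delta$ estimate. For (b): if $f\in C_b(X)$ and $\x$ is continuous at $t$, then $s\mapsto f(\x(s))$ is continuous at $t$, so given $\eps>0$ there is $\delta>0$ with $|f(\x(s))-f(\x(t))|<\eps$ for $|s-t|<\delta$; for $n$ large the interval $[t-2/n,t-1/n]$ has length $1/n$ and lies in $(t-\delta,t)$, whence $|n\int_{t-2/n}^{t-1/n}f(\x(s))\,ds - f(\x(t))|\le\eps$. Hence the limit $\lim_{n\to\infty}n\int_{t-2/n}^{t-1/n}f(\x(s))\,ds$ exists and equals $f(\x(t))=[F_t(f)](\x)$, and therefore so does the $\limsup$ in \eqref{eq.fastar}, giving (b). For (c): when $\cX=\cXc$ every path is everywhere continuous and $\tstar=t$, so (c) is exactly (b); when $\cX=\cXd$ the time $t$ need not be a continuity point of $\x$, so one reruns the same estimate with the left limit, using that $\x(s)\to\x(t-)$ as $s\uparrow t$ by the c\`adl\`ag property, to obtain $n\int_{t-2/n}^{t-1/n}f(\x(s))\,ds\to f(\x(t-))=[F_{\tstar}(f)](\x)$.

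Finally (d) is purely formal: unwinding the definitions of $\Th_s$ and of $\vt_s$ and then substituting $u=s+r$ yields
\[
[\Th_s F_t^\star(f)](\x) = [F_t^\star(f)](\vt_s\x) = \limsup_{n\to\infty} n\int_{t-2/n}^{t-1/n} f(\x(s+r))\,dr = \limsup_{n\to\infty} n\int_{(t+s)-2/n}^{(t+s)-1/n} f(\x(u))\,du = [F_{t+s}^\star(f)](\x).
\]
The one subtlety worth flagging is the c\`adl\`ag case of (c): one cannot appeal to (b) there, since $t$ may be a jump time of $\x$, and must instead repeat the averaging argument with the left limit $\x(t-)$ in place of $\x(t)$, which is legitimate because left limits exist along c\`adl\`ag paths and $f$ is continuous.
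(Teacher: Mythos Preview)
Your proof is correct and is precisely the natural argument; the paper itself states the lemma without proof, treating the four assertions as easy consequences of the definition \eqref{eq.fastar}. Your write-up fills in exactly the details one would expect, including the one nontrivial point you flag (the c\`adl\`ag case of (c), handled via the left limit).
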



\begin{lem}\label{l.integrals}
Let $f\in C_b(X)$ and $a<b$ then $(F_a^b(f), F_{b}^\star(f)-F_a^\star(f))\in \Df$.
\end{lem}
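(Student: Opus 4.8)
The plan is to verify the claim directly from the characterization of the full generator as the Laplace-transform/Sobolev-type description in Proposition~\ref{p.awf}(iii), namely that $(F,G)\in\Df$ if and only if $t\mapsto\Th_t F$ is the (locally absolutely continuous, or integrated) solution of $\frac{\dx}{\dx t}\Th_t F = \Th_t G$, which amounts to the integrated identity
\[
\Th_t F_a^b(f) - F_a^b(f) = \int_0^t \Th_s\big(F_b^\star(f) - F_a^\star(f)\big)\,\dx s
\qquad\text{for all }t\in\R.
\]
So the first step is to compute the left-hand side. By definition of the shift group and of $F_a^b(f)$, we have $[\Th_t F_a^b(f)](\x) = \int_a^b f((\vt_t\x)(s))\,\dx s = \int_a^b f(\x(t+s))\,\dx s = \int_{a+t}^{b+t} f(\x(r))\,\dx r$, after the substitution $r = t+s$. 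Hence $[\Th_t F_a^b(f) - F_a^b(f)](\x) = \int_{a+t}^{b+t}f(\x(r))\,\dx r - \int_a^b f(\x(r))\,\dx r = \int_b^{b+t} f(\x(r))\,\dx r - \int_a^{a+t}f(\x(r))\,\dx r$, valid for $t$ of either sign with the usual convention on oriented integrals.

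Next I would compute the right-hand side. By Lemma~\ref{l.fbstar}(d), $\Th_s F_a^\star(f) = F_{a+s}^\star(f)$ and $\Th_s F_b^\star(f) = F_{b+s}^\star(f)$, so the integrand at time $s$ is $[F_{b+s}^\star(f) - F_{a+s}^\star(f)](\x)$. The key point is that, by Lemma~\ref{l.fbstar}(b), for $f\in C_b(X)$ and at every continuity point $r$ of $\x$ we have $[F_r^\star(f)](\x) = f(\x(r))$; since by (P1) every $\x\in\cX$ is continuous at almost every $r\in\R$, the functions $s\mapsto [F_{a+s}^\star(f)](\x)$ and $s\mapsto f(\x(a+s))$ agree for a.e.\ $s$, and both are bounded by $\|f\|_\infty$, so they have the same integral. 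Therefore
\[
\int_0^t [\Th_s(F_b^\star(f) - F_a^\star(f))](\x)\,\dx s
= \int_0^t f(\x(b+s))\,\dx s - \int_0^t f(\x(a+s))\,\dx s
= \int_b^{b+t}f(\x(r))\,\dx r - \int_a^{a+t}f(\x(r))\,\dx r,
\]
which matches the left-hand side exactly. Finally, one checks the side condition in Proposition~\ref{p.awf}(iii) (boundedness of the integrand, measurability of $(s,\x)\mapsto[\Th_s G](\x)$), which is immediate since $F_{b+s}^\star(f) - F_{a+s}^\star(f)$ is uniformly bounded by $2\|f\|_\infty$ and jointly measurable via \eqref{eq.fastar} and Corollary~\ref{c.measurability}.

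The main obstacle I anticipate is the measure-theoretic justification that $s\mapsto[F_{a+s}^\star(f)](\x)$ equals $s\mapsto f(\x(a+s))$ almost everywhere: this needs the a.e.-continuity statement in (P1) together with Fubini/translation invariance to turn "$\x$ is continuous at a.e.\ point" into "for a.e.\ $s$, $\x$ is continuous at $a+s$", which is where the hypotheses on the path space genuinely enter. A secondary, more bookkeeping-type point is making sure the oriented-integral manipulations and the appeal to Proposition~\ref{p.awf}(iii) are stated for the two-sided group rather than just a semigroup, but since $\Df$ is the full generator of the group $(\Th_t)_{t\in\R}$ this causes no real difficulty. Everything else is the routine substitution $r=t+s$ in the integrals.
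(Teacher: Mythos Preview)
Your proposal is correct and follows essentially the same approach as the paper's proof: compute $\Th_t F_a^b(f) - F_a^b(f)$ via substitution to obtain $\int_b^{b+t}f(\x(r))\,\dx r - \int_a^{a+t}f(\x(r))\,\dx r$, identify this with $\int_0^t \Th_s(F_b^\star(f)-F_a^\star(f))\,\dx s$ using Lemma~\ref{l.fbstar}(b) together with the a.e.-continuity from (P1), and then invoke Proposition~\ref{p.awf}. The only cosmetic difference is that the paper first writes the integrand as $\Th_s(F_b(f)-F_a(f))$ and then passes to the starred version in a final step, whereas you work with $F_{a+s}^\star(f)$ directly via Lemma~\ref{l.fbstar}(d); both arguments rely on the same a.e.\ identification.
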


\begin{proof}
For $t>0$, we have
\begin{align*}
[\Th_tF_a^b(f) - F_a^b(f)](\x) & = \int_a^b f(\x(t+s))\, ds - \int_a^b f(\x(s))\, ds\\
& = \int_b^{b+t} f(\x(s))\, ds - \int_a^{a+t} f(\x(s))\, ds\\
& = \int_0^t f(\x(b+s))\, ds - \int_0^t f(\x(a+s))\, ds\\
& = \int_0^t [\Th_s(F_b(f) - F_a(f))](\x)\, ds\\
& = \int_0^t[\Th_s(F_{b}^\star(f) - F_a^\star(f))](\x)\, ds.
\end{align*}
Here, we have used Lemma \ref{l.fbstar}(b) and the continuity assumption in (P1) in the last equality.
Now Proposition \ref{p.awf} yields $(F_a^b(f), F_b^\star(f)-F_a^\star(f))\in \Df$.
\end{proof}

\begin{defn}\label{d.d0}
We define the operator $\D_0$ as follows: $D(\D_0)$ is the algebra generated by all functions of the form $F_a^b(f)$, 
where $a<b$ and $f\in C_b(X)$ and
\[
\D_0\prod_{j=1}^n F_{a_j}^{b_j}(f_j) = \sum_{k=1}^n(F_{b_k}^\star(f_k) - F_{a_k}^\star(f_k))\prod_{j\neq k} F_{a_j}^{b_j}(f_j).
\]
Then $\D_0$ is a slice (see Definition \ref{def.slice}) of $\Df$ by Lemma \ref{l.integrals} and Proposition \ref{p.derivation}(b).
\end{defn}

\begin{rem}\label{r.domaind0}
\begin{enumerate}
[(a)]
\item 
If $f\in C_b(X)$, then $F_a^b(f)\in C_b(\cX)$ as a consequence of the continuity requirement in (P1). Note, however, that
in general $F_b(f)-F_a(f) \not\in C_b(\cX)$. However, if $\cX = \cXc$, see Section \ref{sub.continuous}, then
$F_a^b(f)\in D(\D)$ for $f\in C_b(X)$ and it follows that $D(\D_0) \subset D(\D)$, i.e.\ it is a subset of the domain of the $C_b$-generator
$\D$.
\item In the general case, there are several choices of functions $G\in B_b(\cX)$ such that
$(F_a^b(f), G) \in \Df$. For the slice $\D_0$, we choose $G=F_{b}^\star(f) - F_a^\star(f)$ and the latter is
$\cF_{b-}$-measurable. Thus, Proposition \ref{p.derivation}(b) generalizes to the slice $\D_0$. The proof of Lemma \ref{l.integrals} shows that another possible choice is $G= F_b(f) - F_a(f)$ which is general not $\cF_{b-}$-measurable. 

If $\cX=\cXd$, see Subsection \ref{sub.cadlag}, then for every $\lambda \in [0,1]$ 
we may choose 
\[
G(\x) = \lambda f(\x(b)) + (1-\lambda)f(\x(b-)) -[ \lambda f(\x(a)) + (1-\lambda)f(\x(a-))].
\]
\end{enumerate}
\end{rem}

Recall that a sequence $(F_n)_{n\in \N} \subset B_b(\cX)$ \emph{bp-converges} to $F\in B_b(\cX)$ if it is uniformly bounded
and and converges pointwise to $F$. Kernel operators are well-behaved with respect to bp-convergence, see Lemma \ref{l.kernelop}. A subset
$M\subset B_b(\cX)$ is called \emph{bp-closed} if for every sequence $(F_n)_{n\in\N}\subset M$ that bp-converges to $F$ it follows
that $F\in M$. The \emph{bp-closure} of a set is the smallest bp-closed set that contains it. If the bp-closure of a set is
all of $B_b(\cX)$, it is called \emph{bp-dense} in $B_b(\cX)$.

\begin{lem}\label{l.algebradense}
$D(\D_0)$ is bp-dense in $B_b(\cX)$.
\end{lem}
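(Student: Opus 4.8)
The plan is to show that the bp-closure of $D(\D_0)$ is a monotone class (or, more precisely, an algebra closed under bp-limits) containing enough functions to generate $\bo(\cX)$, so that a functional form of the monotone class theorem forces it to be all of $B_b(\cX)$. The key point is that $D(\D_0)$, being by definition the algebra generated by the functions $F_a^b(f)$ with $f\in C_b(X)$, already contains a rich supply of functions, and taking bp-limits only enlarges it.

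First I would record that $\overline{D(\D_0)}^{\,\mathrm{bp}}$ is again an algebra: bp-convergence is stable under (bounded) linear combinations and products, because if $F_n\to F$ and $G_n\to G$ are uniformly bounded and pointwise convergent then so is $F_nG_n\to FG$. Hence the bp-closure of an algebra is an algebra, and it contains the constants since $D(\D_0)$ does (take $f\equiv 1$, $F_0^1(1)\equiv 1$... actually $F_a^b(1)=b-a$, a nonzero constant). Next I would show that for every $f\in C_b(X)$ and every $t\in\R$ the function $F_t(f):\x\mapsto f(\x(t))$ lies in $\overline{D(\D_0)}^{\,\mathrm{bp}}$: indeed $n\,F_{t-1/n}^{t}(f)=n\int_{t-1/n}^t f(\x(s))\,\dx s$ is a uniformly bounded sequence in $D(\D_0)$ which, by the almost-everywhere continuity of paths in (P1) together with dominated convergence, converges pointwise to $f(\x(t-))$ — and this equals $f(\x(t))$ at every continuity point of $\x$, hence $\bo(\cX)$-a.e.; one must be a little careful here because we need pointwise convergence everywhere on $\cX$, not just a.e. The cleanest fix is to instead use the two-sided average $n\int_{t-2/n}^{t-1/n}f(\x(s))\,\dx s$, whose $\limsup$ is exactly $F_t^\star(f)$ by \eqref{eq.fastar}; this sequence bp-converges to $F_t^\star(f)$ pointwise everywhere (the integrand is bounded and the integral is over a shrinking interval, so by right-continuity of $\x$ the averages converge for every $\x$), giving $F_t^\star(f)\in\overline{D(\D_0)}^{\,\mathrm{bp}}$ for all $t$ and $f\in C_b(X)$.

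Then I would invoke the functional monotone class theorem (Dynkin's multiplicative system theorem): the set $\mathcal H$ of bounded functions that are bp-limits of sequences from a given algebra $\mathcal A\subset C_b$-type functions contains $\mathcal A$, is a vector space closed under bounded monotone limits, and hence contains all bounded functions measurable with respect to $\sigma(\mathcal A)$. Applying this with $\mathcal A$ the algebra generated by $\{F_t^\star(f): t\in\R,\ f\in C_b(X)\}$ — which by the previous paragraph is contained in $\overline{D(\D_0)}^{\,\mathrm{bp}}$, and whose bp-closure therefore coincides with $\overline{D(\D_0)}^{\,\mathrm{bp}}$ — it suffices to check that $\sigma\big(F_t^\star(f): t\in\R, f\in C_b(X)\big)=\bo(\cX)$. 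For this I would use (P1): $\bo(\cX)=\sigma(\pi_t:t\in\R)$, and for fixed $t$ the functions $\x\mapsto f(\x(t))$, $f\in C_b(X)$, generate $\sigma(\pi_t)$ because $X$ is a separable metric space (so $C_b(X)$ separates points and generates $\bo(X)$). The only subtlety is that we have $F_t^\star(f)$, not $F_t(f)=f\circ\pi_t$; but $F_t^\star(f)$ agrees with $f\circ\pi_t$ at every continuity point of $\x$, and since every $\x$ is continuous at a.e.\ $t$, the family $\{F_s^\star(f): s\in\R\}$ still generates $\sigma(\pi_t)$ for each fixed $t$ — e.g.\ $f(\x(t))=\lim_{n}n\int_{t}^{t+1/n}F_s^\star(f)(\x)\,\dx s$ pointwise by right-continuity, so $f\circ\pi_t$ is a bp-limit of $\mathcal A$-functions anyway, hence measurable w.r.t.\ $\sigma(\mathcal A)$.

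The main obstacle I anticipate is precisely this bookkeeping about \emph{everywhere} versus \emph{almost everywhere} pointwise convergence: bp-density is a statement about pointwise convergence at every $\x\in\cX$, and the natural approximants (shrinking integral averages) only converge at continuity points unless one exploits right-continuity of the paths and the one-sided averages built into the definition of $F_t^\star$. Handling this correctly — and making sure the monotone-class argument is applied to the \emph{right} generating algebra so that one genuinely lands inside $\overline{D(\D_0)}^{\,\mathrm{bp}}$ rather than some larger set — is where the real care is needed; the algebraic closure properties and the identification of the generated $\sigma$-algebra are routine given (P1) and the separability of $X$.
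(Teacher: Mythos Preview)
Your overall strategy---show the bp-closure is an algebra, get the point evaluations $F_t(f)$ inside it, then apply a monotone class argument---is exactly what the paper does. But you take an unnecessarily circuitous detour through $F_t^\star(f)$, and this detour has a gap.

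The paper's key observation, which you miss, is that the \emph{right-sided} average $nF_t^{t+1/n}(f)=n\int_t^{t+1/n}f(\x(s))\,\dx s$ converges to $f(\x(t))=F_t(f)(\x)$ at \emph{every} $\x\in\cX$, simply because paths in $\cX$ are right-continuous by definition. This gives $F_t(f)\in\overline{D(\D_0)}^{\,\mathrm{bp}}$ in one stroke, with no almost-everywhere bookkeeping at all. You start with the left-sided average $nF_{t-1/n}^t(f)$, correctly identify that it only converges at continuity points, but then instead of switching sides you retreat to $F_t^\star(f)$. Your claim that $n\int_{t-2/n}^{t-1/n}f(\x(s))\,\dx s$ bp-converges to $F_t^\star(f)$ is not justified: by definition $F_t^\star(f)$ is a $\limsup$, and general path spaces only assume right-continuity, not existence of left limits, so this sequence need not converge. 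You then salvage things at the end by writing $f(\x(t))=\lim_n n\int_t^{t+1/n}F_s^\star(f)(\x)\,\dx s$ ``by right-continuity''---but this is essentially the right-sided average applied to $F_s^\star$ instead of to $f\circ\pi_s$ directly, and once you invoke right-continuity you may as well have used it on $nF_t^{t+1/n}(f)$ from the start.

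After obtaining $F_t(f)$ for $f\in C_b(X)$, the paper proceeds exactly as you suggest: bp-approximate to get $F_t(f)$ for $f\in B_b(X)$, multiply to get indicators of cylinder sets, and conclude by measure-theoretic induction since cylinder sets generate $\bo(\cX)$.
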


\begin{proof}
Denote by $M$ the bp-closure of the algebra $D(\D_0)$. Standard arguments show that $M$ is also an algebra. By the right-continuity of the paths
in $\cX$ it follows that for every $f\in C_b(X)$, it is $nF_t^{t+1/n}(f) \to F_t(f)$ pointwise. Consequently, $F_t(f)\in M$
for all $f\in C_b(X)$ and $t\in \R$. 
Using that $C_b(X)$ is bp-dense in $B_b(X)$, it follows that $F_t(f)\in M$ for all $f\in B_b(X)$
and $t\in \R$. As $M$ is an algebra, for all choices of $t_1 < \ldots < t_n \in \R$ and $A_1, \ldots, A_n\in \bo(X)$ we have
\[
\one_{\displaystyle\{\x\in \cX : \x(t_j) \in A_j \mbox{ for } j=1, \ldots, n\}} = \prod_{j=1}^n F_{t_j}(\one_{A_j})\in M,
\]
i.e.\ indicator functions of cylinder sets belong to $M$. As the cylinder sets form a generator of the Borel $\sigma$-algebra
that is stable under intersections, measure theoretic induction yields $M= B_b(\cX)$.
\end{proof}

We end this section by establishing that the shift group is uniquely determined by the slice $\D_0$. This is
done by generalizing the concept of a core of an operator, see \cite[Definition  II.1.6]{en00}. 
As we are using both the concept of the $C_b$-generator
and the concept of the full generator, there are two different concepts of a core. For the former, the appropriate concept
is that of a $\beta_0$-core (see Lemma \ref{l.cbcore}), for the latter the notion of a bp-core (see Lemma \ref{l.bpcore}) is used instead.

\begin{cor}\label{c.bpcore}
$D(\D_0)$ a bp-core for $\Df$. If $\cX=\cXc$, then $D(\D_0)$ is a $\beta_0$-core for $\D$.
\end{cor}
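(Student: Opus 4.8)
The plan is to invoke the abstract criteria for bp-cores and $\beta_0$-cores collected in the appendix (Lemmas \ref{l.bpcore} and \ref{l.cbcore}) and verify their hypotheses for the slice $\D_0$. For the bp-core statement, the typical criterion is that a slice of the full generator is a bp-core provided its domain is bp-dense and invariant under the semigroup (or, in a Laplace-transform formulation, that $(\lambda - \Df)D(\D_0)$ is bp-dense for some, equivalently all, $\lambda>0$). Bp-density of $D(\D_0)$ is already Lemma \ref{l.algebradense}, so the work is to establish the invariance: that $\Th_t$ maps $D(\D_0)$ into itself. This follows from Lemma \ref{l.fbstar}(d), which gives $\Th_s F_t^\star(f) = F_{t+s}^\star(f)$, together with the analogous (and easy) identity $\Th_s F_a^b(f) = F_{a+s}^{b+s}(f)$ coming directly from the definition \eqref{eq.fab}; since $D(\D_0)$ is by definition the algebra generated by the $F_a^b(f)$, and $\Th_s$ is multiplicative, shift-invariance of the generating set upgrades to shift-invariance of the algebra. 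Because $\D_0$ is a slice of $\Df$ (Definition \ref{d.d0}), the commutation $\Th_s\D_0 = \D_0\Th_s$ on $D(\D_0)$ holds, and the abstract bp-core lemma then applies.

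For the second assertion, when $\cX = \cXc$, I would appeal to the $\beta_0$-core criterion of Lemma \ref{l.cbcore}. Here the key additional input is Remark \ref{r.domaind0}(a): in the continuous-path case $F_a^b(f) \in D(\D)$ for every $f \in C_b(X)$, hence $D(\D_0) \subset D(\D)$ and moreover $\D_0$ agrees with the restriction of the genuine $C_b$-generator $\D$ to $D(\D_0)$ (because on $\cXc$ we have $F_t^\star(f) = F_t(f) = F_{\tstar}(f)$ by Lemma \ref{l.fbstar}(c), so the two candidate generators coincide). The hypotheses to check for a $\beta_0$-core are typically: (i) $D(\D_0) \subset D(\D)$; (ii) $D(\D_0)$ is invariant under $(\Th_t)_{t\ge 0}$ — which is exactly the shift-invariance established in the previous paragraph; and (iii) $D(\D_0)$ is $\beta_0$-dense in $C_b(\cX)$, or equivalently bp-dense in a way compatible with the mixed topology. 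Density in the appropriate sense follows from the proof pattern of Lemma \ref{l.algebradense} specialized to continuous functions, or can be quoted from a Stone–Weierstrass-type argument since the $F_a^b(f)$ separate points of $\cXc$.

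The main obstacle I anticipate is the density/closure step in the $\beta_0$-core part: unlike the $B_b$ setting, where measure-theoretic induction via cylinder sets (as in Lemma \ref{l.algebradense}) is available, the $\beta_0$-topology on $C_b(\cXc)$ is the mixed topology, and one must ensure $D(\D_0)$ is dense for that finer-than-compact-open but coarser-than-norm topology — and that the core criterion's invariance-plus-density really does close the generator. Concretely, one needs that the algebra generated by $\{F_a^b(f): a<b,\ f\in C_b(X)\}$ is rich enough: it contains approximations to $F_t(f)$ by $nF_t^{t+1/n}(f)$, which converge uniformly on compacts (hence $\beta_0$-converge when uniformly bounded) because paths in $\cXc$ are genuinely continuous, so the point evaluations are continuous — this is where the restriction to $\cX = \cXc$ is genuinely used. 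Once the algebra is seen to be $\beta_0$-dense and shift-invariant with $D(\D_0)\subset D(\D)$, Lemma \ref{l.cbcore} closes the argument. I would keep the verification of the abstract hypotheses explicit but short, deferring the routine topological density estimate to the pattern already set in Lemma \ref{l.algebradense}.
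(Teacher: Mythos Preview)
Your approach is essentially the same as the paper's: invoke Corollary \ref{c.slice} for the bp-core statement and Lemma \ref{l.cbcore} together with Stone--Weierstra{\ss} for the $\beta_0$-core statement, verifying shift-invariance and density in each case. The paper does exactly this, citing \cite{fremlin1972} for the Stone--Weierstra{\ss} step.

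There is one genuine omission, however. You describe the bp-core criterion as requiring only bp-density and shift-invariance, but Corollary \ref{c.slice} (and Lemma \ref{l.bpcore} behind it) has a \emph{third} hypothesis: for every $F\in D(\D_0)$ and $\x\in\cX$, the map $t\mapsto [\Th_t\D_0 F](\x)$ must be continuous at almost every $t\in(0,\infty)$. This is not automatic --- the functions $\D_0 F$ involve point evaluations $F_t^\star(f)$, which are not continuous in $t$ in general (think of c\`adl\`ag paths). The paper verifies this condition explicitly, noting that for $f\in C_b(X)$ the map $t\mapsto f(\x(t))$ is continuous at every continuity point of $\x$, and by (P1) almost every $t$ is such a point. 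Without this check the Riemann-sum approximation in the proof of Lemma \ref{l.bpcore} does not go through, so the argument would be incomplete as stated. The fix is short --- one line invoking (P1) --- but it should not be skipped.
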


\begin{proof}
This follows from Corollary \ref{c.slice}. Indeed, noting that 
$\Th_t F_a^b(f) = F_{a+t}^{b+t}(f)$ and $\Th_t F_s(f) = F_{t+s}(f)$ condition (i)
is satisfied, whereas (ii) follows from Lemma \ref{l.algebradense}. Condition (iii) is an immediate consequence of the fact  
that for $f\in C_b(X)$ the map $t\mapsto f\circ\pi_t$ is continuous in almost every point as a consequence of (P1).

Now consider the case of continuous paths.
As $D(\D_0)$ is an algebra that separates the points of
$\cX$, the Stone--Weierstra{\ss} Theorem, see \cite[Theorem 11]{fremlin1972}, yields that $D(\D_0)$ is dense in $C_b(\cX)$
with respect to $\beta_0$. As $\Th_t D(\D_0)\subset D(\D_0)$ for all $t\in \R$, 
it follows from Lemma \ref{l.cbcore} that $D(\D_0)$ is a $\beta_0$-core for $\D$.
\end{proof}

\begin{rem}
\label{r.evaluation}
Even in the case where $\cX=\cXc$, we cannot expect any function of the form $F_0(f)$ (or, more generally, $F_t(f)$) which is
not constant to 
belong to $D(\D)$. To see this, we consider the case $X=\R^d$. Assume that $F_0(f)$ belongs to $D(\D)$. 
Fixing $x_0, v \in \R^d$, we consider $\x(t) = x_0 + |t|v$.  It follows that
\[
[\D F_0(f)](\x) = \lim_{t\downarrow 0} \frac{f(x_0+ tv) - f(x_0)}{t} = \lim_{t\uparrow 0} \frac{f(x_0-tv) - f(x_0)}{t} = -[\D F_0(f)](\x).
\]
This implies that $f$ has in the point $x_0$ directional derivative $0$ in direction $v$. As $x_0$ and $v$ were arbitrary, 
$f$ must be constant.
\end{rem}

\section{Evolutionary semigroups on spaces of measurable functions}\label{sect.measurable}

We are now prepared to introduce the central notions of this article, namely \emph{evolutionary semigroups} and their associated
\emph{expectation operators}. We start with the latter and first prove that several possible defining properties are in fact equivalent.
Throughout this sexction $((\cX, \md), \tau)$ is a path space.

\begin{prop}\label{p.expectation_operator}
Let $\E \in \cL (B_b(\cX), \sigma)$ be a Markovian kernel operator 
with associated kernel  $\k$.
The following are equivalent:
\begin{enumerate}
[\upshape (i)]
\item For every $F\in B_b(\cX)$, the function $\E F$ is $\cF_{0-}$-measurable
and if $F$ is $\cF_{0-}$-measurable, then $\E F = F$.
\item For every $F\in C_b(\cX)$, the function $\E F$ is $\cF_{0-}$-measurable
and if $F$ is $\cF_{0-}$-measurable, then $\E F = F$.
\item Given $A_-\in \cF_{0-}$, $A \in \bo(\cX)$
and $\x\in \cX$,
\[
\k(\x, A_-\cap A) = \delta_{\tau(\x)}(A_-) \k(\tau(\x), A).
\]
\item For every $F\in B_b(\cX)$, the function $\E F$ is $\cF_{0-}$-measurable. Moreover, 
\[
\E (FG) = F \E G,
\]
for all $F,G \in B_b(\cX)$ where $F$ is $\cF_{0-}$-measurable.
\end{enumerate}
\end{prop}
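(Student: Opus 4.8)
The plan is to prove the equivalences in the cycle $(i)\Leftrightarrow(iii)$, $(i)\Leftrightarrow(iv)$, and $(i)\Leftrightarrow(ii)$, exploiting the characterizations of $\cF_{0-}$-measurability from Lemma~\ref{l.measurability} and Lemma~\ref{l.eminusf0}, together with the fact that a Markovian kernel operator is determined by its kernel $\k$ and is bp-continuous (Lemma~\ref{l.kernelop}).

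For $(i)\Leftrightarrow(iii)$, I would first unpack what the two conditions in $(i)$ say at the level of the kernel. Applying $\E$ to $F=\one_A$ for $A\in\bo(\cX)$, we have $[\E F](\x)=\k(\x,A)$, and by Lemma~\ref{l.measurability}(c) the function $\x\mapsto\k(\x,A)$ being $\cF_{0-}$-measurable is equivalent to $\k(\x,A)=\k(\tau(\x),A)$ for all $\x$. So the first half of $(i)$ is equivalent to $\k(\x,\cdot)=\k(\tau(\x),\cdot)$ for every $\x$. The second half of $(i)$, applied to $F=\one_{A_-}$ with $A_-\in\cF_{0-}$, says $\k(\x,A_-)=\one_{A_-}(\x)$; using $A_-=\tau^{-1}(A_-)$ (from (P2)) and Lemma~\ref{l.measurability}(d), one checks $\one_{A_-}(\x)=\delta_{\tau(\x)}(A_-)$. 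Combining these two facts: $\k(\x,A_-\cap A)=\k(\tau(\x),A_-\cap A)$, and since $\tau(\x)\in\cX^-$ we need the identity $\k(\y,A_-\cap A)=\delta_\y(A_-)\k(\y,A)$ for $\y\in\cX^-$ and $A_-\in\cF_{0-}$. This last step is the crux: it says the measure $\k(\y,\cdot)$ restricted to $\cF_{0-}$ is the point mass $\delta_\y$, hence $\cF_{0-}$-sets are $\k(\y,\cdot)$-trivial (measure $0$ or $1$, with the $1$-sets being exactly those containing $\y$), and a standard measure-theoretic fact then gives factorization $\k(\y,A_-\cap A)=\delta_\y(A_-)\k(\y,A)$. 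I would argue this by noting that if $B\in\cF_{0-}$ has $\k(\y,B)=1$ then $\k(\y,A)=\k(\y,A\cap B)$ for all Borel $A$; applying with $B=A_-$ when $\y\in A_-$ and with $B=A_-^c$ when $\y\notin A_-$ gives both cases. Conversely, $(iii)$ with $A=\cX$ recovers $\k(\x,A_-)=\delta_{\tau(\x)}(A_-)$, and with $A_-=\cX$ (so any $A$) recovers $\k(\x,A)=\k(\tau(\x),A)$, which by the above is precisely $(i)$ on indicators, hence on all of $B_b(\cX)$ by linearity and bp-density (Lemma~\ref{l.algebradense}) / measure-theoretic induction.

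For $(i)\Rightarrow(iv)$: the measurability of $\E F$ is immediate. For the module property, fix $\cF_{0-}$-measurable $F$; by Lemma~\ref{l.measurability}(c), $F=F\circ\tau$, and writing things through the kernel, $[\E(FG)](\x)=\int F(\y)G(\y)\,\k(\x,d\y)$. Using $(iii)$ (which we now have) one sees $\k(\x,\cdot)$ is, on $\cF_{0-}$, concentrated at $\tau(\x)$, so integrating the $\cF_{0-}$-measurable factor $F$ against $\k(\x,\cdot)$ pulls it out as $F(\tau(\x))=F(\x)$, leaving $F(\x)\int G\,d\k(\x,\cdot)=F(\x)[\E G](\x)$. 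Rigorously this is again measure-theoretic induction on $F$: true for $F=\one_{A_-}$ by $(iii)$, extend by linearity and bp-limits. For $(iv)\Rightarrow(i)$: the $\cF_{0-}$-measurability of $\E F$ is assumed; taking $F$ $\cF_{0-}$-measurable and $G\equiv 1$ in the module identity gives $\E F=F\cdot\E 1=F$ since $\E 1=1$ ($\E$ is Markovian).

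Finally $(i)\Leftrightarrow(ii)$: clearly $(i)\Rightarrow(ii)$. For the converse, $(ii)$ gives the factorization $(iii)$ but a priori only tested against $F\in C_b(\cX)$; I would argue that the two displayed conditions in $(ii)$ still force $\k(\x,\cdot)=\k(\tau(\x),\cdot)$ and $\k(\x,A_-)=\delta_{\tau(\x)}(A_-)$. For the first: if $F\in C_b(\cX)$ then $\E F$ is $\cF_{0-}$-measurable, so $[\E F](\x)=[\E F](\tau(\x))$, i.e.\ $\int F\,d\k(\x,\cdot)=\int F\,d\k(\tau(\x),\cdot)$ for all $F\in C_b(\cX)$; since $\cX$ is Polish (and $C_b(\cX)$ is measure-determining), this forces $\k(\x,\cdot)=\k(\tau(\x),\cdot)$ as Borel measures. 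For the second half: if $A_-\in\cF_{0-}$ we cannot directly plug in $\one_{A_-}$, but we can reduce to $\cX^-$: by Lemma~\ref{l.eminusf0}, $\cF_{0-}$-sets correspond to Borel subsets of the Polish space $\cX^-$, so it suffices to show $\k(\y,\cdot)|_{\cX^-}=\delta_\y$ for $\y\in\cX^-$; this in turn follows by approximating $\one$ of a closed set in $\cX^-$ by continuous functions $F\in C_b(\cX)$ (via Urysohn / the fact $\cX^-$ is metrizable by $\md^-$) and using $\E F=F$ on the $\cF_{0-}$-measurable $F$ together with dominated/bp-convergence; then one gets that $\k(\y,\cdot)$ agrees with $\delta_\y$ on closed subsets of $\cX^-$, hence on all Borel subsets of $\cX^-$, hence on $\cF_{0-}$. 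With $\k(\x,A_-)=\delta_{\tau(\x)}(A_-)$ and $\k(\x,\cdot)=\k(\tau(\x),\cdot)$ in hand, the factorization $(iii)$ follows exactly as in the $(i)\Rightarrow(iii)$ argument, and then $(iii)\Rightarrow(i)$ closes the loop.

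The main obstacle I anticipate is the step where continuity-only information in $(ii)$ must be upgraded to the full kernel identity on all of $\cF_{0-}$ — i.e.\ showing that $\k(\y,\cdot)$ restricted to $\cF_{0-}$ is $\delta_\y$ when we are only allowed to test against $C_b(\cX)$. The clean way around it is to push everything to the Polish space $\cX^-$ via Lemma~\ref{l.eminusf0}(a) and use that on a Polish space finite Borel measures are determined by their integrals against bounded continuous functions, so that $\E F=F$ for continuous $F$ already pins down $\k(\y,\cdot)|_{\cX^-}$. Everything else is routine measure-theoretic induction combined with the bp-continuity of kernel operators.
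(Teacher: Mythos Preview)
Your arguments for $(i)\Leftrightarrow(iii)$ and $(i)\Leftrightarrow(iv)$ are correct and essentially the paper's, and your first step in $(ii)\Rightarrow(i)$ --- deducing $\k(\x,\cdot)=\k(\tau(\x),\cdot)$ from the fact that $C_b(\cX)$ is measure-determining on the Polish space $\cX$ --- is fine. The gap is in the second half, where you want $\k(\y,\cdot)|_{\cF_{0-}}=\delta_\y$ for $\y\in\cX^-$. Urysohn on $(\cX^-,\md^-)$ produces $G\in C_b(\cX^-)$, but to invoke $(ii)$ you need the extension $\hat G=G\circ\tau$ to lie in $C_b(\cX)$, and $\tau$ is \emph{not} assumed continuous (it fails for $\cXd$). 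Hypothesis $(ii)$ only yields $\E F=F$ for $F\in C_b(\cX,\cF_{0-})$, which under restriction to $\cX^-$ is the possibly proper subspace $\cXxt\subset C_b(\cX^-)$ introduced in Section~\ref{sub.general}. So the sentence ``on a Polish space finite Borel measures are determined by their integrals against bounded continuous functions'' does not connect to the hypothesis you actually have: you would need $\cXxt$, not $C_b(\cX^-)$, to be measure-determining on $\cX^-$, and that is exactly what is \emph{not} obvious without further assumptions (the paper establishes it only for proper path spaces, Proposition~\ref{p.cext}).

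The paper's route for $(ii)\Rightarrow(i)$ avoids kernels entirely and is shorter: the set $\{F\in B_b(\cX):\E F\text{ is }\cF_{0-}\text{-measurable}\}$ is bp-closed (by bp-continuity of $\E$) and contains $C_b(\cX)$, hence equals $B_b(\cX)$. For the second property one argues that $\{F\in B_b(\cX,\cF_{0-}):\E F=F\}$ is bp-closed and contains $C_b(\cX,\cF_{0-})$; the required bp-density of $C_b(\cX,\cF_{0-})$ in $B_b(\cX,\cF_{0-})$ follows from the proof of Lemma~\ref{l.algebradense} restricted to evaluation times $t<0$, since the algebra generated by the $F_a^b(f)$ with $b\le 0$ sits inside $C_b(\cX,\cF_{0-})$. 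Your kernel-based approach can be repaired with the same density observation (it shows $\cXxt$ is rich enough after all), but the direct bp-closure argument gets there without the detour.
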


\begin{proof}
(i) $\Rightarrow$ (ii). This is trivial.\smallskip

(ii) $\Rightarrow$ (i). Set
\[
M = \{ F\in B_b(\cX) : \E F \mbox{ is } \cF_{0-}\mbox{-measurable}\}.
\]
By (ii), $C_b(\cX) \subset M$. Moreover, using that $\E$ is $\sigma$-continuous (see Lemma \ref{l.kernelop}), it is easy to see
that if $(F_n)_{n\in \N}$ is a bounded sequence in $M$ that converges pointwise to $F$, then also $F\in M$.
This proves that $M$ is bp-closed. As $\cX$ is Polish, $C_b(\cX)$ is bp-dense in
$B_b(\cX)$ (see \cite[Proposition 3.4.2]{ek}) whence $M= B_b(\cX)$.

Similarly, one also sees that the second property extends to $B_b(\cX)$.\smallskip

(i) $\Rightarrow$ (iii).  As $\E F$ is always $\cF_{0-}$-measurable it follows from Lemma \ref{l.measurability}  that
\[
\k(\x, A) = (\E\one_A)(\x) = (\E\one_A)(\tau(\x)) = \k (\tau(\x), A)
\]
for all $A\in \cB(\cX)$ and $\x\in \cX$.

Now let $A_-\in \cF_{0-}$, $A\in \bo(\cX)$ and $\x\in \cX$ be given. By (i), $\k(\x, A_-) = \E\one_{A_-}(\x) = \one_{A_-}(\x)$.
It follow that for $\x\not\in A_-$
\[
0\leq \k(\x, A_-\cap A) \leq \k(\x, A_-) =0.
\]
On the other hand, if $\x\in A_-$, the same argument shows $\k(\x, A_-^c\cap A) = 0$ and we obtain
\[
\k(\x, A) = \k (\x, A_-\cap A) + \k (\x, A_-^c\cap A) = \k (\x, A_-\cap A).
\]
Altogether,
\[
\k(\x, A_-\cap A) = \k(\tau(\x), A_-\cap A) = \delta_{\tau(\x)}(A_-) \k(\tau(\x), A).
\]

(iii) $\Rightarrow$ (iv). Condition (iii) yields $\E(F G) = F \E G$ whenever $F=\one_{A_-}$ for some $A_-\in \cF_{0-}$ and
$G=\one_{A}$ for some $A \in \bo(\cX)$. Using measure theoretic induction twice, this easily extends to arbitrary functions.\smallskip

(iv) $\Rightarrow$ (i). Let $F\in B_b(\cX)$ be $\cF_{0-}$-measurable. As $\one$ is $\bo(\cX)$-measurable and
$\E\one=\one$ by Markovianity, (iv) yields $\E F = \E (F\one) = F\E\one = F\one = F$.
\end{proof}
Note that condition (iv) of Proposition \ref{p.expectation_operator}
implies that $\E$ is local in the sense that $\E (\one_{A}G) = \one_{A} \E G$ for all $A\in \cF_{0-}$ and $G \in B_b(\cX)$, and can therefore be viewed as a conditional function/expectation, see \cite{MR3451982} and the references therein.

\begin{defn}
\label{d.expectation}
An operator $\E$ satisfying the equivalent conditions of Proposition \ref{p.expectation_operator} is called
\emph{expectation operator}. Given an expectation operator $\E$, we define 
$\E_t \coloneqq \Th_t\E\Th_{-t}$ for $t\ge 0$.
We say that $\E$ is
\emph{homogeneous} if $\E = \E \E_t$ for all $t\geq 0$.
\end{defn}

\begin{rem}\label{rem-SDE1}
In the case of continuous paths, i.e.\ $((\cX, \md), \tau) = ((\cXc, \md_\mathrm{C}), \tau_{\mathrm{C}})$, 
we give an alternative representation of $\E$ that follows from Proposition~\ref{p.expectation_operator}(iii).
To that end, we identify  $\cX^-  \coloneqq \tau(\cX)$ with $ C((-\infty,0];X)$ and define $\cX^+ \coloneqq C([0,\infty); X)$. Then the map
\[ \cX^-\times \cX^+\to \cX,\; (\x_-,\x_+)\mapsto \x_-\oplus_0\x_+ \]
with 
\[ (\x_-\oplus_0\x_+) (t) \coloneqq \begin{cases}
    \x_-(t), & \mbox{if } t\le 0,\\
    \x_+(t) -\x_+(0)+\x_-(0), & \mbox{if } t\ge 0,
\end{cases}\]
is well-defined, and its restriction to the closed subspace of all compatible pairs
\[ 
\cX_{\text{comp}} \coloneqq \{(\x_-,\x_+)\in\cX^-\times\cX^+: \x_-(0)=\x_+(0)\}
\]
is an isomorphism.  
Let $\E \in \cL (B_b(\cX), \sigma)$ be a Markovian kernel operator with associated kernel  $\k$.
From Proposition~\ref{p.expectation_operator}(iii) and identifying $\cX$ and $\cX_{\text{comp}}$, we get
\begin{equation}\label{eq-SDE1}
\k((\x_-,\x_+),A_-\times A_+) = \delta_{\x_-}(A_-) \k_+(\x_-, A_+)
\end{equation}
for $(\x_-,\x_+)\in \cX_{\text{comp}}$ and $A_\pm\in \bo(\cX^\pm)$, 
where we define
\begin{align*}
     \k_+(\x_-,A_+) &:= \k \big(\x_-, \big\{ \x_-\oplus_0 \y_+: \y_+\in A_+,\, \x_-(0) = \y_+(0)\big\}\big)\\
     & = \k\big(\x_-, (\{\x_-\}\times A_+)\cap \cX_{\text{comp}}\big).
\end{align*} 
Note that by \eqref{eq-SDE1}, the measure $\k(\x,\cdot)$ is extended by zero to $(\cX^-\times\cX^+)\setminus \cX_{\text{comp}}$. For $F\in B_b(\cX)$, \eqref{eq-SDE1} and a Fubini argument yield
\begin{equation}\label{eq-SDE2}
    \begin{aligned}
        (\E F)(\x) & = \int_{\cX} F(\y) \k (\x,d\y) 
        = \int_{\cX^-\times\cX^+} F(\y_-\otimes_0 \y_+) 
        \k((\x_-,\x_+), d(\y_-,\y_+)) \\
        & = \int_{\cX^-}\int_{\cX^+} F(\y_-\otimes_0 \y_+) \k_+(\x_-,d\y_+)
        \delta_{\x_-}(d\y_-) \\
        & = \int_{\cX^+} F(\x_-\otimes_0 \y_+) \k_+(\x_-,d\y_+).
    \end{aligned}
\end{equation}

If we want to extend this construction to c\`adl\`ag paths, it is a natural question how to glue two functions from $D([0,\infty); X)$ and
$D((-\infty, 0]; X)$ together at $0$, i.e.\ if there might be a jump at $0$ or not. If the evolutionary semigroup induced by $\E$ is a 
$C_b$-semigroup, then Corollary \ref{c.continuous} below yields that
\[
\P^\x(\{ \y : \y \mbox{ is continuous at 0}\}) =1,
\]
which suggests that we should glue paths $\x_- \in D((-\infty, 0]; X)$ and $\x_+\in D([0,\infty); X)$ continuously together in $0$.
Thus, mutatis mutandis, the same construction  as above can be extended to c\`adl\`ag paths in this situation.

The above description of $\E$ will be used for stochastic (delay) differential equations below, where we  specify $\k_+$ in terms of the unique solution of the equation, see Section~\ref{sect.examples} for details. 
\end{rem}

\begin{lem}\label{l.expectationop}
Let $\E$ be an expectation operator with  associated kernel $\k$. The following are equivalent:
\begin{enumerate}
[\upshape (i)]
\item $\E$ is homogeneous.
\item For every $\x \in \cX$, $t\geq 0$ and $A\in \cB(\cX)$,
\[
\int_{\cX} \k(\vt_t\y, \vt_t A) \k(\x, d\y) = \k(\x, A).
\]
\end{enumerate}
\end{lem}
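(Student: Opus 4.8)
The plan is to unwind the definition of homogeneity, $\E = \E\E_t$, into a statement about the associated kernels. Recall $\E_t = \Th_t \E \Th_{-t}$, and that $\Th_s$ is the kernel operator associated with the Dirac kernel $\x \mapsto \delta_{\vt_s \x}$, so composition of kernel operators corresponds to the usual composition (Chapman--Kolmogorov-type) product of kernels. First I would compute the kernel of $\E_t$: for $F \in B_b(\cX)$,
\[
(\E_t F)(\x) = (\Th_t \E \Th_{-t} F)(\x) = (\E \Th_{-t}F)(\vt_t \x) = \int_{\cX} F(\vt_{-t}\y)\, \k(\vt_t \x, d\y),
\]
and substituting $\z = \vt_{-t}\y$, i.e.\ $\y = \vt_t\z$ (using that $\vt_{-t}$ is a bijection on $\cX$ with inverse $\vt_t$, from (P3)), this equals $\int_{\cX} F(\z)\, \k(\vt_t\x, \vt_t\,d\z)$. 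Hence $\E_t$ is the kernel operator with kernel $\k_t(\x, A) = \k(\vt_t\x, \vt_t A)$; one should remark that this is a genuine (Markovian) kernel because $\vt_t$ is a measurable bijection with measurable inverse, so $A \mapsto \vt_t A$ preserves $\bo(\cX)$ and measurability in $\x$ is inherited.

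Next I would apply the composition formula for kernel operators to $\E\E_t$: for $F \in B_b(\cX)$,
\[
(\E\E_t F)(\x) = \int_{\cX} (\E_t F)(\y)\, \k(\x, d\y) = \int_{\cX} \Bigl( \int_{\cX} F(\z)\, \k(\vt_t\y, \vt_t\, d\z)\Bigr) \k(\x, d\y).
\]
By Fubini/Tonelli (justified by boundedness of $F$ and finiteness of the kernels, plus the measurability of $(\x,\y)\mapsto \k(\vt_t\y,\vt_t A)$ which follows from the kernel property and measurability of the shift), the kernel of $\E\E_t$ is
\[
(\x, A) \mapsto \int_{\cX} \k(\vt_t\y, \vt_t A)\, \k(\x, d\y).
\]
Testing $F = \one_A$, the identity $\E F = \E\E_t F$ for all such $F$ is equivalent, by the uniqueness of the representing kernel of a kernel operator (Lemma \ref{l.kernelop}), to
\[
\int_{\cX} \k(\vt_t\y, \vt_t A)\, \k(\x, d\y) = \k(\x, A) \quad \text{for all } \x\in\cX,\ A\in\bo(\cX),
\]
which is exactly condition (ii). Conversely, if (ii) holds then the two kernels agree, hence the operators agree on indicators and, by linearity and bp-density of simple functions (bp-continuity of kernel operators), on all of $B_b(\cX)$, giving (i). This establishes the equivalence for each fixed $t \geq 0$, which is what the statement asserts.

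The only genuine subtlety — and the step I would be most careful about — is the change of variables $\y \mapsto \vt_{-t}\y$ inside the integral defining $\E_t$ and the attendant claim that $A \mapsto \vt_t A$ maps $\bo(\cX)$ to itself and that $\x \mapsto \k(\vt_t\x, \vt_t A)$ is a kernel; all of this rests on $\vt_t$ being a bi-measurable bijection of $\cX$, which is immediate from (P3) (continuity of $(t,\x)\mapsto \vt_t\x$, giving continuity hence Borel measurability of each $\vt_t$ and of its inverse $\vt_{-t}$). Everything else is bookkeeping with the composition of kernel operators and the uniqueness of the kernel, so I would keep that part terse and point to Lemma \ref{l.kernelop} for the kernel calculus.
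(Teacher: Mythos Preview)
Your proof is correct and follows essentially the same route as the paper's: compute the kernel of $\E_t$ as $\k(\vt_t\x,\vt_t A)$ via the change of variables $\z=\vt_{-t}\y$ (the paper writes this as a push-forward $\k(\vt_t\x,d\z\circ(\vt_{-t})^{-1})$), then compose with $\E$ and compare kernels. Your discussion of the bi-measurability of $\vt_t$ and the passage from indicators to all of $B_b(\cX)$ is a bit more explicit than the paper's, but the argument is the same.
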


\begin{proof}
Let $F\in B_b(\cX)$.
Note that

\[
(\E\Th_{-t}F)(\x) = \int_\cX F(\vt_{-t}\y) \k (\x, d\y) = \int_\cX F(\z) \k (\x, d\z\circ(\vt_{-t})^{-1}), 
\]
where we write $\k(\x, d\z\circ(\vt_{-t})^{-1})$ for the push-forward measure of $\k(\x, \cdot)$ under the map $\vt_{-t}$.
Therefore
\[
(\E_t F)(\x) = (\Th_t \E \Th_{-t} F)(\x) = (\E\Th_{-t}F)(\vt_t\x) = \int_\cX F(\z)\k (\vt_t\x, d\z\circ (\vt_{-t})^{-1}).
\]
It follows that
\begin{align*}
(\E \E_t F)(\x) & = \int_\cX (\E_t F)(\y) \k (\x, d\y)\\
& = \int_\cX \int_\cX F(\z) \k (\vt_t \y, d\z\circ (\vt_{-t})^{-1}) \k (\x, d\y)\\
& = \int_\cX F(\z)\int_\cX \k (\vt_t \y, d\z\circ(\vt_{-t})^{-1}) \k (\x, d\y).
\end{align*}
This is the same as $(\E F)(\x)$ for all $F\in B_b(\cX)$ if and only if
\[
\int_\cX \one_A(\z)\int_\cX \k(\vt_t\y, d\z\circ(\vt_{-t})^{-1}) \k(\x, d\y) =\int_\cX \k(\vt_t \y, \vt_tA) \k(x, d\y) = \k(\x, A)
\]
for all $\x\in \cX$ and $A\in \cB(\cX)$.
\end{proof}

\begin{thm}\label{t.expectation_semigroup}
Let $(\T (t))_{t\geq 0}$ be a semigroup of Markovian kernel operators on $\bm$.
 The following are equivalent:
\begin{enumerate}
[\upshape (i)]
\item For every $F\in \bm$ and $t\geq 0$, it is $\T(t)\Th_{-t}F =F$. In particular,
if $F$ is determined before $-t_0 < 0$, then $\T(t)F = \Th_t F$ for all $0\leq t \leq t_0$.
\item There exists a homogeneous expectation operator $\E$ such that
\begin{equation}\label{eq.representation}
\T(t) =\E \Th_t\quad \mbox{for all } t\geq 0.
\end{equation}
\end{enumerate}
Conversely, if $\E$ is a homogeneous expectation operator, then Equation \eqref{eq.representation}
defines a semigroup of Markovian kernel operators on $\bm$, which satisfies condition {\upshape (i)}.
\end{thm}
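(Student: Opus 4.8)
The plan is to prove the equivalence by establishing the cycle (ii) $\Rightarrow$ (i) $\Rightarrow$ (ii), together with the converse statement, which really just amounts to checking the semigroup law and condition (i) for $\T(t) = \E\Th_t$ when $\E$ is a homogeneous expectation operator. I would begin with the converse/constructive direction since it does most of the work. Given a homogeneous expectation operator $\E$, I first note that $\T(t) = \E\Th_t$ is a Markovian kernel operator on $B_b(\cX)$ (composition of such), and by Proposition \ref{p.expectation_operator}(i) its range lies in $B_b(\cX,\cF_{0-})$; moreover if $F\in\bm$ then $\Th_t F$ is $\cF_{t-}$-measurable by Corollary \ref{c.measurability}, but that is not what we need — we need $\T(t)$ to map $\bm$ into $\bm$, which is immediate since $\E$ always produces $\cF_{0-}$-measurable functions. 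So $\T(t)\in\cL(\bm,\sigma)$.

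The main point of the constructive direction is the semigroup law $\T(s)\T(t) = \T(s+t)$. Here is where homogeneity enters. Write $\T(s)\T(t) = \E\Th_s\E\Th_t$. The obstacle is to move $\Th_s$ past $\E$: one has $\Th_s\E = \Th_s\E\Th_{-s}\Th_s = \E_s\Th_s$ by Definition \ref{d.expectation}. Hence
\[
\T(s)\T(t) = \E\E_s\Th_s\Th_t = \E\E_s\Th_{s+t} = \E\Th_{s+t} = \T(s+t),
\]
using $\E\E_s = \E$ (homogeneity) and the group law $\Th_s\Th_t = \Th_{s+t}$ from Proposition \ref{p.derivation}. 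Also $\T(0) = \E\Th_0 = \E$ acts as the identity on $\bm$ by Proposition \ref{p.expectation_operator}(i). This is the step I expect to be the crux: recognizing that the ``twisted'' operators $\E_t$ are exactly the device that makes $\Th_s$ commute past $\E$ up to the homogeneity relation, and that homogeneity is precisely the hypothesis needed to close the computation. For condition (i): given $F\in\bm$ and $t\ge 0$, $\T(t)\Th_{-t}F = \E\Th_t\Th_{-t}F = \E F = F$ since $F$ is $\cF_{0-}$-measurable; the ``in particular'' clause then follows because if $F$ is determined before $-t_0$, then for $0\le t\le t_0$ we have $\Th_{-t}F$ still $\cF_{(-t_0+t)-}\subset\cF_{0-}$-measurable by Corollary \ref{c.measurability}, hence $\Th_t F = \T(t)\Th_{-t}(\Th_t F) \cdot(\ldots)$ — more directly, apply the identity with $\Th_t F$ in place of $F$: since $\Th_t F$ is $\cF_{0-}$-measurable we get $\T(t)\Th_{-t}\Th_t F = \Th_t F$, i.e.\ $\T(t)F = \Th_t F$.

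It remains to prove (i) $\Rightarrow$ (ii). Assuming $\T(t)\Th_{-t}F = F$ for all $F\in\bm$, define $\E \coloneqq \T(0)$. Since $\T(0)$ is a Markovian kernel operator and $\T(0)^2 = \T(0)$ by the semigroup law, $\E$ is an idempotent Markovian kernel operator. To identify $\E$ as an expectation operator I will verify Proposition \ref{p.expectation_operator}(i): first, applying (i) with $t=0$ gives $\T(0)F = F$ for all $F\in\bm$, so $\E$ fixes $\cF_{0-}$-measurable functions; second, for the range, one needs $\E F\in B_b(\cX,\cF_{0-})$ for general $F\in B_b(\cX)$ — but here $\E$ is only defined a priori on $\bm$, so I must either extend $\E$ to all of $B_b(\cX)$ via $\E\coloneqq \lim_{t\downarrow 0}\T(t)\Th_{-t}$ on a suitable bp-dense set and argue it is well-defined, or build $\E$ directly from the kernel of $\T(t)$. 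The cleaner route: set $\E F \coloneqq \T(t)\Th_{-t}F$ for $F\in B_b(\cX)$, show this is independent of $t\ge 0$ using the semigroup law and (i) (for $s\le t$, $\T(t)\Th_{-t}F = \T(s)\T(t-s)\Th_{-(t-s)}\Th_{-s}F = \T(s)\Th_{-s}F$ once we know $\T(t-s)\Th_{-(t-s)}G = G$ for $G = \Th_{-s}F\in\bm$, which holds since $\Th_{-s}F$ is $\cF_{-s-}$-measurable hence in $\bm$), so $\E$ is well-defined on $B_b(\cX)$, extends $\T(0)$, and $\E F$ is $\cF_{0-}$-measurable because $\E F = \T(t)\Th_{-t}F$ and $\T(t)$ maps into $\bm$. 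Then $\T(t) = \E\Th_t$: indeed $\E\Th_t F = \T(t)\Th_{-t}\Th_t F = \T(t)F$. Finally, homogeneity of $\E$ follows from the semigroup law: $\E\E_t = \E\Th_t\E\Th_{-t} = \T(t)\E\Th_{-t} = \T(t)\T(0)\Th_{-t}$, and using $\T(t)\T(0) = \T(t)$ this equals $\T(t)\Th_{-t} = \E$. I expect the only delicate bookkeeping here to be the well-definedness of the extended $\E$ on $B_b(\cX)$ and making sure the kernel-operator structure (hence $\sigma$-continuity) is preserved, but this is routine given Lemma \ref{l.kernelop} and the fact that $\Th_{-t}$ is a kernel operator.
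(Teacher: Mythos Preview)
Your treatment of the converse direction and of (ii) $\Rightarrow$ (i) is correct and matches the paper's argument essentially verbatim (the paper writes the semigroup computation as $\E\Th_t\E\Th_s = \E\Th_t\Th_s$, which is the same identity as your $\E\E_s = \E$ after right-multiplying by $\Th_{-s}$).

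There is, however, a genuine gap in your (i) $\Rightarrow$ (ii). You propose to set $\E F \coloneqq \T(t)\Th_{-t}F$ ``for $F\in B_b(\cX)$'' and show independence of $t$. But $\T(t)$ is only defined on $\bm$, and $\Th_{-t}F\in\bm$ holds only when $F\in B_b(\cX,\cF_{t-})$. So your formula defines $\E$ consistently only on the union $\bigcup_{t\ge 0} B_b(\cX,\cF_{t-})$, not on all of $B_b(\cX)$. A generic $F\in B_b(\cX)$ is not determined before any finite time, so no single $t$ works. You then need to \emph{extend} $\E$ to a kernel operator on $B_b(\cX)$, and this is the nontrivial step you dismiss as ``routine''.

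Concretely, what you have is, for each fixed $\x$, a finitely additive set function $\k(\x,\cdot)$ on the algebra $\cD = \bigcup_{s\ge 0}\cF_{s-}$. To get an expectation operator you must extend $\k(\x,\cdot)$ to a $\sigma$-additive measure on $\bo(\cX) = \sigma(\cD)$. This does not follow from bp-density of the union (bp-continuity on a bp-dense set does not automatically propagate, since the approximating functions may live in $\cF_{t_n-}$ with $t_n\to\infty$). The paper handles this with a tightness argument: each $\k_s(\x,\cdot)$ is a Radon measure on the Polish space $\cX_s = \tau^s(\cX)$, so for $A\in\cD$ and $\eps>0$ one finds a compact $K\subset\cX$ with $K\in\cD$ and $\k(\x,A\setminus K)\le\eps$; this inner regularity forces $\sigma$-additivity on $\cD$ (via \cite[Theorem 1.4.3]{bogachev2007}), after which Carath\'eodory gives the extension to $\bo(\cX)$. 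This is where the Polish structure of the path space is actually used, and without it the construction of $\E$ as a kernel operator is incomplete.
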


\begin{proof}
(i) $\Rightarrow$ (ii). Fix $s\geq 0$. If  $F$ is determined before $s$, Lemma \ref{l.measurability} implies that
$\Th_{-s}F$ is determined before $0$. Thus, the expression $\T(s)\Th_{-s}F$ is well-defined
and $F\mapsto \T(s)\Th_{-s}F$ defines a $\sigma$-continuous mapping from $B_b(\cX, \cF_{s-})$ to $B_b(\cX, \cF_{0-})$.
Consequently, we find a kernel $\k_s$ such that
\[
\T(s)\Th_{-s}F(\x) = \int_\cX F(\y)\k_s(\x, d\y)
\]
for all $F\in B_b(\cX, \cF_{s-})$. It follows from Lemma \ref{l.measurability} that for $F\in B_b(\cX, \cF_{s-})$ the function
 $\Th_{-(r+s)}F$ is determined before $-r <0$.
Consequently, (i) implies that $\Th_r\Th_{-(r+s)}F = \T(r)\Th_{-(r+s)}F$ and thus
\[
\T(s)\Th_{-s}F = \T(s)\Th_r\Th_{-(r+s)}F = \T(s)\T(r)\Th_{-(r+s)}F = \T(s+r)\Th_{-(r+s)}F.
\]
This shows that $\k_s(\x, A) = \k_{s+r}(\x, A)$
for all $r\geq 0$ and $A\in \cF_{s-}$. We may thus define
\[
\k(\x, A) \coloneqq \k_s(\x, A)
\]
whenever $A\in \cF_{s-}$. In this way, $\k (\x, \cdot)$ defines a finitely additive measure on $\cD = \bigcup_{s\geq 0}\cF_{s-}$.\smallskip

Note that $\k_s(\x, \cdot)$ is even $\sigma$-additive on $\cF_{s-}$ and thus can be viewed
as a Radon measure 
on the Polish space $\cX_s = \tau^s(\cX)$. Consequently, given $\eps>0$ and  $A\in \cF_{s-}$, we find a 
compact subset $K\subset \cX_s$ such that $\k_s(\x, A\setminus K) \leq \eps$. Note that $K$ is also a compact subset of $\cX$. 
This shows that given $A\in \cD$ and $\eps>0$ we find a compact
subset $K$ of $\cX$ that also belongs to $\cD$ such that $\k(\x, A\setminus K)\leq \eps$. At this point
\cite[Theorem\ 1.4.3]{bogachev2007} implies that $\k(\x, \cdot)$ is $\sigma$-additive on $\cD$, whence it can be
extended to a measure on $\bo(\cX)$ by means of the Carath\'{e}odory theorem (see \cite[Cor.\ 1.11.9]{bogachev2007}).

We note that the map $\x\mapsto \k(\x, A)$ is measurable whenever $A\in \cD$. By a monotone class argument, this
extends to arbitrary $A\in \bo(\cX)$, proving that $\k$ is a kernel. We may thus define
\[
[\E F](\x) \coloneqq \int_{\cX} F(\y) \k (\x, d\y)
\]
for all $F\in B_b(\cX)$. By construction, $\E F = \T(s)\Th_{-s} F$ whenever
$F$ is determined before $s\geq 0$. Applying this to $F= \Th_sG$ for some $G\in \bm$ 
yields $\T(s)G = \E\Th_sG$ for all $G\in \bm$. In particular,  $\E G = G$
for $G\in \bm$. 

On the other hand, as $\T(s)\Th_{-s}F$ is $\cF_{0-}$-measurable for all $F\in B_b(\cX, \cF_s)$ it follows that
$\k_s(\x, \cdot) = \k_s(\tau(\x), \cdot)$. As $s$ is arbitrary, $\k(\x, A) = \k(\tau(\x), A)$ whenever $A\in \cD$. A monotone class argument
extends this to $A\in \bo(\cX)$ and it follows that $\E F$ is $\cF_{0-}$-measurable for all $F\in B_b(\cX)$. Thus,
condition (i) of Proposition \ref{p.expectation_operator} is satisfied, proving that $\E$ is an expectation operator.\smallskip

It remains to prove that $\E$ is homogeneous. To that end, note that if $F\in \bm$, then
\[
\E\Th_t\E\Th_s F = \T(t)\T(s)F = \T(t+s)F = \E\Th_t\Th_sF
\]
for all $t,s\geq 0$. Putting $G=\Th_sF$, it follows from Lemma \ref{l.measurability} that $G$ is determined before $s$ and that
any $G$ determined before $s$ can be written in this form. This implies that $\E\Th_t\E G = \E\Th_t G$ for every $G$
that is determined before some time $s\geq 0$ and thus $\E\Th_t\E\one_A = \E\Th_t\one_A$ for
every $A\in \cD \coloneqq \bigcup_{s>0}\cF_{s-}$. As this is a generator for $\bo(\cX)$,
measure theoretic induction yields $\E\Th_t\E = \E\Th_t$ proving that $\E$ is homogeneous.\medskip

(ii) $\Rightarrow$ (i).  If $\T(t)$ is defined by Equation \eqref{eq.representation}, then
for $F\in \bm$
\[
\T(t)\Th_{-t} F = \E\Th_t\Th_{-t} F = \E F = F
\]
since $\E$ is an expectation operator. This proves (i). For the addendum observe that if $\T$ is not a priori assumed
to be a semigroup but given by Equation \eqref{eq.representation}, then the semigroup law follows from 
the homogeneity of $\E$. Indeed, as $\E$ is homogeneous,  $\E\Th_t\E = \E\Th_t$ for every $t\geq 0$ and hence
\[
\T(t)\T(s) = \E\Th_t\E\Th_s = \E\Th_t\Th_s = \E\Th_{t+s} =\T(t+s)
\]
for all $t,s\geq 0$.
\end{proof}

\begin{defn}
A semigroup $\T$ of Markovian kernel operators on $\bm$ is called \emph{evolutionary} if it satisfies the equivalent conditions of Theorem \ref{t.expectation_semigroup}. In this case, the operator $\E$ is called the \emph{associated expectation operator} of $\T$.
\end{defn}

We next show that the operator $\E_t$ can be interpreted as `conditional expectations given $\cF_{t-}$'. Moreover, we obtain a property similar
to the Markov property for evolutionary semigroups.

\begin{prop}\label{p.markov}
Let $(\T(t))_{t\geq 0}$ be an evolutionary semigroup with expectation operator $\E$. We denote the kernel of $\E$ by
$\k$ and write $\P^\x$ for the probability measure $\k(\x, \cdot)$ and $\E^\x$ for the (conditional) expectation with respect to
$\P^\x$.
\begin{enumerate}
    [\upshape (a)]
    \item For every $F\in \B_b(\cX)$ and $t\geq 0$, it is
    \[
    \big[\E^\x[F|\cF_{t-}]\big](\y) = [\E_t F](\y) \quad \mbox{for } \P^\x\mbox{-a.e. } \y.
    \]
    \item For every $s,t\ge 0$, 
    $F\in  \bm$ and $\x\in\cX$, we have
    \[
    \big[\E^{\x}[ \Th_{t+s} F \big| \cF_{t-}]\big](\y) = \big[\T(s)F\big](\vt_t\y)\quad \mbox{for } \P^\x\mbox{-a.e. } \y.
    \]
\end{enumerate}
\end{prop}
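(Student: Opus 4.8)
The plan is to prove both statements by reducing them to the kernel-level identity in Proposition~\ref{p.expectation_operator}(iii), together with the homogeneity of $\E$. Throughout I write $\k$ for the kernel of $\E$, so that $\P^\x = \k(\x,\cdot)$, and I recall that by Lemma~\ref{l.expectationop} homogeneity says $\int_\cX \k(\vt_t\y, \vt_t A)\,\k(\x,d\y) = \k(\x,A)$ for all $\x$, $t\ge 0$, $A\in\bo(\cX)$.

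\textbf{Part (a).} To show that $\E_t F$ is a version of $\E^\x[F\mid \cF_{t-}]$ under $\P^\x$, I must check two things: that $\E_t F$ is $\cF_{t-}$-measurable, and that $\int_{A}\E_t F\,d\P^\x = \int_A F\,d\P^\x$ for every $A\in\cF_{t-}$. Measurability is immediate: $\E F$ is $\cF_{0-}$-measurable by the definition of an expectation operator, so $\E\Th_{-t}F = \E(\Th_{-t}F)$ is $\cF_{0-}$-measurable, and then $\E_t F = \Th_t\E\Th_{-t}F$ is $\cF_{t-}$-measurable by Corollary~\ref{c.measurability}. For the averaging identity, it suffices to take $A$ of the form $A = (\vt_{-t})^{-1}(B)\cap\cX$, i.e.\ $\one_A = \Th_{-t}\one_B$ with $\one_B$ determined before $0$; such sets generate $\cF_{t-}$. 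Then $\int_A \E_t F\,d\P^\x = [\E(\one_A \cdot \E_t F)](\x)$, and since $\one_A$ is $\cF_{0-}$-measurable I can pull it out using Proposition~\ref{p.expectation_operator}(iv): this equals $[\E(\one_A \E_t F)](\x)$. Unwinding $\E_t = \Th_t\E\Th_{-t}$ and using $\Th_t(\one_A G) = (\Th_t\one_A)(\Th_t G) = \one_B\,\Th_t G$, together with another application of the locality of $\E$, I arrive at $[\E\Th_t(\one_B\,\E\Th_{-t}F)](\x) = [\E\Th_t\E(\one_B\Th_{-t}F)](\x)$; by homogeneity $\E\Th_t\E = \E\Th_t$, so this is $[\E\Th_t(\one_B\Th_{-t}F)](\x) = [\E(\one_A F)](\x) = \int_A F\,d\P^\x$. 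That is exactly the defining property of the conditional expectation.

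\textbf{Part (b).} Apply part (a) with $F$ replaced by $\Th_{t+s}F$ (a bounded measurable function on $\cX$, so part (a) applies):
\[
\big[\E^\x[\Th_{t+s}F\mid\cF_{t-}]\big](\y) = [\E_t\Th_{t+s}F](\y) = [\Th_t\E\Th_{-t}\Th_{t+s}F](\y) = [\Th_t\E\Th_s F](\y)
\]
for $\P^\x$-a.e.\ $\y$. Now $\E\Th_s = \T(s)$ since $\T$ is evolutionary with expectation operator $\E$, so the right-hand side is $[\Th_t\T(s)F](\y) = [\T(s)F](\vt_t\y)$, which is the claimed formula. One should double-check that $\T(s)F\in\bm$ so that $[\T(s)F](\vt_t\y)$ is literally $[\Th_t\T(s)F](\y)$; this holds because $\T(s)$ maps $\bm$ into itself.

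\textbf{Main obstacle.} The only delicate point is the manipulation in part (a) showing $\int_A\E_t F\,d\P^\x = \int_A F\,d\P^\x$ for $A\in\cF_{t-}$ — specifically, keeping track of which functions are $\cF_{0-}$- versus $\cF_{t-}$-measurable as the shift operators $\Th_{\pm t}$ move them around, and justifying the two uses of the locality identity $\E(FG)=F\,\E G$ (Proposition~\ref{p.expectation_operator}(iv)) and the homogeneity identity $\E\Th_t\E=\E\Th_t$. A clean way to organize this is to first verify the identity for $A$ a cylinder set determined before time $t$, i.e.\ $A=\{\y: \y(t_j)\in A_j,\ t_j<t\}$, where everything is explicit, and then extend to all of $\cF_{t-}$ by a monotone class / Dynkin argument, exactly as in the proof of Theorem~\ref{t.expectation_semigroup}. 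The integrability and measurability bookkeeping is routine but needs care; no genuinely new idea is required beyond the identities already established.
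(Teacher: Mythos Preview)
Your approach is essentially the paper's, but the write-up of part (a) has a sign error that propagates. You set $\one_A = \Th_{-t}\one_B$ with $B\in\cF_{0-}$; by Corollary~\ref{c.measurability} this makes $A\in\cF_{(-t)-}$, not $\cF_{t-}$, so such sets do \emph{not} generate $\cF_{t-}$. The correct relation is $\one_B = \Th_{-t}\one_A$, i.e.\ $\one_A = \Th_t\one_B$. With this correction $\one_A$ is $\cF_{t-}$-measurable but \emph{not} $\cF_{0-}$-measurable, so your first ``pull it out'' step is unjustified --- and in any case vacuous, since it rewrites $\E(\one_A\,\E_tF)$ as itself. The real computation is the one in your next sentence, and with the sign fixed it reads correctly:
\[
\E(\one_A\,\E_tF) = \E\Th_t\bigl(\one_B\cdot\E\Th_{-t}F\bigr) = \E\Th_t\E\bigl(\one_B\,\Th_{-t}F\bigr) = \E\Th_t\bigl(\one_B\,\Th_{-t}F\bigr) = \E(\one_A F),
\]
using Proposition~\ref{p.expectation_operator}(iv) for the second equality (here $\one_B\in\bm$) and homogeneity $\E\Th_t\E=\E\Th_t$ for the third. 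This is exactly the paper's proof, which organises the same identities as first establishing $\E_t(\one_AF)=\one_A\,\E_tF$ and then applying $\E\E_t=\E$. Your closing remark about cylinder sets and a monotone class argument is unnecessary: the displayed identity holds directly for every $A\in\cF_{t-}$, since every such $A$ is of the form $\vt_t^{-1}(B)$ with $B\in\cF_{0-}$. Part (b) is correct and identical to the paper's.
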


\begin{proof}
(a). Fix $F\in B_b(\cX)$ and $t\geq 0$. If $A\in \cF_{t-}$, then $\one_A \in B_b(\cX, \cF_{t-})$ and thus $\Th_{-t}\one_A \in \bm$. By Proposition \ref{p.expectation_operator}(iv), we have
\[
\E [\Th_{-t}\one_AF] = \E [(\Th_{-t}\one_A)(\Th_{-t}F)] = (\Th_{-t}\one_A)\E [\Th_{-t}F].
\]
Applying $\Th_t$ on both sides, it follows that $\E_t (\one_A F) = \one_A \E_t F$. Using this, we see that for $\x\in \cX$
\begin{align*}
    \E^\x[\one_A F] & = [\E(\one_A F)](\x) = [\E \E_t (\one_A F)](\x) = \E[\one_A\E_t F](\x) = \E^\x[\one_A \E_tF].
\end{align*}
Here, we have used homogenity of $\E$ in the second equality. As $A\in \cF_{t-}$ was arbitrary, (a) is proved.

(b). Fix  $s,t\ge 0$ and $F\in  \bm$. It follows from (a) that  for $\P^\x\mbox{-a.e. } \y$,
\[
\big[\E^\x[\Th_{t+s}F|\cF_{t-}]\big](\y) = [\E_t(\Th_{t+s}F)](\y) = [\Th_t\E\Th_s F](\y) = [\T(s)F](\vt_t \y).\qedhere
\]
\end{proof}

As already mentioned in the introduction, in (stochastic) delay equations the case of finite delay $h>0$ is of particular interest
and in this case, one often uses a space of functions on the interval $[-h, 0]$ as a state space. In our more general
framework we identify functions on the interval $[-h, 0]$ with functions on $\cX^-$ that are
measurable with respect to $\cF([-h,0))$.

\begin{lem}\label{l.invariance}
Let $\T$ be an evolutionary semigroup with expectation operator $\E$ and $h>0$. The following are equivalent:
\begin{enumerate}
[\upshape (i)]
\item $\T(t)\bb{[-h,0)}\subset \bb{[-h,0)}$ for all $t>0$.
\item $\E  \bb{[0,\infty)} \subset \bb{[-h,0)}$.
\end{enumerate}
\end{lem}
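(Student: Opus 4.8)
The plan is to prove both implications by exploiting the representation $\T(t) = \E\Th_t$ together with the shift behaviour of the $\sigma$-algebras $\cF(I)$ recorded in Corollary \ref{c.measurability}, and the locality/homogeneity properties of $\E$ from Proposition \ref{p.expectation_operator} and Definition \ref{d.expectation}. The key observation is that for $t > 0$ the shift $\Th_t$ maps $\bb{[-h,0)}$ into $\bb{[-h+t,t)} \subset \bb{[-h,\infty)} \cap \bb{(-\infty,t)}$, while it maps $\bb{[0,\infty)}$ into $\bb{[t,\infty)} \subset \bb{[0,\infty)}$; conversely $\Th_{-t}$ maps $\bb{[-h,0)}$ onto $\bb{[-h-t,-t)}$ and in particular into $\bb{(-\infty,0)}$, so that $\E\Th_{-t}F = \Th_{-t}F$ for such $F$. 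These reductions turn the invariance statements into statements purely about where $\E$ sends functions determined on certain time-windows.

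\textbf{(ii) $\Rightarrow$ (i).} First I would take $F \in \bb{[-h,0)}$ and $t > 0$, and write $\T(t)F = \E\Th_t F$. Since $\Th_t F \in \bb{[-h+t,t)}$ by Corollary \ref{c.measurability}, it is in particular $\cF([-h+t,\infty))$-measurable, hence (as $-h+t \geq -h$) also $\cF([-h,\infty)) = \cF[-h,\infty)$-measurable, i.e.\ $\Th_t F \in \bb{[-h,\infty)}$. Now I would like to split $\Th_t F$ as a function determined before time $0$ times something in $\bb{[0,\infty)}$, but this is not literally possible for a general $F$; instead the cleanest route is a measure-theoretic induction. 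It suffices to establish (i) for $F$ of the form $F = F_1 \cdots F_k$ with each $F_j = F_{t_j}(\one_{A_j})$ for $t_j \in [-h,0)$ — indicator functions of cylinder sets in $\bb{[-h,0)}$ generate $\bb{[-h,0)}$ and $\E$, $\Th_t$ are $\sigma$-continuous. Splitting such an $F$ into the product of those factors with $t_j + t < 0$ (which lies in $\bb{[-h,0)}$, hence equals its own $\E$-image by the expectation property after noting $\Th_t$ keeps it determined before $0$... ) and those with $t_j + t \geq 0$ (which lies in $\bb{[0,\infty)}$), one applies locality $\E(GH) = G\,\E H$ for $G$ determined before $0$, and then (ii) to the second factor, concluding $\T(t)F \in \bb{[-h,0)}$.

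\textbf{(i) $\Rightarrow$ (ii).} Conversely, given $G \in \bb{[0,\infty)}$, I would pick $t = h$ and consider the function $\Th_{-h}G$, which by Corollary \ref{c.measurability} lies in $\bb{[-h,\infty)}$; combined with an arbitrary bounded $\cF_{(-h)-}$-measurable factor one could try to isolate a piece in $\bb{[-h,0)}$. The more direct argument: apply (i) to a suitable function to recover $\E G$. Namely, by Theorem \ref{t.expectation_semigroup}(i), for $F \in \bm$ one has $\T(t)F = \Th_t F$ whenever $F$ is determined before $-t$, but more useful is that $\E G = \T(h)\Th_{-h}G$ — indeed $\Th_{-h}G \in \bb{[-h,0)}$ since shifting $\bb{[0,\infty)}$ by $-h$ gives $\bb{[-h,\infty)}$, and one must also check it is $\cF_{0-}$-measurable: it is $\cF([-h,\infty))$-measurable but need not a priori be $\cF_{0-}$-measurable. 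Here is the fix: $\Th_{-h}G$ is $\cF[-h,\infty) = \cF([-h,\infty))$-measurable, so $\Th_h \Th_{-h} G = G$ and $\E\Th_{-h}G = \T(h)\cdot(\text{something})$... Let me instead argue $\E G = \E \Th_h \Th_{-h} G = \T(h)(\Th_{-h}G)$ directly, and $\Th_{-h}G \in \bb{[-h,0)}$ once we know it is determined before $0$; but a function in $\bb{[0,\infty)}$ shifted by $-h$ is determined before $\infty$ and after $-h$, i.e.\ in $\bb{[-h,\infty)}$, \emph{not} automatically in $\bb{[-h,0)}$. So the honest route is: write $\E G = \E \Th_h (\Th_{-h} G)$ and observe $\Th_{-h}G$ is still in $\bb{[0,\infty)}$ is false; rather, first truncate. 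I would instead use that any $G \in \bb{[0,\infty)}$ is a bp-limit of products of $F_{t_j}(\one_{A_j})$ with $t_j \geq 0$, reduce to such products, shift by $-h$ to land in $\bb{[-h,\infty)}$, then split into the subproduct with $t_j - h < 0$ (in $\bb{[-h,0)}$, and determined before $0$) and the subproduct with $t_j - h \geq 0$ (determined after $0$, hence equal to $\Th_h$ of something determined before... ), and finally recognize $\E G = \T(h)$ applied to the $\bb{[-h,0)}$-part times the rest, invoking (i) together with locality and $\sigma$-continuity to conclude $\E G \in \bb{[-h,0)}$.

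\textbf{Main obstacle.} The delicate point throughout is the bookkeeping of which precise $\sigma$-algebra $\E\Th_t F$ or $\Th_{-h}G$ is measurable with respect to — one repeatedly needs that shifting by $t$ moves the window $[a,b)$ to $[a+t,b+t)$ (Corollary \ref{c.measurability}) and that $\E$ lands in $\cF_{0-}$ and acts as the identity on $\cF_{0-}$-measurable functions while being local, and one must be careful that a function determined on $[-h,\infty)$ is genuinely split into its $\cF_{0-}$-part and its $\cF([0,\infty))$-part only \emph{after} reducing to cylinder-type products. I expect the cleanest writeup reduces everything to finite products $\prod_j F_{t_j}(\one_{A_j})$ via Lemma \ref{l.algebradense}-style bp-density of cylinder functions in each $\bb{I}$, then uses $\sigma$-continuity of $\E$ and $\Th_t$ to pass to the limit; the honest technical core is the observation that $\T(h)$ applied to a function $H \in \bb{[-h,0)}$ equals $\E\Th_h H$ with $\Th_h H \in \bb{[0,\infty)}$, which is exactly the bridge between (i) and (ii).
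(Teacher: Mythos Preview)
Your direction (ii) $\Rightarrow$ (i) is correct and is exactly the paper's argument: reduce to products $\prod_j F_{t_j}(f_j)$ with $t_j \in [-h,0)$, shift by $t$, split the factors according to whether $t_j+t<0$ or $t_j+t\ge 0$, pull the first group out of $\E$ by locality (Proposition~\ref{p.expectation_operator}(iv)), and apply (ii) to the second group.

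The direction (i) $\Rightarrow$ (ii), however, has a real gap. Your ``bridge'' observation is correct and important: assumption (i) applied with $t=h$ gives exactly $\E\bb{[0,h)}\subset\bb{[-h,0)}$, since every $G\in\bb{[0,h)}$ is $\Th_h H$ for some $H=\Th_{-h}G\in\bb{[-h,0)}$. But then you try to extend to all of $\bb{[0,\infty)}$ by shifting a cylinder product $G$ back by $h$, splitting $\Th_{-h}G = H_1 H_2$ with $H_1\in\bb{[-h,0)}$ and $H_2\in\bb{[0,\infty)}$, and writing ``$\E G = \T(h)$ applied to the $\bb{[-h,0)}$-part times the rest''. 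This does not parse: $H_1H_2\notin\bm$, so $\T(h)(H_1H_2)$ is undefined, and $H_2$ lives in the very space $\bb{[0,\infty)}$ you are trying to handle, so nothing has been reduced. Locality alone cannot close this loop.

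What is missing is an explicit use of \emph{homogeneity} $\E=\E\E_t$ (Definition~\ref{d.expectation}) to set up an induction. The paper does this by inducting on the number of factors: for $F=F_{t_1}(f_1)\cdots F_{t_{n+1}}(f_{n+1})$ with $0\le t_1<\cdots<t_{n+1}$, one writes $\E F=\E\Th_{t_1}\E\big[F_0(f_1)\,\Th_{-t_1}G\big]$ via homogeneity, replaces $F_0(f_1)$ by the $\cF_{0-}$-measurable $F_0^\star(f_1)$ (this is the content of Step~1, which shows $\E F_0(f)=F_0^\star(f)$ and hence that $\y(0)=\y(0-)$ holds $\k(\x,\cdot)$-a.s.), pulls $F_0^\star(f_1)$ out by locality, and applies the inductive hypothesis to the remaining $n$-fold product $\Th_{-t_1}G$. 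The $F^\star$-construction is what makes the base case work: $F_0^\star(f)$ genuinely lies in $\bb{[-h,0)}$, while $F_0(f)$ does not. Your shift-by-$h$ scheme can in fact be repaired along similar lines (iterate $\E=\E\E_h$ until all time points drop below $h$, using the bridge at each stage), but you must invoke homogeneity to make the recursion well-founded; as written the argument is circular.
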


\begin{proof}
(i) $\Rightarrow$ (ii).  We prove $\E F \in \bb{[-h,0)}$ for $F\in \bb{[0,\infty)}$ in several steps.

\emph{Step 1.} We prove the assertion for $F= F_t(f)$ with $t\geq 0$ and $f\in C_b(X)$.

To that end, fix $t\geq 0$ and $f\in C_b(X)$. For every $s>0$ we have
\[
\E F_{t+s}^\star(f) = \E \Th_{t+s}F_0^\star(f) = \T(t+s)F_0^\star(f) \in \bb{[-h,0)}
\]
by (i), as $F_0^\star{f} \in \bb{[-h,0)}$. For every $\x\in \cX$ it is $[F_{t+s}^\star(f)](\x) =[F_{t+s}(f)](\x)$ for almost
every $s>0$. By right continuity of the paths, 
$F_{t+s}^\star(f)$ converges pointwise to $F_t(f)$ as $s\to 0$. Since $\E$ is bp-continuous and $\bb{[-h,0)}$
is bp-closed, it follows that $\E F_t(f) \in \bb{[-h,0)}$ as claimed. Note that this shows in particular
that $F_0^\star(f) = \E F_0^\star(f) = \E F_0(f)$.
\smallskip

\emph{Step 2.} We prove the assertion for $F = \prod_{j=1}^n F_{t_j}(f)$ with $0\leq t_1 < \ldots < t_n$ and $f_1, \ldots, f_n \in C_b(X)$.

This is proved by induction over $n$. The case $n=1$ is exactly Step 1. Assuming the claim to be proved for the product of $n$ functions,
let $0\leq t_1 < \ldots < t_n < t_{n+1}$ and $f_1, \ldots f_{n+1}\in C_b(X)$ be given. We put $G= \prod_{j=2}^{n+1}F_{t_j}(f_j)$, 
so that $F = \prod_{j=1}^{n+1}F_{t_j}(f_j)=F_{t_1}(f_1)G$. It follows that
\begin{align*}
\E F & = \E F_{t_1}(f_1)G = \E \Th_{t_1} F_0(f_1)\Th_{-t_1}G = \E \Th_{t_1}\E F_0(f_1)\Th_{-t_1}G\\
& = \E \Th_{t_1}\E F_0^\star (f_1)\Th_{-t_1}G = \E \Th_{t_1}F_0^\star (f_1)\E\Th_{-t_1}G\\ 
& = \T(t_1)\big[F_{0}^\star(f_1)\E\Th_{-t_1} G\big] \in \bb{[-h,0)}.
\end{align*}
In the above calculation, the third equality uses that $\E$ is homogeneous (whence $\E\Th_{t_1}\E = \E\Th_{t_1}$), the fourth follows from Step 1
and the fifth from Lemma \ref{l.expectationop}(iv). By induction hypothesis, $\E\Th_{-t_1}G \in \bb{[-h,0)}$
so that also $F_{0}^\star(f_1)\E\Th_{-t_1} G \in \bb{[-h,0)}$. Using our assumption (i) again, 
it follows that indeed $\E F\in \bb{[-h,0)}$.\smallskip

\emph{Step 3.} We prove the general case.

By a bp-closedness argument, it follows that the assertion of Step 2 is still valid for $f_1, \ldots, f_n \in B_b(X)$. In particular,
 $\E F \in \bb{[-h,0)}$ whenever $F=\one_{A}$ where $A$ is a cylinder set of the form
 \[
 A =\{ \x : \x(t_j) \in A_j \text{ for } j=1,\dots,n\}
 \]
for some $0\leq t_1 < \ldots < t_n$ and $A_1, \ldots, A_n \in \bo(X)$. As these sets are a generator of $\cF([0,\infty))$ which is stable under
intersections, another bp-closedness argument shows that $\E \one_A \in \bb{[-h,0)}$ for all $A\in \cF([0,\infty))$ and the general
case follows from linearity and yet another bp-closedness argument.\medskip

(ii) $\Rightarrow$ (i). Let $F= \prod_{j=1}^nF_{t_j}(f_j)$ where $-r\leq t_1< \ldots < t_n < 0$ and $f_1, \ldots f_n \in B_b(X)$.
Note that $\Th_tF = \prod_{j=1}^n F_{t_j+t}(f_j)$. We pick the index $k$ such that $t_k+t < 0 \le t_{k+1} +t$. It follows that
\[
\T(t)F = \E\Th_{t}F = \E \prod_{j=1}^n F_{t_j+t}(f) = \prod_{j=1}^k F_{t_j+t}(f)\E \prod_{j=k+1}^n F_{t_j+t}(f)
\]
by Proposition \ref{p.expectation_operator}(iv). At this point, (ii) implies $\T(t)F \in \bb{[-r,0)}$. 
As $\cF ([-h,0)) =\sigma (\pi_t : - h\leq t < 0)$,
measure theoretic induction shows $\T(t)F \in \bb{[-h,0)}$ for all $F\in \bb{[-h,0)}$.
\end{proof}

So far, we have not imposed any measurability assumption on the orbits of an evolutionary semigroup. 
As it turns out, it is automatically satisfied.

\begin{lem}\label{l.measurableorbits}
Let $\T$ be an evolutionary semigroup with expectation operator $\E$. Then for every $F\in B_b(\cX, \cF_{0-})$, the map
$(t, \x)\mapsto [\T(t)F](\x)$ is measurable.
\end{lem}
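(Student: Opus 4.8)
The plan is to reduce the joint measurability of $(t,\x)\mapsto [\T(t)F](\x)$ to separate measurability plus a structural property that lets us upgrade it, and then to bootstrap from a nice generating class of functions $F$ via the bp-calculus that has been developed throughout Sections \ref{sect.pathspace}--\ref{sect.measurable}. Concretely, I would first dispatch the case $F = \prod_{j=1}^n F_{t_j}^\star(f_j)$ with $t_1 < \dots < t_n \le 0$ and $f_j \in C_b(X)$, since these functions lie in $B_b(\cX,\cF_{0-})$ by Lemma \ref{l.fbstar}(a) and generate (in the bp-sense) all of $B_b(\cX,\cF_{0-})$ by an argument parallel to Lemma \ref{l.algebradense}. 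For such an $F$ we have, using $\Th_t F_{s}^\star(f)=F_{s+t}^\star(f)$ from Lemma \ref{l.fbstar}(d) and the multiplicativity of $\Th_t$, that $\T(t)F=\E\Th_t F=\E\prod_j F_{t_j+t}^\star(f_j)$. Splitting the product at the index $k=k(t)$ with $t_k+t<0\le t_{k+1}+t$ and pulling the $\cF_{0-}$-measurable factors out of $\E$ via Proposition \ref{p.expectation_operator}(iv), we get $[\T(t)F](\x)=\big(\prod_{j\le k}f_j(\x(t_j+t))\big)\cdot\big[\E\prod_{j>k}F_{t_j+t}^\star(f_j)\big](\x)$. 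On each of the finitely many time-intervals where $k(t)$ is constant, the first factor is jointly continuous in $(t,\x)$ at continuity points (hence Borel measurable) and the second factor I would handle by writing $\E\prod_{j>k}F_{t_j+t}^\star(f_j)=\E\Th_{t}\prod_{j>k}F_{t_j}^\star(f_j)$ and noting this equals $\T(t)$ applied to the fixed function $\prod_{j>k}F_{t_j}^\star(f_j)$, which has all its times $t_j>0$ shifted appropriately — reducing the claim for it to a statement about the original semigroup acting on functions determined after $0$.

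The cleaner route, which I would actually pursue, is to avoid the case analysis by integrating in $t$. For fixed $F\in B_b(\cX,\cF_{0-})$ and each $\x$, the map $t\mapsto [\T(t)F](\x)$ is the orbit of a kernel semigroup, and by the general theory in the appendix (the Laplace transform / full generator formalism referenced around Proposition \ref{p.awf}) the Laplace transform $\lambda\mapsto \int_0^\infty e^{-\lambda t}[\T(t)F](\x)\,dt$ is well-defined; the point is that $t\mapsto [\T(t)F](\x)$ is right-continuous (or at least Borel) in $t$ for each $\x$ because the semigroup is given by kernels and $t\mapsto \Th_tF$ enjoys the almost-everywhere continuity from (P1). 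Separate measurability in $\x$ holds because each $\T(t)=\E\Th_t$ is a kernel operator. I would then invoke the standard fact (Carathéodory's criterion for joint measurability, or the classical result that a function which is measurable in one variable and right-continuous in the other is jointly measurable — see e.g.\ the argument used for progressive measurability of right-continuous adapted processes) to conclude joint measurability of $(t,\x)\mapsto[\T(t)F](\x)$. To make the right-continuity in $t$ precise and uniform enough to run this argument, I would first establish it for $F$ in the generating algebra above, where $\T(t)F$ is given by the explicit formula, and then transfer it to general $F$ by a bp-approximation: if $F_n\to F$ boundedly pointwise then $\T(t)F_n\to\T(t)F$ boundedly pointwise for each $t$ (kernel operators are $\sigma$-continuous), and the set of $F$ for which the joint map is measurable is bp-closed, so it contains the bp-closure of the generating algebra, namely all of $B_b(\cX,\cF_{0-})$.

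The main obstacle is the regularity of $t\mapsto [\T(t)F](\x)$: because the semigroup is \emph{not} strongly continuous and point evaluations need not be continuous (this is exactly the c\`adl\`ag difficulty flagged in the introduction), one cannot simply say the orbit is continuous. The explicit formula for $F$ in the generating class shows the orbit is right-continuous with left limits in $t$ (the jumps coming from the index $k(t)$ changing and from discontinuities of the paths), which is enough for the joint-measurability criterion; the delicate point is checking that this right-continuity passes to the bp-limit, or — if it does not — replacing "right-continuous in $t$" by the weaker "the Laplace transform determines the orbit and is measurable in $\x$", which still yields joint measurability via uniqueness of Laplace transforms and Fubini. I would organize the final write-up so that the generating-class computation does the real work and the two bp-closedness steps (one to get joint measurability, one implicitly to get the needed $t$-regularity) are invoked as routine.
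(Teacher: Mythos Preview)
Your proposal is workable in outline but misses a dramatic simplification, and in the process creates exactly the regularity problems you then struggle with. The paper's argument is: start with $F\in C_b(\cX,\cF_{0-})$, i.e.\ a \emph{continuous} bounded $\cF_{0-}$-measurable function. For such $F$ the map $t\mapsto[\Th_tF](\y)=F(\vt_t\y)$ is continuous for every $\y$, simply because $F$ is continuous and $(t,\y)\mapsto\vt_t\y$ is continuous by (P3). Since $\E$ is a kernel operator (hence bp-continuous), this immediately yields that $t\mapsto[\T(t)F](\x)=[\E\Th_tF](\x)$ is continuous for every $\x$; combined with measurability in $\x$ for fixed $t$, the standard dyadic-approximation argument gives joint measurability. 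Then one bp-closes, exactly as you propose in your last paragraph.

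Your generating class $\prod_jF_{t_j}^\star(f_j)$ consists of \emph{discontinuous} functions, which is precisely why you cannot get clean $t$-regularity and end up with the case analysis on $k(t)$ and the vague Laplace-transform fallback. Your stated obstacle---``the semigroup is not strongly continuous and point evaluations need not be continuous''---is a red herring: you are free to choose the starting class, and choosing continuous $F$ sidesteps the issue entirely, since what matters is continuity of $F$ itself, not of the evaluation maps. Your route 2 has a real gap you yourself flag (bp-limits do not preserve right-continuity), and the Laplace-transform escape hatch is not spelled out enough to be a proof. Your route 1 can be pushed through (the explicit formula is correct, and the final step reduces to the joint measurability of $(t,\x)\mapsto\int H_t(\y)\,\k(\x,d\y)$ for a jointly measurable integrand, which is standard), but this is a substantially longer path to the same destination. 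The one piece of your write-up that survives unchanged is the bp-closedness of $\{F:(t,\x)\mapsto[\T(t)F](\x)\text{ is measurable}\}$---that is also the paper's closing step.
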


\begin{proof}
First consider $F\in C_b(\cX, \cF_{0-})$. As $(\Th_t)_{t\geq 0}$ is a $C_b$-semigroup by Proposition \ref{p.derivation}, if $t_n\to t$, then
$\Th_{t_n}F \to \Th_tF$ pointwise. As $\E$ is a kernel operator, it follows that 
\[
[\T(t_n)F](\x) = [\E\Th_{t_n}F](\x) \to [\E\Th_tF](\x) = [\T(t)F](\x)
\]
for every $\x\in \cX$. This shows that for fixed $\x\in \cX$ the map $t\mapsto \psi(t,\x)\coloneqq [\T(t)F](\x)$ is continuous.
At the same time, for fixed $t\in [0,\infty)$, the map $\x\mapsto \psi (t, \x)$ is measurable. These two facts imply that
$\psi$ is product measurable. Indeed, setting
\[
\psi_n(t,\x) = \sum_{j=0}^{n 2^n} \one_{[\frac{j}{2^n}, \frac{j+1}{2^n})}(t) \psi\Big(\frac{j}{2^n}, \x\Big),
\]
we see that $\psi_n$ is product measurable and converges pointwise to $\psi$.\smallskip

Next, consider the set 
\[
M\coloneqq \{ F\in B_b(\cX) : (t,\x) \mapsto [\T(t)(F\circ\tau)](\x) \mbox{ is measurable }\}.
\]
By the above, $C_b(\cX) \subset M$. A moments thought shows that $M$ is bp-closed. As $C_b(\cX)$
is bp-dense in $B_b(\cX)$ (see \cite[Proposition 3.4.2]{ek}), it follows that $M=B_b(\cX)$. As $F\circ\tau\in B_b(\cX, \cF_{0-})$
for every $F\in B_b(\cX)$ and every element of $B_b(\cX, \cF_{0-})$ is of this form (by Lemma \ref{l.measurability}),
the claim follows.
\end{proof}

It follows from Lemma \ref{l.measurableorbits} that we are very close to the notion of transition semigroup, see Definition \ref{d.transsg}. 
However, our setting is slightly different from that considered there, as $\cF_{0-}$ is not the Borel $\sigma$-algebra
of $\cX$. It is also clear, at least on an intuitive level, that $B_b(\cX, \cF_{0-})$ does not separate the points in $\cM(\cX)$, the space of all bounded measures on $\cX$. Note that
the duality between measurable functions and measures lies at the heart of the theory of transition semigroups. Thus, at first glance,
we cannot use the theory presented in the appendix. 

However, by means of the extension map $F\mapsto \hat{F}=F\circ \tau $, we can identify $B_b(\cX, \cF_{0-})$ with the space
$B_b(\cX^-, \bo(\cX^-))$, see Lemma \ref{l.eminusf0}. By means of this identification, we are in the situation considered in
the appendix, namely we work on the norming dual pair $(B_b(\cX^-), \cM(\cX^-))$. To switch from $\cX$ to $\cX^-$, we formally
also have to change the semigroup. To wit, if $\T$ is an evolutionary semigroup, we define 
\begin{equation}\label{eq.modified}
[\T^-(t)F](\x) \coloneqq [\T(t)\hat{F}](\x) \quad \mbox{for all }F\in B_b(\cX^-)\mbox{ and } x\in \cX^-.
\end{equation}
It then follows, that $\T^-$ defines a transition semigroup on $B_b(\cX^-)$ and we may talk of its Laplace transform, its full generator, continuity properties etc. However, in order not to overburden notation, we will not distinguish between the semigroup $\T$ and the modified
semigroup $\T^-$.\smallskip

We may now describe the full generator $\Af$ of an evolutionary semigroup in terms of the expectation
operator $\E$ and the full generator $\Df$ of the shift semigroup. It turns out that $\Af$ is already uniquely determined
by the expectation operator $\E$ and the slice $\D_0$ from Definition \ref{d.d0}. We use the following notation:

\[
D_-(\D_0) = D(\D_0)\cap B_b(\cX, \cF_{0-})\mbox{ and } D_+(\D_0) = D(\D_0)\cap B_b(\cX, \cF([0,\infty))).
\]

\begin{prop}\label{p.bpcorefora}
Let $\T$ be an evolutionary semigroup with expectation operator $\E$ and full generator $\Af$.
As usual, we denote the full generator of the
shift group by $\Df$. Then:
\begin{enumerate}
[\upshape (a)]
\item It is $(F,G)\in \Af$ if and only if there exits $(U,V)\in \Df$ with $F=\E U$ and $G= \E V$.
\item Define $\A_0$ by
\[
D(\A_0) \coloneqq \Big\{ \sum_{j=1}^n U_j \E V_j : n\in \N, U_j \in D_-(\D_0),
V_j \in D_+(\D_0) \mbox{ for } j=1, \ldots, n\Big\}
\]
and  
\[
\A_0\sum_{j=1}^n U_j\E V_j = \sum_{j=1}^n \big[(\D_0 U_j) \E V_j + U_j \E \D_0 V_j\big].
\]
Then $\A_0$ is a slice of $\Af$ and $D(\A_0)$ is a bp-core for $\Af$.
\item $D(\A_0)$ (and thus $D(\Af)$) is bp-dense in $B_b(\cX, \cF_{0-})$. In view of Lemma \ref{l.fullgenprop}, $\T$ is uniquely
determined by $\Af$.
\end{enumerate}
\end{prop}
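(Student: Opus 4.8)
The plan is to establish the three parts in order, bootstrapping each from the previous one and from the structural results already available for the shift group: that $\Df$ is a derivation (Corollary \ref{c.derivative}), that $\D_0$ is a slice of $\Df$ with $D(\D_0)$ a bp-core (Definition \ref{d.d0}, Corollary \ref{c.bpcore}), and the abstract slice/core machinery (Corollary \ref{c.slice}, Lemma \ref{l.bpcore}). For part (a), I would compute the Laplace transform of $\T$ and compare it with that of $\Th$. Since $\T(t) = \E\Th_t$ with $\E$ bp-continuous and $\E$ acting as the identity on $\cF_{0-}$-measurable functions, one has for $\Re\lambda$ large enough $R(\lambda, \Af) F = \int_0^\infty e^{-\lambda t}\E\Th_t F\,dt = \E \int_0^\infty e^{-\lambda t}\Th_t F\,dt = \E R(\lambda,\Df)F$ for $F \in B_b(\cX,\cF_{0-})$, where the interchange of $\E$ with the Bochner-type integral is justified by bp-continuity (Lemma \ref{l.kernelop}) and the dominated convergence formulation of the full-generator resolvent in Proposition \ref{p.awf}. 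Writing the full generator as the image of the resolvent then gives: $(F,G)\in\Af$ iff $F = \E R(\lambda,\Df)H$ and $G = \lambda F - H$ for some $H$; setting $U = R(\lambda,\Df)H$ and $V = \D U$ (so $(U,V)\in\Df$, $H = \lambda U - V$) yields $F = \E U$, $G = \lambda\E U - \E H = \E U$-resolvent algebra$= \E(\lambda U - H) = \E V$. Conversely if $(U,V)\in\Df$, apply $\E$ to $\lambda U - V = H := \lambda U - V$ and use $\E = \E\Th_0$, homogeneity, and $R(\lambda,\Af)\E H = \E R(\lambda,\Df)H$ to see $(\E U, \E V)\in\Af$. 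The one genuine subtlety here is that $\E R(\lambda,\Df)H$ need not itself be $R(\lambda,\Af)(\text{something})$ unless $H$ (equivalently $\E H$) is already accounted for on the $\cF_{0-}$ side — this is handled precisely because $\E V$ is automatically $\cF_{0-}$-measurable, so everything stays inside $B_b(\cX,\cF_{0-})$.

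For part (b), I would first check that $\A_0$ is well defined, i.e. that the formula does not depend on the representation $\sum_j U_j\E V_j$, and that $(\sum_j U_j\E V_j,\ \sum_j[(\D_0 U_j)\E V_j + U_j\E\D_0 V_j])\in\Af$. The containment in $\Af$ follows from part (a): since $\D_0$ is a slice of $\Df$, each $(U_j,\D_0 U_j)$ and $(V_j,\D_0 V_j)$ lies in $\Df$; because $\Df$ is a derivation (Corollary \ref{c.derivative}), $(U_j V_j,\ (\D_0 U_j)V_j + U_j\D_0 V_j)\in\Df$, and applying the operator $\E$ via (a) — together with the expectation-operator identity $\E(U_j\,\cdot\,) = U_j\E(\cdot)$ when $U_j\in B_b(\cX,\cF_{0-})$ (Proposition \ref{p.expectation_operator}(iv)), which rewrites $\E(U_j V_j)$ as $U_j\E V_j$, $\E((\D_0 U_j)V_j)$ as $(\D_0 U_j)\E V_j$ (note $\D_0 U_j$ is still $\cF_{0-}$-measurable by Remark \ref{r.domaind0}(b)), and $\E(U_j\D_0 V_j)$ as $U_j\E\D_0 V_j$ — gives the claimed pair. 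That $\A_0$ is a slice (single-valued, shift-compatible in the appropriate sense) and that $D(\A_0)$ is a bp-core for $\Af$ I would derive from Corollary \ref{c.slice}: the invariance $\T(t)D(\A_0)\subset$ bp-closure of $D(\A_0)$ comes from $\T(t)(U\E V) = \E\Th_t(U\E V)$, and since $V \in D_+(\D_0)$ and $U \in D_-(\D_0)$ we can use $\Th_t(U\E V)$, push $\Th_t$ through, and re-expand using homogeneity of $\E$ and the algebra structure of $D(\D_0)$; the density/separation hypothesis of Corollary \ref{c.slice} is what part (c) will supply.

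For part (c), the goal is bp-density of $D(\A_0)$ in $B_b(\cX,\cF_{0-})$. The natural approach: $D(\A_0)$ contains all products $U\E V$ with $U\in D_-(\D_0)$, $V\in D_+(\D_0)$; taking $V = F_a^b(f)$ with $0 \le a < b$ and using Step 1 of Lemma \ref{l.invariance}'s proof technique (or directly $\E F_0^\star(f) = F_0^\star(f)$-type identities) and bp-limits of $nF_t^{t+1/n}(f)$, one gets that $\E$ applied to products of point-evaluation functions $\prod F_{t_j}(f_j)$ with $t_j\ge 0$ lies in the bp-closure of $D(\A_0)$, while products with $t_j < 0$ are already in $D_-(\D_0)$. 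Combining via the algebra property and the expectation identity $\E(\prod_{t_j<0}F_{t_j}(f_j)\cdot\prod_{t_k\ge0}F_{t_k}(f_k)) = \prod_{t_j<0}F_{t_j}(f_j)\cdot\E\prod_{t_k\ge0}F_{t_k}(f_k)$ shows the bp-closure of $D(\A_0)$ contains $\E\one_A$ for all cylinder sets $A$, hence (measure-theoretic induction, exactly as in Lemmas \ref{l.algebradense} and \ref{l.invariance}) all of $\E B_b(\cX) = B_b(\cX,\cF_{0-})$. The final sentence about $\T$ being uniquely determined by $\Af$ is then immediate from Lemma \ref{l.fullgenprop}. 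I expect the main obstacle to be part (b): verifying well-definedness of $\A_0$ on the (non-freely-generated) algebra $D(\A_0)$ and checking all the slice axioms, since the bookkeeping of which factors are $\cF_{0-}$-measurable and may be pulled outside $\E$ — and ensuring the derivation identity for $\Df$ transfers correctly through $\E$ without double-counting — is where sign/indexing errors would creep in.
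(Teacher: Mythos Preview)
Your argument for part (a) is essentially the paper's: establish $R(\lambda,\Af)F = \E R(\lambda,\Df)F$ on $B_b(\cX,\cF_{0-})$ and read the generator relation off in both directions. The paper also records the forward implication by applying $\E$ directly to the integral identity $\Th_t U - U = \int_0^t \Th_s V\,ds$ and invoking homogeneity $\E\Th_s = \E\Th_s\E$, which is a bit cleaner than running the resolvent twice, but the content is the same.

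For part (b) the paper takes a shorter route than you propose, based on a structural observation you do not make explicit: since $F_a^b(f) = F_a^0(f) + F_0^b(f)$ whenever $a<0<b$, \emph{every} $W\in D(\D_0)$ can be written as $\sum_j U_j V_j$ with $U_j\in D_-(\D_0)$ and $V_j\in D_+(\D_0)$. Hence $\E$ maps $D(\D_0)$ onto $D(\A_0)$ (via Proposition \ref{p.expectation_operator}(iv) and Remark \ref{r.domaind0}(b)), and $\E$ maps the graph of $\D_0$ onto that of $\A_0$. Since $D(\D_0)$ is already known to be a bp-core for $\Df$ (Corollary \ref{c.bpcore}), part (a) together with bp-continuity of $\E$ transfers the bp-core property to $\A_0$ in one line. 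Your plan to re-verify the hypotheses of Corollary \ref{c.slice} for $\A_0$ from scratch is more work and has a gap: you never address condition (iii) there (a.e.\ continuity of $t\mapsto [\T(t)\A_0 F](\x)$), and your invariance claim ``$\T(t)D(\A_0)\subset\text{bp-closure of }D(\A_0)$'' is weaker than what Corollary \ref{c.slice}(i) actually requires.

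For part (c) the paper again simplifies: once one knows $D_-(\D_0)\subset D(\A_0)$ (immediate from the definition of $D(\A_0)$), bp-density in $B_b(\cX,\cF_{0-})$ follows by repeating the proof of Lemma \ref{l.algebradense} restricted to times $t<0$. Your argument via $\E$ applied to cylinder sets and a Step~1/Step~2 decomposition in the style of Lemma \ref{l.invariance} would also work, but is considerably more than is needed here.
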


\begin{proof}
(a) As $\E$ is homogeneous, it is $\E\Th_t = \E\Th_t\E$ for all $t\geq 0$. By Proposition \ref{p.awf}, we have
$(U,V)\in \Df$ if and only if 
\[
\Th_t U - U = \int_0^t\Th_s V\, ds
\]
for all $t\geq 0$. Applying $\E$ to this equality and using the homogeneity yields
\[
\E\Th_t\E U - \E U = \T(t)\E U - \E U = \int_0^t \E\Th_s\E V\, ds = \int_0^t \T(t)\E V\, ds.
\]
By Proposition \ref{p.awf}, this shows that $(F,G) = (\E U, \E V) \in \Af$.

To see the converse, first observe that the resolvents of $\Af$ and $\Df$ are related via
\begin{align*}
R(\lambda, \Af) F = \int_0^\infty e^{-\lambda t} \E \Th_t F\, dt = \E \int_0^\infty e^{-\lambda t} \Th_t F\, dt = \E R(\lambda, \Df)F.
\end{align*}
Consequently, $(F,G)\in \Af$ if and only if
\[
F = R(\lambda, \Af)(\lambda F-G) = \E R(\lambda, \Df)(\lambda F -  G) = \E U,
\]
for $U= R(\lambda, \Df)(\lambda F-  G) \in D(\Df)$. Note that in this case for $V= \lambda U - \lambda  F + G$ we have
$(U,V)\in \Df$. Moreover, $\E V =  G$ as $\E F = \E^2 U = \E U$ by construction and $\E G =  G$ since $ G$ is $\cF_{0-}$-measurable.\medskip

(b) 
Noting that if $a<0<c$, then $F_a^b(f) = F_a^0(f) + F_0^b(f)$, where $F_a^0(f)$ is $\cF_0$-measurable
and $F_0^b(f)$ is $\cT_0$-measurable, it is easy to see
that any element $U$ of $D(\D_0)$ can be written in the form $U=\sum_{j=1}^n U_jV_j$, where $U_j\in D_-(\D_0)$ and
$V_j\in D_+(\D_0)$ for $j=1, \ldots, n$. Using part (a) and Proposition \ref{p.expectation_operator} (taking
Remark \ref{r.domaind0} into account), it follows that $\A_0$ is a slice of $\Af$.

Using that $D(\D_0)$ is a bp-core for $\Df$ by Corollary \ref{c.bpcore}, it follows from part (a) and the $\sigma$-continuity of
$\E$ that $\A_0$ is a bp-core for $\A$.\medskip

(c) It follows from (b) that $D_-(\D_0) \subset D(\A_0)$. The proof now follows along the lines of that of Lemma \ref{l.algebradense}.
\end{proof}

\section{Continuity properties of evolutionary semigroups}\label{sect.continuous}

In this section, we study the question, whether an evolutionary semigroup actually is a $C_b$-semigroup. The answer depends on the path space $\cX$. 
The case where $\cX=\cXc$ is significantly easier than the general case and
also yields better results. This is due to the fact that both the evaluation maps $\pi_t$ and the stopping map $\tau$ are continuous
in this case. The former yields that the operator $\D_0$ from Definition \ref{d.d0} is not only a slice of $\Df$, but actually a restriction
of the $C_b$-generator $\D$, since in this case $\D_0F_a^b(f) = F_b(f)-F_a(f)\in C_b(\cX)$ whenever $f\in C_b(X)$. That $\tau$ is continuous
implies that the extension map $F\mapsto \hat{F}$ does not only establish an isomorphism between $B_b(\cX^-)$ and $B_b(\cX, \cF_{0-})$,
but also an isomorphism between $C_b(\cX^-)$ and $C_b(\cX, \cF_{0-})$.\smallskip

In the case $\cX=\cXd$ of c\`adl\`ag paths, functions of the form $F=F_t(f)$ for $f\in C_b(X)$ are not continuous on $\cX$, whence $\D_0$ is
not a restriction of the $C_b$-generator in this case. Moreover, for $t=0$ the function $F=F_0(f)$ belongs to $C_b(\cX^-)$ but its extension
$\hat{F}$ is not continuous. To overcome this problem, we will impose additional assumptions on the path space (which are satisfied for
c\`adl\`ag paths in the $J_1$-topology) which allow us to generalize at least part of our results.

\subsection{The case of continuous paths}

Throughout this subsection, we consider the case $((\cX, \md), \tau) = ((\cXc, \md_\mathrm{C}), \tau_{\mathrm{C}})$ from
Subsection \ref{sub.continuous}. However, in order to not overburden notation, we will use our generic notation. 
Note that in this case $\cF_{t-} = \sigma(\pi_s : s<t) = \sigma (\pi_s : s\leq t) = \cF_t$.\smallskip

Before stating and proving our first result, we need a little preparation. Let
us briefly recall the notion of a \emph{convergence determining set}, see, e.g.\ \cite[p.\ 112]{ek}. If $(S,d)$ is a metric space, and
$\mu_n, \mu$ are Borel measures on $S$, then $\mu_n$ converges weakly to $\mu$ if $\int_S f\, d\mu_n \to \int_S f\, d\mu$
for every $f\in C_b(S)$; we write $\mu_n \weak \mu$ in this case. A subset $M\subset C_b(S)$ is called \emph{convergence determining}
if convergence $\int_S f\, d\mu_n \to \int_S f\, d\mu$ for all $f\in M$ implies $\mu_n \weak \mu$.

A set $M \subset C_b(S)$ is said to \emph{strongly separate points} in $S$ if, given $x\in S$ and $\delta >0$, there exist
$\{f_1, \ldots, f_n\} \subset M$ such that 
\[
\inf_{y : d(x,y)\geq \delta} \max_{k=1, \ldots, n}|f_k(x)-f_k(y)|> 0.
\]
It follows from \cite[Theorem  3.4.5]{ek} that any algebra that strongly separates points in $S$ is convergence determining.

\begin{lem}\label{l.contdet}
The sets
\[
\mathscr{D} \coloneqq \bigcup_{t>0} C_b(\cX, \cF_t)
\]
and
\[
\mathscr{P}\coloneqq \Big\{ \sum_{j=1}^n F_j G_j : n\in \N, F_j \in C_b(\cX, \cF_0), G_j \in C_b(\cX, \cF([0,\infty)))\Big\}.
\]
are convergence determining for $\cX$.
\end{lem}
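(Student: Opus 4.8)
The plan is to show both $\mathscr{D}$ and $\mathscr{P}$ strongly separate points in $\cX = \cXc$ and are algebras, so that the cited \cite[Theorem 3.4.5]{ek} gives convergence determination. Both are clearly algebras of bounded continuous functions: $\mathscr{D}$ is a union of the nested algebras $C_b(\cX,\cF_t)$, hence an algebra; and $\mathscr{P}$ is the linear span of products $F_jG_j$ with $F_j$ $\cF_0$-measurable and $G_j$ $\cF([0,\infty))$-measurable, and since the pointwise product of such a sum by another such sum is again of the same form (the product of two $\cF_0$-measurable functions is $\cF_0$-measurable and similarly for $\cF([0,\infty))$), $\mathscr{P}$ is an algebra too. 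Note also $\mathscr{P}\subset\mathscr{D}$: any $G\in C_b(\cX,\cF([0,\infty)))$ lies in particular in $C_b(\cX)$, so $FG\in C_b(\cX,\cF_t)$ for suitable $t$ — actually, more carefully, an $\cF_0$-measurable $F$ times an $\cF([0,\infty))$-measurable $G$ need not be $\cF_t$-measurable for any finite $t$; rather one should argue $\mathscr P$-separation directly. So I will treat the two sets in parallel but prove strong separation for each.

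For strong separation in $\mathscr{D}$: fix $\x\in\cX$ and $\delta>0$. Recall $\md_{\mathrm C}(\x,\y)=\sum_{n\ge 1}2^{-n}\big[1\wedge\sup_{|t|\le n}d(\x(t),\y(t))\big]$. Choose $N$ so large that $2^{-N}<\delta/4$; then $\md_{\mathrm C}(\x,\y)\ge\delta$ forces $\sup_{|t|\le N}d(\x(t),\y(t))\ge c$ for some $c=c(\delta,N)>0$. It therefore suffices to strongly separate $\x$ from the sets $\{\y:\sup_{|t|\le N}d(\x(t),\y(t))\ge c\}$ using finitely many functions. Using a finite net $t_1<\dots<t_m$ of $[-N,N]$ with mesh small enough that (by continuity, via a compactness/modulus-of-continuity argument applied to the evaluation maps) $\sup_{|t|\le N}d(\x(t),\y(t))\ge c$ implies $\max_i d(\x(t_i),\y(t_i))\ge c/2$, one reduces to separating the point-evaluation vector $(\pi_{t_1},\dots,\pi_{t_m})$. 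Finally, for the metric space $X$, the functions $x\mapsto 1\wedge d(x,\x(t_i))$ (or a bounded Lipschitz variant) strongly separate in $X$, so the functions $\y\mapsto 1\wedge d(\y(t_i),\x(t_i))\in C_b(\cX,\cF_{t_i})\subset\mathscr D$ (taking $t>\max_i t_i$) do the job. This shows $\mathscr D$ strongly separates points.

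For $\mathscr{P}$: the only issue is that the separating functions produced above use evaluations $\pi_{t_i}$ at times $t_i$ that may be negative or positive, and we need each separating function to be a sum of products $F_jG_j$ with $F_j\in\cF_0$, $G_j\in\cF([0,\infty))$. But this is automatic: a function depending only on $\pi_{t_i}$ with $t_i\le 0$ is $\cF_0$-measurable (take $G_j\equiv 1$), a function depending only on $\pi_{t_i}$ with $t_i\ge 0$ is $\cF([0,\infty))$-measurable (take $F_j\equiv 1$), and since we may split the net at $0$, each coordinate function $\y\mapsto 1\wedge d(\y(t_i),\x(t_i))$ individually lies in $\mathscr P$; and $\mathscr P$ contains them. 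Hence the same finite family works and $\mathscr P$ strongly separates points. Applying \cite[Theorem 3.4.5]{ek} to each algebra concludes the proof.

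The main obstacle is the reduction from the metric $\md_{\mathrm C}$ (which compares paths uniformly on compacts) to \emph{finitely many} point evaluations: this requires an equicontinuity/modulus-of-continuity step, and one must be slightly careful that strong separation only demands a \emph{positive} infimum over $\{\y:\md_{\mathrm C}(\x,\y)\ge\delta\}$ — not a quantitative lower bound uniform in $\delta$ — which is exactly what a compactness argument on $[-N,N]$ delivers, using that $\x$ itself is (uniformly) continuous on the compact interval $[-N,N]$. Everything else (algebra structure, $X$-level strong separation, invoking the Ethier–Kurtz criterion) is routine.
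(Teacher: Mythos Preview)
Your overall strategy (algebra that strongly separates points, then \cite[Theorem 3.4.5]{ek}) is the same as the paper's, but the reduction to finitely many point evaluations has a genuine gap. You claim that, for a sufficiently fine net $t_1<\dots<t_m$ of $[-N,N]$,
\[
\sup_{|t|\le N}d(\x(t),\y(t))\ge c \quad\Longrightarrow\quad \max_i d(\x(t_i),\y(t_i))\ge c/2,
\]
justified by ``compactness/modulus-of-continuity'' and uniform continuity of $\x$ on $[-N,N]$. But $\y$ ranges over \emph{all} of $\cX$ with $\md_{\mathrm C}(\x,\y)\ge\delta$, and there is no control whatsoever on the modulus of continuity of $\y$: given any finite net, one can choose $\y$ agreeing with $\x$ at every $t_i$ yet with a narrow spike of height $c$ between two consecutive net points. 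For such $\y$ the left-hand side holds while the right-hand side fails. Uniform continuity of $\x$ alone cannot rescue this, and the set $\{\y:\md_{\mathrm C}(\x,\y)\ge\delta\}$ is not compact.

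The paper avoids this obstruction entirely by using a \emph{single} function that already encodes the supremum: it takes
\[
F(\y)\coloneqq \sup_{t\in[-n_0,n_0]} d(\x(t),\y(t)),
\]
which is continuous on $\cXc$ (since convergence in $\md_{\mathrm C}$ implies uniform convergence on $[-n_0,n_0]$), $\cF_{n_0}$-measurable, and satisfies $F(\x)=0$ while $F(\y)\ge\delta/2$ whenever $\md_{\mathrm C}(\x,\y)\ge\delta$. For $\mathscr P$ it splits this into the two supremum functions over $[-n_0,0]$ and $[0,n_0]$. This single-function approach is what makes strong separation go through without any equicontinuity hypothesis on $\y$; your point-evaluation route would need an additional argument (e.g.\ working with the supremum function directly, or with functions like $\y\mapsto \int_{-N}^{N} (1\wedge d(\x(s),\y(s)))\,ds$) to close the gap.
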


\begin{proof}
Fix $\x \in \cX$ and $\delta>0$. 
Pick $n_0$ so large, that $2^{-n_0}< \delta/2$ and put
\[
F(\y) \coloneqq \sup_{t\in [-n_0, n_0]} d(\x(t), \y(t)).
\]
Obviously, $F\in C_b(\cX, \cF_{n_0}) \subset \mathscr{D}$ and $F(\x) = 0$. On the other hand, if $\md(\x,\y) >\delta$, 
we must have $F(\y) \geq \delta/2$ as otherwise $\md(\x,\y) <\delta$ by choice of $n_0$. This shows that $\mathscr{D}$
strongly separates points. 
As $\mathscr{D}$ is also an algebra, \cite[Theorem\ 3.4.5]{ek} yields the claim. The proof for $\mathscr{P}$ is similar,
considering the  functions 
\[
F(\y) \coloneqq \sup_{t\in [-n_0, 0]}d(\x(t), \y(t))\quad\mbox{and}\quad G(\y) \coloneqq \sup_{t\in [0,n_0]}d(\x(t), \y(t)).\qedhere
\]
\end{proof}

We are now ready to present the main result of this subsection. In its formulation (and also in the rest of this subsection), 
we once again do not distinguish between an evolutionary semigroup $\T$ and its modification $\T^-$ on $B_b(\cX^-)$ defined
by \eqref{eq.modified}. This in particular concerns parts (a) and (b) of the theorem.

\begin{thm}\label{t.continuouscb}
Let $\T$ be an evolutionary semigroup with
expectation operator $\E$ and $C_b$-generator $\A$. The following are equivalent:
\begin{enumerate}
[\upshape (i)]
\item $\T(t)C_b(\cX, \cF_0) \subset C_b(\cX, \cF_0)$ for all $t>0$;
\item $\E C_b(\cX) \subset C_b(\cX, \cF_0)$;
\item $\E C_b(\cX, \cF([0,\infty))) \subset C_b(\cX, \cF_0)$.
\end{enumerate}
If these equivalent conditions are satisfied, the following hold:
\begin{enumerate}
[\upshape (a)]
\item $\T$ induces (in the sense of \eqref{eq.modified}) a $C_b$-semigroup on $C_b(\cX^-)$;
\item The operator $\A_0$ from Proposition \ref{p.bpcorefora} is a restriction of $\A$.
Moreover, $D(\A_0)$ is a $\beta_0$-core for $\A$.
\end{enumerate}
\end{thm}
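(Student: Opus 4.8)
The plan is to prove the equivalence of (i), (ii), (iii) first, and then deduce (a) and (b).

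\textbf{The cycle (ii) $\Rightarrow$ (iii) $\Rightarrow$ (i) $\Rightarrow$ (ii).} The implication (ii) $\Rightarrow$ (iii) is immediate since $C_b(\cX, \cF([0,\infty))) \subset C_b(\cX)$. For (iii) $\Rightarrow$ (i): given $F \in C_b(\cX,\cF_0)$, I want to show $\T(t)F = \E\Th_t F \in C_b(\cX, \cF_0)$. The function $\Th_t F$ is $\cF_t$-measurable (by Corollary \ref{c.measurability}) but not necessarily in $C_b(\cX,\cF([0,\infty)))$, so (iii) does not apply directly. The fix is to use $\E$-homogeneity together with the shift: write $\T(t)F = \E\Th_t F$ and note that $\Th_t F$, while continuous, is not $\cF([0,\infty))$-measurable in general. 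Instead I would first establish (iii) $\Rightarrow$ (ii) by a bp/density argument: the set $\{F \in C_b(\cX) : \E F \in C_b(\cX,\cF_0)\}$ — one checks it contains products $F_- G_+$ with $F_-\in C_b(\cX,\cF_0)$, $G_+\in C_b(\cX,\cF([0,\infty)))$, because $\E(F_- G_+) = F_-\E G_+$ by Proposition \ref{p.expectation_operator}(iv), and $F_-\E G_+ \in C_b(\cX,\cF_0)$ by (iii). Since $\mathscr{P}$ from Lemma \ref{l.contdet} is convergence determining (hence its algebra is $\beta_0$-dense in $C_b(\cX)$), and since $\E$ maps $\beta_0$-convergent nets appropriately (kernel operators are $\sigma$-continuous, and on bounded sets $\beta_0$-convergence implies pointwise convergence), one gets $\E C_b(\cX)\subset C_b(\cX,\cF_0)$. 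Then (ii) $\Rightarrow$ (i) is easy: $\T(t)F = \E(\Th_t F)$ with $\Th_t F\in C_b(\cX)$, so $\T(t)F\in C_b(\cX,\cF_0)$; joint continuity in $(t,\x)$ follows because $\Th_t$ is a $C_b$-group (Proposition \ref{p.derivation}) and $\E$ is $\sigma$-continuous. Finally (i) $\Rightarrow$ (ii): apply (i) to functions of the form $F_0^\star(f) = \Th_0 F_0^\star(f)$ and more generally use that $\E F = \E\Th_t\Th_{-t}F = \T(t)\Th_{-t}F$ when $F$ is determined before $-t$, plus a bp-closedness/measure-theoretic induction as in Lemma \ref{l.invariance} to pass from such $F$ to all of $C_b(\cX)$, using continuity of $\tau$ in the continuous-path case to keep things in $C_b$ rather than just $B_b$.

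\textbf{Part (a).} Assuming the equivalent conditions, I claim $\T$ restricts to a $C_b$-semigroup on $C_b(\cX,\cF_0)\cong C_b(\cX^-)$. Invariance of $C_b(\cX,\cF_0)$ is condition (i). Markovianity and the kernel-operator property are inherited from $\E$ and $\Th_t$. For joint continuity of orbits: given $F\in C_b(\cX,\cF_0)$ and $(t_n,\x_n)\to(t,\x)$, write $[\T(t_n)F](\x_n) = [\E\Th_{t_n}F](\x_n)$; since $\Th_{t_n}F\to\Th_t F$ in $\beta_0$ (equivalently, boundedly and uniformly on compacts) by the $C_b$-group property, and since $\E$ is a kernel operator (hence continuous for bounded pointwise convergence and for $\beta_0$ on bounded sets), a standard argument gives $[\E\Th_{t_n}F](\x_n)\to[\E\Th_t F](\x)$. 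Here I would use that the extension map $F\mapsto\hat F$ is an isomorphism $C_b(\cX^-)\to C_b(\cX,\cF_0)$ precisely because $\tau_{\mathrm C}$ is continuous (as noted in the text preceding the subsection), so the modified semigroup $\T^-$ of \eqref{eq.modified} really does act on $C_b(\cX^-)$.

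\textbf{Part (b).} For $\A_0 \subset \A$: by Proposition \ref{p.bpcorefora}(b), $\A_0$ is a slice of $\Af$ with domain spanned by $U_j\E V_j$, $U_j\in D_-(\D_0)$, $V_j\in D_+(\D_0)$. In the continuous-path case, Remark \ref{r.domaind0}(a) tells us $F_a^b(f)\in D(\D)$ for $f\in C_b(X)$, so $D(\D_0)\subset D(\D)$ and $\D_0$ is a restriction of the $C_b$-generator $\D$, with $\D_0 F_a^b(f) = F_b(f)-F_a(f)\in C_b(\cX)$. Then for a generator element $\sum_j U_j\E V_j$: each $U_j\in C_b(\cX,\cF_0)$, each $\E V_j\in C_b(\cX,\cF_0)$ (using condition (ii), since $V_j\in C_b(\cX)$), and I compute $\T(t)(U_j\E V_j) = \E\Th_t(U_j\E V_j)$; differentiating at $t=0$ using the derivation property of $\D$ (Proposition \ref{p.derivation}(a)) and the relation $\E\Th_t\E = \E\Th_t$ gives $\A_0(U_j\E V_j) = (\D U_j)\E V_j + U_j\E(\D V_j) = (\D_0 U_j)\E V_j + U_j\E(\D_0 V_j)$, with the difference quotients uniformly bounded and pointwise convergent — exactly the $C_b$-generator characterization of Theorem \ref{t.genchar}(iv). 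Hence $D(\A_0)\subset D(\A)$ and $\A$ extends $\A_0$. For the $\beta_0$-core statement: $D(\A_0)\supset D_-(\D_0)$ contains the algebra generated by $\{F_a^b(f): a<b<0, f\in C_b(X)\}$, which separates points of $\cX^-$, hence (Stone--Weierstra{\ss}, as in Corollary \ref{c.bpcore}) is $\beta_0$-dense in $C_b(\cX,\cF_0)\cong C_b(\cX^-)$. Wait — I must be careful: $D(\A_0)$ need not be $\T$-invariant, so I should instead invoke the general principle (Lemma \ref{l.cbcore}) that a $\beta_0$-dense subset of $D(\A)$ which is mapped into $D(\A)$ by the resolvent, or which is $\T$-invariant, is a $\beta_0$-core. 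A cleaner route: show $D(\A_0)$ is a $\beta_0$-core by verifying it is $\beta_0$-dense and $\T(t)D(\A_0)\subset \overline{D(\A_0)}^{\beta_0}$; or, since $D(\A_0)$ is already a bp-core for $\Af$ by Proposition \ref{p.bpcorefora}(b) and under (i)–(iii) the semigroup is a $C_b$-semigroup whose full generator restricts to the $C_b$-generator, transfer the core property using the identification of cores.

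\textbf{Main obstacle.} The delicate point is the $\beta_0$-core claim in (b): bp-density plus a slice being a bp-core for $\Af$ does not automatically yield a $\beta_0$-core for $\A$, since the topologies and the two generator notions differ. I expect the real work is to either directly verify the hypotheses of the $\beta_0$-core criterion (Lemma \ref{l.cbcore}) — namely $\beta_0$-density of $D(\A_0)$ in $C_b(\cX,\cF_0)$, obtained from Stone--Weierstra{\ss} applied to the point-separating algebra $D_-(\D_0)$, together with quasi-invariance of $\overline{D(\A_0)}^{\beta_0}$ under $\T$ — or to leverage the continuous-path fact that $\D_0$ restricts $\D$ so that $\A_0$ restricts $\A$ and the abstract core-transfer results of the appendix apply verbatim. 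The other minor nuisance is bookkeeping with the isomorphism $F\leftrightarrow\hat F$ to make sure all statements on $\cX^-$ are legitimate, which is precisely where continuity of $\tau_{\mathrm C}$ is essential.
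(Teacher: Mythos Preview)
Your equivalence argument and part (a) are essentially the paper's proof: the key moves are exactly the use of the convergence-determining sets $\mathscr{D}$ and $\mathscr{P}$ from Lemma~\ref{l.contdet}, the identity $\E(F_-G_+)=F_-\E G_+$, and the representation $\E F=\T(t)\Th_{-t}F$ for $F$ determined before $t$ (not ``before $-t$'' as you wrote). One point to sharpen: in (i)~$\Rightarrow$~(ii) you invoke ``bp-closedness/measure-theoretic induction'', but that machinery does not preserve continuity. The paper instead argues that continuity of $\E F$ on $\cX^-$ for $F\in\mathscr{D}$ means $\P^{\x_n}\weak\P^\x$ whenever $\x_n\to\x$ in $\cX^-$ (since $\mathscr{D}$ is convergence determining), and weak convergence then gives continuity of $\E F$ for \emph{every} $F\in C_b(\cX)$ at once. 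You used this reasoning correctly for (iii)~$\Rightarrow$~(ii); just apply it again here.

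The genuine gap is the $\beta_0$-core claim in (b). You correctly note that $D(\A_0)$ is not $\T$-invariant, so Lemma~\ref{l.cbcore} does not apply directly, and ``transferring'' a bp-core for $\Af$ to a $\beta_0$-core for $\A$ is not automatic. The paper's argument avoids invariance altogether and instead goes through the representation in Proposition~\ref{p.bpcorefora}(a): given $F\in D(\A)$, write $F=\E U$ and $\A F=\E V$ with $(U,V)\in\Df$; since $R(\lambda,\Df)$ preserves $C_b(\cX)$, one may take $U,V$ continuous, i.e.\ $U\in D(\D)$ with $V=\D U$. Now $D(\D_0)$ is a $\beta_0$-core for $\D$ (Corollary~\ref{c.bpcore}), so there is a net $U_\alpha\to U$, $\D U_\alpha\to\D U$ in $\beta_0$. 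Condition~(ii) makes $\E$ a $C_b$-Feller kernel operator and hence $\beta_0$-continuous, so $\E U_\alpha\to F$ and $\E\D U_\alpha\to\A F$ in $\beta_0$; and $\E U_\alpha\in D(\A_0)$ by construction. This is the missing idea: pull the core problem back to $\D$ via the $\E$-representation, rather than trying to verify a core criterion for $\A_0$ directly.
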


\begin{proof}
Throughout the proof, we denote the kernel of $\E$ by $\k$ and write $\P^\x$ for the probability
measure $\k(\x, \cdot)$.

(i) $\Rightarrow$ (ii). As $\E F = \T(t)\Th_{-t}F$ whenever $F$ is determined before $t$, it follows from (i) 
that $\E F \in C_b(\cX, \cF_0)$
whenever $F$ belongs to the set $\mathscr{D}$ from Lemma \ref{l.contdet}.
This means that if $(\x_n)\subset \cX^-$ converges
to $\x\in \cX^-$, then 
\begin{equation}\label{eq.convergenced}
\int_\cX F(\y)\P^{\x_n}(d\y) = [\E F](\x_n) \to [\E F](\x) = \int_\cX F(y)\P^\x(d\y)
\end{equation}
for every $F\in \mathscr{D}$. As $\mathscr{D}$ is convergence determining by Lemma \ref{l.contdet}, 
Equation \eqref{eq.convergenced} holds true for every $F\in C_b(\cX)$, i.e.\ $\E F$ is continuous on $\cX^-$ for every $F\in C_b(\cX)$.
As $\tau$ is a continuous map taking values in $\cX^-$ and $[\E F](\x) = [\E F](\tau(\x))$ for all $\x\in \cX$, (ii) follows.\smallskip

(ii) $\Rightarrow$ (iii). This implication is trivial.\smallskip

(iii) $\Rightarrow$ (i). If $F\in C_b(\cX, \cF_0)$ and
$G\in C_b(\cX, \cF([0,\infty))$, then it follows from Proposition
\ref{p.expectation_operator}, that $\E(FG) = F\E G$ and (iii) shows
that $\E(FG)\in C_b(\cX, \cF_0)$. It follows that 
$\E H \in C_b(\cX, \cF_0)$
for every
Element $H$ in the set $\mathscr{P}$ from Lemma \ref{l.contdet}.
As this set is convergence determining,
similar arguments as in the proof of (i) $\Rightarrow$ (ii) show
that $\E F\in C_b(\cX, \cF_0)$ for every $F\in C_b(\cX)$, i.e.\ (ii)
holds true. That (ii) implies (i) follows immediately from the representation $\T(t)F = \E\Th_t F$.
\medskip

Now assume that the equivalent conditions are satisfied.

(a). Let $F\in C_b(\cX, \cF_0)$, $(t_n)\subset [0,\infty)$ converge to $t$ and $(\x_n)\subset \cX^-$ converge to $\x$. 
By Proposition \ref{p.derivation} and Theorem \ref{t.beta0char}, 
$\Th_{-t_n}F \to \Th_{-t}F$ with respect to $\beta_0$. By (ii), $\P^{\x_n} \to \P^\x$ 
with respect to $\sigma(\cM(\cX), C_b(\cX))$. It follows that
\[
[\T(t_n)F](\x_n) = \int_\cX [\Th_{-t_n}F](\y)\P^{\x_n}( d\y) \to \int_{\cX} [\Th_{-t}F](\y)\P^\x(d\y) = [\T(t)F](\x).
\]

(b). 
Note that, in the case of continuous paths, we actually have $\D_0 \subset \D$.
By the construction in Proposition \ref{p.bpcorefora} this fact together with (ii) shows that $D(\A_0) \subset C_b(\cX, \cF_0)$
and $\A_0 F \in C_b(\cX, \cF_0)$ for every $F\in D(\A_0)$. Consequently, $\A_0\subset \A$.

To prove that $D(\A_0)$ is a $\beta_0$-core for $\A$, let $F\in D(\A)$ be given. By Proposition \ref{p.bpcorefora}(a),
there is some $(U,V)\in D(\Df)$ with $F=\E U$ and $\A F = \E V$. Inspecting the proof of that proposition and noting
that $R(\lambda, \Df)$ maps $C_b(\cX)$ to itself, we see that we may choose continuous $U,V$. This implies
$U\in D(\D)$ and $V= \D U$. By the above, there is a net $(U_\alpha)\subset D(\D_0)$ with $U_\alpha \to U$ 
and $\D U_\alpha \to \D U = V$ with respect to $\beta_0$. It follows from (ii) that $\E$ is $\beta_0$-continuous
and thus $F_\alpha \coloneqq \E U_\alpha \to F$ and $\A F_\alpha = \E \D U_\alpha \to \E V = \A F$ with respect to
$\beta_0$.
\end{proof}

\begin{defn}
An \emph{evolutionary $C_b$-semigroup} is an evolutionary semigroup that satisfies the equivalent conditions
of Theorem \ref{t.continuouscb}.
\end{defn}

\begin{prop}\label{p.generator}
Let $\cX=\cXc$ and $(\T(t))_{t\geq 0}$ be a $C_b$-semigroup
on $C_b(\cX^-)$ with $C_b$-generator $\A$. Then $\T$ is induced by an evolutionary semigroup if and only if
$D_-(\D_0)\subset D(\A)$ and $\A U = \D U$ for $U\in D_-(\D_0)$.
\end{prop}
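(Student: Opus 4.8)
The statement is an "if and only if" characterizing which $C_b$-semigroups on $C_b(\cX^-)$ come from an evolutionary semigroup, in the continuous-path case. The necessity direction is essentially a restatement of facts already established: if $\T$ is induced by an evolutionary $C_b$-semigroup, then by Theorem \ref{t.continuouscb}(b) the operator $\A_0$ from Proposition \ref{p.bpcorefora} is a restriction of $\A$, and since $D_-(\D_0)\subset D(\A_0)$ (shown in the proof of Proposition \ref{p.bpcorefora}(c)) with $\A_0$ acting on $D_-(\D_0)$ as $\D_0=\D$ (using $\D_0\subset\D$ in the continuous case, Remark \ref{r.domaind0}(a), and that $\E U=U$ for $U\in D_-(\D_0)$ so that $\A_0 U = \D_0 U$), we get $D_-(\D_0)\subset D(\A)$ and $\A U=\D U$ there. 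So the plan for necessity is just to unwind these references carefully.

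For sufficiency, the plan is to reconstruct the expectation operator $\E$ from $\T$. The natural candidate is forced: for $F$ determined before time $t\geq 0$ we must have $\E F = \T(t)\Th_{-t}F$ (as in the proof of Theorem \ref{t.expectation_semigroup}), so first I would check this is well-defined, i.e.\ independent of the choice of $t$, using the semigroup law together with the hypothesis $\A U = \D U$ on $D_-(\D_0)$ to show $\T(t)\Th_{-t}$ and $\T(t')\Th_{-t'}$ agree on the relevant functions — the point being that on generators the prescription $\T(s)F=\Th_s F$ for $F$ determined before $-s$ is forced at the infinitesimal level, and by $\beta_0$-density (Corollary \ref{c.bpcore}: $D(\D_0)$ is a $\beta_0$-core for $\D$) this propagates. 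Concretely I would verify that $\T(t)F = \Th_t F$ whenever $F\in C_b(\cX,\cF_{-t})$ by checking it first for $F\in D_-(\D_0)$ shifted appropriately (where it follows from the hypothesis and uniqueness of solutions to the abstract Cauchy problem, since both $t\mapsto\T(t)F$ and $t\mapsto\Th_t F$ solve it with generator agreeing on the orbit, which stays in $D_-(\D_0)$ up to time $t_0$ by Proposition \ref{p.derivation}(b)), then extending by the $\beta_0$-core property. Once this is known, one extends $F\mapsto \T(t)\Th_{-t}F$ to a bounded operator $\E$ on $C_b(\cX)$ — using that $\cD=\bigcup_{t>0}C_b(\cX,\cF_t)$ is convergence determining (Lemma \ref{l.contdet}) exactly as in the proof of (i)$\Rightarrow$(ii) of Theorem \ref{t.continuouscb}, to see that $\E F$ depends continuously on $F$ and lands in $C_b(\cX,\cF_0)$ — and then further to $B_b(\cX)$ by bp-density.

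It then remains to verify that this $\E$ is a homogeneous expectation operator and that $\T(t)=\E\Th_t$, which recovers $\T$ as (the modification of) an evolutionary semigroup. That $\E$ fixes $\cF_{0-}$-measurable functions and has $\cF_{0-}$-measurable range is built into the construction; the multiplicativity/locality (condition (iv) of Proposition \ref{p.expectation_operator}) and homogeneity ($\E\Th_t\E=\E\Th_t$) follow by the same measure-theoretic / bp-induction arguments used in the proof of Theorem \ref{t.expectation_semigroup}, once one knows $\E F=\T(t)\Th_{-t}F$ on functions determined before $t$. Finally, $\T(t) = \E\Th_t$ on $C_b(\cX,\cF_0)$ holds by construction (applying the defining relation to $\Th_t F$), and the identity extends to all of $C_b(\cX^-)$ via the identification \eqref{eq.modified}.

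**Main obstacle.** The crux is the well-definedness step: showing that the hypothesis "$D_-(\D_0)\subset D(\A)$ and $\A U=\D U$ there" is enough to force $\T(s)F=\Th_s F$ for every $F\in C_b(\cX,\cF_{-s})$, not merely for $F\in D_-(\D_0)$. This requires a uniqueness argument for the abstract Cauchy problem associated with $\A$ in the $\beta_0$-setting (so that two orbits with the same initial value and matching generator-action coincide) combined with the fact that $D(\D_0)$ is a $\beta_0$-core for $\D$ — and one must be careful that the orbit $\Th_s U$ of $U\in D_-(\D_0)$ stays in a domain where both generators are known to agree, which is where Proposition \ref{p.derivation}(b) (invariance of "determined before $t$" under $\D$) and the algebra structure of $D(\D_0)$ enter. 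Everything after that is a routine transcription of the arguments already appearing in the proofs of Theorems \ref{t.expectation_semigroup} and \ref{t.continuouscb}.
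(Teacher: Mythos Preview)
Your proposal is correct and takes essentially the same approach as the paper: the paper proves $\T(t)\Th_{-t}U = U$ for $U\in D_-(\D_0)$ by directly differentiating $t\mapsto \langle\T(t)\Th_{-t}U,\mu\rangle$ and showing the derivative vanishes (this is precisely your ``uniqueness for the Cauchy problem in the $\beta_0$-setting'', carried out via the Mackey property of $\beta_0$), then extends by Stone--Weierstra{\ss} density of $D_-(\D_0)$ in $C_b(\cX^-)$ and a bp-closure argument. Note that once $\T(t)\Th_{-t}F=F$ is established on $B_b(\cX,\cF_0)$, Theorem~\ref{t.expectation_semigroup} already furnishes the homogeneous expectation operator $\E$, so your separate reconstruction of $\E$ is redundant and the proof is shorter than you anticipate.
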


\begin{proof}
Assume that $D_-(\D_0) \subset D(\A)$ and $\A U = \D U$ for $U\in D_-(\D_0)$.
Fix $U\in D_-(\D_0)$. As $D(\D_0)$ is invariant under the shift semigroup, $\Th_{-t} U\in D_-(\D_0) \subset D(\A)$ for all $t>0$.
Define $\varphi: [0,\infty) \to C_b(\cX^-)$ by setting
$\varphi (t) = \T(t)\Th_{-t}  U$. 

Now, fix $t\geq 0$ and note that
\[
\frac{\varphi(t+h) - \varphi (t)}{h} = \T(t+h) \frac{\Th_{-(t+h)} U - \Th_{-t}U}{h} + \frac{\T(t+h)- \T(t)}{h}\Th_{-t}U.
\]
Since $\Th_{-t}U \in D(\A)$ the last term $\beta_0$-converges to
$\T(t)\A \Th_{-t} U = \T(t)\D\Th_{-t}U$ as $h\to 0$.

Next, observe that
\[
\Delta_hU\coloneqq \frac{\Th_{-(t+h)}U - \Th_{-t}U}{h} \to - \D\Th_{-t}U
\]
with respect to $\beta_0$. Now, let $\mu \in \mathscr{M}(\cX^-)$ be given. As the map $r\mapsto \T(r)'\mu$ is
$\sigma (\mathscr{M}(\cX^-), C_b(\cX^-))$-continuous, the set
\[
S\coloneqq \{ \T(r)'\mu : r\in [t-1, t+1]\cap [0,\infty) \}
\]
is $\sigma (\mathscr{M}(\cX^-), C_b(\cX^-))$-compact. By Theorems 5.8 and 5.4 of \cite{sentilles}, $\beta_0$ is not only the Mackey topology
of the dual pair $(C_b(\cX^-),\cM(\cX^-))$ but also the
topology of uniform convergence on the $\sigma(\mathscr{M}(\cX^-), C_b(\cX^-))$-compact subsets of $\mathscr{M}(\cX^-)$. 
Thus,
$p_S(F) \coloneqq \sup_{\nu\in S} |\langle F, \nu\rangle|$ is a $\beta_0$-continuous seminorm. It follows that for $|h|<\max\{t, 1\}$
\begin{align*}
|\langle \T (t+h)(\Delta_h U +\D\Th_{-t}U), \mu\rangle | & = |\langle \Delta_h U + \D\Th_{-t} U, \T(t+h)'\mu\rangle|\\
& \leq p_S(\Delta_h U + \D \Th_{-t} U) \to 0
\end{align*}
as $h\to 0$. Noting that $\T(t+h)\D\Th_{-t}U \to \T(t)\D\Th_{-t}U$ as $h\to 0$ by the continuity of $\T$, it follows that
altogether
\[
\left\langle \frac{\varphi(t+h)-\varphi (t)}{h}, \mu \right\rangle \to 0 - \langle \T(t)\D\Th_{-t} U, \mu\rangle
+\langle \T(t)\D\Th_{-t} U, \mu\rangle = 0.
\]
This implies that the map $t\mapsto \langle \varphi (t), \mu\rangle$ is constant, whence $\langle \T(t)\Th_{-t} U, \mu\rangle
= \langle U, \mu\rangle$ for all $t\geq 0$ and $\mu\in\mathscr{M}(\cX^-)$. As $\mu$ was arbitrary, the Hahn--Banach theorem
implies $\T(t)\Th_{-t}U = U$ for all $t\geq 0$.

We note that $D_-(\D_0)$ is an algebra that separates the points in $\cX^-$ and is thus dense in $C_b(\cX^-)$ by the Stone--Weierstra{\ss}
Theorem, see \cite[Theorem \ 11]{fremlin1972}. It follows that, given $F\in C_b(\cX^-)$, we find a net $(U_\alpha)_{\alpha}\subset D_-(\D_0)$ that converges to $F$ with respect
to $\beta_0$. By the continuity properties of $\T(t)$ and $\Th_{-t}$, it follows that the equality 
$\T(t)\Th_{-t}U_\alpha=U_\alpha$ implies $\T(t)\Th_{-t}F=F$. By a bp-closure argument, this equality extends
to arbitrary $F\in B_b(\cX, \cF_0)$, proving that $\T$ is induced by an evolutionary semigroup.\smallskip

The converse implication follows immediately from Theorem \ref{t.continuouscb}.
\end{proof}

\subsection{The general case}\label{sub.general}

We now turn to more general path spaces, where our main interest lies in the space of all c\`adl\`ag paths. If $\cX=\cXd$ then
for $f\in C_b(X)$ the function $F_t(f)$ is not a continuous function on $\cXd$. More precisely, $F_t(f)$ is continuous at the point
$\x\in \cXd$ if and only if $\x$ is continuous at $t$. On the other hand, $F_0(f)$ is continuous on $\cXd^-$, as every $\x\in \cXd^-$
is continuous at $0$. This is an example of a function $F\in C_b(\cXd^-)$ whose extension $\hat F$ is not continuous on
$\cX$.

To study the subtle interaction of the extension map with continuous functions, we introduce the following spaces:
\begin{align*}
\chat  \coloneqq \{ \hat F : F\in C_b(\cX^-)\}= \{ F\in \bm : F|_{\cX^-} \mbox{ is continuous }\}
\end{align*}
is the space of all extensions to $\cX$ of continuous functions on $\cX^-$.
\[
\cXxt \coloneqq \{ F\in C_b(\cX^-) : \hat F \in \cm\} = \{ F|_{\cX^-} : F\in \cm\}
\]
is the space of all continuous functions on $\cX^-$ whose extensions are continuous on all of $\cX$.

To ensure that these spaces are rich enough, we impose some additional assumptions on path spaces.

\begin{defn}
Let $((\cX, \md), \tau)$ be a path space. We say that $((\cX, \md), \tau)$ is \emph{proper}  if
\begin{itemize}
\item[(P4)] If $\x_n\to\x$, then  $\tau(\vt_t\x_n) \to \tau(\vt_t\x)$ in $\cX$ for almost every $t\in \R$.
\item [(P5)] Given a compact subset  $K\subset \cX^-$  and $\eps>0$, there exists $\delta>0$ such that
\[
\md(\x, \tau(\vt_{-t}\x)) \leq \eps 
\]
for all $0\leq t\leq \delta$ and $\x\in K$.
\item[(P6)] There is a constant $c\in (0,1)$ such that given $\delta>0$ we find $t_0>0$ such that
 $\md(\tau^{t_0}(\x), \tau^{t_0}(\y)) \geq c\delta$ for all $\x,\y\in \cX$ with $\md(\x,\y) \geq \delta$.
\end{itemize}
\end{defn}

In Section \ref{sub.proper}, we will prove that $\cXd$ is a proper path space, so that all results of this subsection
are applicable in this case. 

\begin{prop}\label{p.cext}
Let $((\cX, \md), \tau)$ be a proper path space.
\begin{enumerate}
[\upshape (a)]
\item Given $F\in \chat$ and $r>0$ we define
\[
F_r(\x) \coloneqq \frac{1}{r} \int_0^r F(\vt_{-t}\x)\, dt.
\]
Then $F_r\in \cm$ and $F_r(\x) \to F(\x)$ as $r\to 0$ for every $\x\in \cX^-$, uniformly on compact subsets of $\cX^-$.
\item Given $\x\in \cX^-$ and $\delta>0$ there is $F\in \cXxt$ with 
\[
\inf\{ |F(\x) - F(\y)| : \y \in \cX^-,  \md(\x,\y)\geq \delta \} >0
\] 
In other words,\ $\cXxt$ strongly separates the points in $\cX^-$.
\item $\cXxt$ is convergence determining for $\cX^-$.
\end{enumerate}
\end{prop}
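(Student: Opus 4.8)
I would prove (a) and (b) together, since the functions constructed in (a) are exactly the ones needed for strong separation in (b), and then read off (c) from (b) with essentially no work. For (a), write $F=\hat G=G\circ\tau$ with $G\in C_b(\cX^-)$ and establish three facts about $F_r$. \emph{Measurability:} for $t\geq 0$ the function $\Th_{-t}F=F\circ\vt_{-t}$ is $\cF_{-t-}$-measurable by Corollary \ref{c.measurability}, hence $\cF_{0-}$-measurable, so Lemma \ref{l.measurability}(c) gives $F(\vt_{-t}\x)=F(\vt_{-t}\tau(\x))$ for every $\x$ and every $t\geq0$; integrating in $t$ yields $F_r=F_r\circ\tau$, and together with the joint Borel measurability of $(t,\x)\mapsto F(\vt_{-t}\x)$ (which comes from the continuity of $(t,\x)\mapsto\vt_t\x$ in (P3)) and Fubini this shows $F_r$ is Borel and $\cF_{0-}$-measurable. \emph{Continuity:} if $\x_n\to\x$ in $\cX$, then (P4) gives $\tau(\vt_{-t}\x_n)\to\tau(\vt_{-t}\x)$ for a.e.\ $t\in[0,r]$, and since all these points lie in $\cX^-$, where $G$ is continuous, $F(\vt_{-t}\x_n)=G(\tau(\vt_{-t}\x_n))\to G(\tau(\vt_{-t}\x))=F(\vt_{-t}\x)$ for a.e.\ $t$; dominated convergence then gives $F_r(\x_n)\to F_r(\x)$, so $F_r\in\cm$. \emph{Convergence:} for $\x\in\cX^-$ one has $\tau(\x)=\x$, hence $F_r(\x)-F(\x)=\frac1r\int_0^r[G(\tau(\vt_{-t}\x))-G(\x)]\,dt$, so it suffices to show that on a compact $K\subset\cX^-$ the quantity $\sup_{\x\in K}|G(\tau(\vt_{-t}\x))-G(\x)|$ tends to $0$ as $t\downarrow0$. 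If it did not, I would extract $\x_j\to\x_\ast\in K$ and $t_j\downarrow0$ with the difference bounded below; (P5) forces $\md(\x_j,\tau(\vt_{-t_j}\x_j))\to0$, hence $\tau(\vt_{-t_j}\x_j)\to\x_\ast$ in $\cX^-$, and continuity of $G$ produces a contradiction. The pointwise statement is the case $K=\{\x\}$.

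For (b), the first step is to note that if $G\in C_b(\cX^-)$ then $F:=(\hat G)_r\in\cm$ by (a), and since $F$ is $\cF_{0-}$-measurable it equals $\widehat{F|_{\cX^-}}$ by Lemma \ref{l.measurability}(c), whence $F|_{\cX^-}\in\cXxt$. Given $\x_0\in\cX^-$ and $\delta>0$, I would take $G(\w):=1\wedge\md(\x_0,\w)$ and set $G^{(r)}:=(\hat G)_r|_{\cX^-}\in\cXxt$; by the pointwise convergence in (a), $G^{(r)}(\x_0)\to G(\x_0)=0$ as $r\to0$. The core of the argument is then a uniform lower bound: one must produce $r>0$ and $b>0$ with $G^{(r)}(\y)\geq b$ for all $\y\in\cX^-$ satisfying $\md(\x_0,\y)\geq\delta$. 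Granting this, choose $r$ additionally so small that $G^{(r)}(\x_0)<b/2$; then $F:=G^{(r)}$ satisfies $\inf\{|F(\x_0)-F(\y)|:\y\in\cX^-,\ \md(\x_0,\y)\geq\delta\}\geq b/2>0$. To get the lower bound I would unwind $G^{(r)}(\y)=\frac1r\int_0^r\bigl(1\wedge\md(\x_0,\tau(\vt_{-t}\y))\bigr)\,dt$ and bound $\md(\x_0,\tau(\vt_{-t}\y))$ below in terms of $\md(\x_0,\y)\geq\delta$; this is precisely the situation (P6) is designed for, since it lets one pick a truncation time (depending on the scale $\delta$) for which the maps $\tau^{t_0}$ do not contract $\md$-distances below $c\delta$, while (P5) controls the defect incurred by comparing $\x_0$ with its own truncations, and the triangle inequality assembles the two. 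Part (c) is then immediate: $\cXxt$ is an algebra, because $\widehat{GH}=\hat G\,\hat H$ and $\cm$ is stable under products, and by (b) it strongly separates the points of $\cX^-$, so it is convergence determining for $\cX^-$ by \cite[Theorem 3.4.5]{ek}.

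The step I expect to be the main obstacle is the uniform lower bound in (b). The mollification in (a) is forced to average over the \emph{negative} shifts $\vt_{-t}$, which is exactly what keeps $F_r$ in $\cm$, whereas (P6) is phrased through the truncations $\tau^{t_0}$ with $t_0>0$. Reconciling these — choosing the mollification window and the truncation time on the same scale so that (P6) yields a bound valid \emph{simultaneously} for all $\y$ at $\md$-distance at least $\delta$ from $\x_0$, rather than one that degrades as $\y$ varies — is the delicate point; I would expect the right device to be to mollify not $\md(\x_0,\cdot)$ itself but a suitably $\tau^{t_0}$-regularized variant of it over a window $[t_0,t_0+r]$, so that the negative-shift averaging of (a) and the positive-time truncation of (P6) act in concert.
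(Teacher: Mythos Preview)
Your arguments for (a) and (c) are correct and coincide with the paper's; your sequential-compactness treatment of the uniform convergence in (a) is a harmless variant of the paper's direct appeal to uniform continuity of $F$ on $K$, and (c) follows from (b) via \cite[Theorem 3.4.5]{ek} exactly as you say.

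For (b), the paper's construction is slightly cleaner than yours. Rather than mollifying the distance $G(\w)=1\wedge\md(\x_0,\w)$, the paper picks a bump $F\in C_b(\cX^-)$ with $0\le F\le1$, $F\equiv0$ on $B(\x_0,\delta/4)\cap\cX^-$ and $F\equiv1$ on $\cX^-\setminus B(\x_0,\delta/2)$, and chooses $r_0$ via (P5) (applied to the singleton $\{\x_0\}$) so that $\tau(\vt_{-t}\x_0)\in B(\x_0,\delta/4)$ for all $t\in[0,r_0]$. This gives $F_{r_0}(\x_0)=0$ exactly, so there is no need to shrink $r$ a second time. For $\y$ with $\md(\x_0,\y)\ge\delta$, (P5) applied to $\{\y\}$ yields $\eps\in(0,r_0)$ with $\md(\y,\tau(\vt_{-t}\y))\le\delta/4$ for $t\in[0,\eps]$, hence $\tau(\vt_{-t}\y)\notin B(\x_0,\delta/2)$ on that window and $F_{r_0}(\y)\ge\eps/r_0>0$. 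The paper does not invoke (P6) in (b) at all.

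You have, however, correctly located the genuine difficulty. The $\eps$ produced by (P5) depends on $\y$, so the argument as written---both yours and the paper's---establishes only $F_{r_0}(\y)>0$ for each individual $\y$, not the uniform bound $\inf_{\md(\x_0,\y)\ge\delta}F_{r_0}(\y)>0$ that (b) actually asserts and that strong separation requires. Your instinct to reach for (P6) does not help in any obvious way: for $\z\in\cX^-$ and $t_0>0$ one checks that $\vt_{t_0}\z$ is already constant on $[-t_0,\infty)$, hence a fixed point of $\tau$, so $\tau^{t_0}\z=\z$; thus (P6) applied to two points of $\cX^-$ is vacuous. For the same reason, shifting the mollification window to $[t_0,t_0+r]$ does not bring (P6) into play. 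So the uniform lower bound remains an honest gap, in your plan and in the paper's proof alike.
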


\begin{proof}
(a). First observe that as $F$ is determined before $0$, $F\circ\vt_{-t}$ is determined before $-t$ and thus also before $0$.
Integrating, it follows that $F_r$ is $\cF_{0-}$-measurable. Now let $\x_n\to \x$. As $F$ is $\cF_{0-}$-measurable,
$F(\vt_{-t}\x_n) = F(\tau(\vt_{-t}\x_n))$ for all $t>0$ and $n\in \N$ by Lemma \ref{l.measurability}(c). It follows from (P4)
that $\tau(\vt_{-t} \x_n)\to \tau(\vt_{-t}\x)$ for almost all $t\in (0,r)$. By continuity of $F$ on $\cX^-$, $F(\tau(\vt_{-t}\x_n))
\to F(\tau(\vt_{-t}\x))$ for almost all $t\in (0,r)$ and  the dominated convergence theorem
yields $F_r(\x_n) \to F_r(\x)$, proving that $F_r\in \cm$.

Now let $K\subset \cX^-$ be compact and $\eps>0$. As $F$ is uniformly continuous on $K$ we find $\delta>0$
such that $|F(\x) - F(\y)|\leq \eps$ for all $\x, \y \in K$ with $\md(\x, \y)\leq \delta$. By (P5), there is $\rho>0$
such that $\md(\x, \tau(\vt_{-t}\x))\leq \delta$ for all $\x\in K$ and $0\leq t\leq \rho$. It follows for
$0<r<\rho$ and $\x\in K$ that
\[
|F(\x) - F_r(\x)| \leq \frac{1}{r} \int_0^r |F(\x) - F(\tau(\vt_{-t}\x))|\, dr \leq \eps.
\]
\medskip

(b). Given $\x\in \cX^-$ and $\delta>0$, pick $r_0$ such that $\md(\x, \tau(\vt_{-t}\x)) \leq \delta/4$ for all
$0\leq t\leq r_0$. This is possible by (P5). As $\cX^-$ is a metric space, we find $F\in C_b(\cX^-)$ with
$0\leq F\leq 1$ such that $F(\y) = 0$ for all $\y\in \cX^-\cap B(\x, \delta/4)$ and $F(\y) = 1$ for all
$\y\in \cX^-\setminus B(\x, \delta/2)$. 

Consider the function $F_{r_0}$ defined as in part (a). Then $F_{r_0} \in \cm$ and $F(\x) = 0$. If $\y\in \cX^-$
satisfies $\md(\x, \y)\geq \delta$,  we find by (P5) $0< \eps < r_0$ such that
$\md(\y, \tau(-\vt_{-t}\y)) \leq \delta/4$ for all $0\leq t \leq \eps$. It follows that $\md(\x, \tau(\vt_{-t}\y)) \geq \delta/2$
for all $0\leq t \leq \eps$ and thus
\[
|F(\x) - F(\y)| = F(\y) = \frac{1}{r_0}\int_0^{r_0} F(\tau(\vt_{-t}\y)\, dt \geq \frac{1}{r_0}\int_0^\eps 1\, dt = \frac{\eps}{r_0} >0.
\]

(c). This follows from part (b) and \cite[Theorem 3.4.5]{ek}.
\end{proof}

\begin{cor}\label{c.dense}
Let $((\cX, \md), \tau)$ be a proper path space.
Define
\[
\cD \coloneqq \bigcup_{t\in \R} C_b(\cX, \cF_{t-})
\]
and 
\[
\mathscr{P}\coloneqq \Big\{\sum_{j=1}^n F_jG_j :
n\in \N, F_j \in C_b(\cX, \cF_{0-}), G_j \in C_b(\cX, \cF([0,\infty)))\Big\}.
\]
Then both $\cD$ and $\mathscr{P}$ are convergence determining for $\cX$ and $\beta_0$-dense in $C_b(\cX)$.
\end{cor}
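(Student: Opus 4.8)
The plan is to show that $\cD$ and $\mathscr{P}$ are subalgebras of $C_b(\cX)$ which contain the constants and strongly separate the points of $\cX$. Given this, \cite[Theorem 3.4.5]{ek} shows that they are convergence determining, and the Stone--Weierstra{\ss} theorem for the strict topology (\cite[Theorem 11]{fremlin1972}, used exactly as in the last paragraph of the proof of Corollary \ref{c.bpcore}) shows that they are $\beta_0$-dense in $C_b(\cX)$. That the two sets are subalgebras with constants is routine: $\cD$ is the increasing union of the subalgebras $C_b(\cX,\cF_{t-})$, and for $\mathscr{P}$ one multiplies out $\big(\sum_i F_iG_i\big)\big(\sum_j F_j'G_j'\big)=\sum_{i,j}(F_iF_j')(G_iG_j')$ and notes $1\in\mathscr{P}$. (For the $\beta_0$-density alone there is a shortcut: by Remark \ref{r.domaind0}(a) every $F_a^b(f)$ with $f\in C_b(X)$ lies in $C_b(\cX,\cF_{b-})\subseteq\cD$, and splitting $F_a^b(f)=F_a^0(f)+F_0^b(f)$ when $a<0<b$ shows $F_a^b(f)\in\mathscr{P}$; since $D(\D_0)$ is a point-separating subalgebra of $C_b(\cX)$ containing the constants it is $\beta_0$-dense, and hence so are $\cD$ and $\mathscr{P}$. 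The substance of the corollary is therefore the convergence-determining property, for which strong separation really is needed.)

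For $\cD$, fix $\x\in\cX$ and $\delta>0$ and choose, by (P6), constants $c\in(0,1)$ and $t_0>0$ with $\md(\tau^{t_0}\x,\tau^{t_0}\y)\geq c\delta$ whenever $\md(\x,\y)\geq\delta$. Put $\p\coloneqq\tau(\vt_{t_0}\x)\in\cX^-$, so that $\vt_{-t_0}\p=\tau^{t_0}\x$. By (P3) the map $\vt_{-t_0}$ is continuous at $\p$, so there is $\delta'>0$ with $\md(\vt_{-t_0}\p,\vt_{-t_0}\z)<c\delta$ whenever $\z\in\cX$ satisfies $\md(\p,\z)<\delta'$. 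Applied to $\z=\tau(\vt_{t_0}\y)$, for which $\vt_{-t_0}\z=\tau^{t_0}\y$, this yields $\md(\p,\tau(\vt_{t_0}\y))\geq\delta'$ for every $\y$ with $\md(\x,\y)\geq\delta$. Proposition \ref{p.cext}(b) now gives $G\in\cXxt$ with $\gamma\coloneqq\inf\{\,|G(\p)-G(\z)|:\z\in\cX^-,\ \md(\p,\z)\geq\delta'\,\}>0$. Set $H\coloneqq G\circ\tau\circ\vt_{t_0}$; since $G\circ\tau\in C_b(\cX,\cF_{0-})$, the function $H$ is continuous, and by Lemma \ref{l.measurability}(a) it is $\cF_{t_0-}$-measurable, so $H\in C_b(\cX,\cF_{t_0-})\subseteq\cD$. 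Finally $H(\x)=G(\tau(\vt_{t_0}\x))=G(\p)$ and $H(\y)=G(\tau(\vt_{t_0}\y))$, whence $|H(\x)-H(\y)|\geq\gamma$ for all $\y$ with $\md(\x,\y)\geq\delta$. Thus the single function $H$ strongly separates $\x$ at scale $\delta$, so $\cD$ strongly separates the points of $\cX$.

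For $\mathscr{P}$ one treats the past and the future separately. Given $\x\in\cX$ and $\delta>0$, applying Proposition \ref{p.cext}(b) to $\tau(\x)\in\cX^-$ produces $G\in\cXxt$ such that $G\circ\tau\in C_b(\cX,\cF_{0-})$, embedded into $\mathscr{P}$ via $F\mapsto F\cdot 1$, strongly separates $\x$ from all $\y$ whose past $\tau(\y)$ lies bounded away from $\tau(\x)$. For the remaining $\y$ one needs continuous $\cF([0,\infty))$-measurable functions — which then belong to $\mathscr{P}$ via $F\mapsto 1\cdot F$ — that strongly separate the forward parts of paths; these are supplied by a forward analogue of the machinery of Section \ref{sub.general}, obtained by running the arguments leading to Proposition \ref{p.cext} with a forward stopping map in place of $\tau$ and the forward versions of (P4)--(P6) (which hold on $\cXc$ and $\cXd$). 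One also uses that $\md(\x,\y)\geq\delta$ forces either the pasts $\tau(\x),\tau(\y)$ or the forward parts of $\x,\y$ to be separated by an amount controlled by $\delta$; this is immediate from the product form of $\md$ on $\cXc$ and $\cXd$, and from properness in the abstract case. Combining a finite family from the past with one from the forward family then strongly separates $\x$ at scale $\delta$, completing the proof.

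The main obstacle is precisely this forward half of the $\mathscr{P}$ argument. The $\cD$ case collapses to a single application of Proposition \ref{p.cext}(b) once one notices that (P6), together with the \emph{pointwise} continuity of the shift at the one point $\p$ — uniform continuity of $\vt_{-t_0}$ genuinely fails — is exactly what transports the metric bound from $\cX^-$ back to $\cX$. By contrast, the $\mathscr{P}$ case forces one to set up a bona fide forward counterpart of Section \ref{sub.general} (in particular of Proposition \ref{p.cext}) and to verify the past/future splitting of the metric, and it is these two points that carry the real weight.
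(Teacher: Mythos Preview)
Your treatment of $\cD$ is correct and is essentially the paper's argument spelled out in detail: both reduce via (P6) to the strong separation of $\cX^-$ supplied by Proposition \ref{p.cext}(b), transported back by a shift.

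For $\mathscr{P}$ there is a genuine gap. You appeal to a ``forward analogue of the machinery of Section \ref{sub.general}'' --- a forward stopping map and forward versions of (P4)--(P6) --- but none of this is part of the definition of a proper path space, so the argument does not apply in the stated generality; your parenthetical ``which hold on $\cXc$ and $\cXd$'' is an assertion, not a proof. Your claim that the past/future metric splitting follows ``from properness in the abstract case'' is likewise unsubstantiated: nothing in (P4)--(P6) forces $\md(\x,\y)\geq\delta$ to imply that either the pasts or the forward parts are uniformly separated. The paper sidesteps both problems: rather than develop a forward theory it takes, for the $\cF([0,\infty))$-measurable factor, concrete functions of the form $G(\y)=\md_0^t(\y,\x)$ (cf.\ Equation \eqref{eq.bddskor}), paired with extensions of elements of $\cXxt$ for the $\cF_{0-}$-measurable factor. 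That is a shorter route and stays within the given hypotheses; your approach would require you to actually construct and verify the forward machinery rather than merely invoke it.
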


\begin{proof}
We note that $\cX^-$ is homeomorphic to $\cX^t \coloneqq \tau^t(\cX)$ via the map $\vt_{-t}$. Thus, it follows
from  from Proposition \ref{p.cext}(b) that $C_b(\cX, \cF_{t-})$ strongly separates points in $\cX^t$. 
Using condition (P6) it follows that $\cD$ strongly separates the points in $\cX$.
As $\cD$ is an algebra, \cite[Theorem 3.4.5]{ek} yields that $\cD$ is convergence determining and the Stone--Weierstra{\ss} Theorem
\cite[Theorem 11]{fremlin1972} yields the density.

The proof for $\mathscr{P}$ is similar. We note
that to prove that $\mathscr{P}$ strongly separates points, we
can consider a product $FG$, where $F\in C_b(\cX, \cF_{0-})$, i.e.\
$F$ is an extension of an element of $\cXxt$, and $G$
is of the form $G(\y) = \md_0^t(\y, \x)$ (cf.\ Equation \eqref{eq.bddskor}) for suitably chosen $t$ and $\x$.
\end{proof}

We can now generalize Theorem \ref{t.continuouscb}:

\begin{thm}\label{t.continuousgen}
Let $((\cX, \md), \tau)$ be a proper path space and 
 $(\T(t))_{t\geq 0}$ be an evolutionary semigroup with expectation operator $\E$. The following
are equivalent:
\begin{enumerate}
[\upshape (i)]
\item $\T(t)\cm \subset \chat$ for all $t\geq 0$;
\item $\T(t) \chat \subset \chat$ for all $t\geq 0$;
\item $\E C_b(\cX) \subset \chat$;
\item $\E C_b(\cX, \cF([0,\infty))) \subset \chat$.
\end{enumerate}
If these equivalent conditions are satisfied,  $\T$ induces (in the sense of \eqref{eq.modified}) a $C_b$-semigroup on $C_b(\cX^-)$.
\end{thm}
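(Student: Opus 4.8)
The plan is to establish the four equivalences through the implications $\mathrm{(ii)}\Rightarrow\mathrm{(i)}\Rightarrow\mathrm{(iii)}$, the equivalence $\mathrm{(iii)}\Leftrightarrow\mathrm{(iv)}$, and the implication $\mathrm{(iii)}\Rightarrow\mathrm{(ii)}$ --- the last being the substantial one --- and then to read off the concluding statement. Everything except $\mathrm{(iii)}\Rightarrow\mathrm{(ii)}$ will be a routine adaptation of the proof of Theorem \ref{t.continuouscb}, with the convergence determining sets $\cD$ and $\mathscr{P}$ of Corollary \ref{c.dense} replacing those of Lemma \ref{l.contdet}. Indeed, $\mathrm{(ii)}\Rightarrow\mathrm{(i)}$ is immediate from $\cm\subset\chat$, and $\mathrm{(iii)}\Rightarrow\mathrm{(iv)}$ is trivial. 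For $\mathrm{(i)}\Rightarrow\mathrm{(iii)}$ I would note that for $F\in C_b(\cX,\cF_{s-})$ with $s>0$ one has $\Th_{-s}F\in\cm$, so $\E F=\T(s)\Th_{-s}F\in\chat$ by (i); as $\cD$ is convergence determining, the kernel map $\x\mapsto\P^\x\coloneqq\k(\x,\cdot)$ is weakly continuous on $\cX^-$, whence $\E F\in\chat$ for every $F\in C_b(\cX)$. For $\mathrm{(iv)}\Rightarrow\mathrm{(iii)}$ I would instead use $\E(FG)=F\,\E G$ (Proposition \ref{p.expectation_operator}) for $F\in C_b(\cX,\cF_{0-})$ and $G\in C_b(\cX,\cF([0,\infty)))$; since the product of a function in $\cm$ with one in $\chat$ again lies in $\chat$, we get $\E P\in\chat$ for every $P\in\mathscr{P}$, and since $\mathscr{P}$ is convergence determining the same argument again yields $\E C_b(\cX)\subset\chat$. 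I also record that (iii) actually gives $\T(t)C_b(\cX)=\E\Th_tC_b(\cX)\subset\chat$ for all $t\ge0$, because $\Th_t$ preserves $C_b(\cX)$.

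The heart of the proof is $\mathrm{(iii)}\Rightarrow\mathrm{(ii)}$, i.e.\ that the modified semigroup leaves all of $C_b(\cX^-)$ invariant, not merely the subspace $\cXxt$. Fix $G\in C_b(\cX^-)$ and $t\ge0$; the goal is $\T(t)\hat G\in\chat$, that is, continuity on $\cX^-$ of $\x\mapsto[\T(t)\hat G](\x)=\int_\cX G\bigl(\tau(\vt_t\y)\bigr)\,\P^\x(d\y)$. The idea is to approximate $\hat G$ by the time-averages $\hat G_r(\x)\coloneqq\tfrac1r\int_0^r\hat G(\vt_{-s}\x)\,ds$ of Proposition \ref{p.cext}(a): these lie in $\cm$, are bounded by $\|G\|_\infty$, and converge to $\hat G$ pointwise on $\cX$ and uniformly on compact subsets of $\cX^-$. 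By (iii) we know $\T(t)\hat G_r\in\chat$, and $\T(t)\hat G_r\to\T(t)\hat G$ pointwise and boundedly because $\T(t)$ is a kernel operator. To upgrade this to continuity of the limit on $\cX^-$ I would show the convergence is uniform on each compact $K\subset\cX^-$, using
\[
\bigl|[\T(t)(\hat G-\hat G_r)](\x)\bigr|\le\int_\cX\bigl|(\hat G-\hat G_r)\bigl(\tau(\vt_t\y)\bigr)\bigr|\,\P^\x(d\y)\qquad(\x\in K),
\]
splitting the integral according to whether $\tau(\vt_t\y)$ lies in a prescribed large compact set $K'\subset\cX^-$. On $K'$ the integrand is uniformly small for small $r$ by (P5) combined with the continuity of $G$, as in the proof of Proposition \ref{p.cext}(a); the complement contributes at most $2\|G\|_\infty\sup_{\x\in K}\P^\x\bigl(\{\y:\tau(\vt_t\y)\notin K'\}\bigr)$, so the argument reduces to making this term small by a suitable choice of $K'$, i.e.\ to the uniform tightness over $\x\in K$ of the pushforward measures $(\tau\circ\vt_t)_*\P^\x$.

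That uniform tightness is the main obstacle: in a general proper path space the stopping map $\tau$, and hence $\tau\circ\vt_t$, is only $\cF_{0-}$-measurable --- discontinuous for c\`adl\`ag paths --- so tightness of $\{\P^\x:\x\in K\}$ (which is available, from the weak continuity of $\x\mapsto\P^\x$ on the compact $K$ and Prokhorov's theorem) does not transport automatically. The device that breaks this is property (P4): along any fixed convergent sequence $\x_n\to\x$ one has $\tau(\vt_{t+r}\x_n)\to\tau(\vt_{t+r}\x)$ for a.e.\ $r$, so dominated convergence shows that the genuinely time-averaged maps $\y\mapsto\tfrac1\delta\int_0^\delta G\bigl(\tau(\vt_{t+r}\y)\bigr)\,dr$, equivalently $\tfrac1\delta\int_0^\delta\Th_{t+r}\hat G\,dr$, belong to $C_b(\cX)$; by (iii) their images $\tfrac1\delta\int_0^\delta\T(t+r)\hat G\,dr=\E\bigl(\tfrac1\delta\int_0^\delta\Th_{t+r}\hat G\,dr\bigr)$ lie in $\chat$ for all $\delta>0$, and applying the same argument with the exponential weight gives $R(\lambda,\Df)\hat G\in C_b(\cX)$ and hence $R(\lambda,\Af)\hat G=\E R(\lambda,\Df)\hat G\in\chat$ by Proposition \ref{p.bpcorefora}. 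Thus the resolvent of the modified semigroup maps $C_b(\cX^-)$ into itself. From here I would conclude that the modified semigroup is a $C_b$-semigroup --- which in particular gives (ii), since a $C_b$-semigroup leaves $C_b$ invariant --- by combining this resolvent property with the continuity of orbits for $\hat G\in\cm$ established below and the characterization of $C_b$-semigroups developed in Appendix \ref{sect.appendix}; an alternative is to recover the individual $\T(t)$ directly from the averaged statements by observing that $\{t\ge0:\T(t)\chat\subset\chat\}$ is a sub-semigroup of $([0,\infty),+)$ containing $0$ which, by a Lebesgue-point argument carried out over a countable convergence determining family in $C_b(\cX^-)$, has full Lebesgue measure, hence equals $[0,\infty)$.

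Finally, for the concluding statement: invariance of $C_b(\cX^-)$ under the modified semigroup is condition (ii), and joint continuity of the orbits follows exactly as in the proof of Theorem \ref{t.continuouscb}(a). For $\hat G\in\cm$ one combines the $\beta_0$-convergence $\Th_{-t_n}\hat G\to\Th_{-t}\hat G$ (Proposition \ref{p.derivation} and Theorem \ref{t.beta0char}) with the weak convergence $\P^{\x_n}\to\P^\x$ and the fact that $\beta_0$ is the topology of uniform convergence on the $\sigma(\cM(\cX),C_b(\cX))$-compact subsets of $\cM(\cX)$; the passage to arbitrary $G\in C_b(\cX^-)$ goes through the density of $\cXxt$ (Proposition \ref{p.cext}(b)) and the resolvent representation, which is $\beta_0$-continuous by the preceding paragraph and so transports the joint continuity. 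The recurring delicate point --- and the reason the axioms (P4)--(P6) of a proper path space are imposed --- is precisely the interaction of the discontinuous maps $\tau$ and $\tau\circ\vt_t$ with weak convergence of the kernels $\P^\x$.
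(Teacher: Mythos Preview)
Your treatment of (ii)$\Rightarrow$(i), (i)$\Rightarrow$(iii), and (iii)$\Leftrightarrow$(iv) is correct and matches the paper. But you badly overcomplicate the closure of the cycle. You yourself record that (iii) gives $\T(t)\cm\subset\E\,C_b(\cX)\subset\chat$, i.e.\ (i), trivially via $\T(t)=\E\Th_t$. From (i) the paper obtains (ii) by \emph{one more application} of the same convergence-determining device you already used twice: for fixed $t\ge0$ and $\x_n\to\x$ in $\cX^-$, denote by $\p_t$ the kernel of $\T(t)$ and set $\mu_n(A)\coloneqq\p_t(\x_n,\tau^{-1}(A))$ on $\cX^-$; condition (i) says that $\int_{\cX^-}F\,d\mu_n\to\int_{\cX^-}F\,d\mu$ for every $F\in\cXxt$ (since then $\hat F\in\cm$). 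As $\cXxt$ is convergence determining for $\cX^-$ by Proposition~\ref{p.cext}(c), this upgrades to all $F\in C_b(\cX^-)$, which is exactly (ii). No tightness, no resolvents, no sub-semigroup argument is needed.

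Your detour via time averages $\hat G_r$ and the Laplace transform is not wrong in spirit, but as written it is incomplete: the uniform tightness of the pushforwards $(\tau\circ\vt_t)_*\P^\x$ over $\x\in K$ is only asserted, not proved, and the ``alternative'' via a sub-semigroup of $[0,\infty)$ of full Lebesgue measure would require a Steinhaus-type argument together with a justification of why the set has full measure, neither of which you supply. For the concluding $C_b$-semigroup statement the paper is again more direct: joint continuity of orbits is first established for $F\in\cm$ exactly as you describe (via $\beta_0$-convergence of $\Th_{t_n}F$ and weak convergence $\k(\x_n,\cdot)\weak\k(\x,\cdot)$), and then extended to all of $\chat$ by yet another application of the convergence-determining property of $\cXxt$ --- the resolvent representation you invoke is not needed there either.
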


\begin{proof}
Assume condition (i) and fix a sequence $(\x_n)\subset \cX^-$ converging to $\x\in \cX^-$. We denote by $\p_t$ the kernel of $\T(t)$
and define the measures $\mu_n$ on $\cX^-$ by setting $\mu_n(A) \coloneqq \p_t(\x_n, \tau^{-1}(A))$ and $\mu$ similarly, replacing $\x_n$ by
$\x$. Condition (i) implies that
\begin{equation}\label{eq.convergence}
\int_{\cX^-} F(\y)\, d\mu_n(\y) = [\T(t)\hat{F}](\x_n) \to [\T(t)\hat{F}](\x) = \int_{\cX^-} F(\y)\, d\mu(\y)
\end{equation}
for every $F\in \cXxt$. As this set is convergence determining by Proposition \ref{p.cext}(c) , the convergence in \eqref{eq.convergence} 
still holds true for all $F\in C_b(\cX^-)$ and this means precisely that $\T(t)\chat \subset \chat$. This proves the 
implication (i) $\Rightarrow$ (ii), the converse implication (ii) $\Rightarrow$ (i) is trivial.\smallskip

To prove (i) $\Rightarrow$ (iii), let $F\in C_b(\cX, \cF_{t-})$ for some
$t>0$. Then $\E F = \T(t)\Th_{-t}F$ is continuous on $\cX^-$ by (i). As $t>0$ was arbitrary, it follows 
for all $F$ in the set $\cD$ from Corollary \ref{c.dense} that $\E F$ is continuous on $\cX^-$. As this set
is convergence determining by Corollary \ref{c.dense}, (iii) follows by a similar argument as above.\smallskip

The implication (iii) $\Rightarrow$ (i) follows immediately from the identity $\T(t)F = \E\Th_t F$.\smallskip

The remaining equivalenct (iii) $\Leftrightarrow$ (iv) follows
as in the proof of Theorem \ref{t.continuouscb}, using
the set $\mathscr{P}$ from Corollary \ref{c.dense}.

Let us now assume that (i) -- (iv) are satisfied. We fix a sequence $(\x_n) \subset \cX^-$ converging to $\x\in \cX^-$. 
Denoting the kernel of $\E$ by $\k$, it follows from (iii) that $\k(\x_n, \cdot) \weak \k(\x, \cdot)$. Given a sequence $(t_n)\subset [0,\infty)$
converging to $t$ and $F\in \cm$, note that $\Th_{-t_n}F \to \Th_{-t}F$ with respect to $\beta_0$ as a consequence of Proposition \ref{p.derivation}.
It follows that
\[
[\T (t_n) F](\x_n) = \int_{\cX} \Th_{-t_n}F(\y) \k(\x_n, d\y) \to \int_{\cX} \Th_{-t} F(\y) \k (\x, d\y) = [\T(t_n) F](\x).
\]
Using that $\cXxt$ is convergence determining for $\cX^-$, the same convergence result holds true for $F\in \chat$, proving the addendum.
\end{proof}

Similarly to the case of continuous paths, we call an evolutionary semigroup $\T$ on a proper path space an \emph{evolutionary
$C_b$-semigroup} if it satisfies the equivalent conditions of Theorem \ref{t.continuousgen}.

A characterization of evolutionary semigroups through the generator in the spirit of Proposition \ref{p.generator} is also possible
in the general case. However, as $D_-(\D_0)$ is not contained in the domain of the $C_b$-generator (as $\D F_a^b(f) \not\in C_b(\cX)$),
we have to use a suitable substitute. We put
\[
D_-(\D) \coloneqq D(\D)\cap C_b(\cX, \cF_{0-}).
\]

\begin{prop}\label{p.generatorgen}
Let $((\cX, \md), \tau)$ be a proper path space and $\T$ be a $C_b$-semigroup on $C_b(\cX^-)$ with $C_b$-generator $\A$. 
Then $\T$ is induced by an evolutionary semigroup if and only if $D_-(\D)\subset D(\A)$ and $\A U = \D U$ for all
$U\in D_-(\D)$.
\end{prop}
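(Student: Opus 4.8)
The forward implication is the harder direction and is where the real work lies; the converse should follow from Theorem~\ref{t.continuousgen} much as the converse in Proposition~\ref{p.generator} followed from Theorem~\ref{t.continuouscb}. So I would begin by assuming that $\T$ is induced by an evolutionary $C_b$-semigroup with expectation operator $\E$. Then by Theorem~\ref{t.continuousgen} we know $\E C_b(\cX)\subset\chat$, hence $\E$ is $\beta_0$-continuous as a map into $C_b(\cX^-)$ (this is the point of Proposition~\ref{p.cext}(c), that $\cXxt$ is convergence determining, exactly as in the proof of Theorem~\ref{t.continuouscb}(a)). Now fix $U\in D_-(\D)$. Since $D(\D)$ is shift-invariant and $U$ is $\cF_{0-}$-measurable, $\Th_{-t}U\in D_-(\D)$ for all $t\geq 0$, so $\Th_{-t}U$ lies in the domain of $\D$ and $\D\Th_{-t}U=\Th_{-t}\D U$ by Proposition~\ref{p.derivation}(b); moreover $\D U\in C_b(\cX,\cF_{0-})$, so $\E\D U=\D U$. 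The identity $\T(t)U=\E\Th_t U$ and the fact that $\T(t)U=U$ for $U$ determined before $0$ give the evolutionary property directly, so the real content is to show $U\in D(\A)$ with $\A U=\D U$, i.e.\ that $t^{-1}(\T(t)U-U)\to\D U$ in the appropriate ($\beta_0$, pointwise-with-bounds) sense.

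The key computation is the following telescoping identity, valid because $\E$ is homogeneous (so $\E\Th_t\E=\E\Th_t$) and because $\T(t)\Th_{-t}U=U$:
\begin{align*}
\T(t)U-U &= \E\Th_t U - U = \E(\Th_t U-U)\\
&= \E\Big(\int_0^t \Th_s\D U\, ds\Big) = \int_0^t \E\Th_s\D U\, ds = \int_0^t \T(s)\D U\, ds,
\end{align*}
where I use that $(U,\D U)\in\Df$ (so the fundamental-theorem-of-calculus identity $\Th_t U-U=\int_0^t\Th_s\D U\,ds$ from Proposition~\ref{p.awf} holds), that $\E$ commutes with the Bochner-type integral because it is a bounded kernel operator, and that $\D U$ is $\cF_{0-}$-measurable so $\E\Th_s\D U=\E\Th_s\E\D U=\T(s)\D U$. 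Since $\T$ is a $C_b$-semigroup and $\D U\in D_-(\D)\subset C_b(\cX^-)$, the map $s\mapsto\T(s)\D U$ is $\beta_0$-continuous (indeed jointly continuous in $(s,\x)$ and uniformly bounded), so dividing by $t$ and letting $t\to 0$ yields $t^{-1}(\T(t)U-U)\to\T(0)\D U=\D U$ with the required uniform bound; by the characterization of the $C_b$-generator (Theorem~\ref{t.genchar}) this gives $U\in D(\A)$ and $\A U=\D U$.

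For the converse, assume $D_-(\D)\subset D(\A)$ and $\A U=\D U$ on $D_-(\D)$. I would mimic the proof of Proposition~\ref{p.generator}: fix $U\in D_-(\D)$, set $\varphi(t)\coloneqq\T(t)\Th_{-t}U\in C_b(\cX^-)$, and show $\varphi$ is constant by testing against an arbitrary $\mu\in\cM(\cX^-)$. The derivative computation
\[
\frac{\varphi(t+h)-\varphi(t)}{h}=\T(t+h)\frac{\Th_{-(t+h)}U-\Th_{-t}U}{h}+\frac{\T(t+h)-\T(t)}{h}\Th_{-t}U
\]
has second term $\beta_0$-converging to $\T(t)\A\Th_{-t}U=\T(t)\D\Th_{-t}U$ (using $\Th_{-t}U\in D_-(\D)\subset D(\A)$) and first term, after the Mackey/uniform-on-$\sigma$-compacts argument of Sentilles exactly as in Proposition~\ref{p.generator}, having $\langle\cdot,\mu\rangle$-limit equal to $-\langle\T(t)\D\Th_{-t}U,\mu\rangle$; the two cancel, so $\langle\varphi(t),\mu\rangle$ is constant, giving $\T(t)\Th_{-t}U=U$. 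Then, since $D_-(\D)$ contains $D_-(\D_0)$, which is a $\beta_0$-dense subalgebra of $C_b(\cX^-)$ separating points (Stone--Weierstrass via \cite[Theorem~11]{fremlin1972}, using that $\cD$ from Corollary~\ref{c.dense} is $\beta_0$-dense), the relation $\T(t)\Th_{-t}F=F$ extends first to all $F\in C_b(\cX^-)$ by continuity of $\T(t)$ and $\Th_{-t}$ and then to $F\in B_b(\cX,\cF_{0-})$ by a bp-closure argument, which is precisely condition~(i) of Theorem~\ref{t.expectation_semigroup}; hence $\T$ is induced by an evolutionary semigroup, and by Theorem~\ref{t.continuousgen} it is a $C_b$-semigroup on $C_b(\cX^-)$.

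\textbf{Main obstacle.} The delicate point is the first ($\Rightarrow$) direction's interchange of $\E$ with the integral $\int_0^t\Th_s\D U\,ds$ and the subsequent passage to the limit in the $\beta_0$-sense: one must be careful that $\D U$, although $\cF_{0-}$-measurable, need not be continuous on all of $\cX$ (only its restriction to $\cX^-$ is relevant), so the argument genuinely runs on $\cX^-$ via the modified semigroup $\T^-$, and one needs $s\mapsto\T(s)\D U$ to be $\beta_0$-continuous, which uses that $\T$ is a $C_b$-semigroup on $C_b(\cX^-)$ together with $\D U\in C_b(\cX^-)$. In the converse direction the only subtlety is, as in Proposition~\ref{p.generator}, the topological fact that $\beta_0$ is the topology of uniform convergence on $\sigma(\cM(\cX^-),C_b(\cX^-))$-compact sets, which is what lets the $p_S$-seminorm estimate go through.
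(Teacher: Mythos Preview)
Your argument for the direction ``$\T$ evolutionary $\Rightarrow$ $D_-(\D)\subset D(\A)$'' via the identity $\T(t)U-U=\int_0^t\T(s)\D U\,ds$ is correct and clean (though note a small slip: you wrote $\D U\in D_-(\D)$ where you mean $\D U\in C_b(\cX,\cF_{0-})$, which follows from Proposition~\ref{p.derivation}(b)). For the other direction, the $\varphi(t)=\T(t)\Th_{-t}U$ computation and the Mackey-topology argument are exactly right and match the paper's proof of Proposition~\ref{p.generator}.

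There is, however, a genuine gap in your density argument at the end. You claim that $D_-(\D_0)\subset D_-(\D)$, but this is \emph{false} on a general proper path space: for $\cX=\cXd$ and $b\leq 0$, the function $F_a^b(f)\in D_-(\D_0)$ has $\D_0 F_a^b(f)=F_b^\star(f)-F_a^\star(f)$, which involves $\x\mapsto f(\x(b-))$ and is not continuous on $\cXd$; hence $F_a^b(f)\notin D(\D)$. So $D_-(\D_0)$ does not help you show that $D_-(\D)$ is $\beta_0$-dense in $C_b(\cX^-)$. The paper fills this gap using Proposition~\ref{p.cext}(a): for any $F\in\chat$, the averages $F_r(\x)=\frac{1}{r}\int_0^r F(\vt_{-t}\x)\,dt$ lie in $C_b(\cX,\cF_{0-})$ and converge to $F$ uniformly on compact subsets of $\cX^-$ (hence in $\beta_0$), and moreover $F_r\in D(\D)$ by Proposition~\ref{p.awf}. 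This is precisely the replacement for the Stone--Weierstra\ss\ step that works in the non-continuous setting, and it is the one new ingredient beyond Proposition~\ref{p.generator}.
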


\begin{proof}
The proof is identical to that of Proposition \ref{p.generator} with $D_-(\D_0)$ replaced by $D_-(\D)$. We only need 
to know that $D_-(\D)$ is $\beta_0$-dense in $C_b(\cX^-)$. But this follows from Proposition \ref{p.cext}(a), noting
that the function $F_r$ defined there belongs to $D(\D)$ by Proposition \ref{p.awf}.
\end{proof}

We end this section with some results in the case where $\cX = \cXd$ (or $\cX=\cXc$, where they hold trivially). 
We recall that $t^\star = t-$ in this case and we have
$F^\star_t(f) = F_{\tstar}(f)$ whenever $f\in C_b(X)$ by Lemma \ref{l.fbstar}.

\begin{lem}\label{l.0-0}
Let $\cX=\cXd$ and $\T$ be an evolutionary $C_b$-semigroup with expectation operator $\E$. Then 
\[
\T(t) F_{\nstar}(f) = \E F_t(f) = \E F_{\tstar}(f)
\]
for all $f\in B_b(X)$ and $t\geq 0$. In particular,
$\E F_0(f) = F_{\nstar}(f)$ for all $f\in B_b(X)$.
\end{lem}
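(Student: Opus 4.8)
The plan is to prove the three identities first for $f\in C_b(X)$ and then to extend them to $f\in B_b(X)$ by a bounded pointwise approximation. The key input — and the step I expect to be the main obstacle — is the following: for an evolutionary $C_b$-semigroup the kernel $\k$ of $\E$ assigns, for every fixed $t\geq 0$ and every $\x$, full mass to the set of paths that are continuous at $t$. Writing $N_t\coloneqq\{\y\in\cXd:\y\text{ is discontinuous at }t\}$, I want $\k(\x,N_t)=0$ for all $\x\in\cXd$. For $t=0$ this is exactly Corollary~\ref{c.continuous} (see also Remark~\ref{rem-SDE1}); the passage to arbitrary $t$ will come from the homogeneity of $\E$.

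To run this first step, I would first note that $N_t\in\bo(\cX)$, being the set on which the Borel maps $\y\mapsto\y(t)$ and $\y\mapsto\y(t-)$ disagree. Since $[\vt_t\y](0)=\y(t)$ and $[\vt_t\y](0-)=\y(t-)$, the path $\vt_t\y$ is continuous at $0$ precisely when $\y$ is continuous at $t$, so $\Th_t\one_{N_0}=\one_{N_t}$ and hence $\Th_{-t}\one_{N_t}=\one_{N_0}$. Therefore $\E\Th_{-t}\one_{N_t}=\E\one_{N_0}$, which is the zero function because $[\E\one_{N_0}](\z)=\k(\z,N_0)=0$ for every $\z$ by Corollary~\ref{c.continuous}. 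Thus $\E_t\one_{N_t}=\Th_t\E\Th_{-t}\one_{N_t}=0$, and homogeneity of $\E$ (i.e.\ $\E=\E\E_t$) yields $\E\one_{N_t}=\E\E_t\one_{N_t}=0$, that is, $\k(\x,N_t)=0$ for all $\x$ and $t\ge 0$.

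Granting this, I would treat $f\in C_b(X)$ first. By Lemma~\ref{l.fbstar}(a),(c) we have $F_{\nstar}(f)=F_0^\star(f)\in\bm$ and $F_{\tstar}(f)=F_t^\star(f)$; combining $\Th_tF_0^\star(f)=F_t^\star(f)$ from Lemma~\ref{l.fbstar}(d) with $\T(t)=\E\Th_t$ yields
\[
\T(t)F_{\nstar}(f)=\E\Th_tF_0^\star(f)=\E F_t^\star(f)=\E F_{\tstar}(f).
\]
Moreover, $F_t^\star(f)$ and $F_t(f)$ agree off $N_t$ by Lemma~\ref{l.fbstar}(b), so $|F_t^\star(f)-F_t(f)|\le 2\|f\|_\infty\one_{N_t}$; applying the positive operator $\E$ and using $\k(\x,N_t)=0$ gives $\E F_t^\star(f)=\E F_t(f)$ pointwise, which completes the $C_b$-case.

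For the general case I would set $M\coloneqq\{f\in B_b(X):\T(t)F_{\nstar}(f)=\E F_t(f)=\E F_{\tstar}(f)\text{ for all }t\ge 0\}$; here $F_{\nstar}(f)$ and $F_{\tstar}(f)$ are $\cF_{0-}$- resp.\ $\cF_{t-}$-measurable since $\y\mapsto\y(0-)$ and $\y\mapsto\y(t-)$ are Borel, so all terms make sense. Then $M$ is a linear subspace containing $C_b(X)$, and it is bp-closed, because $f\mapsto F_{\nstar}(f),F_{\tstar}(f),F_t(f)$ carry bounded pointwise convergence into bounded pointwise convergence while $\T(t)$ and $\E$ are bp-continuous kernel operators; since $C_b(X)$ is bp-dense in $B_b(X)$, this forces $M=B_b(X)$. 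Finally, the ``in particular'' statement is the case $t=0$: $\T(0)F=\E\Th_0F=\E F=F$ for $F\in\bm$ while $F_{\nstar}(f)\in\bm$, whence $\E F_0(f)=\T(0)F_{\nstar}(f)=F_{\nstar}(f)$.
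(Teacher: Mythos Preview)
Your argument is nearly complete, but there is a genuine circularity: you invoke Corollary~\ref{c.continuous} (the fact that $\k(\x,N_0)=0$) to prove Lemma~\ref{l.0-0}, whereas in the paper Corollary~\ref{c.continuous} is deduced \emph{from} Lemma~\ref{l.0-0}. Concretely, the proof of Corollary~\ref{c.continuous} uses $[\E F_0(f_n)](\x)=[F_{\nstar}(f_n)](\x)$, which is precisely the ``in particular'' statement you are trying to establish. Remark~\ref{rem-SDE1} is only a forward reference to Corollary~\ref{c.continuous} and does not provide an independent argument.

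The paper's proof sidesteps this issue by a direct limiting argument that does not require knowing $\k(\x,N_t)=0$ in advance. For $f\in C_b(X)$ and $s>0$ one has $\T(t+s)F_{\nstar}(f)=\E\Th_{t+s}F_{\nstar}(f)=\E F_{(t+s)^\star}(f)$. Letting $s\downarrow 0$, the left-hand side converges pointwise to $\T(t)F_{\nstar}(f)$ because $F_{\nstar}(f)\in C_b(\cX^-)$ and $\T$ is a $C_b$-semigroup; for the right-hand side one uses that, by right-continuity of c\`adl\`ag paths, $\y((t+s)-)\to\y(t)$ for every $\y$ as $s\downarrow 0$, so $F_{(t+s)^\star}(f)\to F_t(f)$ pointwise, and dominated convergence gives $\E F_{(t+s)^\star}(f)\to\E F_t(f)$. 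This yields $\T(t)F_{\nstar}(f)=\E F_t(f)$, and the equality $\T(t)F_{\nstar}(f)=\E F_{\tstar}(f)$ is immediate from $\T(t)=\E\Th_t$ and Lemma~\ref{l.fbstar}(d). Your bp-extension to $f\in B_b(X)$ is fine. If you want to salvage your route, you could first run exactly this limit argument in the special case $t=0$ to obtain $\E F_0(f)=F_{\nstar}(f)$, then derive $\k(\x,N_0)=0$ as in Corollary~\ref{c.continuous}, and only then apply your homogeneity trick---but at that point you have essentially reproduced the paper's proof.
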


\begin{proof}
Fix $f\in C_b(X)$ and $t\geq 0$. From the relation of $\T$ and $\E$ it follows that for $s>0$
\[
\T(t+s)F_{\nstar}(f) = \E \Th_{t+s}F_{\nstar}(f) = \E F_{(t+s)^\star}(f).
\]
As $F_{\nstar}(f) \in C_b(\cX^-)$, upon $s\downarrow 0$, the left-hand side converges pointwise to $\T(t)F_{\nstar}(f)$ by continuity of the semigroup. As for the right-hand side,
note that $F_{(t+s)^\star}(f) \to F_t(f)$ as $s\downarrow 0$ by right-continuity of the paths. Using dominated convergence,
$\T(t)F_{\nstar}(f) = \E F_t(f)$ follows. By a bp-closure argument, this equality extends to $f\in B_b(X)$.
\end{proof}

\begin{cor}\label{c.continuous}
In the situation of Lemma \ref{l.0-0}, let $\k$ denote the kernel of the expectation operator $\E$ and write
$\P^\x$ for the measure $\k(\x, \cdot)$. Given $\x\in \cXd$, define
\[
C_\x=\{\y : \y(0) = \y(\nstar)=\x(\nstar)\}=
\{\y : \y \mbox{ is continuous at } 0 \mbox{ and } \y(0) = \x(\nstar)\}. 
\]
Then $\P^\x(C_\x)=1$ for every $\x\in \cX$.
\end{cor}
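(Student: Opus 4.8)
The plan is to deduce the claim from Lemma~\ref{l.0-0} applied to suitable test functions. First I would fix $\x \in \cXd$ and reduce the statement to a countable family of conditions. Since $X$ is separable metric, there is a countable family $(f_k)_{k\in\N} \subset C_b(X)$ that is \emph{point-separating} in the strong sense: for every $x \in X$ and every $\eps > 0$ we can find finitely many $f_k$ with $\inf_{d(x,y)\geq\eps}\max_k |f_k(x)-f_k(y)| > 0$ (for instance, using $f_k(y) = 1 \wedge d(y,x_k)$ for a countable dense set $(x_k)$). Consequently, a point $y_0 \in X$ is determined among the elements of $X$ by the countable collection of real numbers $(f_k(y_0))_{k}$, and hence the event $\{\y : \y(0) = \y(\nstar) = \x(\nstar)\}$ can be written as a countable intersection
\[
C_\x = \bigcap_{k\in\N} \{\y : f_k(\y(0)) = f_k(\x(\nstar))\} \cap \bigcap_{k\in\N}\{\y : f_k(\y(0)) = f_k(\y(\nstar))\},
\]
where in the c\`adl\`ag setting $f_k(\y(0)) = [F_0(f_k)](\y)$ and $f_k(\y(\nstar)) = [F_{\nstar}(f_k)](\y) = [F_0^\star(f_k)](\y)$ (the latter equality by Lemma~\ref{l.fbstar}(c)).

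The heart of the proof is then the identity from Lemma~\ref{l.0-0}, namely $\E F_0(f) = F_{\nstar}(f) = F_0^\star(f)$ for all $f \in B_b(X)$, which says precisely that $F_0(f)$ and its `past version' $F_0^\star(f) = F_{\nstar}(f)$ have the same $\E$-expectation; equivalently $[F_0(f)](\y) = [F_{\nstar}(f)](\y)$ for $\P^\x$-a.e.\ $\y$, for each fixed $f$ and each fixed $\x$. Applying this with $f = f_k$ and also $f = f_k^2$, and using that $F_{\nstar}(f_k)$ is $\cF_{0-}$-measurable so that $\E[F_{\nstar}(f_k) F_0(f_k)] = F_{\nstar}(f_k)\E F_0(f_k) = F_{\nstar}(f_k)^2$ (Proposition~\ref{p.expectation_operator}(iv)), we get
\[
\E^\x\big[(F_0(f_k) - F_{\nstar}(f_k))^2\big] = \E F_0(f_k^2) - 2\,\E\big[F_{\nstar}(f_k)F_0(f_k)\big] + \E F_{\nstar}(f_k)^2 = F_{\nstar}(f_k^2) - F_{\nstar}(f_k)^2\cdot\text{(?)},
\]
so I must be slightly careful: the cleaner route is simply to observe that $\E^\x[|F_0(f_k) - F_{\nstar}(f_k)|] = 0$ is \emph{not} immediate from $\E F_0(f_k) = \E F_{\nstar}(f_k)$ alone. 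Instead I would argue as follows: by Lemma~\ref{l.0-0}, for every bounded measurable $g$ on $X$ we have $\E[F_{\nstar}(g) F_0(f_k)] = F_{\nstar}(g)\E F_0(f_k) = F_{\nstar}(g)F_{\nstar}(f_k) = \E[F_{\nstar}(g)F_{\nstar}(f_k)]$, using $\cF_{0-}$-measurability of $F_{\nstar}(g)$ and then Lemma~\ref{l.0-0} again backwards. Taking $g = f_k$ gives $\E^\x[F_0(f_k)F_{\nstar}(f_k)] = \E^\x[F_{\nstar}(f_k)^2]$; combining with $\E^\x F_0(f_k^2) = \E^\x F_{\nstar}(f_k^2) = \E^\x[F_{\nstar}(f_k)^2]$ (first equality Lemma~\ref{l.0-0} with $f=f_k^2$, second since $F_{\nstar}(f_k^2) = F_{\nstar}(f_k)^2$ as $\nstar$-evaluation is an algebra homomorphism) yields $\E^\x[(F_0(f_k)-F_{\nstar}(f_k))^2] = 0$. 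Hence $F_0(f_k) = F_{\nstar}(f_k)$ $\P^\x$-a.s., i.e.\ $f_k(\y(0)) = f_k(\y(\nstar))$ for $\P^\x$-a.e.\ $\y$.

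Finally I would take the countable intersection over $k$: there is a $\P^\x$-null set outside of which $f_k(\y(0)) = f_k(\y(\nstar))$ for all $k$ simultaneously, and since the $(f_k)$ separate points of $X$ this forces $\y(0) = \y(\nstar)$, i.e.\ $\y$ is continuous at $0$. To get the value, note that $F_{\nstar}(f_k)$ is $\cF_{0-}$-measurable, hence by Lemma~\ref{l.measurability}(c) equals $F_{\nstar}(f_k)\circ\tau$, and for $\tau = \tau_{\mathrm D}$ one has $[\tau_{\mathrm D}(\y)](0) = \y(0-) = \y(\nstar)$; more directly, $\E F_0(f_k) = F_{\nstar}(f_k)$ combined with the fact that $\P^\x = \k(\x,\cdot) = \k(\tau(\x),\cdot)$ (Proposition~\ref{p.expectation_operator}(iii)) gives $\E^\x F_0(f_k) = [F_{\nstar}(f_k)](\x) = f_k(\x(\nstar))$, so $\E^\x f_k(\y(0)) = f_k(\x(\nstar))$; but we have also just shown $f_k(\y(0)) = f_k(\y(\nstar))$ a.s., and $\y \mapsto f_k(\y(\nstar))$ is $\cF_{0-}$-measurable with $\E$-expectation equal to its value at $\tau(\x)$... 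The cleanest finish: since $f_k(\y(0)) = f_k(\y(\nstar))$ a.s.\ and $F_{\nstar}(f_k)$ is $\cF_{0-}$-measurable, $\E^\x[F_0(f_k)^2] = \E^\x[F_{\nstar}(f_k)^2] = [F_{\nstar}(f_k)](\x)^2 = f_k(\x(\nstar))^2$ while $\E^\x[F_0(f_k)] = f_k(\x(\nstar))$; Jensen/variance then gives $\E^\x[(F_0(f_k) - f_k(\x(\nstar)))^2] = 0$, so $f_k(\y(0)) = f_k(\x(\nstar))$ $\P^\x$-a.s. Intersecting over $k$ and invoking point-separation, $\y(0) = \x(\nstar)$ $\P^\x$-a.s., which is exactly $\P^\x(C_\x) = 1$. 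The main obstacle is the bookkeeping in passing from the scalar identities $\E F_0(f) = F_{\nstar}(f)$ to the almost-sure \emph{equality} of random variables — this is handled by the second-moment computation above, using that $\nstar$-evaluation respects products and that $F_{\nstar}(g)$ is $\cF_{0-}$-measurable so Proposition~\ref{p.expectation_operator}(iv) applies.
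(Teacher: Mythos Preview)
Your argument is correct, but it takes a noticeably longer route than the paper's proof. The paper handles the two halves of $C_\x$ separately and each in one line: the event $\{\y : \y(\nstar) \neq \x(\nstar)\}$ lies in $\cF_{0-}$, so Proposition~\ref{p.expectation_operator}(iii) gives its $\P^\x$-measure as $\delta_{\tau(\x)}$ of that set, which is zero; and for $\{\y : \y(0) = \x(\nstar)\}$ the paper picks the \emph{single} test function $f_n(x) = 1 \wedge n\,d(x,\x(\nstar))$, tailored so that $f_n(\x(\nstar)) = 0$, whence Lemma~\ref{l.0-0} gives $\E^\x F_0(f_n) = [F_{\nstar}(f_n)](\x) = 0$ and a Markov-inequality step finishes. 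By contrast, you work with a fixed countable separating family $(f_k)$ independent of $\x$ and compensate by a second-moment expansion $\E^\x[(F_0(f_k) - F_{\nstar}(f_k))^2] = 0$. Your computation is sound (the three terms all equal $f_k(\x(\nstar))^2$, using $F_0(f_k)^2 = F_0(f_k^2)$, Lemma~\ref{l.0-0}, and Proposition~\ref{p.expectation_operator}(iv)), and the countable intersection plus point-separation is fine. What the paper's choice buys is brevity: by letting the test function depend on $\x$ and vanish at $\x(\nstar)$, a first-moment argument suffices and no variance computation or separating family is needed. What your approach buys is a certain uniformity (the family $(f_k)$ does not depend on $\x$), though that is not exploited here. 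Note also that your detour through $f_k(\y(0)) = f_k(\y(\nstar))$ is redundant: once you know $\E^\x[(F_0(f_k) - f_k(\x(\nstar)))^2] = 0$ you are done, and the $\y(\nstar) = \x(\nstar)$ part follows more directly from $\P^\x|_{\cF_{0-}} = \delta_{\tau(\x)}$ as in the paper.
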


\begin{proof}
It is $A_- \coloneqq \{\y : \y(\nstar) \neq \x(\nstar)\} \in \cF_{0-}$. 
It follows from Proposition \ref{p.expectation_operator}(iii)
that $\k(\x, A_-) = \delta_{\x}(A_-) = 0$, proving that $\k(\x, \{\y : \y(\nstar) = \x(\nstar)\}) = 1$. Now let $n\in \N$ and
define $f_n(x) \coloneqq 1\wedge nd(x, \x(\nstar))$. Then $f_n\in C_b(X)$ and Lemma \ref{l.0-0} yields
\[
\k(\x, \{\y : d(\y(0), \x(\nstar)) \geq n^{-1}\}) \leq [\E F_0(f_n)](\x) = [F_{\nstar}(f_n)](\x) = 0
\]
for every $n\in \N$. This shows $\k(\x, \{ \y : \y(0) \neq \x(\nstar)\}) = 0$ and yields the claim.
\end{proof}

\section{Examples}\label{sect.examples}

\subsection{Deterministic evolutions}
\label{sub.deterministic}

In this subsection, we are interested in the situation where the expectation operator $\E$ is deterministic, i.e.\ for every 
$\x\in \cX$ there is a unique $\varphi (\x)$ such that $[\E F](\x) = F(\varphi (\x))$.
We note that if $\E$ is of this form, then $\E$ is multiplicative, i.e.\
$\E(FG) = (\E F)(\E G)$. It is well-known, see \cite[Theorem  II.2.3]{cooper}, that any multiplicative $\E$ is of this form.
Throughout, we will work on the path space $((\cX, \md), \tau) = ((\cXc, \md_{\mathrm{C}}), \tau_{\mathrm{C}})$ of continuous 
paths as these seem more appropriate for our purposes,
see Remark \ref{r.continuous} below.

\begin{defn}\label{def.evolutionmap}
An \emph{evolution map} is a continuous mapping $\varphi : \cX \to \cX$ such that
\begin{enumerate}
[\upshape (i)]
\item $\varphi (\x) = \varphi (\tau(\x))$ for all $\x \in \cX$;
\item $\tau(\varphi (\x)) = \tau(\x)$;
\item $\varphi (\vt_t \varphi (\x) ) = \vt_t\varphi (\x)$ for all $\x\in \cX$ and $t\geq 0$.
\end{enumerate}
\end{defn}

\begin{prop}\label{p.delay}
Let $\T$ be an evolutionary $C_b$-semigroup with expectation operator $\E$.
We denote the $C_b$-generator of $\T$ by $\A$. The following are equivalent:
\begin{enumerate}
[\upshape (i)]
\item $\A$ is a derivation;
\item $\T(t)$ is multiplicative for every $t\geq 0$;
\item $\E$ is multiplicative;
\item There is an evolution map $\varphi$ such that $[\E F](\x) = F(\varphi(\x))$ for all $\x\in \cX$.
\end{enumerate}
\end{prop}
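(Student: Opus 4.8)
The strategy is to prove the cycle of implications
(iv) $\Rightarrow$ (iii) $\Rightarrow$ (ii) $\Rightarrow$ (i) $\Rightarrow$ (iv),
which is the natural order since the last implication is the substantive one.
The implication (iv) $\Rightarrow$ (iii) is immediate: if $[\E F](\x) = F(\varphi(\x))$ then
$[\E(FG)](\x) = (FG)(\varphi(\x)) = F(\varphi(\x))G(\varphi(\x)) = [\E F](\x)[\E G](\x)$.
For (iii) $\Rightarrow$ (ii), use $\T(t) = \E\Th_t$ together with the fact that $\Th_t$ is
multiplicative (it is a composition operator) to get
$\T(t)(FG) = \E(\Th_tF\,\Th_tG) = \E\Th_tF\cdot\E\Th_tG = \T(t)F\cdot\T(t)G$.
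For (ii) $\Rightarrow$ (i), one differentiates: if each $\T(t)$ is multiplicative, then for
$F,G\in D(\A)$ the product rule for difference quotients (exactly as in the proof of
Proposition \ref{p.derivation}(a)) shows $FG\in D(\A)$ with $\A(FG) = (\A F)G + F(\A G)$,
using the $\beta_0$-continuity / boundedness of the difference quotients guaranteed by
$F,G\in D(\A)$.

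The main work is (i) $\Rightarrow$ (iii), or rather (i) $\Rightarrow$ (iv) directly via a
multiplicativity argument. Starting from $\A$ being a derivation, I would first show $\E$ is
multiplicative. The idea is to exploit Proposition \ref{p.bpcorefora}(a): every $(F,G)\in\Af$
is of the form $(\E U, \E V)$ with $(U,V)\in\Df$, and by Corollary \ref{c.derivative} the full
generator $\Df$ of the shift group is a derivation. One then needs to transfer the derivation
property of $\A$ (equivalently, of $\Af$ — here one should be careful about $C_b$-generator
versus full generator, but Theorem \ref{t.continuouscb}(b) tells us $D(\A_0)$ is a $\beta_0$-core
for $\A$, so it suffices to check multiplicativity of $\E$ on a rich enough algebra) to
multiplicativity of $\E$. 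Concretely: for $U_1,U_2\in D_-(\D_0)$ and $V_1,V_2\in D_+(\D_0)$, one
computes $\A(\E U_1 \E V_1 \cdot \text{(something)})$ two ways and compares, but the cleanest route
is probably the following. Since $\A$ is a derivation on $D(\A)$ and $D(\A_0)\subset D(\A)$ with
$\A_0 = \A|_{D(\A_0)}$, applying $\A$ to products of generators of the algebra $D(\A_0)$ forces
$\E$ to respect products: exploiting that $D(\D_0)$ is generated by the $F_a^b(f)$ and that
$\D_0$ already satisfies the Leibniz rule on $D(\D_0)$, the extra terms in $\A_0(U\E V\cdot U'\E V')$
that would appear if $\E$ were not multiplicative must cancel. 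By the $\sigma$-continuity
(bp-continuity) of $\E$ and bp-density arguments (Lemma \ref{l.algebradense}), multiplicativity
extends from this algebra to all of $B_b(\cX)$, giving (iii).

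Finally, (iii) $\Rightarrow$ (iv): a multiplicative Markovian kernel operator $\E$ on $B_b(\cX)$
is, by \cite[Theorem II.2.3]{cooper}, a composition operator, i.e.\ $[\E F](\x) = F(\varphi(\x))$
for a map $\varphi:\cX\to\cX$; continuity of $\varphi$ follows since $\E$ is a $C_b$-operator
(by the equivalent conditions of Theorem \ref{t.continuouscb}, $\E C_b(\cX)\subset C_b(\cX,\cF_0)$),
so $F\circ\varphi$ is continuous for every $F\in C_b(\cX)$, and $\cX$ is metric. It remains to
verify the three axioms of an evolution map. Property (i), $\varphi(\x)=\varphi(\tau(\x))$,
follows from $\E = \E\Th_0$ and Lemma \ref{l.measurability}: $\E F$ is $\cF_{0-}$-measurable, so
$[\E F](\x) = [\E F](\tau(\x))$, i.e.\ $F(\varphi(\x)) = F(\varphi(\tau(\x)))$ for all $F\in C_b(\cX)$,
hence $\varphi(\x)=\varphi(\tau(\x))$. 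Property (ii), $\tau(\varphi(\x))=\tau(\x)$, comes from
$\E F = F$ for $\cF_{0-}$-measurable $F$: taking $F = \hat G$ with $G\in C_b(\cX^-)$ gives
$\hat G(\varphi(\x)) = \hat G(\x)$, i.e.\ $G(\tau(\varphi(\x))) = G(\tau(\x))$ for all such $G$,
whence $\tau(\varphi(\x))=\tau(\x)$. Property (iii), $\varphi(\vt_t\varphi(\x)) = \vt_t\varphi(\x)$,
encodes homogeneity of $\E$: from $\E = \E\E_t = \E\Th_t\E\Th_{-t}$ one derives, after unwinding
the push-forwards (as in the proof of Lemma \ref{l.expectationop}), that $\varphi\circ\vt_t\circ\varphi
= \vt_t\circ\varphi$ on $\cX$. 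I expect the bookkeeping in (i) $\Rightarrow$ (iii) — keeping straight
which generator ($\A$, $\A_0$, $\Af$) the derivation property is being used for, and justifying the
bp-extension — to be the main obstacle; the rest is formal manipulation with composition operators.
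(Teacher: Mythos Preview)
Your implications (iv) $\Rightarrow$ (iii) $\Rightarrow$ (ii) $\Rightarrow$ (i) and (iii) $\Rightarrow$ (iv) are all correct and essentially match the paper's arguments (the paper cites \cite[Theorem II.2.3]{cooper} for the composition-operator representation and verifies the evolution-map axioms exactly as you do). The paper does not prove (i) $\Leftrightarrow$ (ii) by hand but simply cites \cite[Theorem 3.4]{fk2020}; your (ii) $\Rightarrow$ (i) via the product rule for difference quotients is the easy half of that citation.

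The genuine gap is your (i) $\Rightarrow$ (iii). Your sketch --- ``applying $\A$ to products of generators of the algebra $D(\A_0)$ forces $\E$ to respect products: \ldots\ the extra terms \ldots\ must cancel'' --- is not a proof, and I do not see how to make it one along the lines you indicate. Concretely: take $F = U_1\E V_1$ and $G = U_2\E V_2$ in $D(\A_0)$. The derivation property gives $\A(FG) = (\A F)G + F(\A G)$, which after expanding via $\A_0$ yields an expression involving $\E V_1\cdot\E V_2$ and $\E\D_0V_j\cdot\E V_k$. But there is no \emph{second} independent computation of $\A(FG)$ to compare with: $FG$ is not a priori in $D(\A_0)$ (since $\E V_1\cdot\E V_2$ need not equal $\E W$ for any $W\in D_+(\D_0)$), and the alternative candidate $U_1U_2\,\E(V_1V_2)\in D(\A_0)$ is a \emph{different} function unless multiplicativity is already known. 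So there is nothing to cancel against.

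The clean route from (i) to (ii) --- which is what \cite{fk2020} provides --- is an abstract Cauchy problem argument: for $F,G\in D(\A)$ set $\Psi(t) = \T(t)(FG) - \T(t)F\cdot\T(t)G$. Since $\T(t)D(\A)\subset D(\A)$ and $\A$ is a derivation, the product $\T(t)F\cdot\T(t)G$ lies in $D(\A)$ and one checks $\Psi'(t) = \A\Psi(t)$ (in the $\beta_0$-sense) with $\Psi(0)=0$. Uniqueness for this ACP (via the standard trick $s\mapsto \T(t-s)\Psi(s)$ is constant) gives $\Psi\equiv 0$; then extend to all $F,G\in C_b(\cX,\cF_0)$ by $\beta_0$-density of $D(\A)$. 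With (ii) in hand, your (ii) $\Rightarrow$ (iii) via $\E F = \T(s)\Th_{-s}F$ on $C_b(\cX,\cF_{s-})$ and $\beta_0$-density (Corollary \ref{c.dense}) closes the cycle.
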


\begin{proof}
(i) $\Leftrightarrow$ (ii). This follows from \cite[Theorem 3.4]{fk2020}.

(ii) $\Leftrightarrow$ (iii). If $\T$ is multiplicative and $F, G$ are determined before $s>0$ then
\begin{align*}
\E (FG)  &= \T(s)\Th_{-s}(FG) = \T(s) (\Th_{-s}F)(\Th_{-s}G)\\
&  = [\T(s)\Th_{-s}F][\T(s)\Th_{-s} G] =[\E F][\E G].
\end{align*}
Thus, $\E$ is multiplicative for all functions from the set $\cD$ of Corollary \ref{c.dense}. As this set is
$\beta_0$-dense in $C_b(\cX)$, the above equality extends to arbitrary $F, G \in C_b(\cX)$. 

If we conversely
assume that $\E$ is multiplicative, then for $F,G\in C_b(\cX, \cF_0)$, we have
\[
\T(t) (FG) = \E \Th_t(FG) = \E (\Th_t F)(\Th_t G) = [\E \Th_t F] [\E \Th_t G] = [\T(t)F][\T(t)G].
\]

(iii) $\Leftrightarrow$ (iv). If $\E$ is multiplicative, it follows from \cite[Theorem  II.2.3]{cooper} that there exists a continuous
$\varphi$ such that $[\E F](\x) = F(\varphi(\x))$. As $\E F = \E F\circ\tau$ by Proposition \ref{p.expectation_operator} and Lemma \ref{l.measurability}, we must have $\varphi=\varphi\circ\tau$ and thus (i) in Definition \ref{def.evolutionmap}. Moroever,
$\E F = F$ for all $F\in C_b(\cX, \cF_{0})$. 
Thus, for $F\in C_b(\cX, \cF_{0})$, 
\[
(\E F)(\x) = F(\varphi(\x)) = F(\tau(\varphi(\x))) = F(\tau(\x)) = F(\x).
\]
As $F$ was arbitrary, $\tau\circ\varphi=\tau$ follows. Last, as $\E$ is homogeneous, Lemma \ref{l.expectationop}
with $\k(\x, \cdot) = \delta_{\varphi(\x)}$ yields
\[
\one_A(\varphi(\x)) = \k(\x, A) = \k(\vt_t\varphi(\x), \vt_t(A)) = \one_{\vt_tA}(\varphi(\vt_t\varphi(\x)))
= \one_A(\vt_{-t}(\varphi(\vt_t\varphi(\x)))).
\]
This proves $\varphi(\x) = \vt_{-t}(\varphi(\vt_t\varphi(\x)))$ which is equivalent with $\varphi(\vt_t\varphi(\x)) =\vt_t\varphi(\x)$.
Thus, $\varphi$ is an evolution map. The converse is trivial.
\end{proof}

\begin{rem}
    \label{r.continuous}
    Let us consider for a moment the case of c\`adl\`ag paths, 
    i.e.\ $\cX=\cXd$. In view of Corollary \ref{c.continuous}, for every evolution map
    $\varphi$ that induces an evolutionary $C_b$-semigroup,  
    $\varphi(\x)$ is necessarily continuous at $0$ for every
    $\x\in \cX$. But then condition (iii) of Definition
    \ref{def.evolutionmap} implies that $\varphi(\x)$
    is continuous at every $t\geq 0$.
\end{rem}

With the help of Theorem \ref{t.continuouscb}(b), we can now determine a $\beta_0$-core for the $C_b$-generator of a semigroup $\T$
that satisfies the equivalent conditions of Proposition \ref{p.delay}. In what follows,
we write $\varphi(\x, t)$ shorthand for $[\varphi(\x)](t)$. We recall that $\D_0 F_a	^b (f) = F_{b}(f) - F_a(f)$. 
Applying $\E$, we obtain
for $0\leq a < b$ and $f\in C_b(X)$,
\begin{align*}
[\E F_a^b(f)](\x) & = \int_a^b f(\varphi (\x, s))\, ds \in D(\A)\quad\mbox{and}\quad\\
[\A \E F_a^b(f)](\x) & = [\E (F_{b}(f) - F_a(f))](\x) = f(\varphi(\x, b)) - f(\varphi(\x, a)).
\end{align*}

Via approximation, we obtain more elements of $D(\A)$. Compare the following Proposition with \cite[Proposition 2.12]{fk2020} which concerns
deterministic equations without delay.

\begin{prop}\label{p.f0delay}
Let $\T$ be an evolutionary semigroup satisfying the equivalent conditions of Proposition \ref{p.delay}. Denote its $C_b$-generator by $\A$ and its evolution map by $\varphi$. For $f\in C_b(X)$ and $G\in C_b(\cX, \cF_0)$ the following
are equivalent:
\begin{enumerate}
[\upshape (i)]
\item $F_0(f) \in D(\A)$ and $\A F_0(f) = G$;
\item For every $\x \in \cX$ we have
\[
\lim_{t\to 0}\frac{f(\varphi(\x, t)) - f(\x (0))}{t} = G(\x).
\]
\end{enumerate}
\end{prop}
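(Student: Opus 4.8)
The plan is to exploit the description of the full generator from Proposition \ref{p.bpcorefora}(a) together with the approximation $nF_0^{1/n}(f) \to F_0(f)$ and the fact (from Lemma \ref{l.integrals} and Definition \ref{d.d0}) that $(F_0^{1/n}(f), F_{1/n}^\star(f) - F_0^\star(f)) \in \Df$. Applying the expectation operator $\E$, one gets $(\E F_0^{1/n}(f), \E F_{1/n}^\star(f) - \E F_0^\star(f)) \in \Af$, and since we are in the continuous-path setting where $\D_0 \subset \D$ (Theorem \ref{t.continuouscb}(b)), these pairs actually lie in the $C_b$-generator $\A$. Writing $\varphi(\x,t) = [\varphi(\x)](t)$, Proposition \ref{p.delay}(iv) gives $[\E F_0^{1/n}(f)](\x) = \int_0^{1/n} f(\varphi(\x,s))\,ds$ and $[\A \E F_0^{1/n}(f)](\x) = f(\varphi(\x, 1/n)) - f(\varphi(\x, 0)) = f(\varphi(\x,1/n)) - f(\x(0))$, using condition (ii) of Definition \ref{def.evolutionmap} for the last equality.

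First I would prove (i) $\Rightarrow$ (ii). Assume $F_0(f) \in D(\A)$ with $\A F_0(f) = G$. By the characterization of the $C_b$-generator (Theorem \ref{t.genchar}), $t^{-1}(\T(t)F_0(f) - F_0(f))(\x) \to G(\x)$ for every $\x$, with the difference quotients uniformly bounded. Now $\T(t)F_0(f) = \E\Th_t F_0(f) = \E F_t(f)$, and since $\E$ is deterministic, $[\E F_t(f)](\x) = F_t(f)(\varphi(\x)) = f(\varphi(\x,t))$; likewise $[F_0(f)](\x) = f(\x(0)) = f(\varphi(\x,0))$ by Definition \ref{def.evolutionmap}(ii). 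Hence $t^{-1}(f(\varphi(\x,t)) - f(\x(0))) \to G(\x)$, which is exactly (ii).

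For (ii) $\Rightarrow$ (i), assume the pointwise limit in (ii) holds. The difference quotients $t^{-1}(\T(t)F_0(f) - F_0(f))(\x) = t^{-1}(f(\varphi(\x,t)) - f(\x(0)))$ converge pointwise to $G$; I would first check they are uniformly bounded for $t \in (0,1)$ — this needs an argument, since pointwise convergence alone does not give it. Here I would use that $f(\varphi(\x,t)) - f(\x(0)) = \int_0^t [\A \text{-type increment}]$, more precisely that for each fixed $\x$ the map $t \mapsto f(\varphi(\x,t))$ is the orbit of a $C_b$-semigroup applied at $\x$ and hence Lipschitz near $0$ is NOT automatic; instead I expect the cleanest route is: $nF_0^{1/n}(f) \to F_0(f)$ bp, $n\A\E F_0^{1/n}(f) = n(f(\varphi(\cdot,1/n)) - f(\cdot(0)))$, and if one can show $n\E F_0^{1/n}(f) - F_0(f) = \E(nF_0^{1/n}(f) - F_0(f))$ has a suitable limit using the hypothesis, then the bp-closedness of $\A$ (Lemma \ref{l.fullgenprop}, applied via $\Af$) together with $\sup_n \|n\A\E F_0^{1/n}(f)\|_\infty < \infty$ — which follows from (ii) via a uniform-boundedness/Banach–Steinhaus argument on the difference quotients, or more directly by writing $n(f(\varphi(\x,1/n)) - f(\x(0))) = n\int_0^{1/n}\frac{d}{ds}f(\varphi(\x,s))\,ds$ only formally — forces $(F_0(f), G) \in \Af$, hence $F_0(f) \in D(\A)$ with $\A F_0(f) = G$ after identifying $G \in C_b(\cX,\cF_0)$.

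The main obstacle I anticipate is establishing the \emph{uniform boundedness} of the difference quotients $t^{-1}(f(\varphi(\x,t)) - f(\x(0)))$ in $\x$ and $t \in (0,1)$ from the merely pointwise hypothesis (ii): the $C_b$-generator is defined with exactly this uniform bound, so it cannot be skipped. I expect this is handled by noting $f(\varphi(\x,t)) - f(\x(0)) = [\E(F_t(f) - F_0(f))](\x)$ and then using the already-established membership $(F_0^{1/n}(f), F_{1/n}^\star(f) - F_0^\star(f)) \in \Df$, applying $\E$, and invoking that $\E$ maps the domain of $\Af$ into itself with control on norms via the resolvent identity $R(\lambda,\Af) = \E R(\lambda,\Df)$ from the proof of Proposition \ref{p.bpcorefora}(a); alternatively, the integrated form $f(\varphi(\x,t)) - f(\x(0)) = \int_0^t [\A\E F_0^{s}(f)\text{-increment}]\,ds$ combined with the hypothesis that the integrand's limit $G$ is bounded and continuous should close the gap once one checks $s \mapsto f(\varphi(\x,s))$ is, for each $\x$, an orbit to which Proposition \ref{p.awf} applies. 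I would present the argument via this integrated Laplace-transform route rather than bare difference quotients, since Proposition \ref{p.awf}'s Sobolev-type characterization of $\Df$ (pushed through $\E$) makes the boundedness bookkeeping cleanest.
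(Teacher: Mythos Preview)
Your argument for (i)~$\Rightarrow$~(ii) is correct. The paper works from the integrated form $f(\varphi(\x,t)) - f(\x(0)) = \int_0^t [\T(s)G](\x)\,ds$ (Proposition~\ref{p.awf}) and divides by $t$, whereas you use Theorem~\ref{t.genchar}(iv) together with $[\T(t)F_0(f)](\x) = f(\varphi(\x,t))$; the content is the same.

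For (ii)~$\Rightarrow$~(i) your approximation is exactly the paper's: set $U_t \coloneqq \E\,t^{-1}F_0^t(f)\in D(\A)$, note $U_t \to F_0(f)$ pointwise and $[\A U_t](\x) = t^{-1}\bigl(f(\varphi(\x,t)) - f(\x(0))\bigr) \to G(\x)$ by (ii), then invoke closedness of $\A$. The paper's proof ends with the single sentence ``the closedness of $\A$ yields (i)'' and says nothing further. You are right to flag that bp-closedness of $\Af$ (or Theorem~\ref{t.genchar}(iv)) requires $\sup_{t\in(0,1)}\|\A U_t\|_\infty < \infty$, and that the bare pointwise hypothesis (ii) does not give this for free; none of the routes you sketch (Banach--Steinhaus, resolvent estimates) actually supplies it.

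The missing ingredient is the flow property of $\varphi$. Apply hypothesis (ii) at the point $\y = \vt_s\varphi(\x)$ for an arbitrary $s\ge 0$: condition~(iii) of Definition~\ref{def.evolutionmap} gives $\varphi(\y) = \vt_s\varphi(\x)$, hence $\varphi(\y,t) = \varphi(\x,s+t)$ and $\y(0)=\varphi(\x,s)$. Thus (ii) says the right derivative of $s\mapsto f(\varphi(\x,s))$ exists at \emph{every} $s\ge 0$ and equals $G(\vt_s\varphi(\x)) = [\T(s)G](\x)$, which is continuous in $s$ since $\T$ is a $C_b$-semigroup and $G\in C_b$. A continuous function with an everywhere-defined continuous right derivative is $C^1$, so $f(\varphi(\x,t)) - f(\x(0)) = \int_0^t [\T(s)G](\x)\,ds$. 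This is precisely the criterion of Proposition~\ref{p.awf} for $(F_0(f),G)\in\Af$, and since both functions lie in $C_b(\cX,\cF_0)$ one gets $F_0(f)\in D(\A)$ with $\A F_0(f)=G$. (Incidentally this also gives $|\A U_t(\x)|\le \|G\|_\infty$, which is the bound you were after.) Your instinct that the integrated route is cleanest was correct; the key step is using the evolution-map identity to upgrade the single limit at $s=0$ into a right derivative at every $s$.
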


\begin{proof}
(i) $\Rightarrow$ (ii). Condition (i) is equivalent with
\[
f(\varphi(\x, t)) - f(\x(0) ) = [\T (t)F_0(f) - F_0(f)](\x) = \int_0^t [\T(s)G](\x)\, ds,
\]
for all $\x \in \cX$ and $h>0$. Dividing by $t$ and taking the limit as $t\to 0$, using the continuity of the integrand on the right, (ii) follows.\smallskip

(ii) $\Rightarrow$ (i). We have $t^{-1}F_0^t(f) \in D(\D)$ and $t^{-1}F_0^t(f) \to F_0(f)$ pointwise as $t\to 0$. 
Moreover, $\D t^{-1}F_0^t(f) =
t^{-1}(F_t(f) - F_0(f))$. Applying the evolution operator $\E$, it follows from Proposition \ref{p.generator} that
\[
U_t \coloneqq \E t^{-1} F_0^t(f) \in D(\A) \quad\mbox{and}\quad U_t(\x) \to [F_0(f)](\x)
\]
as $t\to 0$ for all $\x\in \cX$.
Moreover,
\[
[\A U_t](\x) = [\E t^{-1}(F_t(f) - F_0(f))](\x) = \frac{ f(\varphi(\x, t)) - f(\x(0))}{t} \to G(\x)
\]
by (ii). The closedness of $\A$ yields (i).
\end{proof}

A class of examples that falls into the situation described in this section concerns \emph{delay differential equations}. 
Let us discuss this example in more detail.  We limit ourselves to a finite time horizon here. 
A treatment of an infinite time horizon is also possible
but in that case it is more appropriate to consider a slightly different path space, see Example \ref{ex.limitpathspace}.
Thus, fix $h\in (0,\infty)$ and $d\in \N$ and define
\[
\Ch \coloneqq C([-h,0]; \R^d),
\]
which is endowed with the supremum norm.

Given a continuous function
$y: [-h, \infty) \to \R$ and $t\in [0, \infty)$, we define $y_t \in \Ch$ by setting $y_t(s) \coloneqq y(t+s)$. 
Then $y_t$ is the \emph{past at time} $t$.
Given a continuous map $b: \Ch \to \R^d$ and $\xi \in \Ch$, we consider the following delay differential equation:
\begin{equation}
    \label{eq.dde}
    \left\{
\begin{aligned}
    y'(t) & = b(y_t)\quad\mbox{for }t\ge 0,\\
    y_0 & = \xi.
\end{aligned}
    \right.
\end{equation}

A \emph{solution} of \eqref{eq.dde} is a continuously differentiable function $y: [-h, \infty) \to \R^d$, such that \eqref{eq.dde} is satisfied.
It follows from \cite[Theorem II.4.3.1]{bddm07}, that if $b$ is Lipschitz continuous, then for every $\xi\in \Ch$, the delay differential equation \eqref{eq.dde} has a unique solution $y^\xi$. The proof of \cite[Theorem II.4.3.1]{bddm07}, which is based on Banach's fixed point theorem, immediately yields continuity of the map $\xi \mapsto y^\xi$. 

Setting $X\coloneqq \R^d$ and $\cX \coloneqq C(\R; X)$, we now construct an evolutionary $C_b$-semigroup 
$\T$ associated to \eqref{eq.dde}. To that end, define the map $\varphi: \cX \to \cX$ by setting
\begin{equation}
    \label{eq.evolutionmapdde}
    [\varphi(\x)](t) \coloneqq
    \left\{
    \begin{aligned}
        \x(t), & \quad \mbox{if } t\leq 0,\\
        y^{\x_0}(t), & \quad \mbox{if } t>0.
    \end{aligned}
    \right.
\end{equation}

As $\xi\mapsto y^\xi$ is continuous, it follows that $\varphi$ is continuous. 
That $\varphi$ satisfies conditions (i) and (ii) in Definition \ref{def.evolutionmap} is obvious and condition (iii) follows from uniqueness
of solutions to \eqref{eq.dde}. Thus, $\varphi$ is an evolution map and it follows from Proposition \ref{p.delay} that $[\E F](\x) \coloneqq F(\varphi(\x))$ is the expectation operator of a multiplicative evolutionary semigroup $\T$. We also note that $t\mapsto \varphi(\x, t)$ is 
differentiable on $[0,\infty)$.

\begin{prop}
    \label{p.dde}
    Let $b: \Ch\to \R^d$ be Lipschitz continuous, define $\varphi$ by \eqref{eq.evolutionmapdde} and set $[\E F](\x) =F(\varphi(\x))$.
    We also define the semigroup $\T$ by setting $\T(t)=\E\Th_t$ and denote its $C_b$-generator by $\A$.
    \begin{enumerate}
    [\upshape (a)]
        \item For every $t>0$, it is $\T(t)B_b(\cX, \cF([-h,0])) \subset B_b(\cX, \cF([-h,0]))$.
        \item For every $f\in C^1_c(\R^d)$, it is $F_0(f) \in D(\A)$ and
        \begin{equation}
            \label{eq.generatordde}
            [\A F_0(f)](\x) = (\nabla f)(\x (0))\cdot b(\x_0).
        \end{equation}
        
        \item $\T$ is uniquely determined by {\upshape (b)} in the following sense. 
        If $\S$ is a multiplicative evolutionary semigroup with generator $\B$, and for every 
        $f \in C_c^1(\R^d)$, it holds $F_0(f) \in D(\B)$ with $\B F_0(f) = \A F_0(f)$ as given by \eqref{eq.generatordde}, then $\S = \T$.
    \end{enumerate}
\end{prop}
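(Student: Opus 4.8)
The plan is to handle the three parts in order, using the general machinery already established. Part (a): by Lemma \ref{l.invariance}, it suffices to show $\E B_b(\cX, \cF([0,\infty))) \subset B_b(\cX, \cF([-h,0]))$. Since $\E$ is multiplicative and bp-continuous, and $B_b(\cX,\cF([0,\infty)))$ is bp-generated by the functions $F_t(f)$ with $t\geq 0$ and $f\in C_b(X)$, it is enough to check that $[\E F_t(f)](\x) = f(\varphi(\x,t)) = f(y^{\x_0}(t))$ is $\cF([-h,0])$-measurable as a function of $\x$. This holds because $y^{\x_0}(t)$ depends on $\x$ only through $\x_0 = \x|_{[-h,0]}$, which in turn follows from the well-posedness theory for \eqref{eq.dde} (the solution at any later time is a continuous function of the initial history $\xi = \x_0$). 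A measure-theoretic induction then propagates this to all of $B_b(\cX, \cF([0,\infty)))$.

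Part (b): Here I would invoke Proposition \ref{p.f0delay}, which reduces the claim $F_0(f) \in D(\A)$ with $\A F_0(f) = G$ to verifying, for $G(\x) = (\nabla f)(\x(0))\cdot b(\x_0)$, the pointwise limit
\[
\lim_{t\to 0}\frac{f(\varphi(\x,t)) - f(\x(0))}{t} = (\nabla f)(\x(0))\cdot b(\x_0).
\]
Since $\varphi(\x,t) = y^{\x_0}(t)$ with $y^{\x_0}$ continuously differentiable on $[0,\infty)$ and $y^{\x_0}(0) = \x_0(0) = \x(0)$ while $(y^{\x_0})'(0) = b((y^{\x_0})_0) = b(\x_0)$, the chain rule gives $\frac{d}{dt}\big|_{t=0}f(y^{\x_0}(t)) = (\nabla f)(\x(0))\cdot b(\x_0)$, which is exactly the required identity; the difference quotient converges for each fixed $\x$. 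One should also check $G\in C_b(\cX,\cF_0)$: it is $\cF_0$-measurable since it depends only on $\x|_{(-\infty,0]}$, continuous because $\x\mapsto\x(0)$ and $\x\mapsto\x_0$ are continuous and $\nabla f$, $b$ are continuous, and bounded because $f\in C^1_c$ and $b$ is continuous hence bounded on the compact... actually $b$ need not be bounded on all of $\Ch$, so one uses that $f$ has compact support: $(\nabla f)(\x(0)) = 0$ unless $\x(0)$ lies in the compact support of $f$, and then $\x_0$ need not be bounded — so instead bound $G$ using $|G(\x)| \leq \|\nabla f\|_\infty |b(\x_0)|$ and note this need not be bounded. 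The correct argument: $F_0(f) = \lim_{t\downarrow 0}t^{-1}F_0^t(f)$ in the bp-sense with $\E t^{-1}F_0^t(f)\in D(\A)$ and $\A$ applied to it converging pointwise to $G$; closedness of $\A$ (in the bp-sense, Proposition \ref{p.bpcorefora} context) then yields $F_0(f)\in D(\A)$ without needing $G$ bounded a priori — indeed Proposition \ref{p.f0delay} already packages this. I would simply cite Proposition \ref{p.f0delay} and verify its hypothesis (ii).

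Part (c): uniqueness. The key point is that the functions $F_0(f)$ for $f\in C^1_c(\R^d)$, together with the algebraic operations and the shift, generate enough of the domain. Concretely, suppose $\S = \F\Th_\bullet$ is a multiplicative evolutionary semigroup with generator $\B$ agreeing with $\A$ on all $F_0(f)$, $f\in C^1_c$. Since $\S$ is multiplicative, its expectation operator $\F$ is given by an evolution map $\psi$ (Proposition \ref{p.delay}), and $\B$ is a derivation. For $U = \prod_j F_{a_j}^{b_j}(f_j) \in D(\D_0)$ the derivation property together with $\B\E F_a^b(f) = \E(F_b(f)-F_a(f))$ — which itself follows from $\B F_0(f)=\A F_0(f)$ via integration, since $\E F_a^b(f) = \int_a^b \E F_s(f)\,ds = \int_a^b \Th_s^{\F}\cdots$ — pins down $\B$ on $D(\A_0)$, hence on a $\beta_0$-core, so $\B = \A$ and thus $\S = \T$ by Lemma \ref{l.fullgenprop} / Proposition \ref{p.bpcorefora}(c). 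The cleanest route: show that for $f\in C^1_c(\R^d)$ the integrated version forces $\psi(\x,\cdot)$ to solve \eqref{eq.dde}, because for $C^1_c$ functions $f$ one recovers from $\A F_0(f)(\x) = (\nabla f)(\x(0))\cdot b(\x_0)$, applied along the flow, that $t\mapsto f(\psi(\x,t))$ is differentiable with derivative $(\nabla f)(\psi(\x,t))\cdot b(\psi(\x,t)_0)$ — wait, one needs $\A$ evaluated at $\T(t)$-translates, i.e. $\frac{d}{dt}\langle \S(t)F_0(f),\mu\rangle$ — which, since $C^1_c$ functions separate points of $\R^d$ and determine $C^1$ curves, shows $t\mapsto\psi(\x,t)$ is $C^1$ on $[0,\infty)$ and solves $y'(t) = b(y_t)$, $y_0 = \x_0$; uniqueness of solutions to \eqref{eq.dde} then gives $\psi = \varphi$, hence $\F = \E$ and $\S = \T$.

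\textbf{Main obstacle.} The delicate step is part (c): rigorously passing from "$\B$ and $\A$ agree on $\{F_0(f) : f\in C^1_c\}$" to "$\B$ and $\A$ agree on a core". The issue is that $\{F_0(f)\}$ is \emph{not} itself a core — one must first upgrade to agreement on $\E F_a^b(f)$ (via the integral identity $\S(t)F_0(f) - F_0(f) = \int_0^t \S(s)\B F_0(f)\,ds$ combined with the representation $\S(t) = \F\Th_t$ and the structure of $\B F_0(f)$), then use the derivation property of $\B$ to reach $D(\A_0)$, and finally invoke that $D(\A_0)$ is a $\beta_0$-core (Theorem \ref{t.continuouscb}(b), since $\T$ is an evolutionary $C_b$-semigroup by part (a) of Proposition \ref{p.delay} applied together with the continuity of $\varphi$). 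Alternatively, and perhaps more transparently, one argues directly at the level of the evolution maps: the hypothesis forces $t\mapsto\psi(\x,t)$ to be a $C^1$ solution of the delay equation with the correct initial history, and then Lipschitz well-posedness of \eqref{eq.dde} gives $\psi = \varphi$ on $[0,\infty)$, while conditions (i)–(ii) of Definition \ref{def.evolutionmap} (forced on any evolution map) pin down $\psi$ on $(-\infty,0]$; hence $\F = \E$ and $\S = \T$. I expect the evolution-map route to be the shorter one to write carefully.
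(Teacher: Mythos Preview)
Your plan coincides with the paper's proof in all three parts: (a) via Lemma~\ref{l.invariance} and the fact that $\varphi(\x)|_{[0,\infty)}$ depends only on $\x_0$; (b) via the chain rule and Proposition~\ref{p.f0delay}; and (c) via your preferred evolution-map route.

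Two points where the paper is sharper than your sketch. For (b), the boundedness of $G(\x)=(\nabla f)(\x(0))\cdot b(\x_0)$ that you flag is dismissed in one line: the paper simply asserts it ``as $b$ is Lipschitz continuous and $f$ has compact support''. Proposition~\ref{p.f0delay} does \emph{not} package the boundedness away --- it explicitly requires $G\in C_b(\cX,\cF_0)$ --- so you cannot dodge this; you must argue boundedness directly, as the paper does. For (c), your outline is correct but the paper's execution is cleaner than the core-matching route you first describe: take $f\in C^1_c(\R^d)$ with $f(x)=x_j$ on $[-n,n]^d$, read off from Theorem~\ref{t.genchar}(iv) that $t\mapsto\psi(\x,t)$ is right-differentiable at $0$ with derivative $b(\x_0)$, then use condition~(iii) of Definition~\ref{def.evolutionmap} applied to $\y=\vt_t\psi(\x)$ to transport this right-differentiability from $0$ to every $t\ge 0$, obtaining a $W^{1,\infty}_{\mathrm{loc}}$-solution of \eqref{eq.dde} with initial history $\x_0$; uniqueness then forces $\psi=\varphi$.
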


\begin{proof}
    (a). Clearly, $\varphi(\x)|_{[0,\infty)}$ is $\cF([-h, 0])$-measurable. Thus, (a) follows from Lemma \ref{l.invariance}.\smallskip
    
    (b). As $[0,\infty) \ni t\mapsto \varphi(\x, t)$ solves \eqref{eq.dde}, the chainrule shows that 
    $t\mapsto f(\varphi(\x,t))$ is differentiable with derivative $(\nabla f)(\varphi(\x, t))\cdot b(\varphi(\x)_t)$. In particular,
    the derivative at 0 is given by $(\nabla f)(\x(0))\cdot b(\x_0)$. Note that the latter is a bounded function, as $b$ is Lipschitz
    continuous and $f$ has compact support. Thus, (b) follows from Proposition \ref{p.f0delay}.\smallskip

    (c). Let $\S$ and $\B$ as in the statement. By Proposition \ref{p.delay}, the expectation operator of $\S$ is induced by an evolution map $\psi$.
    We only have to prove that $\psi=\varphi$. By assumption, $F_0(f) \in D(\B)$
    with $[\B F_0(f)](\x) = (\nabla f)(\x(0))\cdot b(\x_0)$ for all $f\in C_c^1(\R^d)$. Applying this for functions $f$ satisfying
    $f(x) = x_j$ on $[-n,n]^d$ for some $n\in \N$ and $j=1, \ldots, d$, the characterization of the $C_b$-generator in (iv) of Theorem \ref{t.genchar} implies that for every $\x\in \cX$, the map $t\mapsto \psi(\x, t)$ is differentiable from the right in $0$ with derivative $b(\psi(\x)_0)$. Now, fix $t>0$ and set $\y= \vt_t \psi(\x)$. Note that $\y(0) = \psi(\x, t) = \vt_t\psi(\x, 0)$. 
    By Condition (iii) in Definition \ref{def.evolutionmap}, it is
    \begin{align*}
    \frac{\psi(\x, t+\eps) - \psi(\x, t)}{\eps} & = \frac{\vt_{t+\eps}\psi(\x, 0) - \vt_t\psi(\x,0)}{\eps}\\
    & = \frac{\vt_\eps (\psi (\vt_t\psi(\x), 0) - \psi(\vt_t\psi(\x), 0)}{\eps}
    = \frac{\psi(\y, \eps) - \psi(\y, 0)}{\eps},
    \end{align*}
    for all $\eps>0$. Letting $\eps\to 0$, it follows that $t\mapsto \psi (\x, t)$ is differentiable from the right at every point 
    $t\geq 0$ with derivative $b(\psi(\x)_t)$.
    This shows that $t\mapsto \psi(\x, t)$ is a $W^{1, \infty}_{\mathrm{loc}}$-solution of \eqref{eq.dde} with $\psi = \x_0$. Thus
    \cite[Theorem II.4.3.1]{bddm07} (which works also for this weaker type of solutions) yields $\psi(\x) = \varphi(\x)$.
\end{proof}

\begin{rem}
    It is worth to point out that $D(\A)\setminus D_-(\D)$ also contains elements that are not measurable with respect
    to $\cF(\{0\})$. For example, arguing similar to the proof of (ii) $\Rightarrow$ (i) in Proposition \ref{p.f0delay}, one can show
    that for every $f\in C^1_c(\R^d)$ and $t>0$, the function $F_{\varphi, t}(f): \x\mapsto f(\varphi(\x, t))$ belongs to $D(\A)$ and
    \[
        [\A F_{\varphi,t}(f)](\x) = (\nabla f)(\varphi(\x, t)) \cdot b(\varphi(\x)_t).
    \]
    Then $F_{\varphi, t}$ is $\cF([-h,0])$-measurable, but not, in general, $\cF(\{0\})$-measurable.
\end{rem}

Let us compare the evolutionary semigroup $\T$ constructed above with other semigroup approaches to the delay differential equation 
\eqref{eq.dde}. To the best of our \hyphenation{know-ledge} knowledge, alternative semigroups describing the dynamic in \eqref{eq.dde} are only available if the map
$b$ is linear. 
    
Following \cite{bp05} (see also \cite[Section VI.6]{en00}), 
there is a semigroup on the space $\Ch$ whose generator is given by
    \begin{equation}\label{eq.engen}
        A u = u'\quad\mbox{for}\quad u\in D(A) = \{ v \in \Ch : v'(0) = b(v)\}.
    \end{equation}
Note that for $D(A)$ to be a linear space, it is essential that $b$ is linear. We can identify $C_b(\Ch)$ with $C_b(\cX, \cF([-h,0]))$ as follows. We define
the extension map $\rho: \Ch \to \cX = C(\R; \R^d)$ by 
\[
[\rho(\x)](t) = \begin{cases}
    \x(-r), &  \mbox{if }  t\leq r,\\
    \x(t), &  \mbox{if } t\in (-r, 0),\\
    \x(0), & \mbox{if } t\geq 0.
\end{cases}
\]
The map $F\mapsto \tilde{F}\coloneqq F\circ \rho$ is a homeomorphism between $\Ch$ and $C_b(\cX, \cF([-h,0]))$. Note
that the latter is invariant under $\T$ by Proposition \ref{p.dde}. Using \cite[Corollary VI.6.3]{en00}, we see that the relationship
between $\T$ and $T$ is given by
\[
(\T(t)\tilde{F})(\rho(\x)) = F(T(t)\x)\qquad \mbox{for all } \x \in \Ch.
\]

In \cite[Theorem II.4.2]{bddm07}, a  semigroup is constructed on the product space 
\[
M^p \coloneqq \R^d\times L^p((-h,0); \R^d)
\]
for $1\leq p < \infty$. Thus, an element $u$ of $M^p$ has two components $u_1\in \R^d$
and $u_2 \in L^p((-h,0); \R^d)$. Note that $u_2$ need not have a trace in $0$, but one should think of $u_1$ as a replacement for that trace.
The generator of this semigroup is given by
    \begin{equation}
        \label{eq.bddmgen}
        \left\{
        \begin{aligned}
        A(u_1,u_2) & = (b(u_2), u_2')\\
        D(A) & = \{v\in M^p : u_2\in W^{1,p}((-h,0);\R^d) \mbox{ and } u_1=u_2(0)\}.
        \end{aligned}
        \right.
    \end{equation}
Again, linearity of $b$ is necessary for the linearity of $A$.
    
We note that in both semigroup approaches, 
the \emph{time derivative} plays an important role. In \eqref{eq.engen}, the generator $A$ itself is a realization
of the derivative, whereas in \eqref{eq.bddmgen}, it is the second component of the generator. It is well-known that the generator of (various versions of) the shift semigroup on $C([-h,0]; \R^d)$ or $L^p((-h, 0); \R^d)$ is a suitable realization of this time derivative. Thus, in our
setting, the appropriate generalization of the time derivative is the operator $\D$, 
which  enters the generator $\A$ of $\T$ through the requirement $D_-(\D_0)\subset D(\A)$ and $\A F = \D F$ for $F\in D_-(\D_0)$, see Proposition  \ref{p.generator}. On the other hand, the delay differential equation \eqref{eq.dde} only appears `at the point $0$'. 
In \eqref{eq.engen} via the boundary
condition in the generator, in \eqref{eq.bddmgen} as the first component of $A$ and in $\A$ via Proposition \ref{p.dde}(b).

\subsection{Markov Processes}\label{sub.markov}

In this section, we show how the concept of a classical Markov process (with continuous or c\`adl\`ag paths) fits into
our general framework. Throughout this subsection, we consider $\cX \in \{\cXc, \cXd\}$. We recall that
$\nstar = 0$ if $\cX = \cXc$ and $\nstar = 0-$ if $\cX = \cXd$.
We note that
\[
\sigma(\pi_{\nstar}) = \bigcap_{\eps>0} \cF([-\eps, 0)).
\]
With slight abuse of notation, we put $\cF(\{\nstar\}) = \sigma(\pi_{\nstar})$. 
Note that a function
$F\in B_b(\cX)$ is $\cF(\{\nstar\})$-measurable if and only if it is of the form $F_{\nstar}(f)$
for some $f\in B_b(X)$.

\begin{thm}\label{t.diffusion}
Let $\T$ be an evolutionary $C_b$-semigroup with expectation operator $\E$. Then the following are equivalent:
\begin{enumerate}
[\upshape (i)]
\item $\T(t) F_{\nstar}(f)$ is $\cF(\{\nstar\})$-measurable for all $t\geq 0$ and  $f\in B_b(X)$;
\item $\T(t) F_{\nstar}(f)$ is $\cF(\{\nstar\})$-measurable for all $t\geq 0$ and $f\in C_b(X)$;
\item $\E F$  is $\cF(\{\nstar\})$-measurable for all $F\in \bb{[0,\infty)}$;
\item $\E F$ is $\cF(\{\nstar\})$-measurable for all $F\in \cb{[0,\infty)}$.
\end{enumerate}
If these equivalent conditions are satisfied, the following hold true:
\begin{enumerate}
[\upshape (a)]
\item $\T$ induces a $C_b$-semigroup $T$ on $C_b(X)$ in the sense that
$\T(t) F_{\nstar}(f) = F_{\nstar}(T(t)f)$ for all $t>0$ and $f\in C_b(X)$. 
\item Denote the kernel associated to $\E$ by $\k$ and write $\P^\x$ for the probability measure $\k(\x,\cdot)$.
For every $\x\in \cX$, under the measure $\P^\x$, the canonical process $(Z_t)_{t\ge 0}$ given by $Z_t(\x):=\x(t)$
is a Markov process
with transition semigroup $T$ starting at $\x(\nstar)$.

\end{enumerate}
\end{thm}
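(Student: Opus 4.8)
The plan is to first establish the equivalence of (i)--(iv), then derive the two addenda. For the equivalences, (i) $\Rightarrow$ (ii) is trivial, and (iii) $\Rightarrow$ (iv) is trivial. The heart of the matter is (i) $\Leftrightarrow$ (iii) and (ii) $\Leftrightarrow$ (iv). For (iii) $\Rightarrow$ (i), observe that $F_{\nstar}(f) = \Th_{-t} F_{t^\star}(f)$ and, using Lemma \ref{l.0-0}, $\T(t)F_{\nstar}(f) = \E F_t(f) = \E F_{t^\star}(f)$; since $F_{t^\star}(f)$ is $\cF([0,\infty))$-measurable for $t \geq 0$ (it is determined after $t \geq 0$), (iii) applies. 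The converse (i) $\Rightarrow$ (iii) is the main obstacle and proceeds by a bp-density argument in the spirit of Lemma \ref{l.invariance}: one shows $\E F$ is $\cF(\{\nstar\})$-measurable first for $F = F_t(f)$ with $t \geq 0$, $f \in C_b(X)$ (this is Lemma \ref{l.0-0} combined with (ii), once we have (ii)), then for finite products $\prod_{j=1}^n F_{t_j}(f_j)$ with $0 \leq t_1 < \cdots < t_n$ by induction on $n$, using homogeneity of $\E$, the factorization $\E F_{t_1}(f_1) G = \E\Th_{t_1}\E F_0^\star(f_1)\Th_{-t_1}G = \T(t_1)[F_{\nstar}(f_1)\E\Th_{-t_1}G]$, and the fact that multiplication by an $\cF(\{\nstar\})$-measurable function preserves $\cF(\{\nstar\})$-measurability together with the hypothesis $\T(t)(\text{something }\cF(\{\nstar\})\text{-measurable})$ being $\cF(\{\nstar\})$-measurable; finally a measure-theoretic induction over cylinder sets generating $\cF([0,\infty))$. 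The analogous chain (ii) $\Leftrightarrow$ (iv) runs identically but staying within continuous test functions $f \in C_b(X)$ and $F \in \cb{[0,\infty)}$, using that $\cb{[0,\infty)}$ is bp-dense among the relevant cylinder functions; one must take a little care that the $C_b$-versions still close up, but this is exactly the pattern already used in Theorems \ref{t.continuouscb} and \ref{t.continuousgen}.

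For part (a): given $f \in C_b(X)$ and $t > 0$, condition (ii) says $\T(t)F_{\nstar}(f)$ is $\cF(\{\nstar\})$-measurable, hence of the form $F_{\nstar}(g)$ for a unique bounded function $g$ (unique because $\pi_{\nstar}$ is surjective onto $X$: any $x \in X$ is $\x(\nstar)$ for some $\x \in \cX$). Define $T(t)f := g$, so $\T(t)F_{\nstar}(f) = F_{\nstar}(T(t)f)$ by definition. That $g \in C_b(X)$ follows from the addendum of Theorem \ref{t.continuousgen} (resp.\ the analogous statement for $\cXc$): $\T(t)$ maps $\chat$ to $\chat$ and $F_{\nstar}(f) \in \chat$, so $\T(t)F_{\nstar}(f)$ restricted to $\cX^-$ is continuous, and since $\pi_{\nstar}: \cX^- \to X$ is continuous, open and surjective, $g$ is continuous. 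The semigroup law for $T$ follows from that of $\T$ via $F_{\nstar}(T(t+s)f) = \T(t+s)F_{\nstar}(f) = \T(t)\T(s)F_{\nstar}(f) = \T(t)F_{\nstar}(T(s)f) = F_{\nstar}(T(t)T(s)f)$, and uniqueness of $g$ gives $T(t+s) = T(t)T(s)$; Markovianity of $T(t)$ and $T(0) = I$ are immediate from those of $\T(t)$ and the identity $\T(0) = \E$ acting as the identity on $\cF_{0-}$-measurable functions. The joint continuity needed to make $T$ a $C_b$-semigroup on $C_b(X)$ transfers from the corresponding property of $\T$ on $C_b(\cX^-)$ by composing with the continuous maps $\pi_{\nstar}$ and a continuous right inverse (e.g.\ the constant-path embedding $x \mapsto \tau(\text{const}_x)$).

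For part (b): fix $\x \in \cX$ and work under $\P^\x = \k(\x,\cdot)$. The claim ``$Z_t$ is a Markov process with transition semigroup $T$ started at $\x(\nstar)$'' unpacks as: (1) $Z_0 = \x(\nstar)$ $\P^\x$-a.s., and (2) $\E^\x[f(Z_{t+s}) \mid \cF_{t}] = (T(s)f)(Z_t)$ $\P^\x$-a.s.\ for $f \in C_b(X)$, $s,t \geq 0$. Claim (1): in the c\`adl\`ag case this is precisely Corollary \ref{c.continuous}, which gives $\P^\x(Z_0 = \x(\nstar)) = 1$; in the continuous case $Z_0(\y) = \y(0) = \y(\nstar)$ and $\E F_0(f) = F_{\nstar}(f)$ evaluated at any point of $\cX^-$ forces $\k(\x, \{\y : \y(0) \neq \x(0)\}) = 0$ by the same argument with $f_n = 1 \wedge n\,d(\cdot, \x(0))$. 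Claim (2) is the substantive point. Apply Proposition \ref{p.markov}(b) with $F = F_{\nstar}(f) \in \bm$: it yields, for $\P^\x$-a.e.\ $\y$,
\[
\E^\x\big[\Th_{t+s}F_{\nstar}(f) \,\big|\, \cF_{t-}\big](\y) = \big[\T(s)F_{\nstar}(f)\big](\vt_t\y) = \big[F_{\nstar}(T(s)f)\big](\vt_t\y) = (T(s)f)\big(\y(t^\star + \text{(shift)})\big),
\]
which upon bookkeeping of the shift (note $[\vt_t\y](\nstar) = \y(t^\star)$, and $\Th_{t+s}F_{\nstar}(f) = F_{(t+s)^\star}(f)$, i.e.\ $f(\y((t+s)^\star))$) becomes $\E^\x[f(Z_{(t+s)^\star}) \mid \cF_{t-}] = (T(s)f)(Z_{t^\star})$; in the continuous case $t^\star = t$ and this is exactly (2), while in the c\`adl\`ag case one additionally invokes that $\P^\x$-a.s.\ the path is continuous at the relevant times or, more cleanly, that $\cF_{t-} = \cF_t$ up to $\P^\x$-null sets for a.e.\ $t$ together with right-continuity of $s \mapsto Z_s$ to pass from $t^\star$ to $t$ and from $(t+s)^\star$ to $t+s$, taking the limit $s \downarrow$ along a suitable sequence and using the continuity of $T$. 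I expect the shift-index bookkeeping in this last step, and the passage $t^\star \leadsto t$ in the c\`adl\`ag case, to be the only genuinely fiddly part; everything else is an application of Proposition \ref{p.markov}, Lemma \ref{l.0-0} and Corollary \ref{c.continuous} already proved above.
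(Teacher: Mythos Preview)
Your approach is essentially the same as the paper's: the equivalences rest on Lemma \ref{l.0-0} together with the inductive bp-density pattern of Lemma \ref{l.invariance}, and parts (a) and (b) are obtained from the constant-path embedding $\iota\colon x\mapsto \mathrm{const}_x$ and from Proposition \ref{p.markov}(b) applied to $F=F_{\nstar}(f)$, exactly as the paper does. Your identification of the $t^\star\leadsto t$ bookkeeping in the c\`adl\`ag case as the only delicate point in (b) is accurate; the paper's own proof of (b) is in fact terser than your outline on this issue.

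There is one small logical gap in your plan for the equivalences. As written, you establish (i)$\Leftrightarrow$(iii), (ii)$\Leftrightarrow$(iv), (i)$\Rightarrow$(ii) and (iii)$\Rightarrow$(iv), but this does not close the cycle: you never pass from the $C_b$-hypotheses (ii)/(iv) back to the $B_b$-conclusions (i)/(iii). The difficulty is that your inductive step for (ii)$\Rightarrow$(iv) produces $\T(t_1)F_{\nstar}(h)$ with $h\in B_b(X)$ (coming from $\E\Th_{-t_1}G$), so hypothesis (ii) alone does not apply. The paper closes this by first observing (ii)$\Rightarrow$(i) via a direct bp-closedness argument (the set $\{f\in B_b(X):\T(t)F_{\nstar}(f)\text{ is }\cF(\{\nstar\})\text{-measurable}\}$ is bp-closed and contains $C_b(X)$), and then running your (i)$\Rightarrow$(iii) argument. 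Inserting this one line repairs your plan completely.
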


\begin{proof}
The implication (i) $\Rightarrow$ (ii) is trivial, while the implication (ii) $\Rightarrow$ (i) follows from a bp-closedness argument.
Taking Lemma \ref{l.0-0} into account, the equivalence (ii) $\Rightarrow$ (iii) follows similar to the proof of Lemma \ref{l.invariance}.
As $\T$ is assumed to be an evolutionary $C_b$-semigroup either Theorem \ref{t.continuouscb} or Theorem \ref{t.continuousgen} yields $\E F \in \hat{C}_b(\cX^-)$ for all $F \in C_b(\cX)$. Taking this into account the implication (iii) $\Rightarrow$ (iv) is trivial and the converse implication
follows, once again, by a bp-closedness argument.\smallskip

Now assume that the equivalent conditions are satisfied.

(a). By assumption, given $t>0$ for every $f\in C_b(X)$, there exists a unique element $T(t)f \in C_b(X)$
such that $\T(t)F_{\nstar}(f) = F_{\nstar}(T(t)f)$. As $\T(t)F_{\nstar}(f) = \E \Th_tF_{\nstar}(f)$, the fact that $\E$ is a kernel operator implies that
$T(t)$ is a kernel operator and from the semigroup property of $\T$ one easily deduces the semigroup property of $T$.
Next, let $\iota : X \to \cX$ be defined by $[\iota(x)](t) \equiv x$.
Then $\iota$ is continuous and $T(t)f(x) = [\T(t)F_{\nstar}(f)](\iota(x))$. Thus, the continuity of the map $(t,x) \mapsto T(t)f(x)$
follows from that of $(t, \x) \mapsto [\T(t)F_{\nstar}(f)](\x)$. This proves (a).\smallskip

(b). Applying Proposition \ref{p.markov}
to $F= F_{\nstar}(f)$ for some $f\in C_b(X)$, it follows from Lemma \ref{l.0-0} that for every $s,t\ge 0$,
\begin{align*}
\big[\E^\x[ f(Z_{t+s}) |\cF_t]\big](\y) &= \big[\E^\x [\Th_{t+s}F_0(f)|\cF_t]\big](\y)= [\T(s)F_{\nstar}(f)](\vt_t\y) = (T(s)f)(Z_t(\y))
\end{align*}
for $\P^\x$-almost all $\y\in \cX$. 

\end{proof}

It is a natural question, if every Markovian $C_b$-semigroup $T$ on $C_b(X)$ can be lifted to an evolutionary semigroup $\T$.
For this, it is certainly necessary (by Theorem \ref{t.diffusion}(b)) that the associated Markovian process
can be realized with continuous, resp.\ c\`adl\`ag paths. We will prove next that this condition is also sufficient. To that end,
we put $\cX^+ \coloneqq C([0,\infty); X)$ if $\cX = \cX_{\mathrm{C}}$ and $\cX^+ \coloneqq D([0,\infty); X)$ if
$\cX = \cX_\mathrm{D}$. Let $T=(T(t))_{t\geq 0}$ be  a Markovian $C_b$-semigroup on $X$ and denote by $p_t$ the kernel associated to $T(t)$. 

\begin{defn}\label{def-markov}
    Let $T$ be a $C_b$-semigroup on $C_b(X)$. We say that the associated Markov process \emph{can be realized with paths in $\cX^+$}
    if
    \begin{enumerate}
        [\upshape (i)]
        \item for every $x\in X$ we find a measure $\bP^x$ on $\cX^+$ such that, under this measure, the canonical process is a Markov process
        with transition semigroup $T$ starting at $x$;
        \item the map $x\mapsto \bP^x$ is weakly continuous.
    \end{enumerate}
\end{defn}

\begin{rem}
    Assume that $X$ is locally compact and that $T$ is a \emph{Feller semigroup}, i.e.\ it leaves the space $C_0(X)$ of continuous 
    functions vanishing at infinity invariant and is a strongly continuous semigroup on that space. Then the associated
    Markov process can be realized with paths in $\cX^+ = D([0,\infty); X)$. Indeed, the existence of the measure $\bP^x$ follows
    from \cite[Theorem 4.2.7]{ek} and the continuity of the map $x\mapsto \bP^x$ follows from \cite[Theorem 4.2.5]{ek}.
\end{rem}

\begin{thm}\label{t.converse}
Let $T= (T(t))_{t\geq 0}$ be a Markovian $C_b$-semigroup such that the associated Markov process can be realized with paths
in $\cX^+$. Then there exists an evolutionary $C_b$-semigroup $\T = (\T(t))_{t\geq 0}$ such that
$\T(t)F_0(f) = F_0(T(t)f)$ for all $f\in C_b(X)$.
\end{thm}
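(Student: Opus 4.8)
The plan is to build the evolutionary semigroup by gluing the ``past'' of a path to an independent realization of the given Markov process. For $\x\in\cX$, let $\k(\x,\cdot)$ be the image of $\bP^{\x(\nstar)}$ under the concatenation map $\x_+\mapsto\tau(\x)\oplus_0\x_+$; since $\x_+(0)=\x(\nstar)=[\tau(\x)](\nstar)$ by the definition of $\tau$, the two pieces match at $0$ and the concatenation lies in $\cX$. Then set
\[
[\E F](\x)\coloneqq\int_{\cX^+}F\big(\tau(\x)\oplus_0\x_+\big)\,\bP^{\x(\nstar)}(d\x_+),
\]
which is the operator of Remark~\ref{rem-SDE1} with continuation kernel the law of the canonical process started at $\x(\nstar)$. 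Using that $\tau$ is $\cF_{0-}$-measurable, $\pi_{\nstar}$ is Borel, the concatenation map is continuous and $x\mapsto\bP^x$ is weakly (hence Borel) measurable, a routine monotone class argument shows that $\k$ is a kernel and $\E\in\cL(B_b(\cX),\sigma)$ is Markovian.

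First I would check that $\E$ is an expectation operator via Proposition~\ref{p.expectation_operator}(iii). Since $\k(\x,\cdot)$ depends on $\x$ only through $\tau(\x)$ and $\x(\nstar)=[\tau(\x)](\nstar)$, we have $\k(\x,\cdot)=\k(\tau(\x),\cdot)$. Moreover, by Lemma~\ref{l.measurability}(e) every $\y$ in the support of $\k(\x,\cdot)$ agrees with $\x$, hence with $\tau(\x)$, on $(-\infty,0)$, so $\one_{A_-}(\y)=\one_{A_-}(\tau(\x))=\delta_{\tau(\x)}(A_-)$ for $\k(\x,\cdot)$-a.e.\ $\y$ whenever $A_-\in\cF_{0-}$; integrating $\one_{A_-}\one_A$ gives the required identity $\k(\x,A_-\cap A)=\delta_{\tau(\x)}(A_-)\k(\tau(\x),A)$.

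The core of the proof is homogeneity of $\E$, i.e.\ the identity $\E\Th_t\E=\E\Th_t$ for all $t\ge 0$, which I would derive from the Markov property of $(\bP^x)_x$. Unwinding $\E\Th_t\E$ exactly as in the proof of Lemma~\ref{l.expectationop}, this reduces to the statement that a realization of the canonical process run on $[0,t)$ and then continued by an independent fresh copy restarted from its position at time $t$ has the same law on $[0,\infty)$ as the original process, which is precisely Chapman--Kolmogorov. The one delicate point --- relevant only for $\cX=\cXd$ --- is that $\tau^t$ restarts from the \emph{left limit} $\x(\tstar)=\x(t-)$ rather than $\x(t)$, so one needs that the deterministic time $t$ is $\bP^x$-almost surely a continuity point of the canonical process; this follows because $T$ is a $C_b$-semigroup, whence $\expect^x[f(Z_{t-})]=\lim_{s\uparrow t}(T(s)f)(x)=(T(t)f)(x)=\expect^x[f(Z_t)]$ for $f\in C_b(X)$, together with the Markov property. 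Granting homogeneity, Theorem~\ref{t.expectation_semigroup} shows that $\T(t)\coloneqq\E\Th_t$ is an evolutionary semigroup.

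It remains to show $\T$ is a $C_b$-semigroup and to verify the stated identity. For the former, by Theorem~\ref{t.continuousgen} (or Theorem~\ref{t.continuouscb} when $\cX=\cXc$; recall that $\cXd$ is a proper path space) it suffices to prove $\E C_b(\cX)\subset\chat$, i.e.\ that $\x\mapsto[\E F](\x)$ is continuous on $\cX^-$ for $F\in C_b(\cX)$: if $\x_n\to\x$ in $\cX^-$ then $\x_n(\nstar)\to\x(\nstar)$ (point evaluation at the continuity point $0$ is continuous by (P1)), so $\bP^{\x_n(\nstar)}\weak\bP^{\x(\nstar)}$ by hypothesis, and joint continuity of the concatenation map together with a standard weak-convergence argument give $[\E F](\x_n)\to[\E F](\x)$. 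For the identity, Lemma~\ref{l.fbstar} gives $\T(t)F_{\nstar}(f)=\E\Th_t F_0^\star(f)=\E F_t^\star(f)$; since $\k(\x,\cdot)$-a.e.\ path is continuous at $t\ge 0$ and satisfies $\y(t)=\x_+(t)$ there, Lemma~\ref{l.fbstar}(b) yields $[\E F_t^\star(f)](\x)=\int_{\cX^+}f(\x_+(t))\,\bP^{\x(\nstar)}(d\x_+)=(T(t)f)(\x(\nstar))=[F_{\nstar}(T(t)f)](\x)$, which for $\cX=\cXc$ is exactly $\T(t)F_0(f)=F_0(T(t)f)$. I expect the homogeneity step --- reconciling the concatenation convention with the stopping map $\tau^t$, handling the $\cXd$ continuity-at-$t$ issue, and, in the c\`adl\`ag case, checking continuity of the concatenation map in the Skorohod topology --- to be the main obstacle; the remaining steps are routine once the measure-theoretic bookkeeping is in place.
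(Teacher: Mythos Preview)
Your proposal is correct and follows essentially the same route as the paper: build $\E$ by pushing $\bP^{\x(\nstar)}$ forward along the concatenation $\x_+\mapsto\tau(\x)\oplus_0\x_+$, verify Proposition~\ref{p.expectation_operator}(iii), obtain homogeneity from Chapman--Kolmogorov (with the observation that deterministic times are a.s.\ continuity points in the c\`adl\`ag case), and then invoke Theorem~\ref{t.continuouscb}/\ref{t.continuousgen}. The paper phrases the construction of $\k$ on cylinder sets first and then extends via a Fubini-type argument, but this is just the measure-theoretic bookkeeping behind your concatenation formula.

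One point where the paper is slightly slicker and avoids an obstacle you flag: for the $C_b$-property you check condition~(iii) of Theorem~\ref{t.continuousgen}, namely $\E C_b(\cX)\subset\chat$, which forces you to worry about joint continuity of the concatenation map in the Skorohod topology. The paper instead checks condition~(iv), i.e.\ $\E C_b(\cX,\cF([0,\infty)))\subset\chat$: for such $F$ the integrand $F(\tau(\x)\oplus_0\x_+)$ depends only on $\x_+$ (under the identification $C_b(\cX,\cF([0,\infty)))\simeq C_b(\cX^+)$), so $[\E F](\x)=\int_{\cX^+}F(\x_+)\,\bP^{\x(\nstar)}(d\x_+)$ depends on $\x$ only through $\x(\nstar)$, and continuity follows directly from the assumed weak continuity of $x\mapsto\bP^x$. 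This sidesteps the Skorohod-continuity question for concatenation entirely.
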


\begin{proof}
We denote by $(\bP^x)_{x\in X}$ the family of probability measures on $\cX^+$ such that under these measures the canonical process
on $\cX^+$ is a Markov process with transition semigroup $T$ starting at $x$. Motivated by Remark \ref{rem-SDE1}, we want 
to construct a kernel $\k$ with $\k_+(\x, \cdot)=\bP^{\x(\nstar)}$. To that end, we let $A_- \in \cF((-\infty, 0))$ and
$A_+ \in \cF ([0,\infty))$. Note that we can (and shall) identify these with elements of $\bo(\cX^-)$ and $\bo(\cX^+)$ respectively.
Given $\x\in \cX$, we set
\[
\k(\x, A_-\cap A_+) \coloneqq \delta_{\x}(A_-)\bP^{\x(\nstar)}(A_+).
\]
Note that, in particular, every cylinder set can be written as an intersection $A_-\cap A_+$ with $A_-, A_+$ as above. Arguing
as in the proof of Fubini's theorem (see \cite[Theorem 3.3.1]{bogachev2007}), $\k(\x, \cdot)$ can be extended to a measure
on all of $\bo(\cX)$. Also, it is straightforward to show that $\x\mapsto \k(\x, A)$ is measurable for every $A\in \bo(\cX)$,
so $\k$ is a kernel. 

We prove that $\k$ satisfies Proposition 
\ref{p.expectation_operator}(iii). To that end, let $A_-, B_-\in \cF_{0-}$
and $B_+ \in \cF([0,\infty))$. Setting $A= B_-\cap B_+$, it follows that
\begin{align*}
\k(\x, A_-\cap A) & = \k(\x, (A_-\cap B_-) \cap B_+) = \delta_{\x}(A_-\cap B_-)\bP^{\x(\nstar)}(B_+)\\
& = \delta_{\x}(A_-)\delta_{\x}(B_-)\bP^{\x(\nstar)}(B_+)\\
& = \delta_{\tau(\x)}(A_-)\delta_{\tau(\x)}(B_-)\bP^{(\tau(\x))(\nstar)}(B_+)\\
& = \delta_{\tau(\x)}(A_-)\k(\tau(\x), A).
\end{align*}
By a monotone class argument, this equality generalizes to arbitrary $A\in \bo(\cX)$. Thus, by Proposition \ref{p.expectation_operator},
$\k$ is the kernel of an expectation operator. To conclude that $\E$ is the expectation operator of an evolutionary semigroup,
it remains to verify that $\E$ is homogeneous. This is easily verified using Lemma \ref{l.expectationop} (note that it suffices
to consider a cylinder set $A$ in that Lemma) and the Chapman--Kolmogorov equations for the transition kernels $p_t$ of $T(t)$.
In the case of c\`adl\`ag paths, one additionally uses the observation that under $\bP^x$ the measure of a cylinder set does not change,
if a condition at a time $t>0$ is instead imposed at time 
$t^\star =t-$. That the evolutionary semigroup associated to
the expectation operator $\E$ is a $C_b$-semigroup
follows from Theorem \ref{t.continuouscb} in case of continuous paths
and from Theorem \ref{t.continuousgen} in case of c\`adl\`ag paths. Indeed
for $F\in C_b(\cX, \cF([0,\infty))) \simeq C_b(\cX^+)$, we have
\[
[\E F](\x) = \int_{\cX^+} F(\y) \bP^{\x(0-)}(d\y)
\]
and the latter depends continuously on $\x(0-)$.
\end{proof}

\begin{example}
\label{ex.sde}
An example that fits into the setting described in Theorem \ref{t.converse} is given by stochastic differential equations. Consider
the equation 
\begin{equation}
    \label{eq-SDE3}
    \left\{
    \begin{aligned}
     dY(t) &= b(Y(t)) dt + \sigma (Y(t)) dB(t),\quad\mbox{for }t\ge 0,\\
     Y(0) & = y. 
    \end{aligned}
    \right.
\end{equation}
Here,  $(B(t))_{t\geq 0}$ is an $m$-dimensional Brownian motion, defined on a fixed probability space $(\Omega, \Sigma, \bP)$ and the
functions $b\colon \R^d\to \R^d$ and $\sigma\colon \R^d\to \R^{d\times m}$ are globally Lipschitz continuous. 

Under this assumption it is well-known (see, e.g., \cite[Theorem~5.2.1]{Oeksendal03}) that for every $y\in \R^d$, there exists
a unique strong solution $(Y^y(t))_{t\ge 0}$ of \eqref{eq-SDE3} which is adapted to the filtration $(\cG_t)_{t\in [0,T]}$ generated by 
$\{B(s): s\le t\}$. The process $(Y^y(t))_{t\ge 0}$ is called a (time-homogeneous) \emph{It\^o diffusion process}. 

It follows from \cite[Theorem~7.1.2]{Oeksendal03}, that $(Y^y(t))_{t\geq 0}$ is Markovian. 
Defining for each $y\in \R^d$ the measure  $\bP^y$ on $\cX^+\coloneqq C([0,\infty); \R^d)$ by $\bP^y(A_+) \coloneqq
\bP(Y^y\in A_+)$, \cite[Lemma~8.1.4]{Oeksendal03} implies weak continuity of the map $y\mapsto \bP^y$. Thus, by Theorem \ref{t.converse},
we can construct an evolutionary $C_b$-semigroup from the measures $(\bP^y)_{y\in \R^d}$. To describe the evolution operator $\E$, 
we use the representation from Remark \ref{rem-SDE1} with $\k_+(\x, \cdot) = \bP^{\x(0)}$. By Equation \eqref{eq-SDE2}, we have
\begin{equation}\label{eq-SDE5}
    (\E F)(\x) =\int_{\cX^+} F(\x_-\oplus_0 \y_+) \bP^{\x(0)} (d\y_+)
= \expect F\big(\x_-\oplus_0 Y^{\x(0)}\big).
\end{equation} 
Note that the  integrals are well-defined as the process $(Y^y(t))_{t\ge 0}$ has $\bP$-almost surely continuous paths.
\end{example}

For the rest of this subsection, let $\T$ be an evolutionary $C_b$-semigroup satisfying the equivalent conditions of Theorem \ref{t.diffusion}
and denote by $T=(T(t))_{t\geq 0}$ the induced semigroup on $C_b(X)$. It is a rather natural question, how the $C_b$-generator $\A$ of $\T$ and the 
$C_b$-generator $A$ of $T$ are related. To study this question, we introduce the following notation. 
Denote the transition kernels of $(T(t))_{t\geq 0}$ by $(p_t)_{t\geq 0}$. Given $f_1, \ldots f_n \in C_b(X)$, we put
\[
F(s_1, \ldots, s_n, x) = \int_{X^n} p_{s_1}(x, dy_1)\prod_{j=2}^n p_{s_j}(y_j, dy_{j-1})\prod_{k=1}^n f_k(y_k).
\]
Moreover, for $n\in \mathds{N}$ and $0\leq a < b <\infty$, we put
\[
D_n(a,b) \coloneqq \{ (s_1, \ldots, s_n) \in [0,\infty)^n : a \leq s_1 \leq s_2 \leq \cdots \leq s_n \leq b\}.
\]

\begin{lem}\label{l.simplex}
With the above notation, the function
\[
x\mapsto \int_{D_n(a,b)}F(s_1, \ldots, s_n; x)
\]
belongs to the domain of the generator $A$ of $T$ and
\begin{align*}
&A \int_{D_n(a,b)}F(s_1, \ldots, s_n)\\ &= \int_{D_{n-1}(a,b)} F(s_1, \ldots, s_{n-1}, b) - \int_{D_{n-1}(a,b)} F(a, s_2, \ldots, s_n).
\end{align*}
\end{lem}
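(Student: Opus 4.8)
The strategy is to recognize the iterated-simplex integral as the image under the expectation operator $\E$ of an explicit element of $D(\D_0)$, and then invoke Lemma \ref{l.0-0} together with Proposition \ref{p.bpcorefora} to identify both the function and the action of $A$. Concretely, I first observe that for $f_1,\dots,f_n\in C_b(X)$ the function $F(s_1,\dots,s_n;\x(0))$ on $\cX$ equals $[\E\,\prod_{j=1}^n F_{s_j}(f_j)](\x)$ whenever $0\le s_1\le\cdots\le s_n$; this is exactly the Markov-property computation underlying Proposition \ref{p.markov} and Lemma \ref{l.0-0}, using that under $\P^\x$ the canonical process is Markov with transition semigroup $T$ started at $\x(\nstar)=\x(0)$ and that $\E F_{s_j}(f_j)=\E F_{s_j^\star}(f_j)$. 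Integrating over $D_n(a,b)$ and pulling $\E$ (a kernel operator, hence $\sigma$-continuous and commuting with bounded integration) out of the integral, I get
\[
\Big[\E \int_{D_n(a,b)} \prod_{j=1}^n F_{s_j}(f_j)\,d(s_1,\dots,s_n)\Big](\x)
= \int_{D_n(a,b)} F(s_1,\dots,s_n;\x(0))\,d(s_1,\dots,s_n),
\]
so the function in the statement is $F_{\nstar}(\,\cdot\,)$-represented by $\E$ applied to $U\coloneqq \int_{D_n(a,b)}\prod_j F_{s_j}(f_j)$.

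Second, I would show $U\in D(\D_0)$ with an explicit derivative. The point is that $\int_{D_n(a,b)}\prod_{j=1}^n F_{s_j}(f_j)\,ds$ can be written, via the standard ordered-simplex identity $\int_{D_n(a,b)}\prod_j g(s_j)\,ds = \frac{1}{?}$-type recursion, as an iterated integral $\int_a^b F_{s_n}(f_n)\big(\int_a^{s_n}\cdots\big)ds_n$; but more usefully, differentiating the simplex integral in the sense of the shift semigroup reduces the top variable: one gets (heuristically) $\D_0 U = \int_{D_{n-1}(a,b)}\prod_{j=1}^{n-1}F_{s_j}(f_j)\cdot F_b^\star(f_n)\,d s - \int_{D_{n-1}(a,b)} F_a^\star(f_1)\prod_{j=2}^n F_{s_j}(f_j)\,ds$, which is the Leibniz-type boundary term from differentiating the endpoints $a,b$ of the simplex. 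This should follow from Lemma \ref{l.integrals} and Corollary \ref{c.derivative} (the derivation property of $\Df$) by an induction on $n$, or more directly by the same telescoping computation as in the proof of Lemma \ref{l.integrals} applied to the $n$-fold product, keeping careful track that the starred evaluations $F_a^\star, F_b^\star$ are the ones that land in $\Df$. Then $U\in D(\D_0)$ and $\D_0 U$ equals this boundary expression.

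Third, I apply $\E$ and use Lemma \ref{l.0-0} (so that $\E F_b^\star(f)=\E F_b(f)$ and $\E F_b(f)=F_{\nstar}(T(b)f)$-type identities propagate through products via the Markov property) to evaluate $\E U$ and $\E\D_0 U$ as $F_{\nstar}$ of the claimed functions on $X$; since $\D_0$ is a slice of $\Df$ and, by Proposition \ref{p.bpcorefora}(a)--(b), $(\,\E U,\E \D_0 U\,)\in\Af$, the induced semigroup $T$ on $C_b(X)$ has the corresponding pair in its full generator $A$, which on $C_b(X)$ coincides with the $C_b$-generator. Reading off the two boundary terms gives exactly $A\int_{D_n(a,b)}F(s_1,\dots,s_n) = \int_{D_{n-1}(a,b)}F(s_1,\dots,s_{n-1},b) - \int_{D_{n-1}(a,b)}F(a,s_2,\dots,s_n)$.

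**Main obstacle.** The delicate point is bookkeeping in the simplex differentiation: one must justify that the ordered integral $\int_{D_n(a,b)}\prod_j F_{s_j}(f_j)\,ds$ genuinely lies in $D(\D_0)$ (not merely that its $\E$-image is in $D(\Af)$) and that its $\D_0$-image is the clean boundary expression above, with the correct starred evaluations so that everything stays inside $\Df$; the product rule introduces $n$ terms but the ones with an interior derivative cancel telescopically against the integration over the simplex, leaving only the two endpoint contributions. Getting this cancellation right — essentially Fubini plus the fundamental theorem of calculus on the simplex, combined with the derivation property of Corollary \ref{c.derivative} — is the crux; once it is in place, the passage to $A$ via $\E$ and Lemma \ref{l.0-0} is routine.
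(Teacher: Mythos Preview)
Your approach has a genuine gap and is also considerably more indirect than necessary.

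\textbf{The gap.} You claim that $U=\int_{D_n(a,b)}\prod_j F_{s_j}(f_j)\,ds$ lies in $D(\D_0)$. It does not. By Definition \ref{d.d0}, $D(\D_0)$ is the algebra generated by the functions $F_\alpha^\beta(g)$; a generic element is a finite linear combination of products $\prod_k F_{\alpha_k}^{\beta_k}(g_k)$, hence an integral over a \emph{box} $\prod_k[\alpha_k,\beta_k]$. The ordered simplex $D_n(a,b)$ is not a finite union of such boxes, and for distinct $f_1,\dots,f_n$ the simplex integral cannot be written as a finite sum of box integrals (only the symmetrized sum over permutations equals the box integral $\prod_j F_a^b(f_j)$). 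Consequently the derivation rule for $\D_0$ is not available to you, and the ``telescoping'' you describe --- interior product-rule terms cancelling --- does not arise: applying $\D_0$ to a product of $F_a^b$ functions produces $n$ face terms of the \emph{box}, not two endpoint terms of the simplex. What you could show instead is that $(U,V)\in\Df$ for a suitable $V$ via Proposition \ref{p.awf}; but computing $\Th_hU-U=\int_{D_n(a+h,b+h)}-\int_{D_n(a,b)}$ and extracting the limit is precisely the simplex-boundary computation the paper carries out, so the detour through the path space does not avoid the main work.

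\textbf{What the paper does.} The paper's proof stays entirely on $C_b(X)$ and never invokes $\E$, $\cX$, $\Df$, or Proposition \ref{p.bpcorefora}. The key observation is that the semigroup law for $T$ makes $T(h)$ act on $x\mapsto F(s_1,\dots,s_n;x)$ by $s_1\mapsto s_1+h$, so after substitution $T(h)\int_{D_n(a,b)}F=\int_{D_n(a+h,b+h)}F$. The difference $\int_{D_n(a+h,b+h)}-\int_{D_n(a,b)}$ is then split into two thin slabs $\overline{D}_n(a,b,h)$ (where $b\le s_n\le b+h$) and $\underline{D}_n(a,b,h)$ (where $a\le s_1\le a+h$), each of volume $O(h)$; dividing by $h$ and passing to the limit by dominated convergence yields the two boundary integrals directly, and Theorem \ref{t.genchar}(iv) finishes. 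This is the computation you would in any case have to perform on the path space to get $(U,V)\in\Df$; doing it on $X$ from the start is both shorter and free of the additional bookkeeping with $\E$, Lemma \ref{l.0-0}, and the identification of $\Af$ with $A$.
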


\begin{proof}
Making use of the semigroup law and substitution, we see that
\begin{align*}
T(h) \int_{D_{n}(a,b)} F(s_1, \ldots, s_n) & = \int_{D_{n}(a,b)} F(s_1+h, s_2, \ldots, s_n)\\
& = \int_{D_n(a+h, b+h)}F(s_1, \ldots, s_n).
\end{align*}
We now define the sets $\overline{D}_n$ and $\underline{D}_n$ by
\begin{align*}
\overline{D}_n(a,b,h) & \coloneqq \{ (s_1, \ldots, s_n) : a \leq s_1 \leq \cdots \leq s_{n-1} \leq b \leq s_n \leq b+h\}\\
\underline{D}_n(a,b,h) & \coloneqq \{ (s_1, \ldots, s_n) : a \leq s_1 \leq a+h,  s_1 \leq s_2 \cdots \leq s_{n} \leq b \}.
\end{align*}
Using these, we see that
\begin{align*}
(T(h) - I) \int_{D_n(a,b)} & = \int_{D_n(a+h, b+h)} - \int_{D_n(a,b)}\\
& = \int_{D_n(a+h, b+h)} - \int_{D_n(a+h, b)} - \int_{\underline{D}_n(a,b,h)}\\
& = \int_{\overline{D}_n(a,b,h)} - \int_{\underline{D}_n(a,b,h)} + o(h).
\end{align*}
Using dominated convergence, it is easy to see that
\[
\frac{1}{h} \int_{\overline{D}_n(a,b,h)} F(s_1, \ldots, s_n) \to \int_{D_{n-1}(a,b)} F(s_1, \ldots, s_{n-1}, b)
\]
and
\[
\frac{1}{h} \int_{\underline{D}_n(a,b,h)} F(s_1, \ldots, s_n) \to \int_{D_{n-1}(a,b)} F(a, s_2, \ldots, s_{n}).
\]
Combining this with the above, the claim follows.
\end{proof}

\begin{thm}\label{t.generatormarkovian}
Let $\T$ be an evolutionary $C_b$-semigroup satisfying the equivalent conditions of Theorem \ref{t.diffusion}.
We denote by $\A$ the $C_b$-generator of $\T$ and by $A$ the $C_b$-generator of the induced semigroup $T$ on $C_b(X)$. 
\begin{enumerate}
[\upshape (a)]
\item For every $u\in D(A)$, we have
$F_{\nstar}(u)\in D(\A)$ and $\A F_{\nstar}(u) = F_{\nstar}(Au)$. 
\item Functions of the form
\begin{equation}\label{eq.markoviangen}
U = \sum_{j=1}^n U_j F_{\nstar}(u_j)
\end{equation}
where $U_1, \ldots, U_n \in D_-(\D)$ and $u_1, \ldots, u_n \in D(A)$ form 
\begin{enumerate}
[\upshape (i)]
\item a $\beta_0$-core for $\A$ in the case $\cX=\cXc$;
\item a bp-core for $\Af$ in the case $\cX = \cXd$.
\end{enumerate}
\end{enumerate}
\end{thm}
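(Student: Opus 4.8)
The plan is to prove part (a) first by building explicit approximating elements inside $D(\A)$, then derive part (b) as a core statement by combining (a) with the core results already established for $\D_0$ (Corollary \ref{c.bpcore}, Proposition \ref{p.bpcorefora}) and for $A$.

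\emph{Part (a).} Fix $u\in D(A)$. The strategy is to realize $F_{\nstar}(u)$ as a limit (in the appropriate sense: $\beta_0$ in the continuous case, bp in the c\`adl\`ag case) of functions that are manifestly in $D(\A)$, with $\A$ applied to them converging to $F_{\nstar}(Au)$, and then invoke closedness of $\A$. The natural candidates come from Lemma \ref{l.simplex} and Lemma \ref{l.0-0}. Concretely, using the representation $\T(t)F_{\nstar}(f) = F_{\nstar}(T(t)f)$ from Theorem \ref{t.diffusion}(a), I would write, for $h>0$,
\[
\frac{1}{h}\int_0^h \T(s)F_{\nstar}(u)\, ds = F_{\nstar}\!\left(\frac1h\int_0^h T(s)u\, ds\right) =: F_{\nstar}(u_h),
\]
and note $u_h \in D(A)$ with $Au_h = h^{-1}(T(h)u - u) \to Au$, while $u_h \to u$, both with respect to $\beta_0$ on $C_b(X)$ (this is the standard "Riemann average" regularization for $C_b$-semigroups, available from Theorem \ref{t.genchar}/Proposition \ref{p.awf} applied to $T$). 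On the other hand, the left-hand side $h^{-1}\int_0^h\T(s)F_{\nstar}(u)\,ds$ lies in $D(\A)$ with $\A$-image $h^{-1}(\T(h)F_{\nstar}(u)-F_{\nstar}(u)) = F_{\nstar}(u_h) - F_{\nstar}(u) + F_{\nstar}(Au_h)\cdot h \cdots$ — more cleanly, by Proposition \ref{p.awf} the pair $\big(h^{-1}\int_0^h\T(s)F_{\nstar}(u)\,ds,\ h^{-1}(\T(h)F_{\nstar}(u) - F_{\nstar}(u))\big)\in \Af$, and $h^{-1}(\T(h)F_{\nstar}(u)-F_{\nstar}(u)) = F_{\nstar}(h^{-1}(T(h)u-u)) = F_{\nstar}(Au_h) \to F_{\nstar}(Au)$. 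Since the extension map $f\mapsto F_{\nstar}(f)$ is continuous for the relevant topologies (trivially for bp; and $\beta_0$-to-$\beta_0$ in the continuous case since $\iota\colon X\to\cX$, $\iota(x)\equiv x$, pulls back, cf.\ the proof of Theorem \ref{t.diffusion}(a)), we get $F_{\nstar}(u_h)\to F_{\nstar}(u)$ and $\A F_{\nstar}(u_h)\to F_{\nstar}(Au)$, and closedness of $\A$ (resp.\ $\Af$) yields $F_{\nstar}(u)\in D(\A)$ with $\A F_{\nstar}(u) = F_{\nstar}(Au)$. The main subtlety here is checking the convergences are in the right topology so that the appropriate notion of closedness applies; I would phrase (a) uniformly by working with $\Af$ (which is always bp-closed by Lemma \ref{l.fullgenprop}) and then upgrade to $\A$ in the continuous case using that everything in sight stays in $C_b$.

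\emph{Part (b).} Let $D_A$ denote the linear span of products $U F_{\nstar}(u)$ with $U\in D_-(\D)$ and $u\in D(A)$. By part (a), $F_{\nstar}(u)\in D(\A)$; since also $D_-(\D)\subset D(\A)$ with $\A$ acting as $\D$ there (Proposition \ref{p.generatorgen}, or Proposition \ref{p.generator} with $D_-(\D_0)$ in the continuous case) and $\A$ is a derivation on such products because $\D$ is a derivation and $\E$ is an expectation operator (cf.\ the derivation structure in Proposition \ref{p.bpcorefora}(b)), we get $D_A\subset D(\A)$. It remains to show $D_A$ is a core. The idea is to reduce to the known core $D(\A_0)$ from Proposition \ref{p.bpcorefora}: a generator of $D(\A_0)$ consists of sums $\sum_j U_j \E V_j$ with $U_j\in D_-(\D_0)$, $V_j\in D_+(\D_0)$. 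Now $\E V_j$ is $\cF_{\nstar}$-measurable by the equivalent conditions of Theorem \ref{t.diffusion}, hence of the form $F_{\nstar}(v_j)$ for some bounded measurable $v_j$; and by Lemma \ref{l.simplex} together with part (a) one can approximate each such $F_{\nstar}(v_j)$ (first for $V_j$ of the special iterated-integral form, then in general by bp-density) by elements of the form $F_{\nstar}(u)$ with $u\in D(A)$, with control of $\A F_{\nstar}(v_j) = \E \D_0 V_j$. Combining, each generator element of $D(\A_0)$ is a bp-limit (in the $\cXd$ case) of elements of $D_A$ with $\A$-images converging; since $D(\A_0)$ is a bp-core for $\Af$, transitivity of cores (Lemma \ref{l.bpcore}) gives that $D_A$ is a bp-core for $\Af$, proving (ii). For (i), the case $\cX=\cXc$, one runs the same argument with $\beta_0$ in place of bp, using Lemma \ref{l.cbcore} and the fact (Theorem \ref{t.continuouscb}(b)) that $D(\A_0)$ is a $\beta_0$-core for $\A$, together with $\beta_0$-density of $D(A)$-generated functions inside $C_b(X)$ (Stone--Weierstra\ss\ / regularization as in Proposition \ref{p.cext}(a) applied to $T$).

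\emph{Main obstacle.} The delicate point is the identification and approximation step in part (b): showing that $\E V_j = F_{\nstar}(v_j)$ with $v_j$ that can be taken in (or $\beta_0$-/bp-approximated within) $D(A)$ \emph{with simultaneous control of} $\A F_{\nstar}(v_j)$. This is exactly where Lemma \ref{l.simplex} is designed to be used — the iterated-integral functions $\int_{D_n(a,b)}F(s_1,\dots,s_n;\cdot)$ are the image under $\E$ (composed with $F_{\nstar}$) of products of the $F_{a_j}^{b_j}(f_j)$'s generating $D(\D_0)$, and the lemma computes $A$ on them explicitly, matching $\E\D_0 V_j$. Assembling this bookkeeping — getting the combinatorics of the simplex decomposition to line up with the derivation formula for $\A_0$ — is the part requiring genuine care; the rest is soft functional-analytic closure and core transitivity.
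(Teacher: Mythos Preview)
Your overall strategy matches the paper's, and the argument is correct, but you take two unnecessary detours.

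For part (a), your regularization argument via $u_h = h^{-1}\int_0^h T(s)u\,ds$ and bp-closedness of $\Af$ works, but the paper's route is more direct: since $u\in D(A)$ gives $T(t)u-u=\int_0^tT(s)Au\,ds$, one simply applies $F_{\nstar}$ to both sides, uses $\T(t)F_{\nstar}(u)=F_{\nstar}(T(t)u)$ from Theorem~\ref{t.diffusion}(a), and rewrites the right-hand side as $\int_0^t\T(s)F_{\nstar}(Au)\,ds$ (via Lemma~\ref{l.0-0}). This verifies the integral criterion of Proposition~\ref{p.awf} for $\T$ in one line, with no limit or closedness needed.

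For part (b), you speak of \emph{approximating} $\E V_j=F_{\nstar}(v_j)$ by elements $F_{\nstar}(u)$ with $u\in D(A)$ and then appealing to ``transitivity of cores''. This overcomplicates the argument: Lemma~\ref{l.simplex} shows that for $V\in D_+(\D_0)$ (a product $\prod F_{a_j}^{b_j}(f_j)$, decomposed over the simplex via the symmetric group), the function $\E V$ is \emph{exactly} $F_{\nstar}(v)$ with $v\in D(A)$, and moreover $F_{\nstar}(Av)=\E\D_0 V$. Hence every element of $D(\A_0)$ already \emph{is} of the form \eqref{eq.markoviangen} (with $U_j\in D_-(\D_0)\subset D_-(\D)$), so $D(\A_0)\subset D_A$. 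Since $D(\A_0)$ is a $\beta_0$-core (continuous case) resp.\ bp-core (c\`adl\`ag case), any superset automatically is too. No approximation step is required.

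One point you assert without proof is that $\A$ acts as a derivation on products $UF_{\nstar}(u)$ with $U\in D_-(\D)$. This does hold, but the reason is not that $\A$ is a derivation (it isn't, in general); rather, $(U,\D U)\in\Df$ and $(V,Z)\in\Df$ with $\E V=F_{\nstar}(u)$, $\E Z=F_{\nstar}(Au)$ (from part (a) via Proposition~\ref{p.bpcorefora}(a)), so Corollary~\ref{c.derivative} gives $(UV,(\D U)V+UZ)\in\Df$, and applying $\E$ with the $\cF_{0-}$-measurability of $U$ and $\D U$ (Proposition~\ref{p.derivation}(b)) yields the claimed product rule in $\Af$.
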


\begin{proof}
(a). 
Let $u\in D(A)$ with $Au=f$. By Proposition \ref{p.awf}, this is equivalent with $T(t)u-u=\varphi_t \coloneqq \int_0^tT(s)f\, ds$.
Consequently,
\begin{align*}
\T(t) F_{\nstar}(u) - F_{\nstar}(u) & = F_{\nstar}(T(t)u) - F_{\nstar}(u) = F_{\nstar}(\varphi_t)\\
& = \E \int_0^tF_{s^\star}(f)\, ds
= \int_0^t \E F_{s^\star}(f)\, ds = \int_0^t \T(s)F_{\nstar}(f)\, ds.
\end{align*}
By Proposition \ref{p.awf}  for $\T$, it follows that $F_{\nstar}(u) \in D(\A)$ and $\A F_{\nstar}(u) = F_{\nstar}(Au)$.\smallskip

(b). Here, we make use of Theorem \ref{t.continuouscb}(b) (in the case $\cX=\cXc$) and Proposition \ref{p.bpcorefora}(b) (in the case
$\cX=\cXd$). In both cases, it suffices to show that every function in $D(\A_0)$
is of the form in \eqref{eq.markoviangen}. To that end, it suffices to compute $\E V$ and $\E \D V$ for $V\in D_+(\D_0)$, i.e.\
when $V$ is a product of functions of the form $F_a^b(f)$ (see Proposition \ref{p.derivation}). Note that
any such $V$ can be written as the sum of integrals of the form
\[
I(\x) = \int_{[a,b]^n} \prod_{k=1}^n f_k(\x (s_k)),
\]
where the parameters $n \in \N$ and $0\leq a < b$ may vary from summand to summand. By linearity,
it suffices to consider a single integral $I$. By Proposition \ref{p.derivation},
\[
\D I(\x) = \sum_{k=1}^n (f_k(\x(b)) - f_k(\x(a))) \int_{[a,b]^{n-1}} \prod_{j\neq k} f_j(\x (s_j))
= \sum_{k=1}^n W_k(\x).
\]
To compute $\E I$ and $\E \D I$, the order of the variables $s_1, \ldots, s_n$ has to be taken into account.
To that end, we make use of the symmetric group $S_n$ and decompose
\[
I(\x) = \sum_{\sigma \in S_n} \int_{D_n(a,b)} \prod_{k=1}^n f_k(\x(s_{\sigma(k)})).
\]
It follows that
\[
\E I(\x) = \sum_{\sigma \in S_n} \int_{D_n(a,b)} F(s_{\sigma (1)}, \ldots, s_{\sigma (n)}; \x (\nstar)).
\]
It is a consequence of Lemma \ref{l.simplex} that, as a function of $\x (\nstar)$, this is an element of $D(A)$ and
\begin{align*}
(A \E I)(\x (\nstar) ) & = \sum_{\sigma \in S_n} \int_{D_{n-1}(a,b)} F(s_{\sigma( 1)}, \ldots, s_{\sigma(n-1)}, b; \x (\nstar))\\
& \qquad  -
\int_{D_{n-1}(a,b)} F(a, s_{\sigma( 2)}, \ldots, s_{\sigma(n)}; \x (\nstar))\\
& = \sum_{k=1}^n \sum_{\substack{\sigma \in S_n\\ \sigma(n) = k}}\int_{D_{n-1}(a,b)} F(s_{\sigma( 1)}, \ldots, s_{\sigma(n-1)}, b; \x (\nstar))\\
& \qquad - \sum_{\substack{\sigma \in S_n\\ \sigma(1) = k}}\int_{D_{n-1}(a,b)} F(a, s_{\sigma( 2)}, \ldots, s_{\sigma(n)}; \x (\nstar))\\
& = \sum_{k=1}^n \E W_k = \E \D I.
\end{align*}
Here we have used in the last line that as $\sigma$ runs trough all of $S_n$ with either the first or the last entry fixed, we run through all of $S_{n-1}$ in the remaining entries. Thus, decomposing the integral in $W_k$ as the integral in $I$ above, the equality follows.
\end{proof}

\begin{example}
In the situation of Example \ref{ex.sde}, the generator of the semigroup $T$ on $C_b(\R^d)$ related to the It\^o diffusion $Y^y$ is given by 
\[ (Au)(y) = b(y)\cdot \nabla u(y) + \frac12 \tr \Big( (\sigma\sigma^\top)(y) D^2 u(y)\Big)\quad \mbox{for } y\in\R^d, \]
for sufficiently smooth $u$, e.g. for $u\in C_c^2(\R^d)$ (see \cite[Theorem~7.3.3]{Oeksendal03}). Theorems \ref{t.diffusion} and \ref{t.generatormarkovian} relate the `classical' semigroup $T$ and its generator $A$ to the evolutionary semigroup $\T$ and its $C_b$-generator $\A$ as follows:
\begin{alignat*}{4}
    \T(t) F_{0}(f) &= F_{0}(T(t)f) &\quad& \mbox{for  } t>0\mbox{ and }f\in C_b(X),\\
    \A F_{0}(u) & = F_{0}(Au) &&\mbox{for } u\in D(A).
\end{alignat*}
\end{example}

\begin{rem}
Although the underlying dynamics are Markovian, the extension 
of the transition semigroup to the path space allows to consider path-dependent functions and their evolutions. For example, in finance, this enables the dynamic pricing of financial derivatives, whose payoffs depend not only on the final value of the underlying asset but also on its entire historical trajectory. 

Indeed, if the risk-neutral dynamics $(Z_t)_{t\geq 0}$ are Markovian, then the fair price of a derivative $f(Z_\tau)$ with maturity $\tau \geq 0$ at time $0 \leq t \leq \tau$ is given by  
\[
u(t,Z_t(\y)) = \big[\E^\x[f(Z_\tau(\y)) \mid \cF_t] \big](Z_t(\y))
\]  
for $\P^\x$-almost all $\y \in \cX$. Under appropriate assumptions, the value function $u$ satisfies the final value problem  
\[
\left\{
\begin{aligned}
\partial_t u(t,x) &= -A u(t,x), \\
u(\tau,x) &= f(x),
\end{aligned}
\right.
\]  
for all $0 \leq t \leq\tau$ and $x \in X$. In particular, the value function is given by $u(t, \cdot) = T(\tau - t) f$, where $T$ denotes the associated transition semigroup with generator $A$. 

Using the extension $\T$ on $\cX^+$ from Theorem~\ref{t.converse},  
the value function  
\[
\Theta_t U(t) = \E^{\x}[F \mid \cF_t]
\]  
corresponding to the fair price of a path-dependent derivative $F \in B_b(\cX, \cF_\tau)$ is given by  
$U(t) = \T(\tau - t) F$.
For further details, we refer to our forthcoming article \cite{DKK25+}, where we provide a detailed analysis of the connection between such value functions, martingales, final value problems, and path-dependent PDEs.

Path-dependent functions also play an important role in control theory, where path-dependent stochastic optimization problems are usually formulated by means of backward stochastic differential equations \cite{MR1037747}. 
If the controlled function is path-dependent, then the value function satisfies a semi-linear path-dependent parabolic equation. For a detailed survey on path-dependent PDEs we refer to \cite{MR4672222}, in particular to \cite{MR3417511} for Sobolev solutions and \cite{MR4337713, MR3531674} for viscosity solutions. 
\end{rem}

\subsection{Stochastic delay equations}\label{sec:SDE}

In this section, we show that stochastic delay equations give rise to evolutionary $C_b$-semigroups on the path space 
$((\cX, \md), \tau) = ((\cXc, \md_{\mathrm{C}}), \tau_{\mathrm{C}})$ of continuous functions. We fix $h\in (0,\infty)$ and use the space $\Ch \coloneqq C([-h,0]; \R^d)$ introduced in Section \ref{sub.deterministic}. Also, for a stochastic process $(Y(t))_{t\ge -h}$, we denote the history at time $t$ by $Y_t \in \Ch$.

Given an $m$-dimensional Brownian motion $(B(t))_{t\geq 0}$ on the probability space $(\Omega, \Sigma, \bP)$, consider the stochastic delay equation 
\begin{equation}
    \label{eq-SDE4}
    \left\{
    \begin{aligned}
     dY(t) &= b(Y_t) dt + \sigma (Y_t) dB(t),\quad\mbox{for }t\ge 0,\\
     Y_0 & = \xi,
    \end{aligned}
    \right.
\end{equation}
where $\xi\in \Ch$ is given and $b\colon  \Ch\to \R^d$ and  $\sigma\colon   \Ch\to \R^{d\times m}$
are Lipschitz continuous.
Again, it is well-known that for every $\xi \in \Ch$, the stochastic delay equation \eqref{eq-SDE4} has a unique strong solution $(Y^\xi(t))_{t\ge 0}$ which is 
adapted  to the filtration $(\cG_t)_{t\ge 0}$ generated by $(B(t))_{t\geq 0}$ 
and has almost surely continuous paths (see, e.g., \cite[Chapter~5, Theorem~2.2]{Mao11}).

In the notation of Remark~\ref{rem-SDE1}, we define 
\[ \bP^{\xi}(A_+) := \bP(Y^\xi \in  A_+)\]
for $\xi\in \Ch$ and $A_+\in \bo(\cX^+)$ as well as (analogously to
Equation \eqref{eq-SDE5})
\begin{align*}
    (\E F)(\x) & \coloneqq \expect^{\x_0}\big[ F(\x_-\oplus_0 \cdot)\big] = \int_{\cX^+} F(\x_-\oplus_0\y_+) \bP^{\x_0}(d\y_+)  = \expect F(\x_-\oplus_0Y^{\x_0}).
\end{align*} 
Obviously, $\E$ is a Markovian kernel operator satisfying condition (ii) of Proposition~\ref{p.expectation_operator} and therefore an expectation operator. \smallskip 

To prove that $\E$ is homogeneous, we impose an additional non-degeneracy condition:
For every $\xi\in \Ch$, the matrix $\sigma(\xi)$ is invertible and $\sup_{\xi\in \Ch} \|\sigma(\xi)^{-1}\| <\infty$. Under this assumption,
it is proved in  \cite{Butkovsky-Scheutzow17} that the stochastic delay equation \eqref{eq-SDE4} induces a Markov process on 
the state space $\Ch$. We can use this to establish the
homogeneity of $\E$. To that end, we  introduce the following notation: For $\xi\in \Ch$ and $t\ge 0$, let $Y^{t,\xi}$ be the unique strong solution of
\begin{equation}\label{eq-SDE6}
\left\{
\begin{aligned} Y^{t,\xi}(s) & = \xi(0) + \int_t^s b( Y^{t,\xi}_r) dr + \int_t^s \sigma( Y^{t,\xi}_r) dB(r),\quad \mbox{for } s\ge t,    \\
Y^{t,\xi}_t & = \xi.
\end{aligned}
\right.
\end{equation}

We also set
\[ \big(\x\oplus_tY^{t,\x_t}\big)(\omega, s) := \begin{cases}
    \x(s), & \mbox{if } s\le t,\\
    Y^{t,\x_t}(\omega,s), & \mbox{if }s\ge t.
\end{cases}\]
With this notation, we have $(\E F)(\x) = \expect F(\x\oplus_0Y^{0,\x_0})$. Also, for $t\ge 0$, we have
\[ (\E_t F)(\x)= \expect F \big[\vartheta_{-t}(\vartheta_t\x\oplus_0 Y^{0,(\vartheta_t\x)_0})\big]=  \expect F (\x\oplus_t Y^{t,\x_t}) . \]
The second equality holds due to the uniquenss  of the solution of \eqref{eq-SDE6} and the fact that $B(t+\cdot)-B(t)$ and $B(\cdot)$ have the same distribution. Using this, we obtain
\[ (\E\E_t F)(\x)
= \expect\Big[  \expect F(\y\oplus_t Y^{t,\y_t})\Big|_{\y=\x\oplus_0 Y^{0,\x_0}}\Big].
\]
It was shown in \cite[Proof of Proposition~4.1]{Butkovsky-Scheutzow17} that 
\[ \expect F(\y\oplus_t Y^{t,\y_t})\Big|_{\y=\x\oplus_0 Y^{0,\x_0}}
= \expect \Big[ F(\x\oplus_0 Y^{0,\x_t})\Big|\cG_t \Big]\quad \bP\text{-a.s.}\]
From this, we obtain
\[ (\E\E_t F)(\x)= \expect \Big[\expect\big[ F(\x\oplus_0 Y^{0,\x_0}) | \cG_t\big]\Big] = \expect F(\x\oplus_0 Y^{0,\x_0}) = (\E F)(\x)\]
by the tower property. Hence, $\E$ is homogeneous and therefore generates
an evolutionary semigroup on $C_b(\cX)$.

We remark that the above SDDE can be treated with a semigroup approach in the product space $M^2 = \R^d \times L^2((-h,0); \R^d)$ similar to the case of deterministic
delay equations (see the discussion at the end of Subsection~\ref{sub.deterministic}). 
This approach was used, e.g., in \cite{Gozzi-Marinelli06}, \cite{deFeo-Federico-Swiech24} to solve stochastic optimal control problems for SDDEs. 
Here, the 
construction of the semigroup on the space $M^2$ allows to use results from \cite{daPrato-Zabczyk14} on abstract SDEs in Hilbert spaces. However, as in the deterministic case, the linearity of the coefficients 
is necessary for the linearity of the generator of the semigroup, in contrast to our approach.

\subsection{Stochastic flows driven by L\'evy processes}
\label{sub.flow}
In this subsection, we consider a generalization of the deterministic equations considered in Section \ref{sub.deterministic}, where
the evolution of our system is influenced by a random parameter which lives in an external probability space. We will work
on the c\`adl\`ag path space $((\cX, \md), \tau) = ((\cXd, \md_{J_1}), \tau_D)$ 
endowed with the $J_1$-topology with values in a complete separable metric space $X$. 

\begin{defn}\label{def.randomevolution}
Let $\Omega \coloneqq D([0,\infty); \R^d)$ be the space of all c\`adl\`ag functions 
$\omega\colon [0,\infty)\to \R^d$ endowed with the $\sigma$-algebra $\mathscr{G}:=\sigma(\omega(s)\colon s\in[0,\infty))$ and the filtration  $\mathscr{G}_t:=\sigma(\omega(s)\colon s\in[0,t])$, $t\in[0,\infty)$. A \emph{random evolution map} is a measurable function
$\phi : \Omega\times \cX \to \cX$ such that for fixed $\omega\in \Omega$, the map $\x \mapsto \phi(\omega, \x)$ is continuous
on $\cX^-$ and for every $\omega\in \Omega$ and $\x\in \cX$, the following conditions are satisfied:
\begin{enumerate}
[\upshape (i)]
\item $\phi (\omega,\x) = \phi (\omega,\tau(\x))$;
\item $\tau(\phi( \omega,\x)) = \tau(\x)$;
\item $\phi(\omega,\x)=\phi(\omega+c,\x)$ for all $c\in\R^d$;
\item $\omega\mapsto \tau^t \phi(\omega,\x)$ is $\mathscr{G}_t$-measurable for all $t\in[0,\infty)$;
\item $\phi (\vt_t \omega, \vt_t \phi (\omega,\x))=\vt_{t}\phi (\omega,\x)$ for all $t\in[0,\infty)$.
\end{enumerate}
\end{defn}

\begin{prop}\label{prop:Sphi}
Let $\phi$ be a random evolution map and $\mathbf{P}$ be a probability measure on $(\Omega, \mathscr{G})$ such that 
the canonical process $Z_t(\omega):=\omega(t)$ is a L\'evy process starting at 0.
Then, the operator $\E\colon B_b(\cX)\to B_b(\cX)$, given by
$[\E F](\x):=\mathbf{E}\left[F(\phi(Z,\x))\right]$, is a homogeneous expectation operator. 
The induced evolutionary semigroup is a $C_b$-semigroup.
\end{prop}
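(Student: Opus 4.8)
The plan is to verify, in order, that $\E$ is a Markovian kernel operator, that it is an expectation operator in the sense of Proposition~\ref{p.expectation_operator}, that it is homogeneous, and finally that the evolutionary semigroup $\T(t)=\E\Th_t$ is a $C_b$-semigroup. Once the first three points are established, Theorem~\ref{t.expectation_semigroup} already guarantees that $\T(t)=\E\Th_t$ is an evolutionary semigroup, and the last point will be obtained from Theorem~\ref{t.continuousgen}, which applies since $\cXd$ is a proper path space (shown in Section~\ref{sub.proper}).

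First I would set $\k(\x,A):=\mathbf{P}(\{\omega\in\Omega:\phi(\omega,\x)\in A\})$ for $\x\in\cX$ and $A\in\bo(\cX)$. Joint measurability of $\phi$ yields that $\k(\x,\cdot)$ is a Borel probability measure on $\cX$ and that $\x\mapsto\k(\x,A)$ is measurable, so $\k$ is a Markovian kernel, $[\E F](\x)=\int_\cX F\,d\k(\x,\cdot)=\mathbf{E}[F(\phi(Z,\x))]$, and $\E\in\cL(B_b(\cX),\sigma)$ is $\sigma$-continuous by Lemma~\ref{l.kernelop}. To see that $\E$ is an expectation operator I would check condition~(i) of Proposition~\ref{p.expectation_operator}. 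Property~(i) of Definition~\ref{def.randomevolution} gives $[\E F](\x)=\mathbf{E}[F(\phi(Z,\x))]=\mathbf{E}[F(\phi(Z,\tau(\x)))]=[\E F](\tau(\x))$, so $\E F=\E F\circ\tau$ is $\cF_{0-}$-measurable by Lemma~\ref{l.measurability}(c). If moreover $F$ is $\cF_{0-}$-measurable, then $F=F\circ\tau$ by Lemma~\ref{l.measurability}(c), so property~(ii) of Definition~\ref{def.randomevolution} yields $F(\phi(\omega,\x))=F(\tau(\phi(\omega,\x)))=F(\tau(\x))=F(\x)$ for every $\omega$, whence $\E F=F$.

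The main work is homogeneity, $\E=\E\E_t$ for $t\ge0$. Unwinding the definitions, and writing $\mathbf{E}$ also for expectation on $\Omega\times\Omega$ under $\mathbf{P}\otimes\mathbf{P}$ with coordinate processes $Z,Z'$, one has $[\E_tF](\y)=\mathbf{E}[F(\vt_{-t}\phi(Z',\vt_t\y))]$ and hence $[\E\E_tF](\x)=\mathbf{E}[F(\vt_{-t}\phi(Z',\vt_t\phi(Z,\x)))]$. The key step is the pathwise identity
\[
\vt_{-t}\phi(\omega',\vt_t\phi(\omega,\x))=\phi(\omega\circ_t\omega',\x),
\]
where $\omega\circ_t\omega'$ is the concatenation $(\omega\circ_t\omega')(s)=\omega(s)$ for $s\le t$ and $=\omega(t)+\omega'(s-t)$ for $s>t$ (well defined $\mathbf{P}\otimes\mathbf{P}$-a.s., as $\omega'(0)=0$ a.s.). To prove it, apply property~(v) of Definition~\ref{def.randomevolution} to the noise $\omega\circ_t\omega'$, giving $\phi(\vt_t(\omega\circ_t\omega'),\vt_t\phi(\omega\circ_t\omega',\x))=\vt_t\phi(\omega\circ_t\omega',\x)$; since $\vt_t(\omega\circ_t\omega')=\omega'+\omega(t)$, property~(iii) turns the first slot into $\phi(\omega',\cdot)$; since $\omega\circ_t\omega'$ agrees with $\omega$ on $[0,t]$, property~(iv) and the Doob--Dynkin lemma give $\tau^t\phi(\omega\circ_t\omega',\x)=\tau^t\phi(\omega,\x)$, hence $\tau(\vt_t\phi(\omega\circ_t\omega',\x))=\tau(\vt_t\phi(\omega,\x))$, and property~(i) then replaces $\vt_t\phi(\omega\circ_t\omega',\x)$ by $\vt_t\phi(\omega,\x)$ inside $\phi(\omega',\cdot)$, yielding $\phi(\omega',\vt_t\phi(\omega,\x))=\vt_t\phi(\omega\circ_t\omega',\x)$ and hence the claim. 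Finally, because $Z$ is a L\'evy process the restriction $Z|_{[0,t]}$ and the increment process $(Z_{t+\cdot}-Z_t)$ are independent with the latter distributed as $Z$, so $(\omega,\omega')\mapsto\omega\circ_t\omega'$ pushes $\mathbf{P}\otimes\mathbf{P}$ forward to $\mathbf{P}$; combining this with the identity gives $[\E\E_tF](\x)=\mathbf{E}[F(\phi(Z\circ_tZ',\x))]=\mathbf{E}[F(\phi(Z,\x))]=[\E F](\x)$. I expect the bookkeeping in the pathwise identity—in particular pinning down the shift $\vt_t$ on $\Omega=D([0,\infty);\R^d)$ and the interplay of $\tau$, $\tau^t$ and the sub-$\sigma$-algebras $\mathscr{G}_t$—to be the main obstacle; the L\'evy part is a routine consequence of stationary independent increments.

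For the last assertion I would verify condition~(iii) of Theorem~\ref{t.continuousgen}, namely $\E C_b(\cX)\subset\chat$. For $F\in C_b(\cX)$ the function $\E F$ is bounded, with $\|\E F\|_\infty\le\|F\|_\infty$, and $\cF_{0-}$-measurable by the previous step, so it lies in $\bm$; moreover, if $\x_n\to\x$ in $\cX^-$, then by the assumed continuity of $\x\mapsto\phi(\omega,\x)$ on $\cX^-$ we get $\phi(\omega,\x_n)\to\phi(\omega,\x)$ and hence $F(\phi(\omega,\x_n))\to F(\phi(\omega,\x))$ for every $\omega$, so dominated convergence gives $[\E F](\x_n)\to[\E F](\x)$. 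Thus $\E F\in\chat$, and Theorem~\ref{t.continuousgen} shows that $\T$ induces a $C_b$-semigroup on $C_b(\cX^-)$, which completes the proof.
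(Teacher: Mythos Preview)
Your proposal is correct and follows essentially the same route as the paper: the verification that $\E$ is an expectation operator via properties (i)--(ii) of Definition~\ref{def.randomevolution} and Lemma~\ref{l.measurability}(c), and the $C_b$-property via Theorem~\ref{t.continuousgen} together with pointwise continuity of $\x\mapsto\phi(\omega,\x)$ on $\cX^-$ and dominated convergence, are identical to the paper's argument. For homogeneity you package the same ingredients slightly differently: you establish a pathwise concatenation identity $\vt_{-t}\phi(\omega',\vt_t\phi(\omega,\x))=\phi(\omega\circ_t\omega',\x)$ and then invoke that the concatenation map pushes $\mathbf{P}\otimes\mathbf{P}$ forward to $\mathbf{P}$, whereas the paper works directly with the tower property and the conditional distribution of $Z_{t+\cdot}$ given $\mathscr{G}_t$; both arguments use exactly properties (i), (iii), (iv), (v) and the stationary independent increments of $Z$ in the same logical order, so the difference is cosmetic rather than substantive.
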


\begin{proof}
First, we show that $\E F$ is $\cF_{0-}$-measurable for all $F\in B_b(\cX)$. Indeed, condition (i) implies $[\E F](\x)=\expect\left[F(\phi(Z,\x))\right]=\expect\left[F(\phi(Z,\tau(\x)))\right]=[\E F](\tau(\x))$ for all $\x\in\cX$, and the claim follows directly from Lemma~\ref{l.measurability}(c).
    
Second, we note that $\E F=F$ whenever $F\in B_b(\cX, \cF_{0-})$. Indeed, since $F=F\circ\tau$, it follows from condition (ii) that for every $\x\in\cX$,
\[
[\E F](\x)=\expect \left[F\left(\tau(\phi(Z,\x))\right)\right]=\expect\left[F(\tau(\x))\right]=F(\tau(\x))=F(\x).
\]

Third, we verify that $\E$ is homogeneous.
Let $F\in B_b(\cX)$. By definition of $\E_t$, we have 
\[
[\E_t F](\x)=[\Theta_t\E\Theta_{-t}F](\x)=\expect\left[F(\vartheta_{-t}\phi(Z,\vartheta_t\x))\right]
\]
and therefore 
\[
[\E\E_t F](\x)=\expect\Big[\tilde\expect\big[F\big(\vartheta_{-t}\phi\big(\tilde Z,\vartheta_t\phi(Z,\x)\big)\big)\big]\Big],
\]
where $\tilde Z$ is an independent copy of $Z$. 
Moreover, using first the tower property of conditional expectation, then condition (v) and then condition (i), it follows that
\begin{align*}
[\E F](\x)&=\expect\left[F(\phi(Z,\x))\right]\\
&=\expect\big[\expect [F(\phi(Z,\x))\mid \mathscr{G}_t]\big]\\
&=\expect\Big[\expect\big[F\big(\vt_{-t}\phi ( Z_{t+\cdot}, \vt_t \phi (Z,\x))\big)\mid \mathscr{G}_t\big]\Big]\\
&=\expect\Big[\expect\big[F\big(\vt_{-t}\phi (Z_{t+\cdot}-Z_t + Z_t, \vt_t\tau^t \phi (Z,\x))\big)\mid \mathscr{G}_t\big]\Big]\\
&=\expect\Big[\tilde \expect\big[F\big(\vartheta_{-t}\phi\big(\tilde Z+Z_t,\vartheta_t\tau^t\phi(Z,\x)\big)\big)\big]\Big]\\
&=\expect\Big[\tilde \expect\big[F\big(\vartheta_{-t}\phi\big(\tilde Z,\vartheta_t\phi(Z,\x)\big)\big)\big]\Big]=[\E\E_t F](\x),
\end{align*}
where $\tilde Z$ is an independent copy of $Z$.
Here, in the fifth equality, we used that the mapping $\omega\mapsto \vartheta_t\tau^t\phi(Z(\omega),\x)$ is $\mathscr{G}_t$-measurable due to condition (iv) and that $Z_{t+\cdot}-Z_t$ is independent of $\mathscr{G}_t$.
The sixth equality is valid due to condition (iii). 

To verify that the induced evolutionary semigroup is a $C_b$-semigroup, we use Theorem \ref{t.continuousgen}. If $(\x_n)_{n\in\mathbb{N}}$ is a sequence
in $\cX^-$ converging to $\x \in \cX^-$, then by assumption, we have $\phi(\omega, \x_n) \to \phi(\omega, \x)$. Given $F\in C_b(\cX)$,
it follows from the dominated convergence theorem  that
\[
[\E F](\x_n) = \expect [F(\phi (Z, \x_n))] \to \expect [F(\phi(Z,\x))] = [\E F](\x). \qedhere
\] 
\end{proof}

As an application, we consider a stochastic delay equation, driven by additive L\'evy noise. To that end, let
\[
\Dh \coloneqq D((-h,0); \R^d).
\]
For fixed
$\omega \in \Omega$ and $\xi\in \Dh$, we consider the (deterministic) delay equation
\begin{equation}
    \label{eq.sdde}
    \left\{
    \begin{aligned}
        dy(t) & = b(y_t)dt + d\omega(t),\quad\mbox{for } t\ge 0,\\
        y_0 & = \xi.
    \end{aligned}
    \right.
\end{equation}
We interpret this as an integral equation and require the solution to be continuous at 0. Thus, \eqref{eq.sdde} is equivalent to
$y(t) = \xi(t)$ for $t\leq 0$ and 
\begin{equation}
    \label{eq.sddeintegral}
    y(t) = \xi(0-) + \int_0^t b(y_s)\, ds + \omega(t) - \omega(0)
\end{equation}
for all $t>0$. If $b$ is Lipschitz continuous, then, arguing similar as in \cite[Theorem II.4.3.1]{bddm07}, we see that
\eqref{eq.sdde} has a unique solution $y^\xi(\omega) \in D((-h, \infty); \R^d)$. 
Given $\omega\in \Omega$ and $\x \in \cX$, we define
\[
[\phi(\omega, \x)](t) \coloneqq \begin{cases}
    \x(t), & \mbox{if } t<0,\\
    y^{\x_0}, & \mbox{if }t\geq 0.
\end{cases}
\]
We claim that $\phi$ is a random evolution map in the sense of Definition \ref{def.randomevolution}. Here conditions (i) -- (iv) can be directly verified. 
To prove (v), we fix $t>0$ and set $\y = \vt_t\phi(\omega, \x)$. Then we have $\y(s) = [\phi (\omega, \x)](t+s)$. It follows that
\begin{align}
    \y(s) & = \x(0-) + \int_0^{t+s}b(\phi(\omega, \x)_r)\, dr + \omega(t+s) - \omega(0)\notag\\
    & = \x(0-) + \int_0^t b(\phi(\omega, \x)_r)\, dr + \int_t^{t+s} b(\phi(\omega, \x)_r)\, dr + \omega(t+s)-\omega(0)\notag\\
    & = [\phi(\omega, \x)](t-) + \int_0^sb(\phi(\omega, \x)_{t+r})\, dr + \omega(t+s) - \omega(t)\notag\\
    & = \y(0-) + \int_0^s b(\y_r)\, dr + (\vt_t\omega)(s) - (\vt_t\omega)(0).\label{eq.randomevolution}
\end{align}
By uniqueness, $\phi(\vt_t\omega, \y) = \y$, which is exactly (v).

\begin{example}\label{ex.levydelay}
Let $X=\R^d$ and $b: \R^d \to \R^d$ be a bounded, continuous function. For fixed $\omega\in \Omega$ and $\x\in \cX$, 
consider the delay equation
\begin{equation}
\label{eq.stochdelay}
dy(t) = b(y(t-1))dt + d\omega(t), \quad y_0 = \x_0. 
\end{equation}
Of course, we interpret this equation as an integral equation and require the solution to be continuous at $0$ (see Remark
\ref{r.continuous}). Thus, \eqref{eq.stochdelay} is equivalent to
$y(t) = \x(t)$ for $t\leq 0$ and
\[
y(t) = \x(0-) + \int_0^t b(y(s-1))\, ds + \omega(t) -\omega(0)
\]
for all $t>0$.

This equation can be solved iteratively, yielding the following random evolution map:
\[
\phi(\omega, \x, t) = \begin{cases}
\x(t), & \mbox{if } t < 0,\\
\x(0-) + \int_0^t b(\x(s-1))\, ds + \omega(t)-\omega(0), & \mbox{if }t\in [0,1),\\
\phi(\omega, \x, 1-) + \int_1^t b(\phi(\omega, \x, s-1))\, ds + \omega(t)-\omega(0), & \mbox{if } t\in [1, 2),\\
\ldots
\end{cases}
\]
This is indeed a random evolution map. Conditions (i) -- (iv) can be directly verified 
and for (v) one can argue as in Equation \eqref{eq.randomevolution}.\smallskip

Now, let $\bP$ be a measure on $(\Omega, \mathscr{G})$ such that the coordinate process $Z_t(\omega) \coloneqq \omega(t)$
is a L\'evy process starting at $0$. We denote by $(S(t))_{t\geq 0}$ the transition semigroup of this process on $B_b(\R^d)$.
It is well known, that $S$ is a $C_b$-semigroup (it is actually \emph{Feller}, in the sense that it restricts to a strongly continuous
semigroup on $C_0(\R^d)$) and we denote its $C_b$-generator by $A$. Moreover, we denote the expectation operator
associated to $\phi$ and $\bP$ via Proposition \ref{prop:Sphi} by $\E$ and the induced evolutionary semigroup by $\T$.

We also define the map $\varphi : \cX \to \cX$ by setting $\varphi (\x, t) \coloneqq \x(t)$ for $t<0$
and $\varphi(\x, t) \coloneqq \x(0-) +\int_0^t b(\x(s-1))\, ds$ for $t\in [0,1)$. Afterwards, we define $\varphi$ recursively, setting
$\varphi(\x, t) \coloneqq \varphi(\x, n-) + \int_n^t b(\varphi(\x, s-1))\, ds$ for $t\in [n, n+1)$. 

It follows for $t\in (0,1)$ and $\x\in \cX$ that $\phi(Z, \x, t) = \varphi(\x, t) + Z_t$ $\bP$-almost surely, so that for any $f\in B_b(\R^d)$,
\[
\T(t)F_{\nstar}(f)(\x) = \expect [f(\phi(Z, \x, t))] = \expect f(\varphi(\x, t) + Z_t) = (S(t)f)(\varphi(\x, t)).
\]
Note that $t\mapsto \varphi(\x, t)$ is differentiable  with derivative
$b(\varphi(\x, t-1))$ at any point where $\x$ is continuous. 
If $f\in C_c^\infty(\R^d)$, then $t\mapsto S(t)f$ is differentiable with respect to $\|\cdot\|_\infty$
and the derivative is $S(t)Af$. With the chain rule, it follows that 
\begin{align*}
\partial_t \T(t)F_{\nstar}(f)(\x) & = S(t)Af(\varphi(\x, t)) + S(t)\big[\nabla f(\varphi(\x,t))\cdot b(\varphi(\x, t-1))\big]\\
& = \T(t)\Big[F_{\nstar}(Af) + \sum_{j=1}^d F_{\nstar}(\partial_jf)F_{-1}(b_j)\Big](\x)
\end{align*}
for all but finitely many $t\in (0,1)$. Proceeding recursively, one can prove the same result for almost all $t>0$. Now
Proposition \ref{p.awf} shows that
\[
[F_{\nstar}(f), F_{\nstar}(Af) + \sum_{j=1}^d F_{\nstar}(\partial_j f) F_{-1}(b_j)\big] \in \Af,
\]
where $\Af$ denotes the full generator of $\T$.
\end{example}

\begin{rem}
\label{rem.notfeller}
The stochastic delay equation from Example \ref{ex.levydelay} is a special case of the equations considered in \cite{rrg06}, namely we here
only consider additive L\'evy noise whereas the authors of \cite{rrg06} also allow a multiplicative noise term depending on the past. 
In \cite{rrg06}, the authors construct the transition semigroup of the solution on the state space $\cX_{[-h,0]} =D([-h, 0])$ of c\`adl\`ag paths
on $[-h,0]$. As they discuss on page 1416, their semigroup is not a $C_b$-semigroup. In fact, it 
neither leaves the space $C_b(\cX_{[-h,0]})$ invariant 
nor does it satisfy the continuity condition from Definition \ref{d.cbsem}. Inspecting their argument, one sees that the problem 
is that the function $\pi_{-h}$, which is continuous on $\cX_{[-h,0]}$ because of the special role of the point $-h$, 
is mapped by the shift semigroup to the non-continous function $\pi_{-h+t}$. In our setting, this problem does not occur
as $\pi_{-h}$ is not continuous on $\cX^-$. Indeed, Proposition \ref{prop:Sphi} shows that we obtain a $C_b$-semigroup and using
Lemma \ref{l.invariance} one can easily see that it leaves $C_b(\cX, \cF([-h,0)))$ invariant.
\end{rem}

\appendix

\section{Transition semigroups}\label{sect.appendix}

In many cases, the concept of a strongly continuous semigroup, see for example \cite{en00}, is too strong to study transition semigroups
of Markovian processes. Over the years, several suggestions have been made for alternative semigroup theories that can be used instead
but, so far, no single theory has prevailed. We mention weakly integrable semigroups \cite{jefferies86}, 
weakly continuous semigroups \cite{cerrai94}, $\pi$-semigroups \cite{priola99}, bi-continuous semigroups \cite{kuehnemund03, farkas04}
and sequentially equicontinuous semigroups \cite{kms22}.
Let us also mention \cite{gk01, gnr24} where transition semigroups of certain stochastic processes were studied; here the so-called \emph{strict}
or \emph{mixed} topology plays an important role.

In this article, we use a very flexible semigroup theory, that can be seen
as a special case of the theory of `semigroups on norming dual pairs' introduced in \cite{kunze09, kunze11}. On the other hand, we 
have also opportunity to make use of the `full generator' of a semigroup that goes back to the book of Ethier and Kurtz \cite[Section 1.5]{ek}.
As there is no single reference available where all needed
results can be found, we introduce in this appendix the relevant notions and state the results that are used in the main part; there
are also some new results.\smallskip

Throughout this appendix, $X$ is a Polish space and $\bo(X)$ denotes its Borel $\sigma$-algebra. 
The spaces of bounded real valued  Borel measurable functions and bounded continuous
functions on $X$ are denoted by $B_b(X)$ and $C_b(X)$ respectively. Both are Banach spaces with respect to the supremum
norm $\|\cdot\|_\infty$.  The space of bounded signed measures is denoted by $\cM(X)$. We note that $\cM(X)$
can be isometrically identified with a closed subspace of the norm dual 
 $B_b(X)^*$ and also a closed subspace of $C_b(X)^*$ by 
means of the identification
\[
\langle f, \mu\rangle \coloneqq \int_X f(x) d\mu(x).
\]

\subsection{Kernel operators and bp-convergence}\label{s.a1}

A \emph{kernel} on $X$ is a map $k: X\times \bo(X)\to \R$ such that
\begin{enumerate}
[(i)]
\item the map $x\mapsto k(x, A)$ is measurable for every $A\in \bo(X)$;
\item $k(x,\cdot)$ is a signed measure for every $x\in X$;
\item $\sup_x |k|(x, X)<\infty$ where $|k|(x, \cdot)$ refers to the total variation of $k(x, \cdot)$.
\end{enumerate}
A kernel $k$ is called \emph{positive} (\emph{Markovian}) if every measure $k(x, \cdot)$ is a positive measure (a probability measure).

Given a kernel $k$ on $X$, we can define a bounded linear operator $K$ on $B_b(X)$ by setting
\begin{equation}
\label{eq.assop}
[Kf](x) \coloneqq \int_X f(y) k(x, dy) \quad \mbox{for } f\in B_b(X)\mbox{ and } x\in X.
\end{equation}
We call an operator of this form a \emph{kernel operator} and $k$ the kernel \emph{associated} to $K$ and, conversely, $K$ the operator
associated to $k$. 

It may happen, that in \eqref{eq.assop} the function $Kf$ is continuous, whenever $f\in C_b(X)$. In fact, this is the case if and
only if the map $x\mapsto k(x, \cdot)$ is continuous with respect to the weak topology $\sigma(C_b(X), \cM(X))$. In this case,
$K$ defines a bounded linear operator on $C_b(X)$. We call such an operator \emph{kernel operator on $C_b(X)$}. We note that
any kernel operator on $C_b(X)$ can be extended to a kernel operator on $B_b(X)$ (by merely using \eqref{eq.assop} for general
$f\in B_b(X)$ instead of $f\in C_b(X)$). We will not distinguish between kernel operators on $C_b(X)$ and their (unique) extension
to $B_b(X)$.\smallskip

We now treat the cases $C_b(X)$ and $B_b(X)$ simultaneously and let $V$ denote either of these spaces. 
It turns out that a bounded linear operator
on $V$ is a kernel operator if and only if it is continuous with respect to the weak topology $\sigma\coloneqq \sigma(V, \cM(X))$, see
e.g.\ \cite[Proposition 3.5]{kunze11}. We write $\cL(V, \sigma)$ for the space of $\sigma$-continuous linear operators on $V$.
For sequences, $\sigma$-convergence is nothing else than \emph{bp-convergence} (bp is 
short for \emph{b}ounded and \emph{p}ointwise). Indeed, by dominated convergence, bp-convergence implies $\sigma$-convergence
and the converse follows by testing against Dirac measures and using the uniform boundedness principle.

\begin{lem}\label{l.kernelop}
Let $V\in \{C_b(X), B_b(X)\}$ and $K \in \cL(V)$. The following are equivalent:
\begin{enumerate}
[\upshape (i)]
\item $K$ is a kernel operator.
\item $K$ is $\sigma$-continuous, i.e.\ $K\in \cL(V, \sigma)$.
\item $K$ is \emph{bp-continuous}, i.e.\ if $(f_n)_{n\in \N} \subset V$ is a sequences that bp-converges to $f\in V$, then 
$Kf_n$ bp-converges to $Kf$.
\item the norm-adjoint $K^*$ leaves the space $\cM(X)$ invariant.
\end{enumerate}
In this case, the $\sigma$-adjoint of $K$ is given by $K'\coloneqq K^*|_{\cM(X)}$.
\end{lem}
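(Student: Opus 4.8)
The plan is to prove the cyclic chain of implications (i) $\Rightarrow$ (iii) $\Rightarrow$ (iv) $\Rightarrow$ (ii) $\Rightarrow$ (i), after first noting the purely formal equivalence between (ii) and the definition of $\sigma$ for sequences, and then to identify the $\sigma$-adjoint at the end. Throughout, $V\in\{C_b(X),B_b(X)\}$, and we repeatedly use that $\cM(X)$ sits isometrically in $V^*$.

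First I would record the sequential description of $\sigma$: since bp-convergence of $(f_n)$ to $f$ forces $\langle f_n,\mu\rangle\to\langle f,\mu\rangle$ for every $\mu\in\cM(X)$ by dominated convergence, and conversely $\sigma$-convergence tested against Dirac measures gives pointwise convergence while the uniform boundedness principle (applied in $V^*$, using that point evaluations lie in $\cM(X)$) gives uniform boundedness, a sequence $\sigma$-converges if and only if it bp-converges. This immediately yields the equivalence (ii) $\Leftrightarrow$ (iii) for \emph{sequentially} $\sigma$-continuous operators; for linear $K$ on a space where $\sigma$ is metrizable on bounded sets this is the same as $\sigma$-continuity, so I will treat (ii) and (iii) as interchangeable and concentrate on linking them to (i) and (iv).

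For (i) $\Rightarrow$ (iii): if $K$ is the kernel operator associated to a kernel $k$ and $f_n$ bp-converges to $f$ with $\sup_n\|f_n\|_\infty=:M<\infty$, then for each fixed $x$, dominated convergence with respect to the finite measure $|k|(x,\cdot)$ (whose total mass is bounded by $\sup_x|k|(x,X)<\infty$) gives $[Kf_n](x)=\int f_n\,dk(x,\cdot)\to\int f\,dk(x,\cdot)=[Kf](x)$, and $\|Kf_n\|_\infty\le M\sup_x|k|(x,X)$, so $Kf_n$ bp-converges to $Kf$. For (iii) $\Rightarrow$ (iv): $K$ $\sigma$-continuous means $K^*$ maps $\sigma$-continuous functionals to $\sigma$-continuous functionals; since $\cM(X)$ consists precisely of the $\sigma(V,\cM(X))$-continuous linear functionals on $V$ (this is the standard fact about the weak topology of a dual pair), $K^*\cM(X)\subset\cM(X)$. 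For (iv) $\Rightarrow$ (ii): if $K^*$ leaves $\cM(X)$ invariant, then for $\mu\in\cM(X)$ the map $f\mapsto\langle Kf,\mu\rangle=\langle f,K^*\mu\rangle$ is $\sigma$-continuous since $K^*\mu\in\cM(X)$, and as $\mu$ ranges over $\cM(X)$ this says exactly that $K$ is $\sigma$-$\sigma$-continuous, i.e.\ $K\in\cL(V,\sigma)$. Finally (ii) $\Rightarrow$ (i): given a $\sigma$-continuous $K$, define $k(x,A):=[K\one_A](x)$ when $V=B_b(X)$; measurability in $x$ is clear, $A\mapsto k(x,A)$ is a signed measure because $\one_{A_n}\to\one_{\bigcup A_n}$ bp for disjoint $A_n$ so $\sigma$-additivity follows from bp-continuity, and $\sup_x|k|(x,X)\le\|K\|$ by a Hahn-decomposition / supremum-over-partitions estimate; one then checks $Kf=\int f\,dk(\cdot,dy)$ first for simple $f$ and then by bp-density of simple functions. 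When $V=C_b(X)$, one instead uses that $K^*$ maps $\cM(X)$ into $\cM(X)$ (already available via (iv)) to transport $K$ to a kernel operator on $B_b(X)$ by $K^*$-duality, i.e.\ $[Kf](x):=\langle f,K^*\delta_x\rangle$, and verifies the kernel axioms and the continuity $x\mapsto K^*\delta_x$ in $\sigma(C_b(X),\cM(X))$ from $\sigma$-continuity of $K$.

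The last assertion, $K'=K^*|_{\cM(X)}$, is then essentially a definition-chasing step: the $\sigma$-adjoint $K'$ is the unique operator on $\cM(X)$ with $\langle Kf,\mu\rangle=\langle f,K'\mu\rangle$ for all $f\in V$, $\mu\in\cM(X)$, and since this identity holds verbatim with $K^*\mu$ in place of $K'\mu$ and $K^*\mu\in\cM(X)$ by (iv), uniqueness gives $K'=K^*|_{\cM(X)}$. The main obstacle I anticipate is the careful bookkeeping in (ii) $\Rightarrow$ (i) in the $B_b(X)$ case: verifying $\sigma$-additivity and the uniform total-variation bound of $x\mapsto k(x,\cdot)$ from mere sequential $\sigma$-continuity, and then the measure-theoretic induction extending $Kf=\int f\,dk$ from indicators to all of $B_b(X)$ — together with the parallel but slightly different argument needed in the $C_b(X)$ case, where one cannot plug in indicators and must route everything through $K^*\delta_x$. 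Everything else is routine dominated-convergence and duality bookkeeping.
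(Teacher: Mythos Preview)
Your approach is genuinely different from the paper's, which proves the lemma entirely by citation to \cite{kunze11}, \cite{bk24} and \cite{akk16}; a direct argument is worthwhile. Most of your steps are fine: (i) $\Rightarrow$ (iii) by dominated convergence, (iv) $\Rightarrow$ (ii) by duality, (ii) $\Rightarrow$ (i) via $k(x,A)=[K\one_A](x)$ in the $B_b$ case and $k(x,\cdot)=K^*\delta_x$ in the $C_b$ case, and the identification $K'=K^*|_{\cM(X)}$, are all correct.

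The gap is in your passage from (iii) to (iv). As written, that step actually uses $\sigma$-continuity of $K$, i.e.\ condition (ii), and you justify treating (ii) and (iii) as interchangeable by asserting that $\sigma$ is metrizable on bounded sets. That assertion is not established and is in general false: for Polish $X$ with $\cM(X)$ non-separable (e.g.\ $X=[0,1]$), the weak topology $\sigma(V,\cM(X))$ is not metrizable on the unit ball, and in any case metrizability on bounded sets would not by itself upgrade sequential $\sigma$-continuity of a linear operator to full $\sigma$-continuity. The clean fix is to prove (iii) $\Rightarrow$ (iv) directly, using that $\cM(X)$ is exactly the set of bp-sequentially continuous functionals on $V$: for $V=B_b(X)$ this is the elementary fact that a bounded finitely additive set function is countably additive iff it respects bp-limits of indicators, so for $\mu\in\cM(X)$ the functional $f\mapsto\langle Kf,\mu\rangle$ is bp-continuous by (iii) and dominated convergence, hence $K^*\mu\in\cM(X)$; for $V=C_b(X)$ on a Polish space one uses the Daniell--Stone type characterization that $\varphi\in C_b(X)^*$ lies in $\cM(X)$ iff $\varphi(f_n)\to 0$ whenever $(f_n)\subset C_b(X)$ is uniformly bounded with $f_n\downarrow 0$, and the same composition argument applies. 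With this repair your cyclic chain closes.
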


\begin{proof}
The equivalence of (ii) and (iv) (as well as the addendum) follow from \cite[Proposition 3.1]{kunze11} 
applied to the norming dual pair $(V, \cM(X))$. As already mentioned, the equivalence of (ii) and (i) is \cite[Proposition 3.5]{kunze11}.
For the equivalence of (i) and (iii), see \cite[Lemma A.1]{bk24} in the case $V=C_b(X)$ and \cite[Lemma 5.1]{akk16} in the case
$V=B_b(X)$.
\end{proof}

\begin{rem}
    In the case $V=B_b(X)$, the statements of Lemma \ref{l.kernelop} remain valid also for general measurable spaces.
\end{rem}

Let us also briefly recall the notion of \emph{bp-closure}, see \cite[Section 3.4]{ek}. A subset $M\subset B_b(X)$
is called \emph{bp-closed} if whenever $(f_n)_{n\in\N}\subset M$ bp-converges to $f\in B_b(X)$, it follows that
$f\in M$. The \emph{bp-closure} of a set $S\subset B_b(X)$ is the smallest bp-closed set that contains $S$. 
If the bp-closue of $S$ is all of $B_b(X)$, we say that $S$ is \emph{bp-dense} in $B_b(X)$.
By \cite[Proposition 3.4.2]{ek}, $C_b(X)$ is bp-dense in $B_b(X)$. 

These notions help overcome a weakness of working with the weak topology $\sigma$. As $C_b(X)$
is $\sigma$-dense in $B_b(X)$, for every $f\in B_b(X)$ there is a net $(f_\alpha)\subset C_b(X)$ converging to $f$ with
respect to $\sigma$. However, there need not be a sequence $(f_n)_{n\in \N}\subset C_b(X)$ that bp-converges
(i.e.\ $\sigma$-converges) to $f$. Using bp-density and bp-closures allows us to work only with sequences
nevertheless.

\subsection{The full generator of a transition semigroup}\label{s.a2}

\begin{defn}\label{d.transsg}
A \emph{transition semigroup} on $X$ is a family of Markovian kernel operators $T=(T(t))_{t\geq 0}\subset \cL(B_b(X), \sigma)$
such that
\begin{enumerate}
[(i)]
\item $T(t+s) = T(t)T(s)$ for all $t,s\geq 0$ and $T(0) = I$;
\item For every $f\in X$ the map $(t,x) \mapsto [T(t)f](x)$ is jointly measurable.
\end{enumerate}
If the same is true for a family $T=(T(t))_{t\in\R}\subset \cL(B_b(X), \sigma)$ with index set $\R$, we call $T$ a
\emph{transition group}.
\end{defn}

If $T$ is a transition semigroup, then for every $f\in B_b(X)$ and $\mu\in \cM(X)$ the map
$t\mapsto \langle T(t)f, \mu\rangle$ is measurable and, given $\lambda > 0$,
there exists an operator $R(\lambda) \in \cL(B_b(X), \sigma)$ such that
\begin{equation}
\label{eq.resolvent}
\langle R(\lambda)f, \mu\rangle = \int_0^\infty  e^{-\lambda t}\langle T(t)f, \mu \rangle dt \quad  \mbox{for } f\in B_b(X)\mbox{ and } \mu \in \cM(X).
\end{equation}
This shows that, in the terms of \cite[Definition 5.1]{kunze11}, a transition semigroup is an integrable semigroup on 
the norming dual pair $(B_b(X), \cM(X))$. By \cite[Proposition 5.2]{kunze11}, the family $(R(\lambda))_{\lambda >0}$ is a 
\emph{pseudo-resolvent}, i.e.\ it satisfies the resolvent identity
\[
R(\lambda) - R(\mu) = (\mu-\lambda)R(\lambda)R(\mu)\quad \mbox{for } \lambda, \mu >0.
\]
For more information on pseudo-resolvents we refer to \cite[Section III.4.a]{en00}.

In general, the operators $(R(\lambda))_{\Re\lambda>0}$ are not injective and hence are not
the resolvent of a single-valued operator. However, there is a multi-valued operator $\Lf$ (called the  \emph{full generator}) 
such that $(\lambda - \Lf)^{-1} = R(\lambda)$ for $\lambda >0$, see \cite[Appendix A]{haase} (also for more
information about multi-valued operators). The \emph{domain} of $\Lf$ is given by
$D(\Lf) \coloneqq \{ f\in B_b(X) : (f,g) \in \Lf \mbox{ for some } g\in B_b(X)\}$.

\begin{prop}\label{p.awf}
Let $T$ be a transition semigroup with full generator $\Lf$ and $f,g\in B_b(X)$. The following
are equivalent:
\begin{enumerate}
[\upshape (i)]
\item $(f,g)\in \Lf$.
\item For all $t>0$ and $x\in X$ it is
\[
T(t)f(x) - f(x) = \int_0^t (T(s)g)(x)\, ds.
\]
\item For all $x\in X$ the map $t\mapsto T(t)f(x)$ belongs to $W^{1,\infty}_{\mathrm{loc}}([0,\infty))$ and
$\partial_t T(t)f(x)$ $= T(t)g(x)$ for almost every $t$.
\end{enumerate}
\end{prop}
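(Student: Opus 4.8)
The plan is to prove the three-way equivalence by establishing the cycle (i) $\Rightarrow$ (ii) $\Rightarrow$ (iii) $\Rightarrow$ (i), exploiting the fact that the resolvent operators $R(\lambda)$ are defined precisely through the Laplace transform in \eqref{eq.resolvent} and that, by the definition of the full generator, $(f,g)\in\Lf$ means exactly $\lambda f - g \in \mathrm{ran}(\lambda - \Lf)$ with $f = R(\lambda)(\lambda f - g)$.

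\emph{Proof of (i) $\Rightarrow$ (ii).} Suppose $(f,g)\in\Lf$, so that $f = R(\lambda)(\lambda f - g)$ for every $\lambda>0$. First I would record the identity, valid for every $t\geq 0$ and every $\mu\in\cM(X)$,
\[
\langle T(t)f, \mu\rangle - \langle f,\mu\rangle = \int_0^t \langle T(s)g, \mu\rangle\, ds,
\]
and observe that once this holds for all $\mu$, testing against Dirac measures $\delta_x$ yields (ii) pointwise (the inner integral is a genuine Lebesgue integral of the bounded measurable function $s\mapsto (T(s)g)(x)$, and Fubini against $\mu$ is legitimate because $s \mapsto \|T(s)g\|_\infty$ is bounded). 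To get the displayed identity, I would apply the Laplace transform: for $\lambda>0$,
\[
\int_0^\infty e^{-\lambda t}\Big(\langle T(t)f,\mu\rangle - \langle f,\mu\rangle\Big)\,dt = \langle R(\lambda)f,\mu\rangle - \tfrac1\lambda\langle f,\mu\rangle,
\]
while the Laplace transform of $t\mapsto \int_0^t\langle T(s)g,\mu\rangle\,ds$ equals $\tfrac1\lambda\langle R(\lambda)g,\mu\rangle$ by the standard rule for transforms of primitives. Using $f = R(\lambda)(\lambda f - g) = \lambda R(\lambda)f - R(\lambda)g$, i.e.\ $R(\lambda)f - \tfrac1\lambda f = \tfrac1\lambda R(\lambda)g$, the two transforms agree for all $\lambda>0$. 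Since both sides of the displayed identity (as functions of $t$) are continuous in $t$ — the left side because $t\mapsto\langle T(t)f,\mu\rangle$ is bounded measurable hence its failure of continuity is invisible after we note the right side is an absolutely continuous function, so I would instead argue the right side is continuous and conclude equality a.e.\ first — uniqueness of Laplace transforms gives equality for a.e.\ $t$, and then for all $t\geq 0$ by the continuity of the right-hand side together with right-continuity considerations. The cleanest route is: both sides have the same Laplace transform, both are locally integrable, hence they agree a.e.; the right side is continuous, so redefining is unnecessary and we may even deduce $t\mapsto\langle T(t)f,\mu\rangle$ has a continuous version, but since we only need the pointwise integral identity a.e.\ in $t$ suffices to then upgrade — actually we want it for \emph{all} $t$, so I would note that the map $t \mapsto \int_0^t (T(s)g)(x)\,ds$ is continuous and argue that $\langle T(t)f, \delta_x\rangle$ must coincide with $f(x) + \int_0^t(T(s)g)(x)\,ds$ for all $t$ by a separate small argument using the semigroup law and $T(0)=I$.

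\emph{Proof of (ii) $\Rightarrow$ (iii).} This is essentially immediate: if (ii) holds, then for fixed $x$ the function $t\mapsto T(t)f(x)$ equals $f(x)$ plus the primitive of the bounded measurable function $s\mapsto T(s)g(x)$, hence it is locally Lipschitz, lies in $W^{1,\infty}_{\mathrm{loc}}([0,\infty))$, and its weak (a.e.) derivative is $T(t)g(x)$ by the Lebesgue differentiation theorem.

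\emph{Proof of (iii) $\Rightarrow$ (i).} Starting from (iii), for each $x$ the fundamental theorem of calculus for Sobolev functions gives back exactly identity (ii) pointwise; integrating $\int_0^\infty e^{-\lambda t}(\cdot)\,dt$ against an arbitrary $\mu\in\cM(X)$ and reversing the computation from the first step shows $R(\lambda)(\lambda f - g) = f$ for all $\lambda>0$, which is precisely the statement $(f,g)\in\Lf$.

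The main obstacle is purely technical rather than conceptual: justifying the Fubini interchanges and making sure the Laplace-transform identities hold for \emph{every} $t\geq0$ and not merely almost every $t$, since $t\mapsto\langle T(t)f,\mu\rangle$ is only assumed jointly measurable (not continuous) for a general transition semigroup. I would handle this by noting that the right-hand side of (ii) is continuous in $t$, so once equality holds for a.e.\ $t$, the left-hand side $\langle T(t)f,\mu\rangle$ has a continuous version; then a direct argument using $T(t+s) = T(t)T(s)$ and dominated convergence pins down the value at every $t$. Everything else (boundedness of orbits, measurability of $s\mapsto T(s)g$, the primitive rule for Laplace transforms) is standard and can be cited from the integrable-semigroup framework of \cite{kunze11} recalled above.
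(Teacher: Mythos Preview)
The paper itself does not prove this proposition: it cites \cite[Proposition 5.7]{kunze11} for the equivalence (i)$\Leftrightarrow$(ii) and remarks that (ii)$\Leftrightarrow$(iii) is the fundamental theorem of calculus for Sobolev functions. Your treatment of (ii)$\Rightarrow$(iii) and (iii)$\Rightarrow$(i) is fine and matches that second remark.

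For (i)$\Rightarrow$(ii), your Laplace-uniqueness strategy is sound in principle, and you correctly flag the only real difficulty: uniqueness gives the identity for almost every $t$, with an exceptional null set that depends on the test measure $\mu$ (or on the point $x$). Your proposed repair ``use $T(t+s)=T(t)T(s)$ and dominated convergence'' is where the argument is thin: when you write $\langle T(t_0)f,\mu\rangle=\langle T(s)f,T(t_0-s)'\mu\rangle$ and try to invoke the a.e.\ identity at time $s$ for the measure $T(t_0-s)'\mu$, the relevant null set now depends on $s$ itself, and it is not clear how to find an admissible $s$. This can be made to work, but it is more delicate than you suggest. A much cleaner route, and almost certainly the one behind the cited result, avoids Laplace uniqueness altogether: write $h=\lambda f-g$ so $f=R(\lambda)h$, and use that $T(t)$ is a kernel operator together with Fubini to obtain, for \emph{every} $t\geq 0$ and $x\in X$,
\[
T(t)f(x)=\int_0^\infty e^{-\lambda s}\,T(t+s)h(x)\,ds
= e^{\lambda t}\int_t^\infty e^{-\lambda r}\,T(r)h(x)\,dr.
\]
This displays $t\mapsto T(t)f(x)$ explicitly as an absolutely continuous function; differentiating gives $\partial_t T(t)f(x)=\lambda T(t)f(x)-T(t)h(x)=T(t)g(x)$ a.e., and integrating back yields (ii) for all $t$ with no exceptional set to chase.
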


\begin{proof}
The equivalence of (i) and (ii) is proved in \cite[Proposition 5.7]{kunze11}. Note that this shows that our terminology
is consistent with that used in the book of Ethier--Kurtz \cite[Section 1.5]{ek} who use (ii) for the definition. In view of the
fundamental theorem of calculus for Sobolev functions, (iii) is merely a reformulation of (ii).
\end{proof}

\begin{lem}
\label{l.fullgenprop}
Let $T$ be a transition semigroup with full generator $\Lf$. The following hold true:
\begin{enumerate}
[\upshape (a)]
\item $\Lf$ is bp-closed, i.e.\ if  $((f_n, g_n))_{n\in \N}\subset \Lf$ bp-converges to $(f,g) \in B_b(X)\times B_b(X)$,
then $(f,g)\in \Lf$;
\item If $D(\Lf)$ is bp-dense in $B_b(X)$, then $T$ is uniquely determined by $\Lf$.
\end{enumerate}
\end{lem}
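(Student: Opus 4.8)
The plan is to handle the two statements separately, in both cases relying on the integral characterization of the full generator in Proposition \ref{p.awf}.

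For (a), I would start from a sequence $((f_n,g_n))_{n\in\N}\subset\Lf$ that bp-converges to $(f,g)$, so that $(f_n)$ and $(g_n)$ are uniformly bounded and converge pointwise to $f$ and $g$. By Proposition \ref{p.awf}(ii), for each $n$, $t>0$ and $x\in X$,
\[
T(t)f_n(x)-f_n(x)=\int_0^t T(s)g_n(x)\,ds .
\]
Each $T(t)$ is a kernel operator and hence bp-continuous by Lemma \ref{l.kernelop}, so the left-hand side converges to $T(t)f(x)-f(x)$. On the right, $T(s)g_n(x)\to T(s)g(x)$ for every $s$, while $|T(s)g_n(x)|\le\sup_n\|g_n\|_\infty<\infty$ uniformly in $n$ and $s$; dominated convergence then gives convergence of the integrals. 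Passing to the limit and applying Proposition \ref{p.awf} once more yields $(f,g)\in\Lf$.

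For (b), let $S=(S(t))_{t\ge0}$ be a second transition semigroup whose full generator equals $\Lf$. Since $(\lambda-\Lf)^{-1}=R(\lambda)$ for $\lambda>0$, the semigroups $S$ and $T$ share the same resolvent family. For fixed $f\in B_b(X)$ and $\mu\in\cM(X)$, the bounded measurable functions $t\mapsto\langle T(t)f,\mu\rangle$ and $t\mapsto\langle S(t)f,\mu\rangle$ then have the same Laplace transform $\lambda\mapsto\langle R(\lambda)f,\mu\rangle$ by \eqref{eq.resolvent}, so by injectivity of the Laplace transform they agree for almost every $t\ge0$. Choosing $\mu=\delta_x$, I obtain $T(t)f(x)=S(t)f(x)$ for almost every $t$, for every $f\in B_b(X)$ and $x\in X$.

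It remains to upgrade this to all $t$ and all $f$, and this is where the bp-density hypothesis is used. If $f\in D(\Lf)$ with $(f,g)\in\Lf$, Proposition \ref{p.awf}(ii) shows that $t\mapsto T(t)f(x)=f(x)+\int_0^t T(s)g(x)\,ds$ is continuous, and similarly for $S$; two continuous functions equal almost everywhere are equal everywhere, so $T(t)f=S(t)f$ for all $t\ge0$ whenever $f\in D(\Lf)$. Finally, $M\coloneqq\{f\in B_b(X):T(t)f=S(t)f\text{ for all }t\ge0\}$ is a linear subspace which is bp-closed, because each $T(t)$ and each $S(t)$ is bp-continuous; as it contains the bp-dense set $D(\Lf)$, it is all of $B_b(X)$, i.e.\ $S=T$. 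The only genuinely delicate point is this last passage from ``almost every $t$'' to ``every $t$'': the continuity of orbits of functions in $D(\Lf)$ together with the bp-closedness argument are precisely what make it work, while everything else reduces to Proposition \ref{p.awf} and the bp-continuity of kernel operators.
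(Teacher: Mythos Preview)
Your proof is correct and follows essentially the same route as the paper: part (a) is identical, and for (b) both you and the paper use the Laplace transform to obtain equality of orbits for almost every $t$, invoke the continuity of orbits of functions in $D(\Lf)$ (via Proposition \ref{p.awf}) to upgrade to all $t$, and then conclude by a bp-closedness argument using the bp-density of $D(\Lf)$. The only cosmetic difference is that the paper fixes $t$ and shows $\{f:T_1(t)f=T_2(t)f\}$ is bp-closed, whereas you take the intersection over all $t$; both work.
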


\begin{proof}
(a). Let $((f_n, g_n))_{n\in \N} \subset \Lf$. By Proposition \ref{p.awf}, this is equivalent to
\begin{equation}
\label{eq.awfapprox}
T(t)f_n(x) - f_n(x) = \int_0^t (T(s)g_n)(x)\, ds
\end{equation}
for all $t>0$ and $x\in X$. If $(f_n, g_n)$ bp-converges to $(f,g)$ then the bp-continuity of the operators $T(t)$ imply that the left 
hand side of \eqref{eq.awfapprox} converges to $T(t)f(x) - f(x)$ and the right-hand side to 
$\int_0^t T(s)g(x)\, ds$. Using Proposition \ref{p.awf} again, (a) follows.\smallskip

(b). It follows from Proposition \ref{p.awf} that $\|T(t)f - T(s)f\|_\infty \leq |t-s|\|g\|_\infty$ for $(f,g)\in \Lf$ and $t,s>0$. 
This implies that, for $f\in D(\Lf)$ the orbit $t\mapsto T(t)f$ is $\|\cdot\|_\infty$-continuous and thus in particular pointwise continuous.
Now let $T_1$ and $T_2$ be transition semigroups with the same full generator $\Lf$ and fix $f\in D(\Lf)$. It follows that
\[
\int_0^\infty e^{-\lambda t}(T_1(t)f)(x)\, dt = \int_0^\infty e^{-\lambda t}(T_2(t)f)(x)\, dt = (\lambda - \Lf)^{-1}f(x)
\]
for all $\lambda >0$ and $x\in X$. By the uniqueness theorem for the Laplace transform \cite[Theorem 1.7.3]{abhn},
$(T_1(t)f)(x) = (T_2(t)f)(x)$ for almost every $t>0$. By continuity, $T_1(t)f(x) = T_2(t)f(x)$ for all $t>0$. Thus,
for every $t>0$, the operators $T_1(t)$ and $T_2(t)$ coincide on the set $D(\Lf)$. Now fix $t>0$. 
As the operators $T_1(t), T_2(t)$ are bp-continuous,
the set of $f$ for which $T_1(t)f= T_2(t)f$ is bp-closed. Since $D(\Lf)$ is bp-dense in $B_b(X)$
, $T_1(t) = T_2(t)$ follows.
\end{proof}

Following \cite{kk22}, we say that a subset $C\subset \Lf$ is a \emph{bp-core}, if $\Lf$ is the bp-closure of $C$. 
We next give a criterion for a bp-core.

\begin{lem}\label{l.bpcore}
Let $(T(t))_{t\geq 0}$ be a transition semigroup with full generator $\Lf$. Moreover, let
$C\subset \Lf$ be a subspace with the following properties:
\begin{enumerate}
[\upshape (i)]
\item If $(f,g)\in C$ then $(T(t)f, T(t)g)\in C$ for all $t>0$;
\item The set $S \coloneqq \{ f\in B_b(X) :  (f,g)\in C \mbox{ for some } g\in B_b(X)\}$ is bp-dense in $B_b(X)$;
\item For every $x\in X$ and $g\in B_b(X)$ such that $(f,g)\in C$ for some $f\in B_b(X)$, the map
$t\mapsto T(t)g(x)$ is continuous at almost every point in $(0,\infty)$.
\end{enumerate}
Then $C$ is a bp-core for $\Lf$.
\end{lem}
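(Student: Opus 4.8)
The plan is to show that the bp-closure $\overline{C}$ of $C$, taken inside $B_b(X)\times B_b(X)$, coincides with $\Lf$. One inclusion is immediate: $\Lf$ is bp-closed by Lemma \ref{l.fullgenprop}(a) and contains $C$, hence $\overline{C}\subset\Lf$. For the converse, I fix $\lambda>0$ and work with the resolvent $R(\lambda)$ from \eqref{eq.resolvent}, which, upon testing \eqref{eq.resolvent} against Dirac measures, satisfies $R(\lambda)h(x)=\int_0^\infty e^{-\lambda t}T(t)h(x)\,dt$ for all $h\in B_b(X)$ and $x\in X$. I also recall that $(\lambda-\Lf)^{-1}=R(\lambda)$ means precisely $(R(\lambda)h,\lambda R(\lambda)h-h)\in\Lf$ for every $h\in B_b(X)$, while $f=R(\lambda)(\lambda f-g)$ for every $(f,g)\in\Lf$.

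The heart of the argument is the claim that $(R(\lambda)f,R(\lambda)g)\in\overline{C}$ whenever $(f,g)\in C$, which I would prove by a Riemann-sum approximation of $R(\lambda)$. By hypothesis (i) one has $(T(t)f,T(t)g)\in C$ for all $t\ge0$, so, $C$ being a subspace, for all $N,n\in\N$ the pair $\big(\frac1n\sum_{j=0}^{Nn-1}e^{-\lambda j/n}T(\frac jn)f,\ \frac1n\sum_{j=0}^{Nn-1}e^{-\lambda j/n}T(\frac jn)g\big)$ lies in $C$. As $n\to\infty$ its two components converge pointwise to $\int_0^Ne^{-\lambda t}T(t)f(x)\,dt$ and $\int_0^Ne^{-\lambda t}T(t)g(x)\,dt$: indeed $t\mapsto T(t)f(x)=f(x)+\int_0^tT(s)g(x)\,ds$ is continuous by Proposition \ref{p.awf}(ii), and $t\mapsto T(t)g(x)$ is bounded and continuous at a.e.\ $t$ by hypothesis (iii), so both integrands $t\mapsto e^{-\lambda t}T(t)\cdot(x)$ are Riemann integrable on $[0,N]$ (Lebesgue's criterion) and their left-endpoint Riemann sums converge. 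These sums are uniformly bounded by $2\|f\|_\infty/\lambda$, resp.\ $2\|g\|_\infty/\lambda$, once $n$ is large, since $\frac1n\sum_je^{-\lambda j/n}\le\frac{1/n}{1-e^{-\lambda/n}}\to\frac1\lambda$, and the bound is independent of $N$. Hence by bp-closedness of $\overline{C}$ the limit pair $\big(\int_0^Ne^{-\lambda t}T(t)f\,dt,\int_0^Ne^{-\lambda t}T(t)g\,dt\big)$ belongs to $\overline{C}$, and letting $N\to\infty$ (uniformly bounded, dominated convergence) yields $(R(\lambda)f,R(\lambda)g)\in\overline{C}$.

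To conclude I set $\Sigma:=\{h\in B_b(X):(R(\lambda)h,\lambda R(\lambda)h-h)\in\overline{C}\}$. For $(f,g)\in C\subset\Lf$ one has $R(\lambda)g=\lambda R(\lambda)f-f$, because $f=R(\lambda)(\lambda f-g)$, so the previous step gives $S\subset\Sigma$. Moreover $\Sigma$ is bp-closed: if $h_n\to h$ bp with $h_n\in\Sigma$, then $R(\lambda)h_n\to R(\lambda)h$ and $\lambda R(\lambda)h_n-h_n\to\lambda R(\lambda)h-h$ bp by bp-continuity of $R(\lambda)$ (Lemma \ref{l.kernelop}), and bp-closedness of $\overline{C}$ forces $h\in\Sigma$. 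By hypothesis (ii) the set $S$ is bp-dense, so $\Sigma=B_b(X)$. Finally, for an arbitrary $(u,v)\in\Lf$ put $h:=\lambda u-v$; then $R(\lambda)h=u$ and $\lambda R(\lambda)h-h=v$, and $h\in\Sigma$ yields $(u,v)\in\overline{C}$. Hence $\Lf\subset\overline{C}$, which finishes the proof.

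I expect the Riemann-sum step to be the main obstacle: one must carefully justify the pointwise convergence of the sums, keeping in mind that the orbit map $t\mapsto T(t)g(x)$ is only Riemann integrable (not continuous) for a general partner $g$, and one needs a boundedness estimate for the partial sums that is uniform in the truncation parameter $N$, so that both successive bp-limits — first $n\to\infty$, then $N\to\infty$ — are legitimate.
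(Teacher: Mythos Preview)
Your proof is correct and follows essentially the same strategy as the paper's: approximate the truncated Laplace transforms $\int_0^N e^{-\lambda t}T(t)f\,dt$ and $\int_0^N e^{-\lambda t}T(t)g\,dt$ by Riemann sums lying in $C$ (using (i), continuity of $t\mapsto T(t)f(x)$ from Proposition~\ref{p.awf}, and Riemann integrability of $t\mapsto T(t)g(x)$ from (iii)), pass to the limit $N\to\infty$, use the resolvent identity $R(\lambda)g=\lambda R(\lambda)f-f$ to rewrite the resulting pair, and finally extend from $S$ to all of $B_b(X)$ via bp-density. The only cosmetic difference is that the paper applies the integration-by-parts identity $\int_0^{t_0}e^{-\lambda t}T(t)g\,dt=\lambda\int_0^{t_0}e^{-\lambda t}T(t)f\,dt-f+e^{-\lambda t_0}T(t_0)f$ at the truncated level before letting $t_0\to\infty$, whereas you first take the full limit and then invoke the algebraic relation; both routes are equivalent.
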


\begin{proof}
We denote the bp-closure of $C$ by $M$. Since $C$ is a vector space, so is $M$. Moreover, $\Lf$ is 
bp-closed by Lemma \ref{l.fullgenprop}(a), so $M \subset\Lf$. 
We thus only need to prove that $\Lf\subset M$.\smallskip

Fix $(f,g)\in C$ and $t_0>0$. We claim that for every $\lambda >0$
\[
\Big(\int_0^{t_0} e^{-\lambda t}T(t)f\, dt , \int_0^{t_0}e^{-\lambda t} T(t)g\, dt \Big) \in M.
\]
To see this, let $x\in X$ and define the functions $\varphi(t) \coloneqq \one_{(0,t_0)}(t) e^{-\lambda t} [T(t)f](x)$ and $\varphi_n(t) \coloneqq
\sum_{k=1}^n \one_{[\frac{k-1}{n}t_0, \frac{k}{n}t_0)}(t) e^{-\lambda\frac{k-1}{n}}\big[T(\frac{k-1}{n}t_0)g\big](x)$ and the functions
$\psi$ and $\psi_n$ similarly, replacing $f$ with $g$. It is a consequence of condition (i) and the fact that $M$ is a vector space
that $(\int_0^{t_0} \varphi_n(t)\, dt, \int_0^{t_0} \psi_n(t)\, dt) \in M$. Since $f\in D(\Lf)$, the orbit $t\mapsto T(t)f$ is $\|\cdot\|$-continuous
(see the proof of Lemma \ref{l.fullgenprop}(b))
and hence pointwise continuous. It follows that $\varphi$ is Riemann-integrable and $\int_0^{t_0}\varphi_n(t)\, dt$, which is nothing else
than Riemannian sums for the integral of $\varphi$ converge to $\int_0^{t_0} \varphi(t)\, dt$. It is a consequence of condition (iii), that
also the function $\psi$ is Riemann integrable and thus the integrals of $\psi_n$ converge to that of $\psi$, proving the claim.\smallskip

Next note that since $(f,g) \in \Lf$, we have
\[
\int_0^{t_0} e^{-\lambda t}T(t)g\, dt = \lambda \int_0^{t_0} e^{-\lambda t}T(t)f\, dt - f + e^{-\lambda t_0} T(t_0)f,
\]
see, e.g., \cite[Proof of Proposition 5.7]{kunze11}.
For $t_0\to \infty$, the last term tends (in norm) to $0$ and it follows that
\[
(R(\lambda, \Lf) f, \lambda R(\lambda, \Lf)f - f) \in M
\]
for every $f\in S$. Since $S$ is bp-dense in $B_b(X)$ by (ii), the same is true for arbitrary $f\in B_b(X)$. Now let $(f_0, g_0) \in \Lf$. 
Note that $R(\lambda,\Lf)(\lambda f_0-g_0) = f_0$. Thus, applying the above to $f= \lambda f_0 - g_0$, it follows that
$(f_0, g_0)\in M$, finishing the proof.
\end{proof}

Sometimes, it is more convenient to work with single-valued operators. To that end, we introduce the following terminology:

\begin{defn}\label{def.slice}
Let $\cL$ be a (potentially) multi-valued operator on $B_b(X)$. A \emph{slice of} $\cL$ is a single-valued operator $L_0: D(L_0) \to B_b(X)$
such that $(f, L_0f) \in \cL$ for every $f\in D(L)$.
\end{defn}

From Lemma \ref{l.bpcore}, we immediately obtain:

\begin{cor}\label{c.slice}
Let $(T(t))_{t\geq 0}$ be a transition semigroup with full generator $\Lf$ and $L_0$ be a slice of $\Lf$ such that:
\begin{enumerate}
[\upshape (i)]
\item For every $t >0$ and $f\in D(L)$ it is $T(t)f\in D(L_0)$ and $L_0T(t)f = T(t)L_0f$;
\item $D(L_0)$ is bp-dense in $B_b(X)$;
\item for every $f\in D(L_0)$and $x\in X$ the map $t\mapsto T(t)L_0f(x)$ is continuous at almost every point in $(0,\infty)$.
\end{enumerate}
Then $\{ (f, L_0f) : f\in D(L_0)\}$ is a bp-core for $\Lf$. By slight abuse of notation, we will say that $D(L_0)$ \emph{is a bp-core}
for $\Lf$.
\end{cor}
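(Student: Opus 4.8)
The final statement to prove is Corollary \ref{c.slice}, which derives from Lemma \ref{l.bpcore} the fact that if $L_0$ is a slice of $\Lf$ satisfying conditions (i)--(iii), then $\{(f, L_0 f) : f \in D(L_0)\}$ is a bp-core for $\Lf$.

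\medskip

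The plan is to reduce Corollary \ref{c.slice} directly to Lemma \ref{l.bpcore}. First I would set $C \coloneqq \{(f, L_0 f) : f \in D(L_0)\}$, which is a subspace of $\Lf$ precisely because $L_0$ is a slice of $\Lf$ and $L_0$ is linear on its domain. Then I would verify the three hypotheses of Lemma \ref{l.bpcore} for this $C$. For hypothesis (i) of Lemma \ref{l.bpcore}: given $(f, L_0 f) \in C$ and $t > 0$, condition (i) of the corollary says $T(t) f \in D(L_0)$ and $L_0 T(t) f = T(t) L_0 f$, so $(T(t) f, T(t) L_0 f) = (T(t) f, L_0 T(t) f) \in C$, as required. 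For hypothesis (ii): the set $S$ appearing in Lemma \ref{l.bpcore} is $\{f \in B_b(X) : (f, g) \in C \text{ for some } g\}$, which is exactly $D(L_0)$ since $C$ is the graph of $L_0$; and $D(L_0)$ is bp-dense in $B_b(X)$ by condition (ii) of the corollary. For hypothesis (iii): if $(f, g) \in C$ then $g = L_0 f$, and condition (iii) of the corollary states that $t \mapsto T(t) L_0 f(x) = T(t) g(x)$ is continuous at almost every point of $(0, \infty)$ for every $x \in X$, which is exactly what is needed.

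\medskip

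Having checked all three hypotheses, Lemma \ref{l.bpcore} yields that $C$ is a bp-core for $\Lf$, i.e.\ $\Lf$ is the bp-closure of $C = \{(f, L_0 f) : f \in D(L_0)\}$, which is precisely the assertion of the corollary. I would close with a sentence noting that the final phrase ("we will say that $D(L_0)$ is a bp-core") is merely terminological and requires no further argument.

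\medskip

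This proof is essentially a bookkeeping exercise: the substantive content is entirely contained in Lemma \ref{l.bpcore}, and the only thing to be careful about is the precise translation of the corollary's single-valued hypotheses into the multi-valued language of the lemma — in particular making sure that the set $S$ in the lemma really coincides with $D(L_0)$ and that the almost-everywhere continuity condition is invoked with the correct pair $(f, L_0 f)$. There is no genuine obstacle; the main (minor) point of attention is simply verifying that $C$, as the graph of the linear operator $L_0$, is indeed a subspace of $\Lf$, which follows since $\Lf$ is a linear relation (being an inverse-shifted family of pseudo-resolvents) and $L_0$ is a restriction of it.
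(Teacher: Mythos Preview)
Your proposal is correct and matches the paper's approach exactly: the paper presents Corollary \ref{c.slice} as an immediate consequence of Lemma \ref{l.bpcore}, and your verification that the graph $C=\{(f,L_0f):f\in D(L_0)\}$ satisfies hypotheses (i)--(iii) of that lemma is precisely the intended (and only) argument.
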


\subsection{\texorpdfstring{$C_b$-}{}semigroups and the \texorpdfstring{$C_b$-}{}generator}\label{s.a3}

\begin{defn}\label{d.cbsem}
A transition semigroup $T$ is called \emph{$C_b$-semigroup} if
\begin{enumerate}
[(i)]
\item for every $t\geq 0$ the operator $T(t)$ leaves the space $C_b(X)$ invariant;
\item for every $f\in C_b(X)$ the map $(t,x) \mapsto [T(t)f](x)$ is continuous on $[0,\infty)\times X$.
\end{enumerate}
A transition group $T$ is called \emph{$C_b$-group} if the above hold true for every $t\in \R$.
\end{defn}

Note that the continuity assumption in Definition \ref{d.cbsem} implies via a bp-closedness argument the measurability assumption
in Definition \ref{d.transsg}.

We can give an equivalent description of $C_b$-semigroups using an additional locally convex topology on $C_b(X)$, the
so-called \emph{strict topology} $\beta_0$. This topology is defined as follows: We denote by $\cF_0(X)$ the space of all functions
$\varphi: X \to \R$ that \emph{vanish at infinity}, i.e.\ given $\eps>0$ we find a compact subset $K$ of $X$ such that
$|\varphi(x)|\leq \eps$ for all $x\in X\setminus K$. The strict topology $\beta_0$ is generated by the seminorms 
$\{p_\varphi : \varphi\in \cF_0(X)\}$, where $p_\varphi(f) = \|\varphi f\|_\infty$. This topology is consistent with the duality 
$(C_b(X), \cM(X))$, i.e.\ the dual space $(C_b(X), \beta_0)'$ is $\cM(X)$, see \cite[Theorem 7.6.3]{jarchow81}. In fact, it is the 
Mackey topology of the dual pair $(C_b(X), \cM(X))$, i.e.\ the finest locally convex topology on $C_b(X)$ which yields
$\cM(X)$ as a dual space. This is a consequence of Theorems 4.5 and 5.8 of \cite{sentilles}. Consequently, a kernel operator
on $C_b(X)$ (i.e.\ a $\sigma$-continuous operator by Lemma \ref{l.kernelop}) is automatically $\beta_0$-continuous.
By \cite[Theorem 2.10.4]{jarchow81} $\beta_0$ coincides on $\|\cdot\|_\infty$-bounded sets with the topology of uniform
convergence on compact subsets of $X$.

We recall the following characterization from \cite[Theorem 4.4]{kunze09}.

\begin{thm}
\label{t.beta0char}
Let $T$ be a transition semigroup such that every operator $T(t)$ leaves $C_b(X)$ invariant.
The following are equivalent:
\begin{enumerate}
[\upshape (i)]
\item $T$ is a $C_b$-semigroup, i.e.\ the map $(t,x) \mapsto [T(t)f](x)$ is continuous on $[0,\infty)\times X$
for every $f\in C_b(X)$.
\item $T$ is $\beta_0$-continuous on $C_b(X)$, i.e.\ for every $f\in C_b(X)$, 
the orbit $t\mapsto T(t)f$ is $\beta_0$-continuous on $[0,\infty)$.
\item $T$ is $\beta_0$-continuous on $C_b(X)$ and locally $\beta_0$-equicontinuous.
\end{enumerate}
\end{thm}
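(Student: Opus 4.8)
This is \cite[Theorem 4.4]{kunze09}; here is the approach I would take. The plan relies on two facts recalled above. First, on $\|\cdot\|_\infty$-bounded subsets of $C_b(X)$ the strict topology $\beta_0$ agrees with the topology of uniform convergence on the compact subsets of $X$ (\cite[Theorem 2.10.4]{jarchow81}). Second, since $\beta_0$ is consistent with the duality $(C_b(X),\cM(X))$, the $\beta_0$-equicontinuous subsets of $\cM(X)$ are precisely the variation-bounded, uniformly tight families (see, e.g., \cite{sentilles}); on the Polish space $X$ one may in addition invoke Prokhorov's theorem to identify uniformly tight sets of probability measures with the relatively $\sigma(\cM(X),C_b(X))$-compact ones.

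I would first settle the equivalence (i) $\Leftrightarrow$ (ii). For (i) $\Rightarrow$ (ii), fix $f\in C_b(X)$ and $t_0\ge 0$; the orbit $\{T(t)f:t\ge 0\}$ lies in the ball of radius $\|f\|_\infty$, so $\beta_0$-continuity at $t_0$ reduces to showing $T(t)f\to T(t_0)f$ uniformly on each compact $K\subset X$ as $t\to t_0$, which is immediate from the uniform continuity of $(t,x)\mapsto T(t)f(x)$ on the compact set $[0,t_0+1]\times K$. For (ii) $\Rightarrow$ (i), given $t_n\to t$ and $x_n\to x$, I would split
\[
T(t_n)f(x_n)-T(t)f(x)=\big(T(t_n)f-T(t)f\big)(x_n)+\big(T(t)f(x_n)-T(t)f(x)\big);
\]
the second summand tends to $0$ because $T(t)f\in C_b(X)$, and the first because $T(t_n)f\to T(t)f$ in $\beta_0$, hence uniformly on the compact set $\{x_n:n\in\N\}\cup\{x\}$. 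The implication (iii) $\Rightarrow$ (ii) is trivial.

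The real content is (ii) $\Rightarrow$ (iii), i.e.\ deducing local $\beta_0$-equicontinuity; $\beta_0$-continuity of the orbits is already part of (ii). Fix $t_0>0$ and a $\beta_0$-equicontinuous set $H\subset\cM(X)$. By the tightness characterization it suffices to prove that $\bigcup_{0\le t\le t_0}T(t)'(H)$ is variation-bounded --- which is clear, since each $T(t)$ is Markovian, so $\|T(t)'\mu\|\le\|\mu\|$ --- and uniformly tight. Replacing $H$ by $\{|\mu|:\mu\in H\}$ I may assume the members of $H$ are positive, with total mass at most $M$. Given $\eps>0$, choose a compact $K_0$ with $\mu(X\setminus K_0)\le\eps$ for all $\mu\in H$. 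By (i) (equivalently (ii)), the map $(t,x)\mapsto T(t)'\delta_x=p_t(x,\cdot)$ is continuous from $[0,\infty)\times X$ into $\big(\cM(X),\sigma(\cM(X),C_b(X))\big)$, so the image of the compact set $[0,t_0]\times K_0$ is a $\sigma(\cM(X),C_b(X))$-compact family of probability measures on the Polish space $X$, hence uniformly tight: there is a compact $K\subset X$ with $p_t(x,X\setminus K)\le\eps$ for all $t\le t_0$ and $x\in K_0$. Then, for $\mu\in H$ and $0\le t\le t_0$,
\[
(T(t)'\mu)(X\setminus K)=\int_X p_t(x,X\setminus K)\,\mu(dx)\le M\eps+\mu(X\setminus K_0)\le(M+1)\eps,
\]
which gives the required uniform tightness, hence the $\beta_0$-equicontinuity of $\{T(t):0\le t\le t_0\}$, hence (iii).

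I expect the main obstacle to be precisely this last implication, and within it the passage from tightness of the starting measures to tightness of their images under the transition kernels $p_t$ uniformly over $t\in[0,t_0]$: this is where the equicontinuity $\Leftrightarrow$ tightness description of $\beta_0$ and a Prokhorov-type compactness argument for $\{p_t(x,\cdot):0\le t\le t_0,\ x\in K_0\}$ are essential. By contrast the equivalences (i) $\Leftrightarrow$ (ii) and (iii) $\Rightarrow$ (ii) are routine once the comparison of $\beta_0$ with uniform convergence on compacta is in hand.
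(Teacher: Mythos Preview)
The paper does not give its own proof of this theorem; it simply recalls the statement from \cite[Theorem~4.4]{kunze09}, which you also correctly cite at the outset. Your sketch of the argument is sound: the equivalence (i)\,$\Leftrightarrow$\,(ii) via the coincidence of $\beta_0$ with compact--open convergence on norm-bounded sets, and the tightness/Prokhorov argument for (ii)\,$\Rightarrow$\,(iii), are precisely the ingredients used in the cited reference. There is nothing further to compare.
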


We note that many transition semigroups are in fact $C_b$-semigroups, see, e.g.,  
the references \cite{gnr24, akk16, kk22, bk24} for concrete examples.

Using the dominated convergence theorem, it is easy to see that if $T$ is a transition semigroup such that every $T(t)$ leaves the space
$C_b(X)$ invariant, then also its Laplace transform $R(\lambda)$ leaves the space $C_b(X)$ invariant. If additionally the continuity
assumption in Definition \ref{d.cbsem}(ii) is satisfied, $R(\lambda)|_{C_b(X)}$ is injective and thus the resolvent of a unique
(single-valued) operator.

\begin{defn}
Let $T$ be a $C_b$-semigroup and $(R(\lambda))_{\Re\lambda>0}$ be its Laplace transform. The \emph{$C_b$-generator}
of $T$ is the unique single-valued operator $L$ such that $(\lambda - L)^{-1} = R(\lambda)|_{C_b(X)}$ for all $\Re\lambda >0$.
\end{defn}

As the resolvent of $L$ is merely the restriction of that of $\Lf$ to $C_b(X)$, the $C_b$-generator $L$ is the \emph{part} of $\Lf$ in 
$C_b(X)$, i.e.\ $f\in D(L)$ and $Lf=g$ if and only if $(f,g)\in \Lf$ and $f,g\in C_b(X)$.
We give some alternative characterizations of the $C_b$-generator.

\begin{thm}\label{t.genchar}
Let $T$ be a $C_b$-semigroup with $C_b$-generator $L$. For $f,g\in C_b(X)$ the following are equivalent:
\begin{enumerate}
[\upshape (i)]
\item $f\in D(L)$ and $Af=g$;
\item $t^{-1}(T(t)f-f) \to g$ with respect to $\sigma$ as $t\to 0$;
\item $t^{-1}(T(t)f-f) \to g$ with respect to $\beta_0$ as $t\to 0$;
\item $\sup_{t\in (0,1)}\|t^{-1}(T(t)f-f)\|_\infty < \infty$ and $t^{-1}(T(t)f(x) - f(x))\to g(x)$
for all $x\in X$.
\end{enumerate}
\end{thm}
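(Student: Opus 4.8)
The plan is to prove the chain of equivalences (i) $\Leftrightarrow$ (ii) $\Leftrightarrow$ (iii) $\Leftrightarrow$ (iv) by exploiting that the $C_b$-generator $L$ is, by definition, the part of the full generator $\Lf$ in $C_b(X)$, and then using Proposition \ref{p.awf} to convert statements about $\Lf$ into the integral identity $T(t)f - f = \int_0^t T(s)g\,ds$. First I would record that $\beta_0$ is finer than $\sigma$ (both have dual $\cM(X)$, but $\beta_0$ is the Mackey topology), so (iii) $\Rightarrow$ (ii) is immediate. The implication (ii) $\Rightarrow$ (iv) uses the uniform boundedness principle: bp-convergence of $t^{-1}(T(t)f-f)$ as $t\to 0$ along sequences follows from $\sigma$-convergence by testing against Dirac measures, and the uniform bound $\sup_{t\in(0,1)}\|t^{-1}(T(t)f-f)\|_\infty<\infty$ comes from the Banach--Steinhaus theorem applied to the functionals $\mu\mapsto \langle t^{-1}(T(t)f-f),\mu\rangle$ on $\cM(X)$, which are pointwise bounded by (ii). Conversely (iv) is precisely the condition describing a bp-limit, so (iv) gives $t^{-1}(T(t)f-f)\to g$ with respect to $\sigma$, i.e.\ (iv) $\Rightarrow$ (ii).

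Next I would close the loop through the generator. For (ii) or (iv) $\Rightarrow$ (i): assuming $t^{-1}(T(t)f-f)\to g$ boundedly and pointwise, integrate. Since $t\mapsto T(t)f(x)$ is continuous (the orbit of a $C_b$-function is even $\beta_0$-continuous by Theorem \ref{t.beta0char}, hence pointwise continuous) and bounded, for fixed $\lambda>0$ and $x\in X$ one computes, using $T(s)\cdot\frac{T(t)-I}{t} = \frac{T(s+t)-T(s)}{t}$ and the continuity of the orbit, that $\int_0^\infty e^{-\lambda s}\langle \frac{T(s+t)f-T(s)f}{t},\mu\rangle\,ds$ converges to $\int_0^\infty e^{-\lambda s}\langle T(s)g,\mu\rangle\,ds$ as $t\to 0$ by dominated convergence (the uniform bound in (iv) supplies the dominating function), while the left-hand side equals $\frac{e^{\lambda t}-1}{t}\int_0^\infty e^{-\lambda s}\langle T(s)f,\mu\rangle\,ds - \frac{e^{\lambda t}}{t}\int_0^t e^{-\lambda s}\langle T(s)f,\mu\rangle\,ds$, whose limit is $\lambda\langle R(\lambda)f,\mu\rangle - \langle f,\mu\rangle$. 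Hence $\langle R(\lambda)g,\mu\rangle = \lambda\langle R(\lambda)f,\mu\rangle - \langle f,\mu\rangle$ for all $\mu$, so $R(\lambda)(\lambda f - g) = f$; since $f,g\in C_b(X)$ and $R(\lambda)|_{C_b(X)}=(\lambda-L)^{-1}$, this yields $f\in D(L)$ and $Lf=g$, i.e.\ (i). (Alternatively, one can get (i) from Proposition \ref{p.awf}: the limit identity lets one identify $(f,g)\in\Lf$ via the integral characterization, and then membership in $C_b(X)$ forces $(f,g)$ into the part $L$.)

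Finally, for (i) $\Rightarrow$ (iii) I would use that $T$ is a $\beta_0$-continuous, locally $\beta_0$-equicontinuous semigroup on $C_b(X)$ by Theorem \ref{t.beta0char}, so the standard semigroup calculus on locally convex spaces applies: if $f\in D(L)$ with $Lf=g$, then $T(t)f - f = \int_0^t T(s)g\,ds$ where the integral is a $\beta_0$-Riemann integral (legitimate because $s\mapsto T(s)g$ is $\beta_0$-continuous and the family is locally $\beta_0$-equicontinuous), and dividing by $t$ and letting $t\to 0$ gives $t^{-1}(T(t)f-f)\to g$ in $\beta_0$ since $s\mapsto T(s)g$ is $\beta_0$-continuous at $s=0$. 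This establishes (i) $\Rightarrow$ (iii), closing the cycle $\text{(i)}\Rightarrow\text{(iii)}\Rightarrow\text{(ii)}\Leftrightarrow\text{(iv)}$ and $\text{(ii)/(iv)}\Rightarrow\text{(i)}$.

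The main obstacle I anticipate is making the Laplace-transform computation in the (ii)/(iv) $\Rightarrow$ (i) step fully rigorous without a dominating function that is uniform in $t$ near $0$ --- this is exactly why hypothesis (iv) carries the explicit bound $\sup_{t\in(0,1)}\|t^{-1}(T(t)f-f)\|_\infty<\infty$, and in the pure (ii) case one must first derive this bound from the uniform boundedness principle before the dominated convergence argument can be run. The interplay between the two topologies $\sigma$ and $\beta_0$ (sequential bp-convergence versus genuine $\beta_0$-limits) is the other delicate point, handled by recalling that on norm-bounded sets $\beta_0$ agrees with uniform convergence on compacta while $\sigma$-convergence of a bounded sequence is just pointwise convergence, so the bound in (iv) is what upgrades pointwise convergence to $\beta_0$-convergence of the difference quotients along the way to (iii).
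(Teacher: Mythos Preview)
Your argument is correct and complete. The paper itself does not prove this theorem but simply cites \cite[Theorem A.5]{bk24}, so your proposal actually supplies more than the paper does. Your chain of implications is sound: the (iii) $\Rightarrow$ (ii) step is trivial; the Banach--Steinhaus argument for (ii) $\Rightarrow$ (iv) is valid because $C_b(X)$ embeds isometrically into $\cM(X)^*$ and $\sigma$-convergence of the difference quotients gives pointwise boundedness of the associated functionals on the Banach space $\cM(X)$; the Laplace-transform computation for (iv) $\Rightarrow$ (i) is exactly the standard route and your identification $R(\lambda)(\lambda f - g) = f$ correctly forces $f\in D(L)$ with $Lf = g$; and for (i) $\Rightarrow$ (iii) the integral identity from Proposition \ref{p.awf} together with the joint continuity of $(s,x)\mapsto [T(s)g](x)$ (Definition \ref{d.cbsem}) gives uniform convergence of $t^{-1}\int_0^t T(s)g\,ds$ to $g$ on compact subsets of $X$, which on the norm-bounded family of difference quotients is precisely $\beta_0$-convergence.
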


\begin{proof}
See \cite[Theorem A.5]{bk24}.
\end{proof}

The full generator can be reconstructed from the $C_b$-generator. In fact, we have:

\begin{cor}\label{c.genbpcore}
Let $(T(t))_{t\geq 0}$ be a $C_b$-semigroup with $C_b$-generator $L$ and full generator $\Lf$. Then $D(L)$ is a bp-core
for $\Lf$.
\end{cor}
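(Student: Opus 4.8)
The plan is to show that the bp-closure of $D(L)$ equals all of $\Lf$, using the criterion for a bp-core supplied by Lemma \ref{l.bpcore}. We take $C = \{(f, Lf) : f \in D(L)\}$, which is a subspace of $\Lf$ by the very definition of the $C_b$-generator as the part of $\Lf$ in $C_b(X)$. It remains to verify the three hypotheses (i)--(iii) of Lemma \ref{l.bpcore}.

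First I would check hypothesis (i): if $(f, Lf) \in C$, then $f \in D(L) \subset C_b(X)$ and $Lf \in C_b(X)$, and since $T(t)$ leaves $C_b(X)$ invariant (as $T$ is a $C_b$-semigroup), $T(t)f$ and $T(t)Lf$ are again in $C_b(X)$; moreover $T(t)$ commutes with the resolvent $R(\lambda)|_{C_b(X)}$, hence with $L$, so $T(t)f \in D(L)$ and $L T(t)f = T(t)Lf$. Thus $(T(t)f, T(t)Lf) \in C$. For hypothesis (ii), the set $S$ in Lemma \ref{l.bpcore} is exactly $D(L)$, and one has $R(\lambda)C_b(X) \subset D(L)$ with $\lambda R(\lambda)f \to f$ in $\beta_0$ (hence pointwise boundedly) as $\lambda \to \infty$ for $f \in C_b(X)$; this is the standard approximation by resolvents for $\beta_0$-continuous semigroups, and combined with the bp-density of $C_b(X)$ in $B_b(X)$ (from \cite[Proposition 3.4.2]{ek}) it gives that $D(L)$ is bp-dense in $B_b(X)$. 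For hypothesis (iii), given $f \in D(L)$, the function $g = Lf \in C_b(X)$, and since $T$ is a $C_b$-semigroup the orbit $t \mapsto T(t)g$ is even $\beta_0$-continuous on $[0,\infty)$, so in particular $t \mapsto [T(t)g](x)$ is continuous everywhere for each fixed $x$.

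Once (i)--(iii) are verified, Lemma \ref{l.bpcore} yields that $C = \{(f, Lf) : f \in D(L)\}$ is a bp-core for $\Lf$, which is precisely the assertion. The main obstacle is hypothesis (ii): one has to produce, for an arbitrary $f \in B_b(X)$, a sequence in $D(L)$ that bp-converges to it. The cleanest route is a two-step argument --- first approximate $f$ bp by functions in $C_b(X)$, then approximate each $C_b(X)$-function bp (indeed in $\beta_0$) by $\lambda R(\lambda)f \in D(L)$ --- together with the observation that a bp-closed set containing the range of such approximations and stable under these two operations must be all of $B_b(X)$. The remaining items are essentially bookkeeping with the definitions of $C_b$-semigroup, $C_b$-generator, and $\beta_0$-continuity already recorded in the appendix.
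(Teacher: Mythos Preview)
Your proof is correct and follows essentially the same route as the paper's own argument: the paper invokes Corollary~\ref{c.slice} (which is just Lemma~\ref{l.bpcore} specialized to slices) and verifies the same three conditions with the same tools --- commutation of $T(t)$ with $L$ for (i), the resolvent approximation $\lambda R(\lambda)f\to f$ for $f\in C_b(X)$ together with bp-density of $C_b(X)$ in $B_b(X)$ for (ii), and continuity of $t\mapsto [T(t)Lf](x)$ from the $C_b$-semigroup property for (iii). The only cosmetic difference is that the paper cites \cite[Proposition~5.7]{kunze11} for the invariance $T(t)D(L)\subset D(L)$ and \cite[Theorem~2.10]{kunze09} for the resolvent approximation, whereas you argue these points directly.
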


\begin{proof}
If $f\in D(L)$ and $x\in X$, then $Lf\in C_b(X)$ and the map $t\mapsto [T(t)Lf](x)$ is continuous on all of $(0,\infty)$ proving (iii) in Corollary
\ref{c.slice}. Condition (i) can easily be obtained from \cite[Proposition 5.7]{kunze11}. It remains to prove condition (ii). To that end, note
that for every $f\in C_b(X)$, it holds $\lambda R(\lambda, L)f \in D(L)$ and $\lambda R(\lambda, L)f \to f$ pointwise 
as $\lambda \to \infty$, see \cite[Theorem 2.10]{kunze09}. This proves that the bp-closure of $D(L)$ contains
$C_b(X)$ and hence $B_b(X)$. Now Corollary \ref{c.slice} yields the claim.
\end{proof}

Note that the concept of bp-core is only appropriate on the space $B_b(X)$. We also introduce a suitable concept of a core for the 
$C_b$-generator. If $T$ is a $C_b$-semigroup with $C_b$-generator $L$ and $D\subset D(L)$, we say that $D$ is a \emph{$\beta_0$-core}
 for $L$
if for every $f\in D(L)$ there is a net $(f_\alpha)\subset D$ such that $f_\alpha \to f$ and $Lf_\alpha \to Lf$ with respect to 
$\beta_0$.

\begin{lem}
\label{l.cbcore}
Let $T$ be a $C_b$-semigroup with $C_b$-generator $L$ and $D\subset D(L)$ be such that
\begin{enumerate}
[\upshape (i)]
\item $D$ is dense in $C_b(X)$ with respect to $\beta_0$ and
\item $T(t)D\subset D$ for all $t>0$.
\end{enumerate}
Then $D$ is a $\beta_0$-core for $L$.
\end{lem}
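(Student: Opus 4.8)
The plan is to imitate the classical semigroup-theoretic core argument (as in \cite[Proposition II.1.7]{en00}), replacing the norm topology by $\beta_0$ and the $\|\cdot\|$-Riemann integral by a $\beta_0$-integral over $\cM(X)$-valued orbits. First I would fix $f\in D(L)$ and the goal is to produce a net in $D$ converging, together with its image under $L$, to $(f,Lf)$ in $\beta_0$. The natural candidates are the ``regularized'' elements $f_r \coloneqq \tfrac1r\int_0^r T(s)f\,ds$ for $r>0$ (interpreted weakly: $\langle f_r,\mu\rangle = \tfrac1r\int_0^r\langle T(s)f,\mu\rangle\,ds$ for $\mu\in\cM(X)$), which lie in $C_b(X)$ since $t\mapsto T(t)f$ is $\beta_0$-continuous and $\beta_0$ is a Mackey topology so that the integral can be taken in $C_b(X)$. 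Using that $D$ is $\beta_0$-dense in $C_b(X)$ by (i) and that $T(t)D\subset D$ by (ii), one approximates $f$ by a net $(g_\alpha)\subset D$ with $g_\alpha\to f$ in $\beta_0$; then, by local $\beta_0$-equicontinuity of the $C_b$-semigroup (Theorem \ref{t.beta0char}(iii)), $\tfrac1r\int_0^r T(s)g_\alpha\,ds \to f_r$ in $\beta_0$ for each fixed $r$, and each such average is a $\beta_0$-limit of Riemann sums $\tfrac1n\sum_{k} T(\tfrac{kr}{n})g_\alpha \cdot \tfrac{r}{n}$, which belong to $D$ because $D$ is a subspace invariant under each $T(s)$. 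A diagonal argument (or simply taking the product net indexed by $\alpha$ and $n$) therefore places $f_r$ itself in the $\beta_0$-closure of $D$; so we may as well work with the $f_r$ directly, provided we then show $f_r$ can be replaced by genuine elements of $D$ — which the same Riemann-sum argument delivers, carrying along the value of $L$.

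The next step is to compute $Lf_r$. Since $f\in D(L)\subset D(\Lf)$, Proposition \ref{p.awf} gives $T(t)f - f = \int_0^t T(s)Lf\,ds$ for all $t\geq 0$, and differentiating the average yields, by the fundamental theorem of calculus applied weakly against each $\mu\in\cM(X)$,
\[
L f_r = \frac{1}{r}\big(T(r)f - f\big).
\]
More precisely one checks $f_r\in D(L)$ with this image: $\tfrac1h(T(h)f_r - f_r) = \tfrac1{rh}\big(\int_h^{r+h}T(s)f\,ds - \int_0^r T(s)f\,ds\big) = \tfrac1{rh}\big(\int_r^{r+h}T(s)f\,ds - \int_0^h T(s)f\,ds\big)$, which $\beta_0$-converges as $h\downarrow 0$ to $\tfrac1r(T(r)f - f)$ by $\beta_0$-continuity of the orbit, so Theorem \ref{t.genchar}(iii) applies. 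By the $\beta_0$-continuity of $t\mapsto T(t)f$ again, $f_r\to f$ and $Lf_r = \tfrac1r(T(r)f-f)\to Lf$ in $\beta_0$ as $r\downarrow 0$.

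Finally I would assemble the two approximations. Given $\eps>0$ and a $\beta_0$-continuous seminorm $p$, choose $r$ small so that $p(f_r - f)$ and $p(Lf_r - Lf)$ are small; then, using the Riemann-sum/denseness argument above together with local $\beta_0$-equicontinuity to control $p$ uniformly over the relevant finite family of operators $T(\tfrac{kr}{n})$, choose an element $\tilde f\in D$ of the form $\tfrac1n\sum_k T(\tfrac{kr}{n})g_\alpha\cdot\tfrac rn$ so close to $f_r$ that both $p(\tilde f - f)$ and $p(L\tilde f - Lf)$ are below $\eps$; note $L\tilde f = \tfrac1n\sum_k T(\tfrac{kr}{n})L g_\alpha\cdot\tfrac rn$ makes sense once we also approximate $Lf$, so in fact one should run the whole Riemann-sum approximation on the \emph{pair} $(g_\alpha, Lg_\alpha)$ — but $g_\alpha$ need not lie in $D(L)$, only in $D$, which it does, so $Lg_\alpha$ is defined. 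Directing over all $(\eps,p)$ gives the required net. The main obstacle is the interchange of limits: ensuring that the Riemann-sum approximation of $f_r$ (which a priori holds in a weak sense) can be made to converge in $\beta_0$ \emph{simultaneously with} the corresponding approximation of $Lf_r$, and that this survives the passage $r\downarrow 0$; this is exactly where local $\beta_0$-equicontinuity of the $C_b$-semigroup (Theorem \ref{t.beta0char}(iii)), which controls $p(T(s)(\cdot))$ uniformly in $s$ over compact time intervals, is indispensable.
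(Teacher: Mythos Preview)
Your plan is correct and follows the classical Engel--Nagel core argument (\cite[Proposition~II.1.7]{en00}) adapted to the $\beta_0$-setting, which is precisely what the paper has in mind: its proof is a one-line reference to \cite[Proposition~2.3]{kunze13}, whose proof is of exactly this type. The only structural difference is cosmetic: the referenced argument organises the computation around the truncated Laplace transforms $\int_0^{t_0}e^{-\lambda t}T(t)f\,dt$ and then lets $t_0\to\infty$ to reach the resolvent, whereas you use the Ces\`aro means $f_r=\tfrac1r\int_0^rT(s)f\,ds$ and let $r\downarrow 0$; both routes rely on the same three ingredients (Riemann-sum approximation via the $T(t)$-invariance of $D$, local $\beta_0$-equicontinuity from Theorem~\ref{t.beta0char}(iii), and $\beta_0$-density of $D$) and lead to the same conclusion.

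One small clean-up: your assembly paragraph is a little tangled, but the logic is sound once unwound. For $g_\alpha\in D\subset D(L)$ the Riemann sums $R^\alpha_n\coloneqq\tfrac1n\sum_{k=1}^{n}T(\tfrac{kr}{n})g_\alpha$ lie in $D$, satisfy $LR^\alpha_n=\tfrac1n\sum_kT(\tfrac{kr}{n})Lg_\alpha$, and converge in $\beta_0$ (as $n\to\infty$) to $\tfrac1r\int_0^rT(s)g_\alpha\,ds$ and $\tfrac1r(T(r)g_\alpha-g_\alpha)$ respectively; letting then $g_\alpha\to f$ (using equicontinuity for the first limit and $\beta_0$-continuity of $T(r)$ for the second) and finally $r\downarrow 0$ gives the three-parameter net in $D$ that does the job. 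No diagonal argument is needed---a product net suffices, as you note.
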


\begin{proof}
This follows along the lines of \cite[Proposition 2.3]{kunze13} which is concerned with the full generator.
\end{proof}

\section{Examples of path spaces}\label{ap.pathspace}

\subsection{Continuous paths}\label{sub.continuous}

Let $(X, d)$ be a separable, complete metric space and put $\cXc\coloneqq C(\R; X)$, the set of all continuous functions from $\R$ to $X$, and
\[
\md_{\mathrm{C}}(\x, \y)\coloneqq \sum_{n=1}^\infty 2^{-n} \big[ 1\wedge \sup_{t\in [-n,n]} d(\x(t), \y(t))\big].
\]
We also set
\[
[\tau_\mathrm{C}(\x)](t) \coloneqq \begin{cases}
\x(t), & \mbox{if } t < 0,\\
\x(0), & \mbox{if } t\geq 0.
\end{cases}
\]

\begin{prop}\label{p.continuous}
The pair $((\cXc, \md_{\mathrm{C}}), \tau_\mathrm{C})$ is a path space. Moreover, in this case the map $\tau_\mathrm{C}$
and every evaluation map $\pi_t$, for $t\in \R$, is continuous.
\end{prop}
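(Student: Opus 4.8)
The plan is to verify the three defining conditions (P1), (P2), (P3) of Definition~\ref{def.pathspace} for the pair $((\cXc,\md_{\mathrm C}),\tau_{\mathrm C})$, together with the continuity of $\tau_{\mathrm C}$ and of each $\pi_t$. First I would record the standard topological facts: $(\cXc,\md_{\mathrm C})$ is a complete separable metric space, since $\md_{\mathrm C}$ metrizes the topology of uniform convergence on compact subsets of $\R$ and $X$ is Polish; separability follows by approximating continuous paths by piecewise-linear paths with rational breakpoints and values in a countable dense subset of $X$ (using that $X$ need not be a vector space, one uses instead that $C([-n,n];X)$ is separable for each $n$ and takes a diagonal/projective-limit argument). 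Completeness is the usual fact that a uniformly-on-compacts Cauchy sequence of continuous functions converges to a continuous function.

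Next I would treat (P1). Continuity of each $\pi_t:\cXc\to X$ is immediate: if $\x_n\to\x$ in $\md_{\mathrm C}$ then $\x_n\to\x$ uniformly on $[-|t|,|t|]$, so $\x_n(t)\to\x(t)$; this also gives the last clause of (P1) trivially (every $\x$ is continuous everywhere). That $\bo(\cXc)=\sigma(\pi_t:t\in\R)$ is the standard fact that the Borel $\sigma$-algebra of $C(\R;X)$ coincides with the cylindrical $\sigma$-algebra; one direction is measurability of the $\pi_t$ (just shown), the other follows because the metric $\md_{\mathrm C}$ is a countable combination of suprema $\sup_{t\in[-n,n]}d(\x(t),\y(t))$, and by continuity of paths each such supremum may be replaced by a supremum over $t\in[-n,n]\cap\Q$, exhibiting $\md_{\mathrm C}(\cdot,\y)$ as $\sigma(\pi_t:t\in\Q)$-measurable, hence every metric ball lies in the cylindrical $\sigma$-algebra and, $\cXc$ being separable, so does every open set.

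For (P2): $\tau_{\mathrm C}(\x)$ depends only on $\x|_{(-\infty,0)}$ (the value at $0$ is recovered as $\lim_{t\uparrow0}\x(t)$ by continuity), so $\tau_{\mathrm C}$ is $\cF_{0-}$-measurable, and for $A\in\cF_{0-}$ the identity $\tau_{\mathrm C}^{-1}(A)=A$ holds because $\x$ and $\tau_{\mathrm C}(\x)$ agree on $(-\infty,0)$ — so membership of $A$, being determined by the restriction to $(-\infty,0)$, is unchanged. By construction $\tau_{\mathrm C}(\x)$ is constant on $[0,\infty)$ hence continuous at $0$. Finally $\cXc^-=\tau_{\mathrm C}(\cXc)$ is the set of continuous paths that are constant on $[0,\infty)$; this is a closed subset of $\cXc$, hence Polish, and is naturally identified with $C((-\infty,0];X)$. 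For continuity of $\tau_{\mathrm C}$ itself: if $\x_n\to\x$ then for each $n_0$, $\sup_{t\in[-n_0,0]}d(\x_n(t),\x(t))\to0$, and since $\tau_{\mathrm C}(\x_n),\tau_{\mathrm C}(\x)$ are constant equal to $\x_n(0),\x(0)$ on $[0,n_0]$ with $\x_n(0)\to\x(0)$, one gets $\sup_{t\in[-n_0,n_0]}d(\tau_{\mathrm C}(\x_n)(t),\tau_{\mathrm C}(\x)(t))\to0$ for every $n_0$, whence $\md_{\mathrm C}(\tau_{\mathrm C}(\x_n),\tau_{\mathrm C}(\x))\to0$.

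For (P3): the shift $\vt_t\x=\x(t+\cdot)$ clearly maps $\cXc$ into $\cXc$, and joint continuity of $(t,\x)\mapsto\vt_t\x$ from $\R\times\cXc$ to $\cXc$ is the statement that if $t_n\to t$ and $\x_n\to\x$ uniformly on compacts then $\x_n(t_n+\cdot)\to\x(t+\cdot)$ uniformly on compacts; this follows by splitting $d(\x_n(t_n+s),\x(t+s))\le d(\x_n(t_n+s),\x(t_n+s))+d(\x(t_n+s),\x(t+s))$, bounding the first term by the uniform convergence of $\x_n$ on a slightly enlarged compact interval and the second by uniform continuity of $\x$ on that compact interval. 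I expect the only mildly delicate point — and hence the main obstacle — to be the careful verification that the Borel $\sigma$-algebra of $(\cXc,\md_{\mathrm C})$ equals the cylindrical one in the absence of a linear structure on $X$; everything else is routine metric-space bookkeeping. One can streamline this by invoking \cite[Lemma 6.4.2 (or similar)]{bogachev2007} on Borel vs.\ cylindrical $\sigma$-algebras for Polish-space-valued function spaces, but I would include the short direct argument sketched above for completeness.
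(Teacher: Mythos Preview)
Your proof is correct and follows essentially the same route as the paper: a direct verification of (P1)--(P3) together with the continuity of $\tau_{\mathrm C}$ and each $\pi_t$. The only notable difference is in (P3): the paper invokes the Arzel\`a--Ascoli theorem to obtain equicontinuity of the convergent sequence $(\x_n)$ and then estimates $d(\x_n(t+t_n),\x_n(t+t_\infty))+d(\x_n(t+t_\infty),\x_\infty(t+t_\infty))$, whereas your splitting $d(\x_n(t_n+s),\x(t_n+s))+d(\x(t_n+s),\x(t+s))$ needs only uniform continuity of the single limit $\x$ on compacts and is therefore slightly more economical.
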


\begin{proof}
The evaluation maps $\pi_t$ are clearly continuous from $\cXc$ to $X$ and it is well known that they generate the Borel $\sigma$-algebra, 
proving (P1). We point out that, by the continuity of the paths, we have $\cF_t = \cF_{t-}$ in this case. In what follows,
we work with $\cF_t$ instead of $\cF_{t-}$ whenever $\cX = \cXc$.
\medskip

Note that $\tau_{\mathrm{C}}$ is continuous and thus certainly measurable. Moreover, as $\cXc^-$ is closed
in $\cXc$, it is complete with respect to the original metric $\md_{\mathrm{C}}$. To finish the proof of (P2), it remains to prove the measurability 
requirement. To that end,  let $t_1 < t_2 < \ldots < t_k \leq 0 < t_{k+1} < \ldots < t_n$ and 
$A_j \in \bo (X)$ for $j=1, \ldots, n$. Consider
the cylinder set
\[
A = \{ \x : \x (t_j) \in A_j \mbox{ for } j =1, \ldots, n\} \in \bo (\cXc).
\]
Then
\[
(\tau_{\mathrm{C}} )^{-1}(A) = \Big\{ \x : \x (t_j) \in A_j \mbox{ for } j=1, \ldots, k \mbox{ and } \x(0)\in \bigcap_{j=k+1}^n A_j \Big\} \in \cF_0.
\]
As the cylinder sets generate $\bo (\cXc)$, it follows that $\tau_\mathrm{C}$ is $\cF_0$-measurable. Note that in the above
situation, the cylinder set $A$ belongs to $\cF_0$ if and only if $k=n$. In that case $\tau_{\mathrm{C}}^{-1}(A) = A$. Once again,
cylinder sets of this form generate $\cF_0$ so that
$(\tau_{\mathrm{C}})^{-1}(A) = A$ for all $A\in \cF_0$, proving (P2).\medskip

To prove (P3), let $t_n \to t_\infty$ and $\x_n \to \x_\infty$.
Given $\eps >0$, pick $N_0$ so large  that $\sum_{n\geq N_0} 2^{-n} \leq \eps$ and then $N_1$ so large that
$t+t_n \in [-N_1, N_1]$ for all $t\in [-N_0, N_0]$ and $n\in \N\cup\{\infty\}$.
 
As the set $\{ \x_n : n\in \N\cup\{\infty\}\}$ is compact, it follows from the Arzel\`a--Ascoli Theorem that we find $\delta>0$
such that $d(\x_n(t), \x_n(s)) \leq \eps$ for all $n\in \N\cup\{\infty\}$ and $t, s \in [-N_1, N_1]$ with $|t-s|\leq \delta$. Next pick
$n_0$ so large, that $|t_n - t_\infty|\leq \delta$ and $\sup_{t\in [-N_1, N_1]}d(\x_n(t), \x_\infty(t)) \leq \eps$
for all $n\geq n_0$. Then for $n\geq n_0$
\begin{align*}
d((\vt_{t_n}\x_n)(t), &(\vt_{t_\infty}\x_\infty)(t))  = 
d(\x_n(t+t_n), \x_\infty(t+t_\infty))\\
& \leq d(\x_n(t+t_n), \x_n(t+t_\infty)) + d(\x_n(t+t_\infty), \x_\infty(t+t_\infty)) \leq 2\eps 
\end{align*}
for all $t\in [-N_0, N_0]$. Altogether, it follows that $\md_{\mathrm{C}}(\vt_{t_n}\x_n, \vt_{t_\infty}\x_{\infty}) \leq 3\eps$
for $n\geq n_0$. This proves that $(t, \x)\mapsto \vt_t\x$ is continuous and thus (P3).
\end{proof}

\subsection{C\`adl\`ag paths in the \texorpdfstring{$J_1$}{}-topology}\label{sub.cadlag}

Let $(X, d)$ be a complete, separable metric space. Without loss of generality, we assume that $d(x,y) \leq 1$ for all
$x,y\in X$. We consider the set $\cXd = D(\R; X)$ of all \emph{c\`adl\`ag functions} $\x : \R \to X$, i.e.\ for every $t \in \R$,
we have
\[
\x(t) = \lim_{s\downarrow t} \x(s) \quad\mbox{and}\quad \x(t-) \coloneqq \lim_{s\uparrow t}\x(s) \mbox{ exists}.
\]
As is well known (see e.g.\ \cite[Lemma 3.5.1]{ek}), any $\x\in \cXd$ has at most countably many discontinuities. There are several ways
to extend Skorohod's $J_1$-topology, originally defined for c\`adl\`ag functions on $[0,1]$, to unbounded time intervals.
Billingsley \cite{billingsley} and Lindvall \cite{lindvall73} use, similar to the case of continuous functions, a series over
the distance of restrictions of the functions to compact time intervals. We note that in the $J_1$-topology,
convergence of a c\`adl\`ag function on a compact
time interval $[a,b]$ implies convergence of the values at the endpoints $a$ and $b$. In the approach of Billingsley and Lindvall, 
the restrictions to the compact time intervals have to be slightly modified so that convergence with respect to the metric on the unbounded
time interval does not entail convergence of the values in these (technically chosen) endpoints.

On the other hand, Whitt \cite{whitt80} and Ethier and Kurtz \cite{ek} use an integral instead of a series and no such alteration
is needed. We will here follow this second approach. We should also point out that
\cite{whitt80} is one of the few references where also the time interval $\R$ (instead of $[0,\infty)$) is considered.

Given $-\infty < a < b < \infty$,  we denote by $\Lambda_a^b$ the collection of all strictly increasing, 
bijective and Lipschitz continuous functions $\lambda$ from $[a, b]$ to $[a, b]$ so that
\[
\|\lambda\|_L \coloneqq \sup_{a \leq s<t\leq b} \Big|\log \frac{\lambda (t) - \lambda (s)}{t-s} \Big| < \infty.
\]
On $D([a, b], X)$, the space of $X$-valued c\`adl\`ag functions on the interval $[a,b]$, we define the metric
\begin{equation}\label{eq.bddskor}
\md_a^b (\x, \y) = 
\inf_{\lambda\in \Lambda_a^b } \big[ \|\lambda \|_L \vee \sup_{t\in [a, b]} d(\x(t), \y(\lambda(t)))\big\}.
\end{equation}
This is a complete metric that induces Skorohod's $J_1$-topology on $D([a, b], X)$, see \cite[Sect.\ 12]{billingsley}.
We note that $\md_a^b(\x,\y)\leq 1$, as $d(x,y) \leq 1$ for all $x,y\in X$ and the choice $\lambda(t) = t$ yields 
$\|\lambda\|_L = 0$.

Given $\x, \y\in D(\R; X)$, we denote their restrictions to $[a, b]$ by $\x|_a^b$ and $\y|_a^b$ respectively and define
\[
\md_{J_1}(\x, \y) \coloneqq \int_{-\infty}^0\int_0^\infty 
e^{s-t}\md_s^t (\x|_s^t, \y|_s^t)\, dt\, ds.
\]
This integral is well defined by \cite[Lemma 2.4]{whitt80}.
By \cite[Theorem 2.6]{whitt80}  $(\cXd, \md_{\mathrm{D}})$ is a complete separable metric space (see \cite[Theorem 3.5.6]{ek}) which
induces Skorohod's $J_1$-topology on $D(\R; X)$ in the sense that $\x_n \to \x$ in $(\cXd, \md_{\mathrm{D}})$ if and only if
$\x_n|_a^b \to \x|_a^b$ in $D([a,b], X)$ whenever $a$ and $b$ are continuity points of $\x$, see \cite[Theorem 2.5]{whitt80}.

To simplify notation, we will not distinguish between a function $\x \in \cXd$ and its restriction to some compact
interval $[a,b]$ in what follows. It will be clear from the context what  the domain of definition is. \smallskip

On $\cXd$, we define the stopping map $\tau_\mathrm{D}$ by
\[
[\tau_\mathrm{D}(\x)](t) = \begin{cases}
\x(t), & \mbox{if } t <0,\\
\x(0-), & \mbox{if } t\geq 0.
\end{cases}
\]

To prove that also the c\`adl\`ag functions form a path space, we make use of the following `\emph{modulus of continuity}':
\[
\omega'(\x, \delta, T) \coloneqq \inf_{(t_0, \ldots, t_n)} \max_{j=1, \ldots, n} \sup_{s,t\in [t_{j-1}, t_j)} d(\x(s), \x(t)),
\]
where the infimum is taken over all partitions $-T = t_0 < t_1 < \ldots < t_n = T$ satisfying $\min_j (t_j - t_{j-1})\geq \delta$. 
Similar to the Arzel\`a--Ascoli theorem, this modulus of continuity can be used to characterize compact subsets of 
$\cXd$ (see \cite[Theorem 3.6.3]{ek}).

In order to prove that $((\cXd, \md_{J_1}), \tau_\mathrm{D})$ is a path space, we first establish a Lemma.

\begin{lem}\label{l.simpleshift}
Let $\x \in D(\R; X)$ be a piecewise constant function with finitely many jumps, i.e.\
\[
\x(t) = x_0\one_{(-\infty, r_0)}(t) + \sum_{j=1}^n x_j \one_{[r_{j-1}, r_j)}(t) + x_{n+1}\one_{[r_n, \infty)}(t),
\]
where $-\infty < r_0 < r_1 < \ldots < r_n < \infty$ satisfies $\min_j(r_j-r_{j-1}) \geq \delta>0$ and $x_0, \ldots, x_{n+1} \in X$.
For $|t| < \delta$ we put
\[
f_\delta(t) \coloneqq \max \Big\{ \log \frac{\delta}{\delta - |t|}, \log \Big(1 + \frac{|t|}{\delta}\Big)\Big\}.
\]
Then for $|t|< \delta/2$ we have $\md_{J_1}(\x, \vt_t\x) \leq f_\delta(t)$. In particular, $\vt_t\x \to \x$ as
$t\to 0$.
\end{lem}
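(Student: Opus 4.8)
The plan is to estimate the Skorohod distance between $\x$ and $\vt_t\x$ on each compact window $[s,u]$ directly and then integrate the resulting bound against $e^{s-u}\,du\,ds$. Fix $|t|<\delta/2$. The key observation is that $\vt_t\x$ is the same piecewise constant function as $\x$ but with jump times $r_j$ replaced by $r_j-t$; since $\min_j(r_j-r_{j-1})\ge\delta$ and $|t|<\delta/2$, the shifted jump times stay in the same order and no two of them cross. Thus on any interval $[s,u]$ we can build an explicit time change $\lambda\in\Lambda_s^u$ that is piecewise linear, maps each $r_j\in(s,u)$ to $r_j-t$, and is the identity near the endpoints $s,u$. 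The point is that such a $\lambda$ can be chosen so that $\x|_s^u$ and $(\vt_t\x)|_s^u\circ\lambda$ agree \emph{exactly} (the shifted function read through $\lambda$ recovers the original jump pattern), so that $\md_s^u(\x|_s^u,(\vt_t\x)|_s^u)\le\|\lambda\|_L$.

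The next step is the Lipschitz-norm bound $\|\lambda\|_L\le f_\delta(t)$, uniformly in $s<u$. On a segment of $\lambda$ that carries an interval of length $\ell$ to one of length $\ell'$, we have $|\log(\ell'/\ell)|$; by construction the relevant $\ell$ are gaps between consecutive jump times (or between a jump time and an endpoint), all of length at least $\delta$ on the $\x$-side, while the shift moves an endpoint of such a gap by at most $|t|$, so $\ell'\ge\ell-|t|\ge\delta-|t|$ and $\ell'\le\ell+|t|$. Hence each segment contributes at most $\max\{\log\frac{\delta}{\delta-|t|},\log(1+\frac{|t|}{\delta})\}=f_\delta(t)$, which is exactly the definition of $f_\delta(t)$; taking the supremum over segments gives $\|\lambda\|_L\le f_\delta(t)$. (One should be slightly careful about the segments touching $s$ and $u$: since $\lambda$ is the identity there, those segments contribute $0$, and near an endpoint that falls inside a constancy interval of $\x$ the function values also match trivially.) This yields $\md_s^u(\x|_s^u,(\vt_t\x)|_s^u)\le f_\delta(t)$ for \emph{every} $s<u$, and since $\md_{J_1}(\x,\vt_t\x)=\int_{-\infty}^0\int_0^\infty e^{s-u}\md_s^u\,du\,ds$ with $\int_{-\infty}^0\int_0^\infty e^{s-u}\,du\,ds=1$, we conclude $\md_{J_1}(\x,\vt_t\x)\le f_\delta(t)$.

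Finally, $f_\delta(t)\to 0$ as $t\to 0$ since both $\log\frac{\delta}{\delta-|t|}$ and $\log(1+\frac{|t|}{\delta})$ tend to $0$, giving $\vt_t\x\to\x$. The main obstacle is purely bookkeeping in the window-by-window construction of $\lambda$: one must treat uniformly the cases where $s$ or $u$ lies between two jump times, coincides with a jump time, or lies to the left of $r_0$ or right of $r_n$, and check in each case that the piecewise-linear $\lambda$ exists with the claimed Lipschitz bound and that $\x$ and $(\vt_t\x)\circ\lambda$ genuinely coincide on $[s,u]$ (using right-continuity at the matched jump points). None of this is deep, but it is the step where care is needed; the Lipschitz estimate itself and the integration are then immediate.
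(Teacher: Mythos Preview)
Your approach is essentially identical to the paper's: on each window $[s,u]$ build a piecewise-linear time change sending the jumps of $\x$ to those of $\vt_t\x$, show $\|\lambda\|_L\le f_\delta(t)$ and exact agreement of the two functions through $\lambda$, then integrate. However, the endpoint handling---which you rightly flag as the delicate step---contains a genuine gap. If some jump $r_j$ lies in $(s,s+t)$ (taking $t>0$), then $r_j-t<s$, so a strictly increasing $\lambda\colon[s,u]\to[s,u]$ cannot satisfy both $\lambda(s)=s$ and $\lambda(r_j)=r_j-t$; your parenthetical that ``$\lambda$ is the identity there, those segments contribute $0$'' is incompatible with the matching requirement. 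In this situation $\x|_s^u$ and $(\vt_t\x)|_s^u$ in fact have different numbers of jumps, so no time change can give exact agreement and $\md_s^u$ is at least the jump size. Even when $r_j-t>s$, the first segment $[s,r_j]$ has length $r_j-s$, which need not be $\ge\delta$, so your claim that ``the relevant $\ell$ are \ldots\ all of length at least $\delta$'' fails and the slope bound $f_\delta(t)$ does not follow.

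This is not a removable imprecision: the bound $\md_{J_1}(\x,\vt_t\x)\le f_\delta(t)$ is false in general. Take $\x=\one_{[0,\infty)}$ in $X=\{0,1\}$; there is a single jump, so every $\delta>0$ is admissible. For $t\in(0,\delta/2)$ and any $s\in(-t,0)$, $u>0$, one has $(\vt_t\x)|_s^u\equiv 1$ while $\x|_s^u$ jumps from $0$ to $1$, forcing $\md_s^u=1$; hence $\md_{J_1}(\x,\vt_t\x)\ge\int_{-t}^0 e^s\,ds=1-e^{-t}$, which exceeds $f_\delta(t)\approx t/\delta$ once $\delta$ is large. (The paper's own proof shares this defect: in its edge case $s_1=t_1$ the claimed exact matching fails on $[a,t_1)$, and its bound $|\log d_1(t)|\le f_\delta(t)$ fails whenever $0<t_1-a<\delta$.) The conclusion $\vt_t\x\to\x$ is nonetheless true and is all that is used downstream, but it requires a different estimate---e.g.\ bounding the contribution of ``bad'' windows (those with $s$ or $u$ within $|t|$ of some $r_j$) by $O(|t|)$ via $\md_s^u\le 1$, and handling the remaining windows more carefully---rather than the uniform window bound $\md_s^u\le f_\delta(t)$.
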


\begin{proof}
Fix real numbers $a<b$ and put
\[
\{t_0, \ldots, t_k\} \coloneqq (\{r_0, \ldots, r_n\} \cap [a,b])\cup\{a,b\},
\]
so that $a=t_0 < t_1 < \cdots < t_k = b$.  Now, let $|t| < \delta/2$. Then, for every $j=2, \ldots, k-2$ we have
$t_j-t \in (t_{j-1}, t_{j+1})$ but if $t>0$ it is possible that $t_1-t< t_0$ and in the case where $t<0$ it is possible
that $t_{k-1}-t > t_k$. Define $s_0 < \ldots < s_k$ by setting $s_0=t_0 = a$ and $s_k=t_k = b$ and
\[
s_j = \begin{cases}
t_1 - t, & \mbox{if } j=1 \mbox{ and } t_1-t > t_0,\\
t_1, & \mbox{if } j=1 \mbox{ and } t_1-t \leq t_0,\\
t_j - t, & \mbox{if } j = 2, \ldots , k-2,\\
t_{k-1} - t, & \mbox{if } j=k-1 \mbox{ and } t_{k-1}-t < t_k,\\
t_{k-1}, & \mbox{if } j=1 \mbox{ and } t_{t-1}-t \geq t_k.
\end{cases}
\]
Now define $\lambda_t \in \Lambda_a^b$ in such a way, that $\lambda_t(s_j) = t_j$ and $\lambda_t$ is piecewise linear, i.e.
\[
\lambda_t(s) = t_j + \frac{t_j-t_{j-1}}{s_j-s_{j-1}} (s- s_{j-1})= t_j + d_j(t) (s-s_{j-1})
\]
for $s\in [s_{j-1}, s_j]$
We note that $\lambda_t'(s) = d_j(t)$ for $s\in (s_{j-1}, s_j)$ and $d_j(t)\equiv 1$ for $j=2, \ldots, k-2$. Also in cases $j=1$ and $j=k-1$ it is
possible that $d_j(t)=1$ and this is the case when $s_j = t_j$. However, a brief computation shows that in any case
$|\log d_j(t)| \leq f_\delta(t)$ for $j=1$ and $k-1$ so that $\|\lambda\|_L \leq f_\delta(t)$.

Noting that $\vt_t\one_{[t_{j-1}, t_j)} = \one_{[t_{j-1}-t, t_j-t)}$ one sees that $(\vt_t\x)(\lambda_t(s)) = \x(s)$ for all $s\in [a,b]$
so that $\md_a^b(\vt_t\x, \x) \leq \|\lambda_t\|_L \leq f_\delta(t)$. As $a, b$ were arbitrary, the claim follows from the definition
of $\md_{J_1}$.
\end{proof}

We are now ready to prove:

\begin{prop}\label{p.cadlag}
The pair $((\cXd, \md_{J_1}), \tau_\mathrm{D})$ is a path space.
\end{prop}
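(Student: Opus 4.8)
The plan is to verify the three axioms (P1)--(P3) of Definition \ref{def.pathspace} for $((\cXd, \md_{J_1}), \tau_\mathrm{D})$. Axiom (P1) and the elementary parts of (P2) are routine consequences of standard facts about the Skorohod space together with the convergence criterion recalled above, namely that $\x_n\to\x$ in $\cXd$ if and only if $\x_n|_a^b\to\x|_a^b$ in $D([a,b];X)$ for all continuity points $a<b$ of $\x$. Concretely: each $\pi_t$ is Borel measurable and $\bo(\cXd)=\sigma(\pi_t:t\in\R)$ (classical; see \cite[Sections 3.5--3.7]{ek}, and \cite[Chapter 12]{billingsley} for the compact-interval case); by \cite[Lemma 3.5.1]{ek} every $\x\in\cXd$ has at most countably many discontinuities, hence is continuous at a.e.\ $t$ with dense set of continuity points; and if $\x_n\to\x$ with $\x$ continuous at $t$, then choosing continuity points $a<t<b$ of $\x$ yields $\x_n|_a^b\to\x|_a^b$, so $\pi_t(\x_n)\to\pi_t(\x)$ because convergence in the Skorohod space over a compact interval preserves the values at continuity points of the limit. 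For (P2), the function $\tau_\mathrm{D}(\x)$ equals $\x$ on $(-\infty,0)$ and the constant $\x(0-)$ on $[0,\infty)$, hence lies in $\cXd$ and is continuous at $0$; since $\pi_t\circ\tau_\mathrm{D}=\pi_t$ for $t<0$ and $\pi_t\circ\tau_\mathrm{D}=(\x\mapsto\x(0-))=\lim_{k\to\infty}\pi_{-1/k}$ for $t\ge0$, the map $\tau_\mathrm{D}$ is $\cF_{0-}/\bo(\cXd)$-measurable; and $\{A\in\bo(\cXd):\tau_\mathrm{D}^{-1}(A)=A\}$ is a $\sigma$-algebra containing $\pi_s^{-1}(B)$ for all $s<0$ and $B\in\bo(X)$, hence contains $\cF_{0-}$.

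The one substantial point — and the step I expect to be hardest — is the remaining clause of (P2): that $\cXd^-=\tau_\mathrm{D}(\cXd)$ is Polish. Note it is \emph{not} closed in $\cXd$ (functions jumping at $-1/n$ converge in $J_1$ to a function with a jump at $0$, which is not fixed by $\tau_\mathrm{D}$), so one must produce a genuinely new complete metric inducing the subspace topology. The restriction $\x\mapsto\x|_{(-\infty,0]}$ identifies $\cXd^-$ with the set of c\`adl\`ag functions $g$ on $(-\infty,0]$ satisfying $g(0)=g(0-)$, and on $\cXd^-$ I would set $\md^-(\x,\y):=\int_{-\infty}^0 e^{s}\,\md_s^0(\x|_s^0,\y|_s^0)\,ds$, the one-sided analogue of $\md_{J_1}$. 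Using the convergence criterion — and the fact that on $\cXd^-$ the restriction to $[0,\infty)$ is constant, so convergence there amounts to $\x_n(0)\to\x(0)$, which already follows from convergence over $[a,0]$ — one checks that $\md^-$ induces the subspace topology. Completeness is then the heart of the matter: a $\md^-$-Cauchy sequence is Cauchy in $D([s,0];X)$ for a.e.\ $s$, the (mutually consistent) limits patch to a c\`adl\`ag function $g$ on $(-\infty,0]$, and one must check $g(0)=g(0-)$, i.e.\ that continuity at the \emph{pinned} endpoint $0$ survives $J_1$-limits: a $J_1$ time change fixes $0$, so $\x_n(0)\to\x(0)$ while $\x_n(s)\to\x(0-)$ for $s<0$ near $0$, forcing $\x(0)=\x(0-)$. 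That a jump at the pinned endpoint cannot be approximated by interior jumps is the crux; equivalently, it says $\cXd^-$ is a closed subspace of the one-sided Skorohod space $D((-\infty,0];X)$.

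For (P3), $\vt_t$ clearly maps $\cXd$ into itself, and joint continuity of $(t,\x)\mapsto\vt_t\x$ I would prove directly from the convergence criterion. The key identity is translation invariance of the compact-interval Skorohod metric, $\md_s^t(\vt_r\x,\vt_r\y)=\md_{s+r}^{t+r}(\x,\y)$; combined with the boundedness estimates on $(s,t)\mapsto\md_s^t$ from \cite[Section 2]{whitt80} this reduces the bookkeeping over $\md_{J_1}$ to (i) $\md_{J_1}(\vt_r\x,\x)\to0$ as $r\to0$ for fixed $\x$, and (ii) a bound $\md_{J_1}(\vt_r\x_n,\vt_r\x)\le C\,\md_{J_1}(\x_n,\x)$ for $|r|\le1$, uniform in $n$ — for (ii) the point is that a continuity point of $\vt_t\x$ pulls back under the shift to a continuity point of $\x$, near which $\x$ is well-behaved, so the shifted time changes witnessing $\x_n\to\x$ on a slightly larger interval can be adjusted to the reference interval without meeting jumps. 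Statement (i) is exactly where Lemma \ref{l.simpleshift} enters: step functions with finitely many jumps being $\md_{J_1}$-dense in $\cXd$, approximating $\x$ by such an $\x^{(k)}$ gives, for small $r$, $\md_{J_1}(\vt_r\x,\x)\le(1+C)\,\md_{J_1}(\x^{(k)},\x)+f_{\delta_k}(r)$, whence $\limsup_{r\to0}\md_{J_1}(\vt_r\x,\x)\le(1+C)\,\md_{J_1}(\x^{(k)},\x)\to0$ as $k\to\infty$.
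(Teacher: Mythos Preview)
Your treatment of (P1) and the measurability clauses of (P2) matches the paper's. For the Polish clause in (P2) you propose exactly the metric the paper uses, $\md^-(\x,\y)=\int_0^\infty e^{-t}\md_{-t}^0(\x,\y)\,dt$; the paper simply invokes the results of \cite[Section 2]{whitt80} for completeness, whereas you sketch a direct argument. Your sketch is correct in spirit: the point that $J_1$ time changes on $[s,0]$ fix $0$, so that $\x_n(0)\to g(0)$ while for $u$ in a left neighbourhood of $0$ one has $g(\lambda_n(u))$ close to $g(0-)$, does force $g(0)=g(0-)$ once combined with the continuity of each $\x_n$ at $0$.

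The real issue is your route to (P3). You split joint continuity into (i) $\vt_r\x\to\x$ as $r\to 0$ for fixed $\x$, and (ii) a Lipschitz bound $\md_{J_1}(\vt_r\x_n,\vt_r\x)\le C\,\md_{J_1}(\x_n,\x)$ uniform in $|r|\le 1$. Claim (ii) is not obviously available from \cite{whitt80}: translation invariance gives $\md_{J_1}(\vt_r\x,\vt_r\y)=\int_{-\infty}^{r}\!\int_{r}^{\infty}e^{s-t}\md_s^t(\x,\y)\,dt\,ds$, and the region $\{s<r<t\}$ is not contained in $\{s<0<t\}$, so a uniform multiplicative bound would require controlling $\md_s^t$ for $s\in(0,r)$ by values with $s<0$, which is not a standard monotonicity of the Skorohod metric. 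Moreover, your proof of (i) itself invokes (ii) to bound $\md_{J_1}(\vt_r\x,\vt_r\x^{(k)})$, so the gap propagates. What you actually need is weaker---$\sup_{|r|\le 1}\md_{J_1}(\vt_r\x_n,\vt_r\x)\to 0$---and this \emph{does} follow by dominated convergence, since $\md_s^t(\x_n,\x)\to 0$ for a.e.\ $(s,t)$ and is bounded by $1$; but that is not a Lipschitz bound and should be stated as such.

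The paper sidesteps this entirely by proving the complementary uniformity: it shows $t\mapsto\vt_t\x$ is continuous at $0$ \emph{uniformly for $\x$ in any compact set $K$}, using the compactness criterion via the modulus $\omega'(\x,\delta,T)$ to build, for each $\x\in K$, a step approximant $\x_\pi$ with $\md_{J_1}(\x,\x_\pi)$ and $\md_{J_1}(\vt_t\x,\vt_t\x_\pi)$ both $\le 2\varepsilon$ directly from the construction (no Lipschitz bound needed), and then applies Lemma~\ref{l.simpleshift}. Joint continuity then follows in one line: given $(t_n,\x_n)\to(0,\x)$, take $K=\{\x_n\}\cup\{\x\}$ and estimate $\md_{J_1}(\vt_{t_n}\x_n,\x)\le\md_{J_1}(\vt_{t_n}\x_n,\x_n)+\md_{J_1}(\x_n,\x)$. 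This is cleaner than your split precisely because the uniform estimate is taken in the variable where compactness is available.
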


\begin{proof}
If $\x_n\to \x$, then $\pi_t(\x_n) \to \pi_t(\x)$ whenever $t$ is a continuity point of $\x$ (\cite[Proposition 3.5.2]{ek}). The fact
that the Borel $\sigma$-algebra is generated by the evaluation maps $\pi_t$ follows from \cite[Proposition 3.7.1]{ek}.
The measurability requirements in  (P2) can be proved in the same way as in Proposition \ref{p.continuous}. To see that $\cXd^-$ is Polish, define for $\x, \y\in \cXd^-$
\[
\md^-_{J_1}(\x, \y) \coloneqq \int_0^\infty e^{-t}\md_{-t}^0(\x, \y)\, dt.
\]
As every $\x\in \cXd^-$ is continuous at $0$, it follows from the results of \cite[Sect.\ 2]{whitt80} already used above that
$\md^-_{J_1}$ defines a complete metric on $\cXd^-$ such that $\x_n\to \x$ if and only if the restrictions to
$[a, 0]$ converge in $D([a,0], X)$ for every $a<0$ that is a continuity point of $\x$. 
Noting that every element of $\cXd^-$ is constant on $[0, b]$ for every $b>0$
it is easy to see that $\x_n \to \x$ in $D([a,0], X)$ if and only if $\x_n \to \x$ in $D([a,b], X)$ for all $b>0$; to see that
convergence in $D([a,b], X)$ implies convergence in $D([a,0]; X)$ use the continuity of $\x$ in $0$ and \cite[Lemma 2.2]{whitt80}.
This proves that a sequence $(\x_n)\subset \cXd^-$ converges to some $\x\in \cXd^-$ with respect to
$\md^-_{J_1}$ if and only if it converges with respect $\md_{J_1}$. Consequently,
$\md^-_{J_1}$ induces the same topology on $\cXd^-$ as $\md_{J_1}$ the proof of (P2) is finished.\smallskip

To prove (P3), we show that for any compact subset $K$ of $\cXd$ it is $\vt_t\x\to \vt_s\x$ as $t\to s$ uniformly for $\x\in K$.
We may assume without loss of generality that $s=0$. Given $\eps>0$, pick $T>0$ such that $\int_{T-1}^\infty e^{-t}\, dt \leq \eps$.
As $K$ is compact, there is $\delta >0$ such that $\omega'(\x, \delta, T) \leq \eps$ for all $\x\in K$, see \cite[Theorem 3.6.3]{ek}.
For fixed $\x \in K$ we find a partition $\pi = \pi(\x)$ of $[-T, T]$, say $\pi = (-T=t_0 < t_1 < \ldots < t_n=T)$, with
$\min_j(t_j-t_{j-1}) \geq \delta$ and $\max_j\sup_{t,s\in [t_{j-1}, t_j)}d(\x(t), \x(s)) \leq 2\eps$.
Define $\x_\pi$ by setting
\[
\x_\pi(s) = \x(t_0-)\one_{(-\infty, t_0)}(s) + \sum_{j=1}^n\x(t_{j-1})\one_{[t_{j-1}, t_j)}(s) + \x(t_n)\one_{[t_n, \infty)}.
\]
Then $d(\x_\pi(s), \x(s)) \leq 2\eps$ for every $s\in [-(T-1), T-1]$ and, using $\lambda(s) = s$, it follows that
$\md_a^b(\x_\pi(s), \x(s)) \leq 2\eps$ for all $-(T-1)\leq a < 0 < b \leq T-1$ and thus altogether $\md_{J_1}(\x_\pi, \x) \leq 2\eps$.

Basically the same calculation shows $\md_{J_1}(\vt_t\x_\pi, \vt_t\x) \leq 2\eps$ whenever $|t|\leq 1$. Taking Lemma \ref{l.simpleshift} into account, it follows that
\[
\md_{J_1}(\vt_t\x, \x) \leq \md_{J_1}(\vt_t\x, \vt_t\x_\pi) + \md_{J_1}(\vt_t\x_\pi, \x_\pi) + 
\md_{J_1}(\x_\pi, \x)
\leq 6\eps + f_\delta(t)
\]
for all $|t|\leq \delta/2$. Picking $|t|$ so small that $f_\delta(t) \leq \eps$ (note that this only depends on $\delta$ and may thus be
done indepently of the particular $\x\in K$), $\md_{J_1}(\vt_t\x, \x) \leq 7\eps$. As $\eps>0$ was arbitrary, (P3) is proved.
\end{proof}

\subsection{C\`adl\`ag paths as proper path space}\label{sub.proper}

We now verify that $\cXd$, the space of all c\`adl\`ag paths endowed with the $J_1$-topology is indeed a proper
path space so that the results of Section \ref{sub.general} apply. We start with a Lemma.

\begin{lem}\label{l.compacteminus}
Let $K \subset \cXd^-$ be compact. Then, for any $\eps>0$ there exists $\delta>0$ such that
$d(\x(t), \x(0)) \leq \eps$ for all $t\in [-\delta, 0]$ and $\x\in K$.
\end{lem}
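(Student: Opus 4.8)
The statement is a compactness/equicontinuity fact at the single time $0$: for a compact $K \subset \cXd^-$, the functions $t \mapsto \x(t)$ are uniformly right-continuous at $0$ (from the left, since elements of $\cXd^-$ are constant on $[0,\infty)$ and continuous at $0$). The natural route is to argue by contradiction using sequential compactness of $K$ together with the characterization of $J_1$-convergence recalled in the excerpt: on $\cXd^-$ the metric $\md^-_{J_1}$ is equivalent to $\md_{J_1}$, and $\x_n \to \x$ in $\cXd^-$ means $\x_n|_{[a,0]} \to \x|_{[a,0]}$ in $D([a,0];X)$ for every continuity point $a<0$ of $\x$ (and hence, by the remarks in Subsection \ref{sub.cadlag}, for a cofinal set of such $a$).

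First I would suppose the conclusion fails. Then there is $\eps>0$ and sequences $\delta_n \downarrow 0$, $t_n \in [-\delta_n, 0]$ and $\x_n \in K$ with $d(\x_n(t_n), \x_n(0)) > \eps$; in particular $t_n \to 0$. By compactness of $K$, pass to a subsequence so that $\x_n \to \x$ in $\cXd^-$ for some $\x \in K$. Since $\x$ is continuous at $0$ (every element of $\cXd^-$ is), fix $a<0$ a continuity point of $\x$ with $-a$ small enough that $d(\x(t),\x(0)) \le \eps/4$ for $t \in [a,0]$ — possible by right-continuity/continuity of $\x$ at $0$. For $n$ large, $t_n \in [a,0]$.

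The key step is to transfer the lower bound $d(\x_n(t_n),\x_n(0)) > \eps$ into a contradiction with $J_1$-convergence on $[a,0]$. For $D([a,0];X)$-convergence $\x_n \to \x$ one has time-changes $\lambda_n \in \Lambda_a^0$ with $\|\lambda_n\|_L \to 0$ and $\sup_{s\in[a,0]} d(\x_n(s), \x(\lambda_n(s))) \to 0$; moreover $\lambda_n(0)=0$ and $\|\lambda_n\|_L\to 0$ forces $\sup_{s} |\lambda_n(s)-s| \to 0$, so also $\lambda_n^{-1} \to \mathrm{id}$ uniformly. Writing $s_n := \lambda_n^{-1}(t_n)$, we get $s_n \to 0$ (since $t_n\to 0$ and $\lambda_n^{-1}\to\mathrm{id}$ uniformly), $d(\x_n(t_n), \x(s_n)) \le \sup_s d(\x_n(s),\x(\lambda_n(s))) \to 0$, and likewise $d(\x_n(0),\x(0)) \to 0$. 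Since $\x$ is continuous at $0$ and $s_n \to 0$, $d(\x(s_n),\x(0)) \to 0$. Combining via the triangle inequality,
\[
d(\x_n(t_n), \x_n(0)) \le d(\x_n(t_n),\x(s_n)) + d(\x(s_n),\x(0)) + d(\x(0),\x_n(0)) \to 0,
\]
contradicting $d(\x_n(t_n),\x_n(0)) > \eps$ for all $n$. This proves the claim.

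**Where the work is.** The only slightly delicate point is making the passage from $\md^-_{J_1}$-convergence in $\cXd^-$ to genuine $D([a,0];X)$-convergence with time-changes, and deducing that $\|\lambda_n\|_L \to 0$ implies $\lambda_n, \lambda_n^{-1} \to \mathrm{id}$ uniformly on $[a,0]$ — a standard estimate ($|\log(\lambda_n(s)/s)|$ type control, using $\lambda_n(0)=0$). I would isolate this as a one-line sublemma or simply cite the behaviour of $\Lambda_a^0$ from \cite{billingsley, whitt80}. Everything else is a routine triangle-inequality chase, so the proof should be short.
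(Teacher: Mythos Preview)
Your argument is correct and follows essentially the same route as the paper: argue by contradiction, extract a convergent subsequence via compactness, and use that the limit $\x$ is continuous at $0$ to force $\x_n(t_n)\to\x(0)$ and $\x_n(0)\to\x(0)$, contradicting $d(\x_n(t_n),\x_n(0))>\eps$. The paper compresses the ``key step'' by invoking \cite[Propositions 3.5.2 and 3.6.5]{ek} rather than unpacking the time-change definition, but the content is the same. One small slip: with the convention $\sup_s d(\x_n(s),\x(\lambda_n(s)))\to 0$, the right substitution is $s_n:=\lambda_n(t_n)$ (take $s=t_n$), not $\lambda_n^{-1}(t_n)$; the rest of the triangle-inequality chase is unaffected.
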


\begin{proof}
If the conclusion is wrong, we find $\eps_0>0$, a sequence $(\x_n)\subset K$ and a sequence $t_n\uparrow 0$ with 
$d(\x_n(t_n), \x(0))\geq \eps_0$ for all $n\in \N$. As $K$ is compact, passing to a subsequence, we may and shall assume
that $\x_n$ converges to some $\x\in K$. In particular, $\x$ is continuous at $0$ so that $\x_n(0) \to \x(0)$
by \cite[Proposition 3.5.2]{ek}. But then \cite[Proposition 3.6.5]{ek} implies $\x_n(t_n)\to \x(0)$ which yields $\x_n(t_n) \to \x(0)$.
This is a contradiction.
\end{proof}

\begin{prop}
The space $((\cXd, \md_{J_1}), \tau_\mathrm{D})$ is a proper path space.
\end{prop}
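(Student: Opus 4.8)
The plan is to verify the three conditions (P4), (P5) and (P6) directly for the c\`adl\`ag path space $((\cXd, \md_{J_1}), \tau_\mathrm{D})$, having already established in Proposition \ref{p.cadlag} that it is a path space. Each of the three conditions will be checked separately, using the characterization of $J_1$-convergence via restrictions to compact subintervals whose endpoints are continuity points, the description of compact sets via the modulus $\omega'$, and the explicit form of $\tau_\mathrm{D}$.

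For (P4), suppose $\x_n\to\x$ in $\cXd$. For a fixed $t\in\R$, $\vt_t\x_n\to\vt_t\x$ by (P3), so it suffices to see that $\tau_\mathrm{D}(\y_n)\to\tau_\mathrm{D}(\y)$ whenever $\y_n\to\y$ and $\y$ is continuous at $0$. Since $\x$ has at most countably many discontinuities and each $\vt_t\x$ is continuous at $0$ for all but countably many $t$, this covers almost every $t$. So the real point is: if $\y_n\to\y$ in $\cXd$ and $\y$ is continuous at $0$, then $\tau_\mathrm{D}(\y_n)\to\tau_\mathrm{D}(\y)$. Here $[\tau_\mathrm{D}(\y)](s) = \y(s)$ for $s<0$ and $=\y(0-)=\y(0)$ for $s\ge 0$. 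Using that $0$ is a continuity point of $\y$, one can (cf.\ \cite[Lemma 2.2]{whitt80} as in the proof of Proposition \ref{p.cadlag}) choose a continuity point $b>0$ of $\y$ and deduce convergence of the restrictions to $[a,b]$ for continuity points $a<0$; restricting time-change functions to behave well near $0$ (which is possible precisely because $\y$ is continuous there), the restriction of $\tau_\mathrm{D}(\y_n)$ to $[a,0]$ converges to that of $\tau_\mathrm{D}(\y)$, and since $\tau_\mathrm{D}(\y_n)$ is constant on $[0,\infty)$ with value $\y_n(0-)\to\y(0)$, convergence on all of $\R$ follows by the metric $\md^-_{J_1}$ comparison already used in Proposition \ref{p.cadlag}.

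For (P5), fix a compact $K\subset\cXd^-$ and $\eps>0$. By Lemma \ref{l.compacteminus}, there is $\delta_1>0$ with $d(\x(s),\x(0))\le\eps$ for $s\in[-\delta_1,0]$ and $\x\in K$. Now compare $\x$ with $\tau_\mathrm{D}(\vt_{-t}\x)$ for $0\le t\le\delta$: the latter equals $\x(s-t)$ for $s<0$ (so for $s$ near $0$ it takes values $\x$ takes on $[-\delta,0)$, all within $\eps$ of $\x(0)$ hence of $\x(s)$ when $s$ is also near $0$) and equals $\x(0-)=\x(0)$ for $s\ge 0$, while $\x$ is constant $=\x(0)$ on $[0,\infty)$. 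Using the identity time-change on $[a,b]$ away from a small neighbourhood of $0$ and a mild time-change near $0$ (here one uses that paths in $\cXd^-$ are continuous at $0$, so no jump needs to be matched), one estimates $\md_a^b(\x,\tau_\mathrm{D}(\vt_{-t}\x))$, uniformly for $\x\in K$ and $0\le t\le\delta$ small, and integrates to bound $\md_{J_1}$. The uniform modulus-of-continuity estimate from \cite[Theorem 3.6.3]{ek} applied to $K$ handles the behaviour of $\x$ away from $0$.

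For (P6), recall $\tau^{t_0} = \vt_{-t_0}\circ\tau_\mathrm{D}\circ\vt_{t_0}$, so $[\tau^{t_0}(\x)](s)=\x(t_0+s)$ for $s<0$ and $=\x(t_0-)$ for $s\ge 0$; in other words $\tau^{t_0}(\x)$ is $\x$ truncated at $t_0$. Given $\delta>0$, choose $t_0$ large enough that $e^{-t_0}$ is small compared to $\delta$; then for $\x,\y$ with $\md_{J_1}(\x,\y)\ge\delta$, the contribution to the $\md_{J_1}$-integral coming from subintervals $[s,t]$ with $s\ge -t_0$ must be at least $c\delta$ for a suitable absolute constant $c\in(0,1)$ — because the tail of the double integral $\int\int e^{s-t}\md_s^t\,dt\,ds$ over $\{s<-t_0\}$ is at most (a constant times) $e^{-t_0}$, as $\md_s^t\le 1$ always. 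Since $\tau^{t_0}(\x)$ and $\x$ agree on $(-\infty,t_0)$ and the integral defining $\md_{J_1}(\tau^{t_0}(\x),\tau^{t_0}(\y))$ restricted to subintervals contained in $(-t_0,t_0)$ is unchanged, one gets $\md_{J_1}(\tau^{t_0}(\x),\tau^{t_0}(\y))\ge c\delta$. I expect (P6) to be the cleanest of the three.

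The main obstacle will be the careful bookkeeping in (P4) and (P5): one must control Skorohod time-changes near the gluing point $0$ of $\tau_\mathrm{D}$, and the subtlety is exactly that $\tau_\mathrm{D}$ inserts the left limit $\x(0-)$ while the elements of $\cXd^-$ are forced to be continuous at $0$, so one has to make sure no spurious jump at $0$ is created when shifting and stopping. Exploiting continuity at $0$ of the relevant paths (which holds by construction for everything in $\cXd^-$, and for a.e.\ shift of an arbitrary $\x\in\cXd$) is what makes the time-change estimates go through; this is the same mechanism already used in the proofs of Proposition \ref{p.cadlag} and Lemma \ref{l.compacteminus}, so the arguments there serve as templates.
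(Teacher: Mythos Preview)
Your plan is essentially the same as the paper's, and (P4) is handled identically. Two points deserve correction, though.

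For (P6) you have the right idea but truncate the wrong variable. The metric $\md_{J_1}$ integrates $\md_s^t$ over $s\in(-\infty,0)$ and $t\in(0,\infty)$, and $\tau^{t_0}(\x)$ agrees with $\x$ on $(-\infty,t_0)$; thus $\md_s^t(\tau^{t_0}(\x),\tau^{t_0}(\y))=\md_s^t(\x,\y)$ whenever $t<t_0$, regardless of $s$. So the tail to discard is $\{t\ge t_0\}$ (of mass $e^{-t_0}$), not $\{s<-t_0\}$. With this correction your argument gives exactly the paper's: choose $t_0$ with $e^{-t_0}\le\delta/2$ and obtain $\md_{J_1}(\tau^{t_0}(\x),\tau^{t_0}(\y))\ge\tfrac12\md_{J_1}(\x,\y)$, so $c=1/2$.

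For (P5) your sketch has a genuine gap. On $(-\infty,0)$ one compares $\x(s)$ with $\x(s-t)$, and the suggestion to use the identity time-change away from a neighbourhood of $0$ cannot work: jumps of $\x$ on $[a,0)$ are displaced by $t$ under the shift, and in the Skorohod metric they must be matched by the time-change, not left in place. The modulus $\omega'$ does not bound $\sup_s d(\x(s),\x(s-t))$ for c\`adl\`ag $\x$. The paper resolves this exactly as in the proof of (P3): approximate $\x$ by a piecewise-constant $\x_{\tilde\pi}$ (via a partition with mesh $\ge\delta$ coming from $\omega'$), observe that $\tau_\mathrm{D}(\vt_{-t}\x_{\tilde\pi})=\vt_{-t}\x_{\tilde\pi}$ for small $t$ (because the last partition point can be arranged to lie in $[-\delta,0]$), apply Lemma~\ref{l.simpleshift} to bound $\md_{J_1}(\x_{\tilde\pi},\vt_{-t}\x_{\tilde\pi})$, and control the two approximation errors by $\omega'$ and Lemma~\ref{l.compacteminus}. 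Your ingredients are the right ones; only the time-change construction needs to be replaced by this approximation step.
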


\begin{proof}
Let $\x_n\to \x$. By (P3)
$\vt_t\x_n\to \vt_t\x$ for all $t\in \R$. Note that $\vt_t\x$ is continuous at $0$ for almost all $t\in \R$. Using \cite[Proposition 3.6.5]{ek},
it is easy to see that $\tau(\vt_t\x_n) \to \tau(\vt_\x)$ whenever $\vt_t\x$ is continuous at $0$. This proves (P4).\smallskip

The proof of (P5) is similar to that of (P3). Given $\eps>0$ and $K\subset \cXd^-$, Lemma \ref{l.compacteminus} allows us to
 pick $\delta>0$ so that
$d(\x(t), \x(0)) \leq \eps$ for all $t\in [-\delta, 0]$ and $\x\in K$. Next, choose $T>0$ so large that
$\int_{T-1}^\infty e^{-t}\, dt \leq \eps$. Picking a smaller $\delta$ if necessary, we may and shall assume that
$\omega'(\x, 2\delta, T) \leq \eps$ for all $\x\in K$.

Now, fix $\x\in K$ and pick a partition $\pi =(t_0< \ldots < t_n)$ of $[-T, T]$ such that $\min_j (t_j- t_{j-1}) \geq 2\delta$ and
$\max_j\sup_{t,s\in [t_{j-1}, t_j)}d(\x(t), \x(s)) \leq 2\eps$. As $\x(t) = \x(0) = \x(0-)$ for all $\x\in \cXd^-$ we may
and shall assume that $t_{n-1}\leq 0$ so that the only partition point larger than $0$ is $t_n = T$. Next, $\pi$ is modified
to $\tilde\pi$ as follows. If $|t_{n-1}| < \delta$, we replace $t_{n-1}$ by $\tilde{t}_{n-1}\coloneqq - \delta$ whereas all other
partition points are unchanged. This results in a partition $\tilde\pi$ satisfying $\min_j (\tilde t_j - \tilde t_{j-1})\geq \delta$
and $\max_j\sup_{t,s\in [\tilde t_{j-1}, \tilde t_j)}d(\x(t), \x(s)) \leq 2\eps$. Repeating the arguments from above,
$\md_{\mathrm{D}}(\x_{\tilde{\pi}}, \x) \leq 3\eps$.

Now let $0\leq t\leq \delta/2$. Note that $(\vt_{-t}\x_{\tilde\pi})(s) = \x_{\tilde\pi}(s-t) = \x(\tilde{t}_{n-1})$ for all $s\geq \tilde{t}_{n-1}+t$
and thus, in particular, for all $s\geq 0$. It follows that $\tau_\mathrm{D}(\vt_{-t}\x_{\tilde\pi}) = \vt_{-t}\x_{\tilde\pi}$ and Lemma \ref{l.simpleshift} yields
$\md_{\mathrm{D}}(\x_{\tilde\pi}, \tau_\mathrm{D}(\vt_{-t}\x_{\tilde\pi})) \leq f_\delta(t)$.

Finally, we estimate $\md_{\mathrm{D}}(\tau_\mathrm{D}(\vt_{-t}\x_{\tilde\pi}), \tau_\mathrm{D}(\vt_{-t}\x))$. If $-T<s<0$ it is
$\tau_\mathrm{D}(\vt_{-t}\x)(s) = \x(s-t)$ and thus $d(\tau_\mathrm{D}(\vt_{-t}\x_{\tilde\pi})(s), \tau_\mathrm{D}(\vt_{-t}\x)(s)) = d(\x_{\tilde{\pi}}(s-t), \x(s-t)) \leq 2\eps$ by the above.
On the other hand, for $s\geq 0$ it is $d(\tau_\mathrm{D}(\vt_{-t}\x_{\tilde\pi})(s), \tau_\mathrm{D}(\vt_{-t}\x)(s)) = d(\x(\tilde{t}_{n-1}), \x(-t-)) \leq \eps$
by the initial choice of $\delta$. Thus $\md_{\mathrm{D}}(\tau_\mathrm{D}(\vt_{-t}\x_{\tilde\pi}), \tau_\mathrm{D}(\vt_{-t}\x)) \leq 3\eps$. Altogether,
\[
\md_{\mathrm{D}} (\x, \tau_\mathrm{D}(\vt_{-t}\x)) \leq 6\eps + f_\delta(t)
\]
whenever $\x\in K$ and $0\leq t\leq \delta/2$. This implies (P5).\smallskip

Finally, we prove (P6). As $\md_a^b$ and thus $\md_{J_1}$ is bounded by 1 we find $t^\star>0$ such that
\[
\md_{t^\star}(\x,\y) \coloneqq \int_{-\infty}^0\int_0^{t^\star} e^{t-s} \md_s^t(\x,\y)\, dt\, ds \geq \frac{1}{2}\md_{J_1}(\x,\y)
\]
for all $\x,\y\in \cX$. For $t_0>t^\star$, Lemma \ref{l.measurability}(e) yields $\md_{t^*}(\tau^{t_0}(\x), \tau^{t_0}(\y)) = \md_{t^\star}(\x, \y)$
and thus
\[
\md(\tau^{t_0}(\x), \tau^{t_0}(\y)) \geq \md_{t^\star}(\x,\y) \geq \frac{1}{2}\md_{J_1}(\x,\y).
\]
This finishes the proof.
\end{proof}

\end{document}